\newtheorem{theorem}{Theorem}
\newtheorem{definition}[theorem]{Definition}
\newtheorem{lemma}[theorem]{Lemma}
\newtheorem{proposition}[theorem]{Proposition}
\numberwithin{equation}{section}
\numberwithin{theorem}{section}
\renewcommand{\epsilon}{\varepsilon}
\renewcommand{\rho}{\varrho}
\renewcommand{\widetilde}{\tilde}
\DeclareMathOperator*{\esssup}{ess\,sup}
\DeclareMathOperator*{\essinf}{ess\,inf}
\DeclareMathOperator*{\essosc}{ess\,osc}
\DeclareMathOperator*{\loc}{loc}
\def\XXint#1#2#3{{\setbox0=\hbox{$#1{#2#3}{\int}$ }
\vcenter{\hbox{$#2#3$ }}\kern-.6\wd0}}
\def\YYint#1#2#3{{\setbox0=\hbox{$#1{#2#3}{\iint}$}
    \vcenter{\hbox{$#2#3$}}\kern-.51\wd0}}
\numberwithin{equation}{section}
\begin{document}

\title[Continuity estimates for doubly degenerate parabolic equations]%
{Continuity estimates for doubly degenerate parabolic equations with lower order terms via nonlinear potentials
}
\author[Qifan Li]%
{Qifan Li*}

\newcommand{\acr}{\newline\indent}

\address{\llap{*\,}Department of Mathematics\acr
                   School of Sciences\acr
                   Wuhan University of Technology\acr
                   430070, 122 Luoshi Road,
                   Wuhan, Hubei\acr
                   P. R. China}
\email{qifan\_li@yahoo.com,
qifan\_li@whut.edu.cn}

\subjclass[2010]{35K10, 35K59, 35K65, 35K92.} 
\keywords{Doubly degenerate parabolic equation, Quasilinear parabolic equation, Regularity theory, Potential theory.}

\begin{abstract}
This article studies the continuity of bounded nonnegative weak solutions to inhomogeneous doubly nonlinear parabolic equations.
A model equation is
\begin{equation*}\partial_t u-\operatorname{div}(u^{m-1}|Du|^{p-2}Du)=f\qquad \text{in}\quad\Omega\times(-T,0)\subset \mathbb{R}^{n+1}.\end{equation*}
Here, we consider the
case $m>1$ and $2<p<n$. We establish a continuity estimate for $u$ in terms of elliptic Riesz potentials of the right-hand side of the equation.
\end{abstract}
\maketitle
\section{Introduction and main results}
We are concerned in this paper with the doubly degenerate parabolic equation with lower order terms.
More precisely, we consider quasilinear parabolic equations of the form
\begin{equation}\label{parabolic}\partial_t u-\operatorname{div}A(x,t,u,Du)=f,\end{equation}
in a bounded domain $\Omega_T=\Omega\times(-T,0)$, where $\Omega\subset\mathbb{R}^n$.
Here, we assume that $f=f(x)\in L_{\loc}^1(\Omega)$ and $u$ is nonnegative.
The vector field $A(x,t,u,\xi)$ is measurable in $\Omega_T\times\mathbb{R}\times\mathbb{R}^n$
and satisfies the growth and ellipticity conditions:
 \begin{equation}\label{A}
	\begin{cases}
	 \big\langle A(x,t,u,\xi),\xi\big \rangle\geq C_0u^{m-1}|\xi|^p,\\
	|A(x,t,u,\xi)|\leq C_1u^{m-1}|\xi|^{p-1}+g(x)u^{\frac{m-1}{p}},
	\end{cases}
\end{equation}
where $C_0$, $C_1$ are given positive constants, $g\geq0$ and $g=g(x)\in L_{\loc}^{p/(p-1)}(\Omega)$.
Throughout the paper, we keep $m>1$ and $2<p<n$. In this case, the quasilinear equation \eqref{parabolic}
with the structure conditions \eqref{A} is classified as doubly degenerate.
Doubly degenerate parabolic equations play an important role in the study of the non-stationary, polytropic flow of a fluid in a porous medium
(see, for example \cite{BER}). The H\"older continuity of nonnegative weak solutions to \eqref{parabolic}-\eqref{A}
 was proved in \cite{PV, I}. In \cite{BDLS} the authors proved the local H\"older continuity of sign-changing
solutions to doubly nonlinear parabolic equations in the range $p>2$ and $m+p>2$.
Furthermore, Harnack inequality for doubly degenerate parabolic equations was first established by
Vespri \cite{V}. This result was extended in \cite{FS}
to the general quasilinear structure. Recently, B\"ogelein et al. \cite{BHSS} proved the Harnack inequality in the case $m>0$, $p>1$
and $m(p-1)>1$ which is the full range in slow diffusion
case.

The aim of this paper is to prove continuity of weak solutions to \eqref{parabolic}
in terms of nonlinear potentials of the functions $f$
and $g$.
The continuity problem related to the potentials
can be traced back to the study of the Wiener criteria of $p$-Laplace equations (see \cite{KM}).
In \cite{KM} the authors proved the local boundedness of weak solutions to $p$-Laplace equations with measure data.
Furthermore, Liskevich and Skrypnik \cite{LS, Sk} studied evolutionary $p$-Laplace equations with lower order terms from nonlinear Kato-type
classes and
established qualitative properties such as local boundedness, continuity and
Harnack estimate for weak solutions.
Subsequently, Liskevich, Skrypnik and Sobol \cite{LSS} extended the local boundedness result
via a time-dependent parabolic potential.
Similar results for porous medium equations have been obtained
in \cite{BDG2, BDG1, z2, z1}.
However, limited work has been done for the doubly nonlinear equations. In \cite{SS} the author obtained
a local boundedness result for the doubly degenerate parabolic equations in terms
of the parabolic nonlinear potentials.
Motivated by this work, we are interested in the continuity estimate for doubly degenerate parabolic equations
with lower order terms from nonlinear Kato-type classes.
Before formulating the main result, let us
introduce the following quantities
\begin{equation}\begin{split}\label{potential1}
F_1(R)=\sup_{x\in \Omega}\int_0^R\left(\frac{1}{r^{n-p}}\int_{B_r(x)}g(y)^{\frac{p}{p-1}}
\,\mathrm {d}y\right)^{\frac{1}{p}}\frac{1}{r}\,\mathrm {d}r
\end{split}\end{equation}
and
\begin{equation}\begin{split}\label{potential2}
F_2(R)=\sup_{x\in \Omega}\int_0^R\left(\frac{1}{r^{n-p}}\int_{B_r(x)}f(y)
\,\mathrm {d}y\right)^{\frac{1}{p-1}}\frac{1}{r}\,\mathrm {d}r,
\end{split}\end{equation}
where $R>0$ and $B_r(x)=\left\{y\in\Omega:|y-x|\leq r\right\}$.
These quantities are also termed as elliptic Riesz potentials (see \cite{z1}).
Next, we give the definition of weak solutions to doubly nonlinear parabolic equations.
\begin{definition}\label{weak solution}
A nonnegative measurable function $u:\Omega_T\to \mathbb{R}$ is
said to be a local weak solution to \eqref{parabolic}-\eqref{A} if
\begin{equation*}\begin{split}
u\in C_{\loc}(-T,0;L_{\loc}^2(\Omega)), \quad u^\alpha\in L_{\loc}^p(-T,0;W_{\loc}^{1,p}(\Omega)),\quad
\text{where}\quad \alpha=\tfrac{m+p-2}{p-1}
\end{split}\end{equation*}
and
for every open set
$U\subset \Omega$ and every subinterval $(t_1,t_2)\subset(-T,0)$
the identity
\begin{equation}\begin{split}\label{weaksolution}
\int_U &u(\cdot,t)\varphi(\cdot,t)dx\bigg|_{t=t_1}^{t_2}+\iint_{U\times(t_1,t_2)}-u\partial_t
 \varphi+\big\langle A(x,t,u,Du), D\varphi\big\rangle
\,\mathrm {d}x\mathrm {d}t
\\&=\iint_{U\times(t_1,t_2)}  f \varphi\,\mathrm {d}x\mathrm {d}t
\end{split}\end{equation}
holds for
all any function
$\varphi\in W_{\loc}^{1,2}(-T,0;L^2(U))\cap L_{\loc}^p(-T,0;W_0^{1,p}(U)).$
 \end{definition}
 We note that the gradient of $u$ in the structure conditions \eqref{A} is interpreted as $Du=\alpha^{-1}u^{1-\alpha}
 \chi_{\{u>0\}}Du^\alpha$.
 We are now in a position to state our main theorem.
 \begin{theorem}\label{main}
 Let $u$ be a nonnegative, locally bounded, weak solution to the doubly degenerate parabolic equation \eqref{parabolic}
 in the sense of Definition \ref{weak solution}, where the vector field $A$ fulfills the structure conditions \eqref{A}.
 Furthermore, assume that
 \begin{equation}\label{limit F_1F_2}
 \lim_{R\downarrow 0}\left[F_1(R)+F_2(R)\right]=0
 \end{equation}
 holds true. Then, $u$ is locally continuous in $\Omega_T$.
 \end{theorem}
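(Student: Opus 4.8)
The plan is to adapt DiBenedetto's method of intrinsic scaling to the doubly degenerate structure \eqref{A}, carrying the lower order terms along so that after the De~Giorgi iteration they assemble precisely into the Riesz potentials $F_1,F_2$. As a preliminary normalization I would pass to the unknown $v=u^\alpha$ with $\alpha=\tfrac{m+p-2}{p-1}$: since $Du=\alpha^{-1}u^{1-\alpha}\chi_{\{u>0\}}Dv$, a short computation turns \eqref{A} into $\langle A,Dv\rangle\ge c|Dv|^p$ and $|A|\le c|Dv|^{p-1}+g\,v^{(m-1)/(\alpha p)}$, so that $v$ solves a quasilinear equation of $p$-Laplacian type with the power-type time term $\partial_t(v^{1/\alpha})$, $\tfrac1\alpha=\tfrac{p-1}{m+p-2}\in(0,1)$; moreover $u$ is nonnegative and locally bounded if and only if $v$ is, and $v=u^\alpha$ is a monotone bijection so $\essosc_Q u$ and $\essosc_Q v$ control one another on sets where $u\le M$. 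Fixing $z_0=(x_0,t_0)\in\Omega_T$ and a reference cylinder on which $0\le v\le M$, the goal is to construct a nested sequence of intrinsic cylinders $Q_j=B_{R_j}(x_0)\times(t_0-\theta_jR_j^{\,p},t_0]$, $R_j=\sigma^jR_0$, with aspect ratio $\theta_j$ calibrated to the current oscillation, along which
\begin{equation*}
\omega_{j+1}:=\essosc_{Q_{j+1}}u\ \le\ \lambda\,\omega_j+C\bigl(F_1(R_j)+F_2(R_j)\bigr)
\end{equation*}
for fixed $\lambda\in(0,1)$ and $C\ge1$ depending only on the data and $M$. Iterating this recursion and invoking \eqref{limit F_1F_2}, the accumulated error $\sum_{i<j}\lambda^{\,j-1-i}\bigl(F_1(R_i)+F_2(R_i)\bigr)$ tends to $0$ as $j\to\infty$, hence $\omega_j\to0$ with an explicit modulus of continuity; since $z_0$ is arbitrary this gives $u\in C_{\loc}(\Omega_T)$.

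The heart of the argument is the one-step oscillation decay, which I would build on three ingredients. First, \emph{energy estimates}: testing the weak formulation \eqref{weaksolution} with truncations $(v-k)_\pm\zeta^p$ (Steklov-regularizing the time term) yields Caccioppoli inequalities on $Q_j$ in which the data enter through the integrals $\iint g\,v^{(m-1)/(\alpha p)}\,|D((v-k)_\pm\zeta)|$ and $\iint f\,(v-k)_\pm\zeta^p$; absorbing the gradient via Young's inequality, using H\"older's inequality together with the parabolic Sobolev embedding ($p<n$), and normalizing by $|Q_j|$, these are bounded by $\bigl(r^{p-n}\!\int_{B_r(x_0)}g^{p/(p-1)}\bigr)^{1/p}$ and $\bigl(r^{p-n}\!\int_{B_r(x_0)}f\bigr)^{1/(p-1)}$ times $\omega_j$-dependent factors; crucially the aspect ratios $\theta_j$ cancel in these normalized forcing terms, which is exactly why, $f$ and $g$ being independent of $t$, the \emph{elliptic} potentials $F_1,F_2$ are the right objects, and summation over the dyadic radii $R_i$ accumulates the errors into $F_1(R_j)+F_2(R_j)$. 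Second, \emph{logarithmic estimates}, obtained by testing with a logarithmic function of $v$, which transport measure-theoretic smallness of a sub- or super-level set forward in time across the intrinsic cylinder. Third, a \emph{De~Giorgi iteration} fed by the first two: if $v$ is close to its infimum (or supremum) on a large portion of the top of $Q_j$, then it stays close to that value on a full subcylinder, up to the additive potential error.

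The oscillation step then splits into the two familiar alternatives according to whether $\mu_j^-:=\essinf_{Q_j}u$ is comparable to $\omega_j$. In the non-degenerate case $\mu_j^-\ge\tfrac14\omega_j$, the factor $u^{m-1}$ is trapped between positive multiples of $(\mu_j^-)^{m-1}$, the equation becomes genuinely parabolic of $p$-Laplace type on the cylinder with $\theta_j\sim(\mu_j^-)^{1-m}\omega_j^{2-p}$, and DiBenedetto's reduction of oscillation goes through with the forcing handled as above. In the degenerate case $\mu_j^-<\tfrac14\omega_j$ one has $\mu_j^+:=\esssup_{Q_j}u\ge\tfrac34\omega_j$, and near the zero set I would work on cylinders whose aspect ratio is a suitable power of $\omega_j$ (for the model equation $\theta_j\sim\omega_j^{3-m-p}$) and run the expansion-of-positivity machinery for $\mu_j^+-u$, again tracking the data through the Caccioppoli and logarithmic estimates. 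Either way one reaches $\omega_{j+1}\le\lambda\omega_j+C\bigl(F_1(R_j)+F_2(R_j)\bigr)$, possibly after first passing to a cylinder of comparable but intrinsically rescaled size.

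I expect the main obstacle to be the matching of scales: the intrinsic cylinders carry aspect ratios depending on the unknown $\omega_j$ and $\mu_j^-$ through the exponents $m-1$ and $p-2$ simultaneously, whereas the forcing scales differently, so one must verify that $F_1(R_j)+F_2(R_j)$ dominates the data contributions \emph{uniformly} over the admissible intrinsic geometries; and in the degenerate alternative, where $\omega_j$ may be arbitrarily small, one must ensure the equation retains enough coercivity for the positivity expansion to proceed without the recursion stalling. A stopping-time/critical-mass argument at each scale is needed to decide which alternative occurs and to keep the recursion self-improving; that, together with the careful bookkeeping converting the accumulated errors into $F_1$ and $F_2$, is where the bulk of the technical work lies.
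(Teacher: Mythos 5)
Your overall architecture---intrinsic scaling, Caccioppoli and logarithmic estimates, two alternatives, and accumulation of the forcing into the elliptic Riesz potentials $F_1$, $F_2$, followed by an iteration $\omega_{j+1}\leq\lambda\,\omega_j+C\left(F_1(R_j)+F_2(R_j)\right)$ that drives the oscillation to zero under \eqref{limit F_1F_2}---matches the skeleton of the paper, and the final bookkeeping in your last paragraph is essentially the paper's Section~5. However, your third ingredient, a ``De~Giorgi iteration'' on the measures of nested super/sub-level sets that absorbs the data ``up to an additive potential error,'' is exactly the route the paper says does \emph{not} close in the potential framework: the introduction states explicitly that neither De~Giorgi nor Moser iteration works for the De~Giorgi-type lemmas here. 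The concrete reason is that in the classical recursion the forcing contribution $\iint f(u-k_j)_\pm$ at the $j$-th dyadic level decays only linearly in $2^{-j}$, while the gain from the Sobolev embedding is superlinear in the level-set measure; the forcing therefore enters the recurrence as a constant term that does not decay, the measures stall rather than tend to zero, and one does not obtain the sharp Riesz-potential modulus. What the paper uses instead, in Lemmas~\ref{lemmaDeGiorgi1}, \ref{lemmaDeGiorgi2} and \ref{lemmaDeGiorgi1+}, is the Kilpel\"ainen--Mal\'y construction: a monotone sequence of \emph{levels} $\{l_j\}$ (not of level-set measures), built via the auxiliary functions $G$, $\phi_+$, $\psi_\pm$ and the scale-invariant quantities $A_j(l)$ of \eqref{A_j}, with the step sizes $d_j=|l_{j+1}-l_j|$ shown to obey a linear recursion \eqref{djdj-1} whose forcing terms are precisely the dyadic building blocks of $F_1$ and $F_2$; summing $d_j$ reassembles the potentials. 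Your proposal does not anticipate this replacement, and without it the key positivity lemmas cannot be proved as you plan to; that is a genuine gap, not a stylistic difference.

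Two smaller divergences. First, you begin by substituting $v=u^\alpha$; the paper never does this and works directly with $u$, keeping the factor $u^{m-1}$ explicit in the Caccioppoli inequalities \eqref{Cacformula1}, \eqref{Cacformula2}. Your change of variables trades the spatial degeneracy $u^{m-1}|Du|^{p-2}Du$ for a singular time term $\partial_t(v^{1/\alpha})$ and a lower-order coefficient $g\,v^{(m-1)/(\alpha p)}$ that still degenerates at $v=0$; this does not manifestly simplify the problem and introduces its own difficulties in the time mollification of the testing, so it is a different, unverified route rather than an error. Second, you organize the alternatives by whether $\mu_j^-\geq\frac14\omega_j$; the paper's dichotomy \eqref{1st}--\eqref{2nd} is measure-theoretic, with the case analysis $\mu_-\leq\frac12\mu_+$ versus $\mu_->\frac12\mu_+$ occurring only inside the proofs to bound $u^{m-1}$ from below. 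The measure-theoretic version is what lets the time-propagation Lemmas~\ref{timeexpandlemma} and~\ref{time 2nd} be stated; with your dichotomy you would need a separate argument to produce the time slice on which the favorable measure information is available.
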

No attempt has been made here to study the regularity problem for the time-dependent parabolic potential.
Our proof relies on the method of intrinsic scaling and follow the idea from \cite[chapter III]{Di93}.
In the context of the potential estimate,
it seems that neither De Giorgi nor Moser iteration technique work in the proof of De Giorgi type lemmas.
Our proof is based on the Kilpel\"ainen-Mal\'y technique \cite{KM, LS}, but the arguments
become more involved when we move from the $p$-Laplace to the
doubly nonlinear setting.
Contrary to \cite{BDG1}, which established a continuity result for the porous medium equations with nonnegative measure data,
our result could handle the case when the right-hand side of the equation $f$ is a sign-changing function.

The rest of the paper is organized as follows. In Sect. 2, we derive the Caccioppoli inequalities
and set up an alternative argument.
Subsequently
Sect. 3 is devoted to the
analysis of the first alternative. We obtain a decay estimate for the oscillation of the weak solution in terms of $F_1(R)$ and $F_2(R)$.
In Sect. 4 we proceed with the study of the second alternative and obtain a similar estimate for the oscillation of the weak solutions
via nonlinear potentials.
Finally the proof of the main theorem is presented in Sect. 5.
\section{Preliminary material and Caccioppoli inequalities}
In this section, we provide some preliminary lemmas. To start with,
we follow the notation used in \cite{LSS} and introduce some auxiliary functions. Throughout the paper, we define
\begin{equation}\label{G}
	G(u)=\begin{cases}
u,&\quad \text{for}\quad u>1,\\
	u^2,&\quad \text{for}\quad 0\leq u\leq1
\\
	0,&\quad \text{for}\quad u<0.
	\end{cases}
\end{equation}
Let $s\in\mathbb{R}$ and $s_+=\max\{s,0\}$. We observe that $G(s)=
\min\{s_+,s_+^2\}$. Fix $0<\lambda<p-1$, we introduce the
function
\begin{equation*}\phi_+(s)=\int_0^{s_+}(1+\tau)^{-1-\lambda}\,\mathrm {d}\tau=\tfrac{1}{\lambda}
\left(1-\left(1+s_+\right)^{-\lambda}\right).
\end{equation*}
It can be easily seen that there exist $\gamma_1$ and $\gamma_2$ depending only upon $\lambda$ such that $\gamma_1<\gamma_2$,
\begin{equation}\label{phi}\gamma_1\min\{s_+,1\}\leq\phi_+(s)
\leq \gamma_2\min\{s_+,1\}\qquad\text{and}\qquad
\gamma_1\frac{s_+}{s_++1}\leq\phi_+(s)
\leq\gamma_2\frac{s_+}{s_++1}.
\end{equation}
In this paper, we fix the parameter $\lambda$ such that $\lambda<\min\left\{(p-1)^{-1},n^{-1}\right\}$.
The statement that a constant  $\gamma$ depends only upon the data means that it can be determined a priori only in terms
of $\{n,m,p,C_0,C_1,\lambda\}$. Next, we set
\begin{equation*}
\Phi(s)=\int_0^{s_+}\phi_+(\tau)\,\mathrm {d}\tau
\end{equation*}
and it follows from \eqref{phi} that
\begin{equation}\label{PhiG}
\gamma_1G(s)=\gamma_1\min\{s_+,s_+^2\}\leq \Phi(s)\leq\gamma_2 \min\{s_+,s_+^2\}=\gamma_2G(s).
\end{equation}
Furthermore, we introduce the function
\begin{equation*}
\psi(s)=\int_0^{s_+}(1+\tau)^{-\frac{1}{p}-\frac{\lambda}{p}}\,\mathrm {d}\tau=\tfrac{p}
{p-1-\lambda}\left[(1+s_+)^{1-\frac{1+\lambda}{p}}-1\right].
\end{equation*}
Let $d>0$, $l>0$ and $u$ be the weak solution
as in the Definition \ref{weak solution}. We define the quantities
\begin{equation}\label{psi-}
\psi_-=\left(\frac{1}{d}\int_u^l\left(1+\frac{l-s}{d}\right)^{-\frac{1}{p}-\frac{\lambda}{p}}
\,\mathrm {d}s\right)_+=\psi\left(\frac{l-u}{d}\right)
\end{equation}
and
\begin{equation}\label{psi+}
\psi_+=\left(\frac{1}{d}\int_l^u\left(1+\frac{s-l}{d}\right)^{-\frac{1}{p}-\frac{\lambda}{p}}
\,\mathrm {d}s\right)_+=\psi\left(\frac{u-l}{d}\right).
\end{equation}
At this point, we state the following two lemmas which give a characterization of the quantities $\psi_-$
and $\psi_+$.
  \begin{lemma}\label{lemmainequalitypsi-}
  Let $\psi_-$ be the quantity defined in \eqref{psi-}. Then, we have
    \begin{equation}\label{inequalitypsi-}
c_1(\epsilon_1)\left(\frac{l-u}{d}\right)^{\frac{p-1-\lambda}{p}}\leq\psi_-\leq  c_2(\epsilon_1)
\left(\frac{l-u}{d}\right)^{\frac{p-1-\lambda}{p}}
\qquad\text{if}\qquad \frac{l-u}{d}\geq \epsilon_1.
\end{equation}
\end{lemma}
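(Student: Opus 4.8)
The plan is to read both inequalities off the integral representation of $\psi_-$. Write $s:=\frac{l-u}{d}$; by hypothesis $s\geq\epsilon_1>0$, so the positive part is inactive and the second equality in \eqref{psi-} together with the definition of $\psi$ gives
\begin{equation*}
\psi_-=\psi(s)=\int_0^{s}(1+\tau)^{-\frac{1+\lambda}{p}}\,\mathrm{d}\tau .
\end{equation*}
Since we fixed $\lambda<p-1$, the exponent satisfies $0<\tfrac{1+\lambda}{p}<1$; equivalently $\tfrac{p-1-\lambda}{p}\in(0,1)$, and the integral $\int_0^s\tau^{-(1+\lambda)/p}\,\mathrm{d}\tau$ converges at the origin. (One could equally work from the closed form $\psi(s)=\tfrac{p}{p-1-\lambda}\big[(1+s)^{\frac{p-1-\lambda}{p}}-1\big]$, but the integral form makes the two bounds most transparent.)

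For the upper bound I would use the pointwise estimate $(1+\tau)^{-(1+\lambda)/p}\leq\tau^{-(1+\lambda)/p}$ for $\tau>0$, which yields
\begin{equation*}
\psi_-\leq\int_0^{s}\tau^{-\frac{1+\lambda}{p}}\,\mathrm{d}\tau=\frac{p}{p-1-\lambda}\,s^{\frac{p-1-\lambda}{p}},
\end{equation*}
so one may take $c_2(\epsilon_1)=\tfrac{p}{p-1-\lambda}$ (in fact this half holds for every $s>0$). For the lower bound I would instead bound the integrand from below by its value at the right endpoint, $(1+\tau)^{-(1+\lambda)/p}\geq(1+s)^{-(1+\lambda)/p}$ for $0\leq\tau\leq s$, giving $\psi_-\geq s\,(1+s)^{-(1+\lambda)/p}$. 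Here the restriction $s\geq\epsilon_1$ enters: it implies $1+s\leq(1+\epsilon_1^{-1})s$, and hence
\begin{equation*}
\psi_-\geq\bigl(1+\epsilon_1^{-1}\bigr)^{-\frac{1+\lambda}{p}}\,s^{\,1-\frac{1+\lambda}{p}}=\bigl(1+\epsilon_1^{-1}\bigr)^{-\frac{1+\lambda}{p}}\,s^{\frac{p-1-\lambda}{p}} .
\end{equation*}
Taking $c_1(\epsilon_1)=\bigl(1+\epsilon_1^{-1}\bigr)^{-\frac{1+\lambda}{p}}>0$ then gives \eqref{inequalitypsi-}.

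This is an elementary computation and I do not expect a genuine obstacle; the only point worth care is that the lower bound really does need the threshold $s\geq\epsilon_1$. Indeed, as $s\downarrow0$ the integrand in the representation of $\psi_-$ is comparable to $1$, so $\psi_-\sim s$, which is of strictly smaller order than $s^{(p-1-\lambda)/p}$ because $\tfrac{p-1-\lambda}{p}<1$; consequently no inequality $c_1\,s^{(p-1-\lambda)/p}\leq\psi_-$ can hold with a positive constant uniform down to $s=0$, and this is reflected in $c_1(\epsilon_1)\to0$ as $\epsilon_1\downarrow0$. It is therefore important to retain the explicit dependence of $c_1$ on $\epsilon_1$, as it will be used in the later iteration arguments.
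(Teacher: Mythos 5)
Your proof is correct, and it supplies exactly the kind of elementary verification the paper leaves to the reader (the paper explicitly states that the proofs of Lemmas \ref{lemmainequalitypsi-} and \ref{lemmainequalitypsi+} are omitted). The two pointwise bounds on the integrand, $(1+\tau)^{-\frac{1+\lambda}{p}}\leq\tau^{-\frac{1+\lambda}{p}}$ for the upper estimate and $(1+\tau)^{-\frac{1+\lambda}{p}}\geq(1+s)^{-\frac{1+\lambda}{p}}$ on $[0,s]$ for the lower one, combined with $1+s\leq(1+\epsilon_1^{-1})s$ when $s\geq\epsilon_1$, give admissible constants $c_2=\frac{p}{p-1-\lambda}$ and $c_1(\epsilon_1)=(1+\epsilon_1^{-1})^{-\frac{1+\lambda}{p}}$, and your closing remark correctly identifies that the threshold $s\geq\epsilon_1$ is genuinely needed for the lower bound (and only there), consistent with how $\epsilon_1$ enters the later iteration arguments.
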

  \begin{lemma}\label{lemmainequalitypsi+}
  Let $\psi_+$ be the quantity defined in \eqref{psi+}. Then, we have
  \begin{equation}\label{inequalitypsi+}
c_1(\epsilon_1)\left(\frac{u-l}{d}\right)^{\frac{p-1-\lambda}{p}}\leq\psi_+\leq  c_2(\epsilon_1)
\left(\frac{u-l}{d}\right)^{\frac{p-1-\lambda}{p}}
\qquad\text{if}\qquad \frac{u-l}{d}\geq \epsilon_1
\end{equation}
\end{lemma}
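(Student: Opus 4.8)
The plan is to reduce both lemmas to one elementary scalar inequality and then read off the constants from the explicit formula for $\psi$. Write $\beta:=\tfrac{p-1-\lambda}{p}$; since $0<\lambda<p-1$ we have $\beta\in(0,1)$, and the closed form displayed above becomes $\psi(s)=\tfrac1\beta\big[(1+s_+)^{\beta}-1\big]$. On the region where the hypothesis of Lemma \ref{lemmainequalitypsi+} holds we have $u>l$, hence $\psi_+=\psi\big(\tfrac{u-l}{d}\big)=\tfrac1\beta\big[(1+\tfrac{u-l}{d})^{\beta}-1\big]$, and likewise $\psi_-=\psi\big(\tfrac{l-u}{d}\big)$ under the hypothesis of Lemma \ref{lemmainequalitypsi-}. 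Thus it suffices to show that, for suitable $c_1(\epsilon_1),c_2(\epsilon_1)$ depending only on the data and on $\epsilon_1$,
\begin{equation*}
c_1(\epsilon_1)\,\sigma^{\beta}\ \le\ \tfrac1\beta\big[(1+\sigma)^{\beta}-1\big]\ \le\ c_2(\epsilon_1)\,\sigma^{\beta}\qquad\text{for all }\sigma\ge\epsilon_1,
\end{equation*}
since applying this with $\sigma=\tfrac{u-l}{d}$, respectively $\sigma=\tfrac{l-u}{d}$, yields the two lemmas at once.

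For the upper bound I would invoke the subadditivity of $t\mapsto t^{\beta}$ on $[0,\infty)$ (valid because $\beta\in(0,1)$), namely $(1+\sigma)^{\beta}\le 1+\sigma^{\beta}$; this gives $\tfrac1\beta\big[(1+\sigma)^{\beta}-1\big]\le\tfrac1\beta\sigma^{\beta}$, so one may take $c_2=\tfrac1\beta$, and no restriction on $\sigma$ is actually needed for this half. For the lower bound I would use the mean value theorem for $t\mapsto t^{\beta}$ on $[1,1+\sigma]$: there is $\xi\in(1,1+\sigma)$ with $(1+\sigma)^{\beta}-1=\beta\,\xi^{\beta-1}\sigma$, and since $\beta-1<0$ and $\xi<1+\sigma$ we get $(1+\sigma)^{\beta}-1\ge\beta(1+\sigma)^{\beta-1}\sigma$. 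Here is the only place where $\sigma\ge\epsilon_1$ is used: it yields $1+\sigma\le(1+\epsilon_1^{-1})\sigma$, whence $(1+\sigma)^{\beta-1}\ge(1+\epsilon_1^{-1})^{\beta-1}\sigma^{\beta-1}$ (the exponent being negative), and therefore $\tfrac1\beta\big[(1+\sigma)^{\beta}-1\big]\ge(1+\epsilon_1^{-1})^{\beta-1}\sigma^{\beta}$, i.e. $c_1(\epsilon_1)=(1+\epsilon_1^{-1})^{\beta-1}$ works.

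I do not expect any real obstacle in this argument; the single point worth underlining is that the hypothesis $\sigma\ge\epsilon_1$ cannot be dropped from the lower bound, because as $\sigma\downarrow0$ one has $\psi(\sigma)\sim\sigma$ while $\sigma^{\beta}\gg\sigma$, so $\psi(\sigma)/\sigma^{\beta}\to0$ and no $\epsilon_1$-free lower constant can hold. Since $\psi_-$ and $\psi_+$ are obtained from the same scalar function $\psi$ by inserting $\tfrac{l-u}{d}$ and $\tfrac{u-l}{d}$ respectively, the proofs of the two lemmas are word-for-word identical once the scalar inequality above is established.
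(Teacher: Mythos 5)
Your argument is correct, and it is the natural elementary proof: writing $\beta=\tfrac{p-1-\lambda}{p}\in(0,1)$ and $\psi(\sigma)=\tfrac1\beta[(1+\sigma)^\beta-1]$, the subadditivity of $t\mapsto t^\beta$ gives the upper bound $c_2=1/\beta$ for all $\sigma\ge0$, while the mean value theorem combined with $1+\sigma\le(1+\epsilon_1^{-1})\sigma$ for $\sigma\ge\epsilon_1$ gives the lower bound with $c_1(\epsilon_1)=(1+\epsilon_1^{-1})^{\beta-1}$. The paper explicitly omits the proofs of Lemmas \ref{lemmainequalitypsi-} and \ref{lemmainequalitypsi+} (``not particularly difficult but will not be reproduced here''), so there is no argument to compare against; your proof fills that gap, and you correctly identify that the restriction $\sigma\ge\epsilon_1$ is needed only for the lower bound, since $\psi(\sigma)/\sigma^\beta\to0$ as $\sigma\downarrow0$.
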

The proofs of these two lemmas are not particularly difficult but will not be reproduced here.
 Let $\rho>0$, $\theta>0$ and $(x_0,t_0)\in\Omega_T$ be a fixed point.
 We set $Q_{\rho,\theta}(z_0)=B_\rho(x_0)\times(t_0-\theta,t_0)\subset \Omega_T$
and denote by
$\varphi$ a piecewise smooth function in $Q_{\rho,\theta}(z_0)$ such that
\begin{equation}\label{def zeta}0\leq\varphi\leq1, \quad|D\varphi|<\infty\quad\text{and}\quad\varphi=0\quad\text{
on}\quad\partial B_\rho(x_0),\end{equation}
where $\partial B_\rho(x_0)=\{y:|y-x_0|=\rho\}$.
If $x_0$ is the origin, then we omit in our notation the point $x_0$ and write $B_\rho$ for $B_\rho(x_0)$.
Finally, let
$$\partial_PQ_{\rho,\theta}(z_0)=
[\partial B_\rho(x_0)\times (t_0-\theta,t_0)]\cup [B_\rho(x_0)\times\{t_0-\theta\}]$$
denotes the parabolic boundary of
$Q_{\rho,\theta}(z_0)$.
The crucial result in our development of regularity property of weak solutions will be the following Caccioppoli inequality.
  \begin{lemma}\label{Cac1}
  Let $d>0$, $l>0$ and let
  $u$ be a nonnegative weak solution to the doubly degenerate parabolic equation \eqref{parabolic}
 in the sense of Definition \ref{weak solution}.
 There exists a constant $\gamma$ depending only upon the data, such that
 for every piecewise smooth cutoff function $\varphi$ satisfying \eqref{def zeta},
 we have
  \begin{equation}\begin{split}\label{Cacformula1}
  \esssup_{t_0<t<t_1}&\int_{L^-(t)}G\left(\frac{l-u}{d}\right)\varphi^k\,\mathrm {d}x+
  d^{p-2}\iint_{L^-}u^{m-1}|D\psi_-|^p\varphi^k\,\mathrm {d}x\mathrm {d}t
  \\
 \leq &\gamma\int_{L^-(t_0-\theta)}G\left(\frac{l-u}{d}\right)\varphi^k\,\mathrm {d}x
 +\gamma \iint_{L^-}\frac{l-u}{d}\left|\partial_t\varphi\right|\varphi^{k-1}\,\mathrm {d}x\mathrm {d}t
  \\&+\gamma  d^{p-2}\iint_{L^-}u^{m-1}\left(\frac{l-u}{d}\right)^{(1+\lambda)(p-1)}
  \varphi^{k-p}|D\varphi|^p\,\mathrm {d}x\mathrm {d}t
\\&+\gamma \frac{\theta}{d^2}\int_{B_\rho(x_0)}g^{\frac{p}{p-1}}\,\mathrm {d}x
  +\gamma \frac{\theta}{d}\int_{B_\rho(x_0)}|f|\,\mathrm {d}x,
   \end{split}\end{equation}
   where $k>p$, $L^-=Q_{\rho,\theta}(z_0)\cap \{u\leq l\}$ and $L^-(t)=B_\rho(x_0)\cap \{u(\cdot,t)\leq l\}$.
  \end{lemma}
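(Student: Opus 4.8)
The plan is to test the weak formulation \eqref{weaksolution} with the function
$\varphi_{\mathrm{test}} = -\psi_- \varphi^k$ on the level set $\{u\le l\}$, where $\psi_-$ is the quantity in \eqref{psi-} and $k>p$. Since $\psi_-$ depends on $u$ through $\psi((l-u)/d)$, the time-derivative term produces a nonnegative primitive: using that $\partial_s\bigl[\text{(primitive of }\psi((l-s)/d))\bigr]$ relates to $G((l-u)/d)$ via \eqref{PhiG}, the term $\int -u\,\partial_t\varphi_{\mathrm{test}}$ yields, after an integration by parts in time, the supremum term $\esssup_t\int_{L^-(t)} G((l-u)/d)\varphi^k\,\mathrm dx$ on the left, the initial-datum term $\int_{L^-(t_0-\theta)}G((l-u)/d)\varphi^k\,\mathrm dx$ on the right, and a term controlled by $\iint_{L^-}\frac{l-u}{d}|\partial_t\varphi|\varphi^{k-1}$. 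One has to be a little careful here because $u$ is only in $C_{\loc}(-T,0;L^2_{\loc})$, so strictly the time-derivative manipulation should be justified by a Steklov-averaging argument, as is standard in this setting; I would either invoke this routinely or mollify in time and pass to the limit.

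Next I treat the diffusion term $\iint_{L^-}\langle A(x,t,u,Du),D\varphi_{\mathrm{test}}\rangle$. We have $D\varphi_{\mathrm{test}} = -\varphi^k D\psi_- - k\psi_-\varphi^{k-1}D\varphi$, and $D\psi_- = -\tfrac1d(1+\tfrac{l-u}{d})^{-\frac{1+\lambda}{p}}\chi_{\{u<l\}}Du$. The ellipticity in \eqref{A} applied to the first piece gives, on $\{u\le l\}$,
\[
\langle A,D\varphi_{\mathrm{test}}\rangle\ \ge\ C_0\, u^{m-1}|Du|^p\,\tfrac1d\bigl(1+\tfrac{l-u}{d}\bigr)^{-\frac{1+\lambda}{p}}\varphi^k\,-\,\bigl(\text{cross term}\bigr),
\]
and since $|D\psi_-|^p = d^{-p}(1+\tfrac{l-u}{d})^{-(1+\lambda)}|Du|^p\chi_{\{u<l\}}$, the good term is comparable to $d^{p-2}u^{m-1}|D\psi_-|^p\varphi^k$ — note the weight $(1+\tfrac{l-u}{d})^{-\frac{1+\lambda}{p}}$ together with the factor $d^{-1}$ reconstitutes exactly $d^{p-2}|D\psi_-|^p$ times the remaining power of the weight. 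The cross term $k\psi_-\varphi^{k-1}|D\varphi|\cdot C_1 u^{m-1}|Du|^{p-1}$ is absorbed by Young's inequality into the good term at the cost of a term of the form $d^{p-2}u^{m-1}\psi_-^p\varphi^{k-p}|D\varphi|^p$; invoking Lemma 2.3 (or directly the definition of $\psi$) one estimates $\psi_-^p\lesssim (\tfrac{l-u}{d})^{(1+\lambda)(p-1)}$ on $\{u<l\}$ — here I would split according to whether $(l-u)/d$ is large or small and use that $\psi(s)\simeq s$ for small $s$ and $\psi(s)\simeq s^{(p-1-\lambda)/p}$ for large $s$, checking that in both regimes $\psi(s)^p \lesssim s^{(1+\lambda)(p-1)}$ holds (for $s$ small this is $s^p\le s^{(1+\lambda)(p-1)}$, valid since $(1+\lambda)(p-1)<p$ as $\lambda<(p-1)^{-1}$; for $s$ large it is an equality of exponents). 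This gives the third term on the right-hand side of \eqref{Cacformula1}.

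The remaining pieces are the lower-order contribution from the second bound in \eqref{A} and the right-hand side $\iint f\varphi_{\mathrm{test}}$. For the $g$-term, $|g(x)u^{(m-1)/p}|\cdot|D\varphi_{\mathrm{test}}|$ splits again into a part with $\varphi^k|D\psi_-|$ and a part with $\psi_-\varphi^{k-1}|D\varphi|$; in the first, write $g\,u^{(m-1)/p}\varphi^k|D\psi_-| = (g\,\varphi^{k/p})\cdot(u^{(m-1)/p}\varphi^{k(p-1)/p}|D\psi_-|)$ and apply Young with exponents $p/(p-1)$ and $p$, landing a small multiple of $d^{p-2}u^{m-1}|D\psi_-|^p\varphi^k$ — wait, the homogeneity in $d$ must be checked: the $g$-term should produce $\tfrac{\theta}{d^2}\int_{B_\rho}g^{p/(p-1)}$, which matches because the test function carries no extra power of $d$ beyond those already in $\psi_-$, and $\psi_-\le \gamma\min\{\cdot,1\}$ is bounded. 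Here one uses $0\le u\le l$ crudely to bound powers of $u$ that cannot be absorbed; I would be slightly careful that the $u^{(m-1)/p}$ factor combined with $|D\psi_-|$ does match $u^{m-1}|D\psi_-|^p$ after Young — it does, with $u^{(m-1)/p}\cdot 1$ raised to $p$ giving $u^{m-1}$. The $f$-term is simplest: $|f\,\psi_-\varphi^k|\le |f|\cdot\gamma$ since $\psi_-\le\gamma$, but to get the stated factor $\tfrac{\theta}{d}$ one instead keeps $\psi_-/... $ — actually $\psi_-$ is dimensionless, so $\iint_{L^-}|f|\psi_-\varphi^k \le \gamma\int_{B_\rho}|f|\,\mathrm dx\cdot\theta$, and comparing with the claimed $\tfrac{\theta}{d}\int|f|$ one sees the test function should actually be taken as $-d^{-1}\psi_-\varphi^k$ times an appropriate normalization, or equivalently $\psi_-$ here is being used with its natural scaling; I would reconcile the $d$-powers by tracking them consistently from the time term (where $G((l-u)/d)$ fixes the normalization) through every other term. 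I expect the main obstacle to be precisely this bookkeeping of the $d$-dependence together with the simultaneous absorption of the $u^{m-1}|D\psi_-|^p$ term from three different sources (the cross diffusion term, the $g$-term, and implicitly nothing else), and the justification of the time-derivative identity at the low regularity $u\in C_{\loc}(L^2_{\loc})$; both are standard but error-prone.
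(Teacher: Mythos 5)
Your proof proposal runs into a genuine obstruction at the very first step: the choice of test function. You test with $-\psi_-\varphi^k$, where $\psi_-=\psi\bigl(\tfrac{l-u}{d}\bigr)$ involves the weight $(1+\tau)^{-\frac{1+\lambda}{p}}$; the paper instead tests with
\[
\varphi_-=\Bigl[\int_u^l\Bigl(1+\tfrac{l-s}{d}\Bigr)^{-1-\lambda}\,\mathrm{d}s\Bigr]_+\varphi^k
= d\,\phi_+\!\Bigl(\tfrac{l-u}{d}\Bigr)\varphi^k,
\qquad
\phi_+(s)=\int_0^{s_+}(1+\tau)^{-1-\lambda}\,\mathrm{d}\tau.
\]
The exponent $-1-\lambda$ in $\phi_+'$ is essential, and it is not the same as the exponent $-\tfrac{1+\lambda}{p}$ in $\psi'$; this is where your argument breaks.

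Concretely, two things go wrong. First, the diffusion term: with your test function the ellipticity yields, on $\{u\le l\}$,
\[
-\varphi^k\langle A, D\psi_-\rangle \ \ge\ \frac{C_0}{d}\,u^{m-1}|Du|^p\Bigl(1+\tfrac{l-u}{d}\Bigr)^{-\frac{1+\lambda}{p}}\varphi^k,
\]
whereas the target left-hand-side term is $d^{p-2}u^{m-1}|D\psi_-|^p\varphi^k = d^{-2}u^{m-1}|Du|^p\bigl(1+\tfrac{l-u}{d}\bigr)^{-(1+\lambda)}\varphi^k$. Dominating the latter by the former would require $d\,\bigl(1+\tfrac{l-u}{d}\bigr)^{(1+\lambda)(1-1/p)}\ge\gamma$ uniformly, which is false for small $d$: the "reconstitutes exactly $d^{p-2}|D\psi_-|^p$" claim does not hold. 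By contrast, differentiating the paper's profile produces the weight $(1+\tfrac{l-u}{d})^{-1-\lambda}$ that exactly matches $d^p|D\psi_-|^p$, and the factor of $d$ in front of $\phi_+$ combined with dividing the whole identity by $d^2$ at the end gives the stated $d^{p-2}$ and $\theta/d^2$, $\theta/d$ scalings. Second, the time term: your test function's $u$-antiderivative is $\int_0^{s_+}\psi(\tau)\,\mathrm{d}\tau$, which behaves like $s^2$ for small $s$ but like $s^{(2p-1-\lambda)/p}$ for large $s$ — not comparable to $G(s)=\min\{s,s^2\}$. The paper's choice has $u$-antiderivative $d^2\Phi\bigl(\tfrac{l-u}{d}\bigr)$, and it is precisely \eqref{PhiG} that identifies $\Phi\simeq G$; this is why both the sup term and the initial-datum term come out with $G$. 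So the two requirements — that the profile's derivative match the $|D\psi_-|^p$ weight and that its antiderivative match $G$ — pin down the profile to be $\phi_+$; the $\psi$-profile satisfies neither. Your broad outline (split the diffusion term, absorb the cross term by Young, estimate the $g$- and $f$-contributions, justify the time integration by Steklov averaging) is sound and is indeed what the paper does, but it must be executed with $\varphi_-$, not $-\psi_-\varphi^k$.
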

  \begin{proof}
  To simplify the notation, we set $B=B_\rho(x_0)$ and $Q=Q_{\rho,\theta}(z_0)$.
 In the weak formulation \eqref{weaksolution} we choose the testing function
  \begin{equation*}\begin{split}\varphi_-=\left[\int_u^l\left(1+\frac{l-s}{d}\right)^{-1-\lambda}\,\mathrm {d}s\right]_+
  \varphi^k=d\phi_+\left(\frac{l-u}{d}\right)\varphi^k.\end{split}\end{equation*}
The use of $\varphi_-$ as a testing function is justified, modulus a time mollification technique (see \cite[Lemma 2.7]{SS}).
This gives
\begin{equation*}\begin{split}
   \iint_Q\varphi_-\partial_tu \,\mathrm {d}x\mathrm {d}t
   + \iint_Q\big\langle A(x,t,u,Du), D\varphi_-\big\rangle
\,\mathrm {d}x\mathrm {d}t=\iint_Qf \varphi_-\,\mathrm {d}x\mathrm {d}t.
    \end{split}\end{equation*}
 We now proceed formally for the term involving the time derivative. From \eqref{PhiG}, we see that
 for any $t\in (t_0-\theta,t_0)$
   \begin{equation*}\begin{split}
   \int_{t_0-\theta}^t&\int_B\varphi_-\partial_tu \,\mathrm {d}x\mathrm {d}t
 \\ & =-\int_{t_0-\theta}^{t}\int_B\frac{\partial}{\partial t} \left[\left(\int_u^l\,\mathrm {d}w
   \int_w^l\left(1+\frac{l-s}{d}\right)^{-1-\lambda}\,\mathrm {d}s\right)_+\right]\varphi^k\,\mathrm {d}x\mathrm {d}t
  \\ &\leq -\gamma d^2\int_{L^-(t)}G\left(\frac{l-u}{d}\right)\varphi^k\,\mathrm {d}x
  \\&\quad+\gamma d^2
  \iint_{L^-}\frac{l-u}{d}\varphi^{k-1}\left|\partial_t\varphi\right|\,\mathrm {d}x\mathrm {d}t
  +\gamma d^2\int_{L^-(t_0-\theta)}G\left(\frac{l-u}{d}\right)\varphi^k\,\mathrm {d}x.
   \end{split}\end{equation*}
   Next, we consider the second term on the left-hand side. To this end, we decompose
   \begin{equation*}\begin{split}
   \iint_Q&\big\langle A(x,t,u,Du), D\varphi_-\big\rangle
\,\mathrm {d}x\mathrm {d}t
\\= &k\iint_Q A(x,t,u,Du)\varphi^{k-1} D\varphi
\left(\int_u^l\left(1+\frac{l-s}{d}\right)^{-1-\lambda}\,\mathrm {d}s\right)_+
\,\mathrm {d}x\mathrm {d}t
\\&-\iint_{L^-} A(x,t,u,Du)\cdot Du
\left(1+\frac{l-u}{d}\right)^{-1-\lambda}
\varphi^k
\,\mathrm {d}x\mathrm {d}t
=:T_1+T_2.
\end{split}\end{equation*}
We first consider the estimate for $T_2$. From the first structure condition \eqref{A}, we deduce that
 \begin{equation*}\begin{split}
 T_2&\leq-C_0\iint_{L^-}u^{m-1}|Du|^p\left(1+\frac{l-u}{d}\right)^{-1-\lambda}\varphi^k\,\mathrm {d}x\mathrm {d}t\\&=
  -C_0d^p\iint_{L^-}u^{m-1}|D\psi_-|^p\varphi^k\,\mathrm {d}x\mathrm {d}t.
 \end{split}\end{equation*}
 To estimate $T_1$, we use the second structure condition \eqref{A} to deduce that
  \begin{equation*}\begin{split}
 T_1&\leq \iint_Q (|Du|^{p-1}u^{m-1}+u^{\frac{m-1}{p}}g)\varphi^{k-1} |D\varphi|
\left(\int_u^l\left(1+\frac{l-s}{d}\right)^{-1-\lambda}
\,\mathrm {d}s\right)_+
\,\mathrm {d}x\mathrm {d}t
\\&=:T_3+T_4,
 \end{split}\end{equation*}
 with the obvious meanings of $T_3$ and $T_4$. Next, we consider the estimate for $T_3$.
We use \eqref{phi} and Young's inequality to conclude that for any fixed
 $\epsilon>0$ there holds
  \begin{equation*}\begin{split}T_3\leq &\gamma d\iint_{L^-}|Du|^{p-1}u^{m-1}\varphi^{k-1} |D\varphi|
  \left(\frac{l-u}{d}\right)\left(1+\frac{l-u}{d}\right)^{-1}
  \,\mathrm {d}x\mathrm {d}t
  \\ \leq &\epsilon d^p\iint_{L^-}u^{m-1}|D\psi_-|^p\varphi^k\,\mathrm {d}x\mathrm {d}t
\\ &+\gamma d^p\iint_{L^-}\varphi^{k-p}|D\varphi|^pu^{m-1}\left(1+\frac{l-u}{d}\right)
^{\lambda(p-1)-1}\left(\frac{l-u}{d}\right)^{p}
\,\mathrm {d}x\mathrm {d}t
 \\ \leq &\epsilon d^p\iint_{L^-}u^{m-1}|D\psi_-|^p\varphi^k\,\mathrm {d}x\mathrm {d}t
\\ &+\gamma(\lambda,\epsilon) d^p\iint_{L^-}\varphi^{k-p}|D\varphi|^pu^{m-1}\left(\frac{l-u}{d}\right)^{(1+\lambda)(p-1)}
\,\mathrm {d}x\mathrm {d}t,
\end{split}\end{equation*}
since $\lambda<(p-1)^{-1}$.
To estimate $T_4$, we apply \eqref{phi}, $\lambda<(p-1)^{-1}$ and Young's inequality to conclude that
 \begin{equation*}\begin{split}T_4\leq &\gamma d\iint_{L^-}u^{\frac{m-1}{p}}g\varphi^{k-1} |D\varphi|
  \left(\frac{l-u}{d}\right)\left(1+\frac{l-u}{d}\right)^{-1}\,\mathrm {d}x\mathrm {d}t
  \\=&\gamma \iint_{L^-}\left[du^{\frac{m-1}{p}}\varphi^{\frac{k}{p}-1}  \left(\frac{l-u}{d}\right)\left(1+\frac{l-u}{d}\right)^{-\frac{1}{p}
 +\frac{\lambda}{p^\prime}} |D\varphi|\right]
  \\&
  \qquad\times\left[\left(1+\frac{l-u}{d}\right)^{-\frac{1}{p^\prime}-\frac{\lambda}{p^\prime}}g\varphi^{\frac{k}{p^\prime}}\right]
\,\mathrm {d}x\mathrm {d}t
  \\ \leq &\gamma d^p\iint_{L^-}\varphi^{k-p}|D\varphi|^pu^{m-1}\left(\frac{l-u}{d}\right)^{(1+\lambda)(p-1)}
\,\mathrm {d}x\mathrm {d}t
\\&+\gamma\theta\int_Bg^{\frac{p}{p-1}}
  \,\mathrm {d}x,
\end{split}\end{equation*}
where the constant $\gamma$ depends only upon the data.
Taking into account that $\varphi_-\leq \lambda^{-1} d$,
we obtain an estimate for the right-hand side of the equation by
\begin{equation*}\begin{split}
\iint_Qf\varphi_-\,\mathrm {d}x\mathrm {d}t\leq \gamma d\theta\int_B|f|
  \,\mathrm {d}x.
\end{split}\end{equation*}
Combining the above estimates and dividing by
$d^2$, we obtain the desired estimate \eqref{Cacformula1}. We have thus
proved the lemma.
\end{proof}
Furthermore, by substituting
 \begin{equation*}\begin{split}\varphi_+=\left[\int_l^u\left(1+\frac{s-l}{d}\right)^{-1-\lambda}\,\mathrm {d}s\right]_+
  \varphi^k=d\phi_+\left(\frac{u-l}{d}\right)\varphi^k\end{split}\end{equation*}
  into the weak formulation \eqref{weaksolution}, we get the following lemma, the proof of which we omit.
    \begin{lemma}\label{Cac2}
     Let $a>0$, $d>0$, $l>0$ and let
  $u$ be a nonnegative weak solution to the doubly degenerate parabolic equation \eqref{parabolic}
 in the sense of Definition \ref{weak solution}.
  Let $\rho>0$ be such that  $(t_0-a^{1-m}d^{2-p}\rho^p,t_0)\subset (-T,0)$ and $B_\rho(x_0)\subset\Omega$.
 There exists a positive constant $\gamma$ depending only upon the data, such that
 for every piecewise smooth cutoff function $\varphi$ vanishing on $\partial B_\rho(x_0)\times (t_0-a^{1-m}d^{2-p}\rho^p,t_0)$,
there holds
  \begin{equation}\begin{split}\label{Cacformula2}
  \esssup_{t_0-a^{1-m}d^{2-p}\rho^p<t<t_0}
  &\int_{L^+(t)}G\left(\frac{u-l}{d}\right)\varphi^k\,\mathrm {d}x+
  d^{p-2}\iint_{L^+}u^{m-1}|D\psi_+|^p\varphi^k\,\mathrm {d}x\mathrm {d}t
  \\
 \leq &\gamma\int_{L^+(t_0)}G\left(\frac{u-l}{d}\right)\varphi^k\,\mathrm {d}x+
 \gamma \iint_{L^+}\frac{u-l}{d}\left|\partial_t\varphi\right|\,\mathrm {d}x\mathrm {d}t
\\& +\gamma  d^{p-2}\iint_{L^+}u^{m-1}\left(\frac{u-l}{d}\right)^{(1+\lambda)(p-1)}
  \varphi^{k-p}|D\varphi|^p\,\mathrm {d}x\mathrm {d}t
\\&+\gamma \frac{\rho^p}{d^p}a^{1-m}\int_{B_\rho(x_0)}g^{\frac{p}{p-1}}\,\mathrm {d}x
  +\gamma \frac{\rho^p}{d^{p-1}}a^{1-m}\int_{B_\rho(x_0)}|f|\,\mathrm {d}x,
   \end{split}\end{equation}
     where $k>p$, $L^+=[B_\rho(x_0)\times (t_0-a^{1-m}d^{2-p}\rho^p,t_0)]\cap \{u\geq l\}$
     and $L^+(t)=B_\rho(x_0)\cap \{u(\cdot,t)\geq l\}$.
  \end{lemma}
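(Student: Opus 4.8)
The plan is to prove \eqref{Cacformula2} as the exact super-level analogue of Lemma~\ref{Cac1}, specialised to the intrinsically scaled cylinder $Q:=B_\rho(x_0)\times(t_0-\theta,t_0)$ with $\theta:=a^{1-m}d^{2-p}\rho^p$. Apart from replacing $l-u$ by $u-l$ in every auxiliary function, the only new ingredient is the pair of bookkeeping identities
$$\frac{\theta}{d^2}=\frac{\rho^p}{d^p}\,a^{1-m}\qquad\text{and}\qquad\frac{\theta}{d}=\frac{\rho^p}{d^{p-1}}\,a^{1-m},$$
which convert the forcing coefficients produced along the lines of Lemma~\ref{Cac1} into the ones displayed on the right-hand side of \eqref{Cacformula2}.

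\emph{Test function.} Writing $B=B_\rho(x_0)$, I would insert
$$\varphi_+=\Big[\int_l^u\Big(1+\tfrac{s-l}{d}\Big)^{-1-\lambda}\,\mathrm{d}s\Big]_+\varphi^k=d\,\phi_+\!\Big(\tfrac{u-l}{d}\Big)\varphi^k$$
into the weak formulation \eqref{weaksolution} on subintervals of $(t_0-\theta,t_0)$ --- legitimate up to the time--mollification of \cite[Lemma~2.7]{SS} --- so that
$$\iint\varphi_+\,\partial_t u\,\mathrm{d}x\,\mathrm{d}t+\iint\big\langle A(x,t,u,Du),D\varphi_+\big\rangle\,\mathrm{d}x\,\mathrm{d}t=\iint f\,\varphi_+\,\mathrm{d}x\,\mathrm{d}t.$$
Since $\phi_+\!\big(\tfrac{u-l}{d}\big)$ and $\psi_+$ both vanish on $\{u<l\}$, every integral localises to $L^+$.

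\emph{The three groups of terms.} For the parabolic term I would use $d\,\phi_+\!\big(\tfrac{u-l}{d}\big)\partial_t u=d^2\,\partial_t\Phi\!\big(\tfrac{u-l}{d}\big)$, integrate by parts in $t$, and invoke $\gamma_1 G\le\Phi\le\gamma_2 G$ from \eqref{PhiG}; this produces the slicewise quantity $\int_{L^+(t)}G\!\big(\tfrac{u-l}{d}\big)\varphi^k\,\mathrm{d}x$, the datum term $\int_{L^+(t_0)}G\!\big(\tfrac{u-l}{d}\big)\varphi^k\,\mathrm{d}x$ appearing in \eqref{Cacformula2}, and the error $\iint_{L^+}\tfrac{u-l}{d}\,|\partial_t\varphi|\,\mathrm{d}x\,\mathrm{d}t$ (after $G(s)\le s_+$ and $\varphi\le1$). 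For the elliptic term I would decompose $\langle A,D\varphi_+\rangle$ into a $Du$-part and a $D\varphi$-part exactly as $T_2+T_1$ in Lemma~\ref{Cac1}: the $Du$-part is estimated from below by the first inequality in \eqref{A}, and since $|Du|^p\big(1+\tfrac{u-l}{d}\big)^{-1-\lambda}=d^p|D\psi_+|^p$ on $\{u>l\}$ it yields $-C_0 d^p\iint_{L^+}u^{m-1}|D\psi_+|^p\varphi^k$, which is carried to the left and, after dividing by $d^2$, becomes the gradient term $d^{p-2}\iint_{L^+}u^{m-1}|D\psi_+|^p\varphi^k$ of \eqref{Cacformula2}; the $D\varphi$-part is split again by the second inequality in \eqref{A} into a piece carrying $C_1$ and a piece carrying $g$. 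To the first, \eqref{phi} and Young's inequality (using $\lambda<(p-1)^{-1}$) peel off a small multiple $\epsilon d^p\iint_{L^+}u^{m-1}|D\psi_+|^p\varphi^k$ --- reabsorbed on the left --- plus $\gamma d^p\iint_{L^+}u^{m-1}\big(\tfrac{u-l}{d}\big)^{(1+\lambda)(p-1)}\varphi^{k-p}|D\varphi|^p$; to the second, Young's inequality with exponents $(p,\tfrac{p}{p-1})$ gives the same $|D\varphi|$-term plus $\gamma\,\theta\int_{B}g^{p/(p-1)}\,\mathrm{d}x$. Finally, since $\varphi_+\le\lambda^{-1}d$, the right-hand side of the equation is bounded by $\lambda^{-1}d\,\theta\int_{B}|f|\,\mathrm{d}x$. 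Collecting these estimates, reabsorbing the $\epsilon$-terms, dividing by $d^2$, taking the $\esssup$ over $t$ and the full time interval in the gradient term, and inserting the two bookkeeping identities, yields \eqref{Cacformula2}.

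\emph{Main difficulty.} I do not expect a serious obstacle, since the statement is literally the mirror image of Lemma~\ref{Cac1}. The two points deserving attention are: (i) the rigorous treatment of $\iint\varphi_+\,\partial_t u$ --- because $\varphi_+$ depends on $u$ while $u\in C_\loc(-T,0;L^2_\loc(\Omega))$ only, one must run the Steklov/time--mollification argument of \cite[Lemma~2.7]{SS} and then identify the endpoint datum term with care; and (ii) the intrinsic-scaling bookkeeping, i.e. checking that $\theta=a^{1-m}d^{2-p}\rho^p$ is exactly what turns $\tfrac{\theta}{d^2}$ and $\tfrac{\theta}{d}$ into $\tfrac{\rho^p}{d^p}a^{1-m}$ and $\tfrac{\rho^p}{d^{p-1}}a^{1-m}$, and that the standing assumption $(t_0-a^{1-m}d^{2-p}\rho^p,t_0)\subset(-T,0)$ together with $B_\rho(x_0)\subset\Omega$ makes $Q$ an admissible cylinder.
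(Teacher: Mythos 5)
Your proposal is exactly what the paper intends: immediately after the proof of Lemma~\ref{Cac1}, the paper names the test function $\varphi_+=d\,\phi_+\!\big(\tfrac{u-l}{d}\big)\varphi^k$ and states that the proof of Lemma~\ref{Cac2} is omitted because it follows by the same computation. You have correctly reproduced that mirror-image computation, including the observation that $\theta=a^{1-m}d^{2-p}\rho^p$ is what converts the raw coefficients $\theta/d^2$ and $\theta/d$ into the displayed ones.

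One small point is worth flagging. Running the analogue of the time-derivative manipulation in the proof of Lemma~\ref{Cac1} --- integrating by parts in $t$ over $(t_0-\theta,t)$, which is the only direction that puts the coercive term $C_0 d^p\iint_{L^+}u^{m-1}|D\psi_+|^p\varphi^k$ on the favourable side --- produces the datum slice at the \emph{bottom} time $t_0-\theta=t_0-a^{1-m}d^{2-p}\rho^p$, not at the top time $t_0$ as written both in the paper's statement of \eqref{Cacformula2} and in your proposal. This looks like a typo in the paper; in every application within the paper the cutoff $\varphi$ vanishes at that bottom slice so the term drops out anyway, but if you reproduce the proof you should write the slice at $t_0-a^{1-m}d^{2-p}\rho^p$, mirroring the appearance of $L^-(t_0-\theta)$ in \eqref{Cacformula1}. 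Otherwise your treatment of the three groups of terms (parabolic via $\Phi$ and \eqref{PhiG}, elliptic via the split $T_1+T_2$ and \eqref{A}, forcing via $\varphi_+\le\lambda^{-1}d$), the Young absorption using $\lambda<(p-1)^{-1}$, and the bookkeeping identities are all correct.
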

  We now turn to consider the logarithmic estimates for nonnegative weak solutions. To this aim,
  we introduce the logarithmic function
  \begin{equation}\begin{split}\label{ln}\psi^{\pm}(u)&=\ln^+\left(\frac{H_k^{\pm}}{H_k^{\pm}-(u-k)_{\pm}+c}\right),
  \qquad 0<c<H_k^{\pm},
\end{split}\end{equation}
where $H_k^{\pm}$ is a constant chosen such that $H_k^{\pm}\geq \esssup_{Q_{\rho,\theta}}(u-k)_{\pm}$.
We are now in a position to state the following lemma.
\begin{lemma}Let
  $u$ be a nonnegative weak solution to the doubly degenerate parabolic equation \eqref{parabolic}
 in the sense of Definition \ref{weak solution}.
 There exists a  constant $\gamma$ that can be determined a priori only in terms of the data such that
 \begin{equation}\begin{split}\label{lnCac}
\esssup_{t_0-\theta<t<t_0}&\int_{B_\rho(x_0)\times\{t\}}[\psi^{\pm}(u)]^2\zeta^p\,\mathrm {d}x
\\ \leq &\int_{B_\rho(x_0)\times\{t_0-\theta\}}[\psi^{\pm}(u)]^2\zeta^p\,\mathrm {d}x+\gamma
\iint_{Q_{\rho,\theta}(z_0)}u^{m-1}\psi^{\pm}(u)|D\zeta|^p\left[(\psi^{\pm})^\prime(u)\right]^{2-p}
\,\mathrm {d}x\mathrm {d}t
\\&+\gamma\ln\left(\frac{H}{c}\right)\left(\frac{1}{c}\iint_{Q_{\rho,\theta}(z_0)}|f|\,\mathrm {d}x\mathrm {d}t+
\frac{1}{c^2}\iint_{Q_{\rho,\theta}(z_0)}g^{\frac{p}{p-1}}\,\mathrm {d}x\mathrm {d}t\right),
\end{split}\end{equation}
where $\zeta=\zeta(x)$ is independent of $t$ and satisfies \eqref{def zeta}.
\end{lemma}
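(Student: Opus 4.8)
The plan is to insert into the weak formulation \eqref{weaksolution} the test function
$$\varphi=\pm\, 2\,\psi^{\pm}(u)\,(\psi^{\pm})'(u)\,\zeta^p,\qquad (\psi^{\pm})'(u)=\big(H_k^{\pm}-(u-k)_{\pm}+c\big)^{-1},$$
chosen so that $\varphi\,\partial_t u=\zeta^p\,\partial_t[\psi^{\pm}(u)]^2$; this is admissible modulo the time-mollification already used in the proof of Lemma \ref{Cac1} (see \cite[Lemma 2.7]{SS}). All integrals below are effectively restricted to the set $\{\psi^{\pm}(u)>0\}$, on which $u$ is bounded away from zero, so that $Du$ is well defined, and on which one has the pointwise bounds $\psi^{\pm}(u)\le\ln(H_k^{\pm}/c)$ and $0\le(\psi^{\pm})'(u)\le 1/c$.

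For the parabolic term the chain rule and the fact that $\zeta$ is independent of $t$ give, after integration over $B_\rho(x_0)\times(t_0-\theta,t)$, exactly $\int_{B_\rho(x_0)\times\{t\}}[\psi^{\pm}(u)]^2\zeta^p\,\mathrm{d}x-\int_{B_\rho(x_0)\times\{t_0-\theta\}}[\psi^{\pm}(u)]^2\zeta^p\,\mathrm{d}x$. For the diffusion term I would split $D\varphi=2\zeta^p\big([(\psi^{\pm})'(u)]^2+\psi^{\pm}(u)(\psi^{\pm})''(u)\big)Du+2p\,\psi^{\pm}(u)(\psi^{\pm})'(u)\zeta^{p-1}D\zeta$. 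The decisive observation is $(\psi^{\pm})''(u)=[(\psi^{\pm})'(u)]^2$, so the first bracket equals $[(\psi^{\pm})'(u)]^2\big(1+\psi^{\pm}(u)\big)\ge0$; with the coercivity in \eqref{A} this produces, after transferring the diffusion term to the right-hand side, a nonpositive contribution $-2C_0\iint\zeta^p[(\psi^{\pm})'(u)]^2\big(1+\psi^{\pm}(u)\big)u^{m-1}|Du|^p\,\mathrm{d}x\mathrm{d}t=:-2C_0\mathcal G$, part of which is retained for absorption and the rest discarded.

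The boundary part of the diffusion term is bounded by $2p\iint\psi^{\pm}(u)(\psi^{\pm})'(u)\zeta^{p-1}|D\zeta|\,|A|\,\mathrm{d}x\mathrm{d}t$, into which I would feed $|A|\le C_1 u^{m-1}|Du|^{p-1}+g\,u^{(m-1)/p}$. For the $C_1$-part, Young's inequality with exponents $p/(p-1)$ and $p$ measured against $\mathcal G$ leaves a remainder that a direct computation identifies as a constant times $[\psi^{\pm}(u)]^p\big(1+\psi^{\pm}(u)\big)^{1-p}[(\psi^{\pm})'(u)]^{2-p}|D\zeta|^p u^{m-1}$; here the factor $1+\psi^{\pm}(u)$ inherited from $\mathcal G$ is essential, since $[\psi^{\pm}(u)]^p\big(1+\psi^{\pm}(u)\big)^{1-p}\le\psi^{\pm}(u)$ yields precisely the integrand on the right of \eqref{lnCac}. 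For the $g$-part I would again use Young with the same exponents, splitting so that one factor, raised to the power $p$, reproduces $\psi^{\pm}(u)[(\psi^{\pm})'(u)]^{2-p}|D\zeta|^p u^{m-1}$ (absorbed into the corresponding term of \eqref{lnCac}), while the other, raised to the power $p/(p-1)$, equals $[(\psi^{\pm})'(u)]^2\psi^{\pm}(u)\,g^{p/(p-1)}\le c^{-2}\ln(H_k^{\pm}/c)\,g^{p/(p-1)}$.

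The right-hand side of \eqref{weaksolution} is controlled trivially by $\iint f\varphi\le\frac{2}{c}\ln(H_k^{\pm}/c)\iint_{Q_{\rho,\theta}(z_0)}|f|\,\mathrm{d}x\mathrm{d}t$. Choosing the Young parameter in the $C_1$-estimate small enough to be absorbed into $C_0\mathcal G$, discarding the remaining copy of $\mathcal G$, and taking the essential supremum over $t\in(t_0-\theta,t_0)$ — legitimate since the right-hand side is independent of $t$ — gives \eqref{lnCac}. The main obstacle I anticipate is purely bookkeeping: arranging the two Young splittings so that the powers of $\psi^{\pm}(u)$, $(\psi^{\pm})'(u)$, $\zeta$, $|D\zeta|$ and $u$ fall exactly on the terms written in \eqref{lnCac}; once the identity $(\psi^{\pm})''=[(\psi^{\pm})']^2$ and the elementary inequality $[\psi^{\pm}]^p(1+\psi^{\pm})^{1-p}\le\psi^{\pm}$ are isolated, and the time-mollification is performed as in Lemma \ref{Cac1}, no further difficulty arises.
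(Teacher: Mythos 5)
Your proposal is correct and matches the paper's proof in all essentials: the same test function $\varphi=[\psi^2(u)]'\zeta^p$, the same identity $[\psi^2]''=2(1+\psi)(\psi')^2$ combined with the coercivity condition in \eqref{A}, the same Young splittings with exponents $p$ and $p/(p-1)$ for the $u^{m-1}|Du|^{p-1}$- and $g$-parts of $|A|$, and the same pointwise bounds $\psi\le\ln(H/c)$, $|\psi'|\le 1/c$ to produce the $f$- and $g$-terms. The only cosmetic difference is that you apply Young against the full $(1+\psi)$-weighted coercive integral and then invoke $\psi^p(1+\psi)^{1-p}\le\psi$, whereas the paper applies Young directly against the $\psi$-weighted part; this is immaterial, and your parenthetical remark that $u$ is bounded away from zero on $\{\psi^{\pm}>0\}$ is only true for $\psi^+$, but nothing in the estimate uses it.
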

\begin{proof}
The proof of \eqref{lnCac} follows in a similar manner as the arguments in \cite[Proposition 2.1]{PV} and we sketch the proof.
For simplicity of notation, we write $\psi$ instead of $\psi^{\pm}(u)$ and set $B=B_\rho(x_0)$.
In the weak formulation \eqref{weaksolution} we use the testing function
$\varphi=[\psi^2(u)]^\prime\zeta^p,$
where $\zeta=\zeta(x)\in  C_0^\infty(B_\rho)$.
The following formal
computations can be made rigorous with a standard smoothing procedure
with respect to time.
Then, we find
\begin{equation*}\begin{split}
\int_{t_0-\theta}^{t}\int_{B}\partial_tu[\psi^2(u)]^\prime\zeta^p\,\mathrm {d}x\mathrm {d}t
=\int_{B\times\{t\}}\psi^2(u)\zeta^p\,\mathrm {d}x-
\int_{B\times\{t_0-\theta\}}\psi^2(u)\zeta^p\,\mathrm {d}x.
 \end{split}\end{equation*}
Next, we consider the term involving $ A(x,t,u,Du)$. To this end, we decompose
  \begin{equation*}\begin{split}
   \int_{t_0-\theta}^{t}&\int_{B}\big\langle A(x,t,u,Du), D\varphi\big\rangle
\,\mathrm {d}x\mathrm {d}t
\\&=\int_{t_0-\theta}^{t}\int_{B} A(x,t,u,Du)[\psi^2(u)]^\prime p\zeta^{p-1}\cdot D\zeta\,\mathrm {d}x\mathrm {d}t
\\&+\int_{t_0-\theta}^{t}\int_{B} A(x,t,u,Du)\cdot Du [\psi^2(u)]^{\prime\prime} \zeta^p\,\mathrm {d}x\mathrm {d}t
=:T_1+T_2.
  \end{split}\end{equation*}
  We first estimate the second term $T_2$. Noting that $[\psi^2(u)]^{\prime\prime}
  =2(1+\psi)[\psi^\prime(u)]^2$, we infer from \eqref{A} that
\begin{equation*}\begin{split}
T_2\geq 2C_0\int_{t_0-\theta}^{t}\int_{B}u^{m-1}|Du|^p(1+\psi)[\psi^\prime(u)]^2  \zeta^p\,\mathrm {d}x\mathrm {d}t.
\end{split}\end{equation*}
To estimate $T_1$, we use \eqref{A} to deduce that
\begin{equation*}\begin{split}
T_1&\leq 2C_1\int_{t_0-\theta}^{t}\int_{B}(u^{m-1}|Du|^{p-1}+gu^{\frac{m-1}{p}})
\psi\psi^\prime\zeta^{p-1}|D\zeta|\,\mathrm {d}x\mathrm {d}t
\\&=:T_3+T_4,
\end{split}\end{equation*}
with the obvious meanings of $T_3$ and $T_4$. By Young's inequality, we conclude that
\begin{equation*}\begin{split}
T_3\leq&\epsilon \int_{t_0-\theta}^{t}\int_{B}u^{m-1}|Du|^p\zeta^p[\psi^\prime(u)]^2\psi\,\mathrm {d}x\mathrm {d}t
\\&+c(\epsilon)\int_{t_0-\theta}^{t}\int_{B}u^{m-1}\psi|D\zeta|^p[\psi^\prime(u)]^{2-p}\,\mathrm {d}x\mathrm {d}t,
\end{split}\end{equation*}
since $p-\frac{2p}{p^\prime}=2-p$. Next, we turn our attention to the estimate of $T_4$. By Young's inequality, we see that
\begin{equation*}\begin{split}
T_4=&2C_1\int_{t_0-\theta}^{t}\int_{B}\psi\left[\zeta^{p-1}\psi^\prime(u)^{\frac{2}{p^\prime}}g\right]
\left[|D\zeta|\psi^\prime(u)^{1-\frac{2}{p^\prime}}u^{\frac{m-1}{p}}\right]\,\mathrm {d}x\mathrm {d}t
\\ \leq&\gamma \int_{t_0-\theta}^{t}\int_{B}g^{p^\prime}\zeta^p[\psi^\prime(u)]^2\psi\,\mathrm {d}x\mathrm {d}t
\\&+\gamma \int_{t_0-\theta}^{t}\int_{B}u^{m-1}\psi|D\zeta|^p[\psi^\prime(u)]^{2-p}\,\mathrm {d}x\mathrm {d}t.
\end{split}\end{equation*}
Taking into account that $\psi\leq\ln (H/c)$ and $\psi^\prime\leq 1/c$, we conclude that
\begin{equation*}\begin{split}
\int_{t_0-\theta}^{t}\int_{B}g^{p^\prime}\zeta^p[\psi^\prime(u)]^2\psi\,\mathrm {d}x\mathrm {d}t
\leq \ln\left(\frac{H}{c}\right)
\frac{1}{c^2}\iint_{Q_{\rho,\theta}(z_0)}g^{\frac{p}{p-1}}\,\mathrm {d}x\mathrm {d}t
\end{split}\end{equation*}
and
\begin{equation*}\begin{split}
\int_{t_0-\theta}^{t}\int_{B}f\varphi\,\mathrm {d}x\mathrm {d}t
\leq \ln\left(\frac{H}{c}\right)
\frac{1}{c}\iint_{Q_{\rho,\theta}(z_0)}|f|\,\mathrm {d}x\mathrm {d}t.
\end{split}\end{equation*}
Collecting above estimates and taking the supremum over $t\in(t_0-\theta,t_0)$,
we obtain the desired logarithmic estimate \eqref{lnCac}.
\end{proof}
Next, we state in Lemma \ref{caclemma} a Caccioppoli estimate, which will be used in the proof of Lemma \ref{DeGiorgi3}.
\begin{lemma}\label{caclemma}
Let
  $u$ be a nonnegative weak solution to the doubly degenerate parabolic equation \eqref{parabolic}
 in the sense of Definition \ref{weak solution}.
 There exists a positive constant
$\gamma$ depending only upon the data, such that for every piecewise smooth cutoff function
$\varphi$ and satisfying \eqref{def zeta}, there holds
\begin{equation}\begin{split}\label{Caccioppoli}
\esssup_{t_0-\theta<t<t_0}&\int_{B_\rho(x_0)\times\{t\}}(u-k)_{\pm}^2\varphi^p \,\mathrm {d}x
+\iint_{Q_{\rho,\theta}(z_0)}u^{m-1}|D(u-k)_{\pm}\varphi|^p \,\mathrm {d}x\mathrm {d}t\\
\leq &\int_{B_\rho(x_0)\times\{t_0\}}(u-k)_{\pm}^2\varphi^p \,\mathrm {d}x+\gamma\iint_{Q_{\rho,\theta}(z_0)
} u^{m-1}(u-k)_{\pm}^p|D\varphi|^p
\,\mathrm {d}x\mathrm {d}t\\&+\gamma
\iint_{Q_{\rho,\theta}(z_0)} (u-k)_{\pm}^2|\partial_t\varphi|\,\mathrm {d}x\mathrm {d}t+\gamma
\iint_{Q_{\rho,\theta}(z_0)} |f|(u-k)_{\pm}\,\mathrm {d}x\mathrm {d}t\\&+\gamma
\iint_{Q_{\rho,\theta}(z_0)} g^{\frac{p}{p-1}}\,\mathrm {d}x\mathrm {d}t.
\end{split}\end{equation}
\end{lemma}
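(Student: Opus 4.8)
The plan is to test the weak formulation \eqref{weaksolution}, over a subcylinder $B_\rho(x_0)\times(t_0-\theta,\tau)$ with $\tau\in(t_0-\theta,t_0)$ arbitrary, against the truncation
\[
\psi=\pm(u-k)_{\pm}\varphi^p,
\]
which is admissible because $\varphi$ vanishes on $\partial B_\rho(x_0)$ (recall \eqref{def zeta}); exactly as in the proof of Lemma \ref{Cac1}, the computation that follows is only formal and is made rigorous through a time-mollification (Steklov averaging) argument, cf. \cite[Lemma 2.7]{SS}. I treat the ``$+$'' case; the ``$-$'' case is identical once one observes that $(u-k)_-=(k-u)_+$ and that $|Du|=|D(u-k)_-|$ almost everywhere on $\{u<k\}$.

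For the parabolic term one uses the identity $\partial_t u\cdot(u-k)_+=\partial_t\big[\tfrac12(u-k)_+^2\big]$ and integrates by parts in time. This produces $\tfrac12\int_{B_\rho(x_0)}(u-k)_+^2\varphi^p\,\mathrm{d}x$ at the running level $\tau$, the same integral at the endpoint of the time interval, which is precisely the datum term on the right-hand side of \eqref{Caccioppoli}, and a remainder estimated by $\gamma\iint_{Q_{\rho,\theta}(z_0)}(u-k)_+^2|\partial_t\varphi|\,\mathrm{d}x\,\mathrm{d}t$, since $|\partial_t\varphi^p|\le p\varphi^{p-1}|\partial_t\varphi|\le p|\partial_t\varphi|$ on account of $0\le\varphi\le1$. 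For the diffusion term one expands $D\psi=\chi_{\{u>k\}}Du\,\varphi^p+p(u-k)_+\varphi^{p-1}D\varphi$; its principal contribution is bounded below, by the coercivity in \eqref{A}, by $C_0\iint_{Q_{\rho,\theta}(z_0)}u^{m-1}|D(u-k)_+|^p\varphi^p\,\mathrm{d}x\,\mathrm{d}t$, and this, together with the time-slice term at $\tau$, is what we keep on the left.

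It remains to handle the cross term $p\iint\langle A(x,t,u,Du),D\varphi\rangle(u-k)_+\varphi^{p-1}$ and the forcing $\iint f\psi$. By the growth bound $|A|\le C_1 u^{m-1}|Du|^{p-1}+g\,u^{\frac{m-1}{p}}$ the cross term splits into a principal part and a lower-order part. For the principal part one factorizes, on $\{u>k\}$,
\[
u^{m-1}|Du|^{p-1}(u-k)_+\varphi^{p-1}|D\varphi|=\Big(u^{\frac{m-1}{p^\prime}}|D(u-k)_+|^{p-1}\varphi^{p-1}\Big)\cdot\Big(u^{\frac{m-1}{p}}(u-k)_+|D\varphi|\Big),\qquad p^\prime=\tfrac{p}{p-1},
\]
and applies Young's inequality with exponents $p^\prime$ and $p$: the first factor raised to the power $p^\prime$ gives back $u^{m-1}|D(u-k)_+|^p\varphi^p$, absorbed on the left by taking the Young parameter small, whereas the second factor raised to the power $p$ produces $\gamma\iint u^{m-1}(u-k)_+^p|D\varphi|^p$. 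For the lower-order part one applies Young's inequality to $g\cdot\big(u^{\frac{m-1}{p}}(u-k)_+\varphi^{p-1}|D\varphi|\big)$, again with exponents $p^\prime$ and $p$, yielding $\gamma\iint g^{\frac{p}{p-1}}$ and another contribution of the shape $\gamma\iint u^{m-1}(u-k)_+^p|D\varphi|^p$ (here one only needs $0\le\varphi\le1$). Finally $|f\psi|\le|f|(u-k)_+$ since $\varphi\le1$, which yields $\gamma\iint|f|(u-k)_+$.

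Assembling these estimates for fixed $\tau$, then taking the essential supremum over $\tau\in(t_0-\theta,t_0)$ in the time-slice term and letting $\tau$ tend to the opposite endpoint in the gradient integral — the right-hand side being independent of $\tau$ — and adding the two resulting inequalities, we obtain \eqref{Caccioppoli}. The main (mild) technical point is the bookkeeping in the two Young's-inequality splittings, which must distribute the degenerate weight $u^{m-1}$ correctly between the absorbed gradient term and the data terms (note the exponents $\tfrac{m-1}{p^\prime}$ and $\tfrac{m-1}{p}$, tuned so that raising to the Hölder conjugate power reconstructs $u^{m-1}$); at the rigorous level one must of course first replace $u$ by its Steklov average and then pass to the limit. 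Neither of these presents a genuine difficulty.
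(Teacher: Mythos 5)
Your proof is correct and follows exactly the route the paper itself points to: testing \eqref{weaksolution} against $\pm(u-k)_\pm\varphi^p$, integrating the parabolic term by parts, invoking coercivity for the principal term, and splitting the two cross terms via Young's inequality with the degenerate weight $u^{m-1}$ distributed between factors as you indicate; the paper declares this ``standard'' and omits it, and you have filled it in. Two cosmetic points: the initial-data term on the right arises at the time slice $\{t_0-\theta\}$ (the initial endpoint) as in your derivation, so the $\{t_0\}$ printed in \eqref{Caccioppoli} appears to be a slip, consistent with the analogous one in \eqref{Cacformula2} and with the way $\varphi=0$ on $\partial_P Q_A'$ is used to drop that term in the proof of Lemma~\ref{DeGiorgi3}; and to pass from the $u^{m-1}|D(u-k)_\pm|^p\varphi^p$ you retain on the left to the $u^{m-1}|D((u-k)_\pm\varphi)|^p$ in the statement, one extra triangle-inequality step is needed whose error $u^{m-1}(u-k)_\pm^p|D\varphi|^p$ is already present on the right.
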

This is a standard result that can be proved by choosing the testing function $\pm(u-k)_{\pm}\varphi^p$
into \eqref{weaksolution} and the proof will be omitted.

We now turn our attention to the proof of Theorem \ref{main}.
The continuity of the weak solution $u$ at a point $\mathfrak z_0$
will be a consequence of the following assertion. There exists a family of nested and shrinking
cylinders $Q_{\rho_n,\theta_n}(\mathfrak z_0)$,
with vertex at $\mathfrak z_0$,
such that the essential oscillation of $u$ in $Q_{\rho_n,\theta_n}(\mathfrak z_0)$ converges to
zero as the cylinders shrink to
the point $\mathfrak z_0$.

There is no loss of generality in assuming $\mathfrak z_0=(0,0)$.
Let $\epsilon_0>0$ and $\hat R>0$ be such that $Q_{\hat R,\hat R^{p-\epsilon_0}}(0)\subset\Omega_T$. For any $0<R<\hat R$,
we fix a cylinder $Q_R=B_R\times (-R^{p-\epsilon_0},0)$ and set
\begin{equation*}\begin{split}
\mu_+=\esssup_{Q_R}u,\qquad \mu_-=\essinf_{Q_R}u\qquad\text{and}\qquad\essosc_{Q_R}u=\mu_+-\mu_-.
\end{split}\end{equation*}
Henceforth,
let $\omega>0$ be a parameter such that $\frac{1}{2}\omega\leq\mu_+-\mu_-\leq\omega$.
Here  we assume that $\mu_+>0$, since otherwise, if $\mu_+=0$, then $\omega=0$ and Theorem \ref{main}
holds trivially.
Let $A>1$ be a constant which will be determined later.
We introduce the enlarged cylinder
\begin{equation}\label{hat Q}\hat Q=B_R\times\left(-(\mu_+)^{1-m}\left(\frac{\omega}{A}\right)^{2-p}R^p,0\right).\end{equation}
In the case that $\hat Q$ is not contained in $\widehat Q_R=B_R\times (-\frac{1}{2}R^{p-\epsilon_0},0)$ for all $0<R<\hat R$, we
conclude that
\begin{equation}\begin{split}\label{cylindercontain}
-(\mu_+)^{1-m}\left(\frac{\omega}{A}\right)^{2-p}\leq -\frac{1}{2}R^{-\epsilon_0}.
\end{split}\end{equation}
Since $u$ is nonnegative, we have $\omega\leq2\mu_+$ and the inequality \eqref{cylindercontain} implies that
\begin{equation}\begin{split}\label{cylindernotcontain}
\essosc_{Q_R}u\leq\omega \leq 2^{\frac{m}{m+p-3}}A^{\frac{p-2}{m+p-3}}R^{\frac{\epsilon_0}{m+p-3}}
\end{split}\end{equation}
holds for all $0<R<\hat R$.
In this case the weak solution is H\"older continuous and Theorem \ref{main} holds immediately.
Next,
we concentrate on the case $\hat Q\subset \widehat Q_R$ for a fixed $R\in(0,\hat R)$.
For any fixed time level $-(A^{p-2}-1)(\mu_+)^{1-m}\omega^{2-p}R^p\leq t\leq0$ we introduce
the intrinsic cylinder of the type
\begin{equation*}\begin{split}
Q_r^-(t)=B_r\times(t-(\mu_+)^{1-m}\omega^{2-p}r^p,t),
\end{split}\end{equation*}
where $0<r\leq R$.
Motivated by the work of DiBenedetto \cite[chapter III]{Di93}, we consider two complementary cases.
For a fixed constant $\nu_0>0$, we see that either
  \begin{itemize}
 \item[$\bullet$]
 \textbf{The first alternative}. There exists $-(A^{p-2}-1)(\mu_+)^{1-m}\omega^{2-p}R^p\leq \bar t\leq0$ such that
\begin{equation}\label{1st}\left|\left\{(x,t)\in Q_{\frac{3}{4}R}^-(\bar t)
:u<\mu_-+\frac{\omega}{4}\right\}\right|\leq \nu_0|Q_{\frac{3}{4}R}^-(\bar t)|\end{equation}
\end{itemize}
or this does not hold. Then, we have
  \begin{itemize}
 \item[$\bullet$]
 \textbf{The second alternative}. For any $-(A^{p-2}-1)(\mu_+)^{1-m}\omega^{2-p}R^p\leq \bar t\leq0$, there holds
\begin{equation}\label{2nd}\left|\left\{(x,t)\in Q_{\frac{3}{4}R}
^-(\bar t):u>\mu_+-\frac{\omega}{4}\right\}\right|\leq (1-\nu_0)|Q_{\frac{3}{4}R}^-(\bar t)|.\end{equation}
\end{itemize}
The constant $\nu_0>0$ will be determined in the course of the proof of Lemma \ref{lemmaDeGiorgi1} in Sect.3,
while the value of $A$ will be
fixed during the proof of Proposition \ref{2nd proposition} in Sect. 4.
  \section{The first alternative}
  The aim of this section is to establish a decay estimate of the essential oscillation for the first alternative.
  A key ingredient in the proof of the decay estimate is
 a De Giorgi type lemma.
 Before we prove this result,
 we first recall the following definition of the Lebesgue point.
\begin{definition}\label{def lebesgue}\cite{EG} Let $f$ be a locally integrable function in $\Omega_T$. A point $(x,t)$ for which
\begin{equation*}\lim_{r\to0}\frac{1}{2r^p|B_r|}\ \int_{t-r^p}
^{t+r^p}\int_{B_r(x)}|f(y,s)-f(x,t)| \,\mathrm {d}y \,\mathrm {d}s=0\end{equation*}
holds is called a Lebesgue point of $f$.
\end{definition}
We now state and prove a couple of lemmas. The first lemma can be deduced from
\cite[section 1.3]{S} and \cite[Theorem 1.33]{EG}.
\begin{lemma}\label{Lebesguepoint} If $f\in L_{\loc}(\Omega_T)$, then almost every point in $\Omega_T$ is a Lebesgue point
of $f$.
\end{lemma}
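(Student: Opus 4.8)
The plan is to recognise Lemma~\ref{Lebesguepoint} as a purely measure-theoretic statement — it does not involve the equation — namely the Lebesgue differentiation theorem, taken not with respect to Euclidean balls but with respect to the parabolic family $P_r(x,t):=B_r(x)\times(t-r^p,t+r^p)$ appearing in Definition~\ref{def lebesgue}. The first step is to equip $\mathbb{R}^{n+1}$ with the parabolic metric
\[
d\big((x,t),(y,s)\big)=\max\big\{\,|x-y|,\;|t-s|^{1/p}\,\big\},
\]
which, since $1/p\le 1$, is a genuine metric, and for which the ball of radius $r$ centred at $(x,t)$ coincides, up to a null set, with $P_r(x,t)$. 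Since $|P_r(x,t)|=2r^p|B_r|$, the normalising factor in Definition~\ref{def lebesgue} is exactly $|P_r(x,t)|^{-1}$.

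Next I would record that Lebesgue measure is doubling for this metric, namely $|P_{2r}(x,t)|=2^{n+p}|P_r(x,t)|$, so that $(\mathbb{R}^{n+1},d,\mathcal{L}^{n+1})$ is a space of homogeneous type. On such a space the Vitali $5r$-covering lemma — which requires only a metric and no dimensional restriction — together with the doubling property yields the weak-type $(1,1)$ bound for the parabolic maximal operator
\[
\mathcal{M}f(x,t)=\sup_{r>0}\frac{1}{2r^p|B_r|}\int_{t-r^p}^{t+r^p}\int_{B_r(x)}|f(y,s)|\,\mathrm{d}y\,\mathrm{d}s .
\]
This is precisely the maximal-function estimate contained in \cite[section~1.3]{S} and \cite[Theorem~1.33]{EG}.

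From the weak $(1,1)$ estimate the lemma follows by the classical Calder\'on--Zygmund density argument, which I would carry out as follows: fix a compact set $K\subset\Omega_T$ and $\varepsilon>0$, choose a bounded open $K'$ with $K\subset K'$ and $\overline{K'}\subset\Omega_T$, and approximate $f$ in $L^1(K')$ by a continuous function $g$, for which every point of $K$ is trivially a Lebesgue point; splitting $|f-f(x,t)|\le|f-g|+|g-g(x,t)|+|g(x,t)-f(x,t)|$ and averaging over $P_r(x,t)$, one bounds the bad set $\{(x,t)\in K:\limsup_{r\to0}|P_r|^{-1}\int_{P_r}|f-f(x,t)|>\varepsilon\}$ by $\{\mathcal{M}(f-g)>\varepsilon/3\}\cup\{|f-g|>\varepsilon/3\}$, whose measure is at most $C\varepsilon^{-1}\|f-g\|_{L^1(K')}$; letting $g\to f$ in $L^1(K')$ and then intersecting over $\varepsilon=1/j$, $j\in\mathbb{N}$, shows that $\mathcal{L}^{n+1}$-a.e.\ point of $K$, hence of $\Omega_T$, is a Lebesgue point of $f$. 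The one point that genuinely needs care — and the reason one appeals to \cite{S, EG} rather than the textbook Lebesgue differentiation theorem — is that the cylinders $P_r$ are highly eccentric as Euclidean sets, their eccentricity blowing up as $r\downarrow 0$ because $p>1$; consequently the Euclidean Besicovitch covering theorem does not apply, and the whole argument must be run in the parabolic metric $d$.
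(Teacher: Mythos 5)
Your argument is correct and is essentially the proof that the paper delegates to its references: the paper offers no argument of its own for Lemma~\ref{Lebesguepoint} beyond citing \cite[Section~1.3]{S} and \cite[Theorem~1.33]{EG}, and what you have written out — equipping $\mathbb{R}^{n+1}$ with the parabolic metric $d((x,t),(y,s))=\max\{|x-y|,|t-s|^{1/p}\}$ (a genuine metric since $1/p\le 1$ makes $u\mapsto u^{1/p}$ subadditive), noting that the $d$-balls are exactly the cylinders $P_r(x,t)$ of Definition~\ref{def lebesgue} and that $\mathcal{L}^{n+1}$ is doubling with constant $2^{n+p}$, then running Vitali covering, the weak $(1,1)$ maximal estimate, and the Calder\'on--Zygmund density argument — is precisely the content of those citations specialized to this setting. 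Your closing remark that Besicovitch does not apply to the eccentric Euclidean cylinders and that the whole argument must live in the parabolic metric correctly identifies why the general homogeneous-type machinery, rather than the textbook Euclidean Lebesgue differentiation theorem, is the right tool here.
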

In applications,
the cylinders used here are not centered
on the Lebesgue point, that is in fact the vertex.
Moreover, we need the following lemma, which states that the Lebesgue point for $f$ is also a Lebesgue point for $f_+$.
\begin{lemma}\label{Lebesguepoint+}
Let $f\in L_{\loc}(\Omega_T)$ and $z_1=(x_1,t_1)\in\Omega_T$ be a Lebesgue point for $f$. Then, we have
\begin{equation}
\label{Lebesguepointf+}f^+(z_1)=\lim_{r\to0}\frac{1}{|Q_{r,r^p}(z_1)|}\ \iint_{Q_{r,r^p}(z_1)}f^+(y,s)
\,\mathrm {d}y \,\mathrm {d}s,\end{equation}
where $Q_{r,r^p}(z_1)=B_r(x_1)\times(t_1-r^p,t_1)$.
\end{lemma}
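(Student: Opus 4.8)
The plan is to reduce the statement to the definition of a Lebesgue point for $f$ itself, which we may use by hypothesis, together with the elementary inequality $|a^+-b^+|\le|a-b|$ valid for all $a,b\in\RR$. First I would write $c:=f(z_1)$ and split into the two obvious cases according to the sign of $c$. If $c\le 0$, then $f^+(z_1)=0$, and since $f^+(y,s)\le|f(y,s)-c|+c^+=|f(y,s)-c|$ pointwise when $c\le0$ (indeed $f^+\le(f-c)^+\le|f-c|$ because subtracting a nonpositive number only increases the value, hence also increases the positive part), the average of $f^+$ over $Q_{r,r^p}(z_1)$ is dominated by the average of $|f(y,s)-f(z_1)|$ over the same cylinder, which tends to $0$ as $r\to0$ by the Lebesgue point property of $f$. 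This proves \eqref{Lebesguepointf+} in this case.

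If $c>0$, I would instead use the Lipschitz bound $|f^+(y,s)-c^+|=|f^+(y,s)-c|\le|f(y,s)-c|$ directly, so that
\begin{equation*}
\left|\frac{1}{|Q_{r,r^p}(z_1)|}\iint_{Q_{r,r^p}(z_1)}f^+(y,s)\,\mathrm{d}y\,\mathrm{d}s-f^+(z_1)\right|
\le \frac{1}{|Q_{r,r^p}(z_1)|}\iint_{Q_{r,r^p}(z_1)}|f(y,s)-f(z_1)|\,\mathrm{d}y\,\mathrm{d}s,
\end{equation*}
and the right-hand side again vanishes as $r\downarrow0$ by Definition \ref{def lebesgue}. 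The only point requiring a small remark is that the cylinders $Q_{r,r^p}(z_1)=B_r(x_1)\times(t_1-r^p,t_1)$ appearing here are \emph{backward} cylinders with vertex at $z_1$, whereas Definition \ref{def lebesgue} uses the symmetric cylinder $B_r(x)\times(t-r^p,t+r^p)$; since $|Q_{r,r^p}(z_1)|=\tfrac12\,|B_r(x_1)|\cdot 2r^p$ and $Q_{r,r^p}(z_1)\subset B_r(x_1)\times(t_1-r^p,t_1+r^p)$, the average over the backward cylinder is bounded by twice the average over the symmetric one, which still tends to $0$. This comparison of the one-sided and two-sided cylinders is the only genuinely non-automatic step, and it is entirely routine.

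Combining the two cases gives \eqref{Lebesguepointf+}, completing the proof. I do not expect any real obstacle here; the lemma is a soft consequence of the contraction property of $t\mapsto t^+$ and the trivial inclusion of the backward cylinder in the symmetric one, and the whole argument is a few lines once these observations are in place.
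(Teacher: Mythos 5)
Your proof is correct and follows essentially the same route as the paper: both hinge on the $1$-Lipschitz property $|a^+-b^+|\le|a-b|$ of the positive part, together with comparing the backward cylinder $Q_{r,r^p}(z_1)$ against the symmetric one from Definition~\ref{def lebesgue} at the cost of a factor $2$. The split into cases $c\le 0$ and $c>0$ is superfluous, since the Lipschitz bound already yields the estimate you write in the second case for all values of $c$, but this does not affect correctness.
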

\begin{proof}We first observe that
$|f^+(y,s)-f^+(x,t)|\leq |f(y,s)-f(x,t)|$.
Consequently, we conclude that
\begin{equation*}\begin{split}
\lim_{r\to0}&\frac{1}{|Q_{r,r^p}(z_1)|}\ \iint_{Q_{r,r^p}(z_1)}|f^+(y,s)-f^+(x_1,t_1)| \,\mathrm {d}y \,\mathrm {d}s
\\&\leq
2\lim_{r\to0}
\frac{1}{2r^p|B_r|}\ \int_{t-r^p}
^{t+r^p}\int_{B_r(x)}|f^+(y,s)-f^+(x_1,t_1)| \,\mathrm {d}y \,\mathrm {d}s=0,
\end{split}\end{equation*}
which proves \eqref{Lebesguepointf+}. This completes the proof.
\end{proof}
For technical reasons, we need the following result concerning the scaling property for function $G$ defined in \eqref{G}.
\begin{lemma}\label{Gk}
Let $\epsilon_1>0$ and $k\geq1$ be the fixed constants.
If $v\geq \epsilon_1$, then
\begin{equation}\label{Gkv}G(kv)\leq c(k,\epsilon_1)G(v).\end{equation}
\end{lemma}
\begin{proof}
If $v\leq1$ and $kv\leq1$, then $G(kv)=k^2v^2=k^2G(v)$.
If $v\geq 1$ and $kv\geq1$, then $G(kv)=kv=kG(v)$. Finally,
if $\epsilon_1\leq v\leq1$ and $kv>1$, then $G(kv)=\frac{k}{v}v^2=\frac{k}{v}G(v)\leq\frac{k}{\epsilon_1}G(v)$.
\end{proof}
With the help of these lemmas, we can now prove the following De Giorgi type lemma.
We adopt the Kilpel\"ainen-Mal\'y technique from \cite{KM,LS}. For the treatment of the doubly nonlinear problem,
the Kilpel\"ainen-Mal\'y technique should be suitably modified to handle the cutoff functions.
 \begin{lemma}\label{lemmaDeGiorgi1}
 Let $u$ be a bounded nonnegative weak solution to \eqref{parabolic}-\eqref{A} in $\Omega_T$.
 There exist constants $\nu_0\in(0,1)$ and $B>1$, depending only on the data, such that if
\begin{equation}\label{1st assumption}\left|\left\{(x,t)\in Q_{\frac{3}{4}R}^-(\bar t):u<\mu_-+\frac{\omega}{4}\right\}\right
|\leq \nu_0| Q_{\frac{3}{4}R}^-(\bar t)|,\end{equation}
then either
\begin{equation}
\label{DeGiorgi1}u(x,t)>\mu_-+\frac{\omega}{2^5}\qquad\text{for}\ \ \text{a.e.}\ \ (x,t)\in Q_{\frac{1}{4}R}^-(\bar t)\end{equation}
or
\begin{equation}
\label{omega1}\omega\leq B\left((\mu_+)^{\frac{1-m}{p}}F_1(2R)+(\mu_+)^{\frac{1-m}{p-1}}F_2(2R)\right).\end{equation}
 \end{lemma}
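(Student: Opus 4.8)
The plan is to run a Kilpel\"ainen--Mal\'y type iteration adapted to the doubly nonlinear structure, using the Caccioppoli inequality of Lemma \ref{Cac1}. I would work on a decreasing sequence of radii $\rho_j=\tfrac14 R + 2^{-j-2}R$ converging to $\tfrac14 R$, with associated subcylinders $Q_j = B_{\rho_j}\times(\bar t - (\mu_+)^{1-m}\omega^{2-p}\rho_j^p,\bar t)$, and a decreasing sequence of levels $l_j = \mu_- + \tfrac{\omega}{2^5} + 2^{-j-1}\cdot\tfrac{\omega}{2^5}$ (so $l_j\downarrow \mu_-+\tfrac{\omega}{2^5}$, with $l_0$ around $\mu_-+\tfrac{\omega}{2^4}$), together with cutoff functions $\varphi_j$ equal to $1$ on $Q_{j+1}$, supported in $Q_j$, with $|D\varphi_j|\lesssim 2^j/R$ and $|\partial_t\varphi_j|\lesssim 2^{jp}(\mu_+)^{m-1}\omega^{p-2}R^{-p}$. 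The key quantity is $Y_j$ measuring the mass of $\{u<l_j\}$ in $Q_j$, but in the Kilpel\"ainen--Mal\'y scheme one does not iterate the measure directly; instead one introduces an excess functional of the form
\begin{equation*}
A_j = \frac{1}{|Q_j|}\iint_{Q_j} G\!\left(\frac{l_j - u}{d_j}\right)\diff x\diff t,
\end{equation*}
with the scaling parameter $d_j$ chosen adaptively — roughly $d_j \sim 2^{-j}\omega$ — and one shows a recursive inequality $A_{j+1}\le C b^j A_j^{1+\kappa}$ for some $b>1$, $\kappa>0$, which forces $A_j\to 0$ provided $A_0$ is small; that smallness is exactly where the hypothesis \eqref{1st assumption} with $\nu_0$ small enters. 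The quantities $F_1(2R)$, $F_2(2R)$ appear because the last two terms of \eqref{Cacformula1}, namely $\gamma\frac{\theta}{d^2}\int_{B_\rho}g^{p/(p-1)}$ and $\gamma\frac{\theta}{d}\int_{B_\rho}|f|$, cannot be absorbed into the iteration; when $\theta=(\mu_+)^{1-m}\omega^{2-p}\rho^p$ and $d\sim\omega$, these become (after the correct normalization) comparable to powers of $(\mu_+)^{(1-m)/p}F_1(2R)$ and $(\mu_+)^{(1-m)/p-1}F_2(2R)$, so one sets up the dichotomy: either these potential terms dominate, giving alternative \eqref{omega1}, or they are negligible and the pure De Giorgi iteration closes, giving \eqref{DeGiorgi1}.

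More concretely, I would carry out the steps in this order. First, fix the geometric setup (radii, levels, cutoffs, cylinders) and record that $u$ being nonnegative with $\mu_+\ge\tfrac12\omega$ makes the intrinsic scaling $\theta_j=(\mu_+)^{1-m}\omega^{2-p}\rho_j^p$ legitimate and the cylinders nested inside $Q^-_{\frac34 R}(\bar t)$. Second, apply Lemma \ref{Cac1} on $Q_j$ with $l=l_j$, $d=d_j$, $\varphi=\varphi_j$; estimate the supremum term and the $|D\psi_-|^p$ term from below, and the gradient term on the right via the pointwise bound $(\tfrac{l_j-u}{d_j})^{(1+\lambda)(p-1)}\le$ a constant (since on $\{u<l_j\}$ one has $l_j-u\le l_j-l_\infty+\cdots\lesssim d_j$ by the choice $d_j\sim 2^{-j}\omega$), and the time term similarly. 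Third, use the Sobolev embedding (for $u^{m-1}|D\psi_-|^p$, after writing $u^{m-1}\ge(\mu_-+\tfrac{\omega}{2^5})^{m-1}$ or using the lower bound $u\gtrsim\mu_+$ on the relevant set via nonnegativity — this is one delicate point) combined with the energy estimate to run a parabolic De Giorgi/Kilpel\"ainen--Mal\'y interpolation: bound $A_{j+1}$ by the measure of the superlevel set inside $Q_j$ to a power $>1$, times the energy, producing the recursion $A_{j+1}\le C 2^{\sigma j}A_j^{1+\kappa}+ (\text{potential remainder})$. Fourth, invoke the standard fast-geometric-convergence lemma: if $A_0\le (C)^{-1/\kappa}2^{-\sigma/\kappa^2}$ and the potential remainder is $\le \tfrac12$ of this threshold, then $A_j\to0$, which by \eqref{PhiG} and Lemma \ref{Gk} translates into $u\ge \mu_-+\tfrac{\omega}{2^5}$ a.e.\ on $Q_\infty=Q^-_{\frac14 R}(\bar t)$, i.e.\ \eqref{DeGiorgi1}. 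Fifth, if instead the potential remainder exceeds that threshold, unwind the normalization to obtain \eqref{omega1} with a suitable $B$; the smallness of $A_0$ itself is guaranteed by choosing $\nu_0$ small, since $A_0\le \gamma\,\nu_0$ by \eqref{PhiG} and the definition of $G$.

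The main obstacle I anticipate is handling the degeneracy coefficient $u^{m-1}$ inside the energy term cleanly enough to apply the Sobolev inequality and close the recursion with a genuine exponent gain $\kappa>0$. On the set $\{u<l_j\}$ the factor $u^{m-1}$ could a priori be as small as $(\mu_-)^{m-1}$, which may be $0$; the remedy — and the place where the argument is genuinely more involved than the $p$-Laplacian case — is to exploit that outside $\{u<l_j\}$ one has $u\ge l_j\gtrsim \mu_-+\omega\gtrsim\omega$, and to treat the truncated function $\psi_-$ and the transition region carefully so that the effective diffusion coefficient is comparable to $\omega^{m-1}$ (equivalently $(\mu_+)^{m-1}$, by $\mu_+\le 2\mu_+$, $\omega\le 2\mu_+$); Lemma \ref{lemmainequalitypsi-} and Lemma \ref{Gk} are the technical tools that let one pass between $\psi_-$, the power $\big(\tfrac{l-u}{d}\big)^{(p-1-\lambda)/p}$, and $G\big(\tfrac{l-u}{d}\big)$ without losing the scaling. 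A secondary technical nuisance is the bookkeeping of all the powers of $2^j$, $\omega$, $\mu_+$ and $R$ so that the intrinsic geometry is preserved at every step and the two potential terms assemble exactly into $(\mu_+)^{(1-m)/p}F_1(2R)$ and $(\mu_+)^{(1-m)/(p-1)}F_2(2R)$; this is routine but must be done with care, using $B_{\rho_j}\subset B_{2R}$ and the monotonicity of $F_1,F_2$ to replace $\int_{B_{\rho_j}}$ by the sup defining $F_1,F_2$ at scale $2R$.
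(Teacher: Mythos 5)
Your proposal is not the argument in the paper, and the difference is structural rather than technical: what you describe is a De~Giorgi fast-geometric-convergence scheme on nested cylinders $B_{\rho_j}\times(\cdots)$ with $\rho_j\downarrow\tfrac14R$ and a fixed geometric ladder of levels, while the paper's proof is a Kilpel\"ainen--Mal\'y iteration centered at a \emph{Lebesgue point} $(x_1,t_1)\in Q^-_{\frac14R}(\bar t)$, with radii $r_j=4^{-j}C^{-1}R\downarrow0$ shrinking to that point. This matters precisely because it is what produces the Riesz potentials $F_1(2R)$, $F_2(2R)$ in \eqref{omega1}. In the paper, the levels $l_j$ are chosen \emph{adaptively}: $l_{j+1}$ is essentially defined by the condition $A_j(l_{j+1})=\chi$ for a fixed threshold $\chi$, with a floor $l_{j+1}\ge l_j-\tfrac14(\alpha_{j-1}-\alpha_j)$ where $\alpha_j$ already encodes the local potentials (see \eqref{alpha}). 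The central estimate is \eqref{djdj-1},
\begin{equation*}
d_j\leq\tfrac14\,d_{j-1}+\gamma\,\frac{4^{-j-100}}{B}\,\omega+\gamma\Bigl(r_{j-1}^{p-n}(\mu_+)^{1-m}\int_{B_{j-1}}g^{\frac{p}{p-1}}\,\mathrm{d}y\Bigr)^{\frac1p}+\gamma\Bigl(r_{j-1}^{p-n}(\mu_+)^{1-m}\int_{B_{j-1}}|f|\,\mathrm{d}y\Bigr)^{\frac1{p-1}},
\end{equation*}
with $d_j=l_j-l_{j+1}$. Summing over $j$ and using that $B_j=B_{r_j}(x_1)$ shrinks to the point $x_1$ turns the geometric sum of local potentials into a Riemann sum for $F_1(2R)+F_2(2R)$, while the telescoping $\sum d_j=l_0-\lim l_j=l_0-u(x_1,t_1)$ recovers the pointwise bound $u(x_1,t_1)>\mu_-+\tfrac{\omega}{24}-\gamma(\cdots)$. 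None of this is available with your geometry: if the radii converge to $\tfrac14R$ rather than to $0$, the potential contribution at every step stays essentially at scale $R$, and no amount of summing reconstitutes $\int_0^{R}(\cdots)\tfrac{\mathrm{d}r}{r}$. There is also no natural way for a power gain $A_{j+1}\le Cb^jA_j^{1+\kappa}$ to tolerate a constant-in-$j$ remainder; the paper avoids this entirely by keeping the $A_j$'s capped at $\chi$ and contracting the $d_j$'s instead.

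Two further concrete points fail. Your claim that $\bigl(\tfrac{l_j-u}{d_j}\bigr)^{(1+\lambda)(p-1)}$ is ``a constant'' on $\{u<l_j\}$ because $l_j-u\lesssim d_j$ is false: with $d_j\sim2^{-j}\omega$ and $l_j\downarrow\mu_-+\tfrac{\omega}{2^5}$, on the set $\{u<\mu_-+\tfrac{\omega}{2^6}\}$ you have $(l_j-u)/d_j\gtrsim 2^j$, unbounded. The paper never bounds this ratio pointwise; it uses $A_{j-1}(l_j)\le\chi$ to bound the \emph{integral}, together with the split $L_j(\bar l)=L_j'(\bar l)\cup L_j''(\bar l)$ at threshold $\epsilon_1$ as in \eqref{Ldecomposition}, and only on $L_j''$ does it invoke Lemma~\ref{lemmainequalitypsi-} to run Sobolev; Lemma~\ref{Gk} is then needed to pass from the linear integrand back to $G$. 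Second, your remedy for the degeneracy $u^{m-1}\to0$ --- ``outside $\{u<l_j\}$ one has $u\ge l_j\gtrsim\omega$'' --- does not help: the energy term lives on $\{u<l_j\}$, exactly where $u$ may vanish. The paper's actual remedy is the cutoff $\tilde u=\max\{u,\tfrac{1}{128}\omega\}$, which satisfies $D\tilde u=0$ on $\{u\le\tfrac{\omega}{128}\}$, $\tilde u^{m-1}\ge(\tfrac{1}{256}\mu_+)^{m-1}$ (cf.\ \eqref{um-1}), and $\{u\le l_j\}=\{\tilde u\le l_j\}$; this truncation is load-bearing, and without it or an equivalent device your energy step does not close.
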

 \begin{proof}
 Let $B>1$ to be determined in the course of the proof.
 We first assume that \eqref{omega1} is violated, that is,
 \begin{equation}
\label{omega1violated}\omega>\frac{3}{4}B\left(\frac{1}{3B}\omega+(\mu_+)^{\frac{1-m}{p}}F_1(2R))+(\mu_+)^{\frac{1-m}{p-1}}F_2(2R)
\right).\end{equation}
If we can prove \eqref{DeGiorgi1} then the lemma follows immediately.
 Fix $(x_1,t_1)\in Q_{\frac{1}{4}R}^-(\bar t)$ and assume that $(x_1,t_1)$ is a Lebesgue point of $u$.
Let $r_j=4^{-j}C^{-1}R$
 and $B_j=B_{r_j}(x_1)$ where $C>4$ is to be determined.
For a sequence $\{l_j\}_{j=0}^\infty$ and a fixed $l>0$, we set
 \begin{equation*}Q_j(l)=B_j\times (t_1-(\mu_+)^{1-m}(l_j-l)^{2-p}r_j^p,t_1).\end{equation*}
Moreover, we define $\varphi_j(l)=\phi_j(x)\theta_{j,l}(t)$, where
$\phi_j\in C_0^\infty(B_j)$, $\phi_j=1$ on $B_{j+1}$, $|D\phi_j|\leq r_j^{-1}$
 and $\theta_{j,l}(t)$ is a Lipschitz function
satisfies
 \begin{equation*}
 \theta_{j,l}(t)=1\qquad\text{in}\qquad t\geq t_1-\frac{4}{9}(\mu_+)^{1-m}(l_j-l)^{2-p}r_j^p,
 \end{equation*}
  \begin{equation*}
 \theta_{j,l}(t)=0\qquad\text{in}\qquad t\leq t_1-\frac{5}{9}(\mu_+)^{1-m}(l_j-l)^{2-p}r_j^p
 \end{equation*}
and
  \begin{equation*}
 \theta_{j,l}(t)=\frac{t-t_1-\frac{5}{9}(\mu_+)^{1-m}(l_j-l)^{2-p}r_j^p}{\frac{1}{9}(\mu_+)^{1-m}(l_j-l)^{2-p}r_j^p}
 \end{equation*}
in
$t_1-\frac{5}{9}(\mu_+)^{1-m}(l_j-l)^{2-p}r_j^p\leq t\leq t_1-\frac{4}{9}(\mu_+)^{1-m}(l_j-l)^{2-p}r_j^p$.
From the definition of $\varphi_j(l)$, we see that $\varphi_j(l)=0$ on $\partial_PQ_j(l)$.
 Next, for  $j=-1,0,1,2,\cdots$, we define the sequence $\{\alpha_j\}$ by
  \begin{equation}\begin{split}\label{alpha}\alpha_j=&\frac{4^{-j-100
  }}{3B}\omega+ \frac{3}{4}\int_0^{r_j}\left(r^{p-n}(\mu_+)^{1-m}\int_{B_r(x_1)}
  g(y)^{\frac{p}{p-1}} \,\mathrm {d}y
  \right)^{\frac{1}{p}}\frac{\mathrm {d}r}{r}\\&+
  \frac{3}{4}\int_0^{r_j}\left(r^{p-n}(\mu_+)^{1-m}\int_{B_r(x_1)}|f(y)| \,\mathrm {d}y
  \right)^{\frac{1}{p-1}}\frac{\mathrm {d}r}{r}.
  \end{split}\end{equation}
  From the definition of $\alpha_j$, we see that $\alpha_j\to 0$ as $j\to\infty$ and there holds $B\alpha_{j-1}\leq\omega$,
  \begin{equation}\begin{split}\label{alpha1}
  \alpha_{j-1}-\alpha_j\geq &\frac{4^{-j-100}}{3B}\omega+\gamma\left(r_j^{p-n}(\mu_+)^{1-m}\int_{B_j} g(y)^{\frac{p}{p-1}}
  \,\mathrm {d}y\right)^{\frac{1}{p}}\\&+\gamma\left(r_j^{p-n}(\mu_+)^{1-m}\int_{B_j}|f(y)|
  \,\mathrm {d}y\right)^{\frac{1}{p-1}}
  \end{split}\end{equation}
  and
   \begin{equation}\begin{split}\label{alpha2}
  \alpha_{j-1}-\alpha_j\leq & \frac{4^{-j-100}}{3B}\omega+\gamma\left(r_{j-1}^{p-n}
  (\mu_+)^{1-m}\int_{B_{j-1}} g(y)^{\frac{p}{p-1}}
  \,\mathrm {d}y\right)^{\frac{1}{p}}\\&+\gamma\left(r_{j-1}^{p-n}(\mu_+)^{1-m}\int_{B_{j-1}}|f(y)|
  \,\mathrm {d}y\right)^{\frac{1}{p-1}}
  \end{split}\end{equation}
  for all $j=0,1,2,\cdots$, where the constant $\gamma$ depends only upon the data.
 Moreover, we define a quantity $A_j(l)$ by
 \begin{equation}\begin{split}\label{A_j}
 A_j(l)=&\frac{(\mu_+)^{m-1}(l_j-l)^{p-2}}{r_j^{n+p}}\iint_{L_j(l)}\left(\frac{l_j-u}{l_j-l}\right)^{(1+\lambda)(p-1)}\varphi_j(l)^{k-p}
 \,\mathrm {d}x\mathrm {d}t
 \\&+\esssup_t\frac{1}{r_j^n}\int_{B_j\times\{t\}}G\left(\frac{l_j-u}{l_j-l}\right)\varphi_j(l)^{k}
 \,\mathrm {d}x,
 \end{split}\end{equation}
 where $k>p$, $G$ is defined in \eqref{G} and $L_j(l)=Q_j(l)\cap \{u\leq l_j\}\cap \Omega_T$.
 The proof of \eqref{DeGiorgi1} will be divided into several steps.

Step 1: \emph{We claim that $A_j(l)$ is continuous in $l<l_j$.}
 To prove this assertion,
 it suffices to show that the function
 \begin{equation*}\begin{split}
 B_j(l):=
 \esssup_t\theta_{j,l}(t)^k\frac{1}{r_j^n}\int_{B_j}G\left(\frac{l_j-u}{l_j-l}\right)\phi_j^{k}
 \,\mathrm {d}x
 \end{split}\end{equation*}
 is continuous. For a fixed $l<l_j$, we take $|l^\prime-l|<\delta<\frac{1}{2}(l_j-l)$. Observe that $G$ is Lipschitz continuous,
 we have
  \begin{equation*}\begin{split}
  \big|G\left(\frac{l_j-u}{l_j-l}\right)-G\left(\frac{l_j-u}{l_j-l^\prime}\right)\big|\leq \gamma
  (l_j-u)_+\frac{|l-l^\prime|}{(l_j-l)(l_j-l^\prime)}
  \leq \gamma l_j\frac{|l-l^\prime|}{(l_j-l)^2},
   \end{split}\end{equation*}
   since $l_j-l^\prime=l_j-l+l-l^\prime>\frac{1}{2}(l_j-l)$. It  follows that
   \begin{equation*}\begin{split}
   |B_j(l)-B_j(l^\prime)|\leq &\gamma\frac{|l-l^\prime|}{(l_j-l)^2}l_j
 + \esssup_t\frac{1}{r_j^n}\int_{B_j}G\left(\frac{l_j-u}{l_j-l}\right)\phi_j^{k}
 \,\mathrm {d}x \left(\esssup_t|\theta_{j,l}(t)^k-\theta_{j,l^\prime}(t)^k|\right)
   \end{split}\end{equation*}
   and this implies that the function $B_j(l)$ is continuous for $l<l_j$.

 Step 2: \emph{Determine the values of $l_0$ and $l_1$.} Initially, we set $l_0=\mu_-+\frac{1}{4}\omega$ and $\bar l=\frac{1}{2}l_0+\frac{1}{2}\mu_-+\frac{1}{16}
 B\alpha_0+\frac{1}{32}\omega$.
 Recalling that $B\alpha_0< \omega$, we deduce
 \begin{equation}\begin{split}\label{l0}
 l_0-\bar l=\frac{1}{8}\omega-\frac{1}{16}B\alpha_0-\frac{1}{32}\omega\geq \frac{1}{32}\omega
 \quad\text{and}\quad  l_0-\bar l\leq \frac{3}{32}\omega.
 \end{split}\end{equation}
 Then, we have
 $(\mu_+)^{1-m}(l_0-\bar l)^{2-p}r_0^p\leq  C^{-p}2^{5(p-2)}(\mu_+)^{1-m}\omega^{2-p}R^p<\frac{1}{100^p}(\mu_+)^{1-m}\omega^{2-p}R^p$,
 provided that we choose $C=2^{\frac{5(p-2)}{p}+100}$.
 This guarantees the inclusion of $Q_0(\bar l)\subset Q_{\frac{3}{4}R}
  ^-(\bar t)$. In  view of $l_0-u\leq \frac{1}{4}\omega$ in $L_0(\bar l)$, we infer from \eqref{l0} and \eqref{1st assumption} that
  the inequality
   \begin{equation}\begin{split}\label{initial estimate}
   &\frac{(\mu_+)^{m-1}(l_0-\bar  l)^{p-2}}{r_0^{n+p}}\iint_{L_0(\bar l)}\left(\frac{l_0-u}{l_0-\bar l}
   \right)^{(1+\lambda)(p-1)}\varphi_0(\bar l)^{k-p}
 \,\mathrm {d}x\mathrm {d}t
 \\&\leq\frac{(\mu_+)^{m-1}(l_0-\bar l)^{p-2-(1+\lambda)(p-1)}}{r_0^{n+p}}\left(\frac{\omega}{4}\right)^{(1+\lambda)(p-1)}|L_0(\bar l)|
 \\&\leq \gamma_0 C^{n+p}\frac{(\mu_+)^{m-1}\omega^{p-2}}{R^{n+p}}|L_0(\bar l)|
= \gamma_0 C^{n+p}\frac{\left|Q_0(\bar l)\cap \left\{u\leq l_0\right\}\right|}{|Q_{\frac{3}{4}R}^-(\bar t)|}
  \\&\leq \gamma_0 C^{n+p}\frac{\left|Q_{\frac{3}{4}R}^-(\bar t)\cap \left
  \{u\leq \mu_-+\frac{\omega}{4}\right\}\right|}{|Q_{\frac{3}{4}R}^-(\bar t)|}
   \leq \gamma_0 C^{n+p}\nu_0
    \end{split}\end{equation}
    holds for a constant $\gamma_0$ depending only upon the data.
    Moreover, we apply Lemma \ref{Cac1} with $(l,d,\theta)$ replaced by  $(l_0,l_0-\bar l,(\mu_+)^{1-m}(l_0-\bar l)^{2-p}r_0^p)$ to obtain
     \begin{equation*}\begin{split}
    \esssup_t& \frac{1}{r_0^n}\int_{B_0\times\{t\}}G\left(\frac{l_0-u}{l_0-\bar l}\right)\varphi_0(\bar l)^{k}
 \,\mathrm {d}x \\ \leq &\gamma\frac{(\mu_+)^{m-1}(l_0-\bar l)^{p-2}}{r_0^{p+n}}\iint_{L_0(\bar l)}\left(\frac{l_0-u}{l_0-\bar l}
 \right)^{(1+\lambda)(p-1)}
 \varphi_0(\bar l)^{k-p}\,\mathrm {d}x\mathrm {d}t
 \\&+\gamma\frac{1}{r_0^n} \iint_{L_0(\bar l)}\frac{l_0-u}{l_0-\bar l}|\partial_t\varphi_0(\bar l)|\,\mathrm {d}x\mathrm {d}t
   \\&+\gamma \frac{r_0^{p-n}}{(l_0-\bar l)^p}(\mu_+)^{1-m}\int_{B_0}g^{\frac{p}{p-1}}\,\mathrm {d}x
  +\gamma \frac{r_0^{p-n}}{(l_0-\bar l)^{p-1}}(\mu_+)^{1-m}\int_{B_0}|f|\,\mathrm {d}x,
 \end{split}\end{equation*}
 since $u\leq \mu_+$ in $L_0(\bar l)$, $|D\varphi_0(\bar l)|\leq r_0^{-1}$, $\varphi_0(\bar l)=0$ on $\partial_PQ_0(\bar l)$
 and the first term on the right-hand side of \eqref{Cacformula1} vanishes.
In view of $l_0-u\leq \frac{1}{4}\omega$ in $L_0(\bar l)$
 and $|\partial_t\varphi_0(\bar l)|\leq 9(l_0-\bar l)^{p-2}(\mu_+)^{m-1}r_0^{-p}$, we infer from \eqref{l0} and \eqref{initial estimate}
 that
\begin{equation*}\begin{split}
\frac{1}{r_0^n}& \iint_{L_0(\bar l)}\frac{l_0-u}{l_0-\bar l}|\partial_t\varphi_0(\bar l)|\,\mathrm {d}x\mathrm {d}t
\\&\leq \gamma\frac{(\mu_+)^{m-1}(l_0-\bar  l)^{p-3}}{r_0^{n+p}}\iint_{L_0(\bar l)}(l_0-u)
 \,\mathrm {d}x\mathrm {d}t
 \\&\leq \gamma C^{n+p}\frac{(\mu_+)^{m-1}\omega^{p-2}}{R^{n+p}}|L_0(\bar l)|
   \leq \gamma C^{n+p}\nu_0,
\end{split}\end{equation*}
where the constant $\gamma$ depends only upon the data. Furthermore, we infer from \eqref{l0} and \eqref{omega1violated} that
there exists a constant $\gamma$ depending only upon the data, such that
\begin{equation*}\begin{split}
\frac{r_0^{p-n}}{(l_0-\bar l)^p}&(\mu_+)^{1-m}\int_{B_0}g^{\frac{p}{p-1}}\,\mathrm {d}x
  +\frac{r_0^{p-n}}{(l_0-\bar l)^{p-1}}(\mu_+)^{1-m}\int_{B_0}|f|\,\mathrm {d}x
  \\&\leq \gamma\frac{r_0^{p-n}}{\omega^p}(\mu_+)^{1-m}\int_{B_0}g^{\frac{p}{p-1}}\,\mathrm {d}x
  +\gamma\frac{r_0^{p-n}}{\omega^{p-1}}(\mu_+)^{1-m}\int_{B_0}|f|\,\mathrm {d}x
  \\&\leq \gamma C^{n-p}(B^{-p}+B^{1-p}).
  \end{split}\end{equation*}
  Collecting above estimates, we conclude with
 \begin{equation*}\begin{split}
   & \esssup_t \frac{1}{r_0^n}\int_{B_0\times\{t\}}G\left(\frac{l_0-u}{l_0-\bar l}\right)\varphi_0(\bar l)^{k}
 \,\mathrm {d}x
 \leq \gamma_1C^{n+p}\nu_0+\gamma_1C^{n-p}(B^{-p}+B^{1-p}),
  \end{split}\end{equation*}
  where $\gamma_1=\gamma_1(\text{data})$.
  Consequently, we arrive at
   \begin{equation*}\begin{split}
  A_0(\bar l)\leq (\gamma_0+\gamma_1)C^{n+p}\nu_0+\gamma_1C^{n-p}(B^{-p}+B^{1-p}).
    \end{split}\end{equation*}
    At this stage, we  fix a number $\chi\in(0,1)$ which will be determined in the course of the proof.
    Then, we choose $\nu_0=\nu_0(\text{data},\chi)<1$ and $B=B(\text{data},\chi)>1$ be such that
     \begin{equation}\begin{split}\label{nu0}
  (\gamma_0+\gamma_1)C^{n+p}\nu_0=\frac{\chi}{4}\qquad\text{and}\qquad\gamma_1C^{n-p}(B^{-p}+B^{1-p})<\frac{\chi}{4}.
    \end{split}\end{equation}
    This implies that $A_0(\bar l)\leq\frac{1}{2}\chi$. Our task now is to determine the value of $l_1$.
    We first consider the case
    \begin{equation*}\iint_{B_{r_0}(x_1)\times\{t_1-r_0^p<t<t_1\}}(l_0-u)_+^{(1+\lambda)(p-1)}\,\mathrm {d}x\,\mathrm {d}t=0.\end{equation*}
According to Lemma \ref{Lebesguepoint+}, we conclude that $u(x_1,t_1)\geq l_0=\mu_-+\frac{1}{4}\omega$, which proves
    the desired estimate \eqref{DeGiorgi1}.
    In the case
     \begin{equation*}\iint_{B_{r_0}(x_1)\times\{t_1-r_0^p<t<t_1\}}(l_0-u)_+^{(1+\lambda)(p-1)}\,\mathrm {d}x\,\mathrm {d}t>0,\end{equation*}
     we see that $A_0(l)\to+\infty$
    as $l\to l_0$.
    Noting that $A_0(l)$ is continuous and increasing, then there exists a number $\tilde l\in (\bar l, l_0)$ such that $A_0(\tilde l)=\chi$.
    From \eqref{l0} and  \eqref{omega1violated}, we infer that for $B>8$ there holds
    \begin{equation}\label{l0barl}l_0-\bar l\geq \frac{1}{32}\omega\geq \frac{1}{4
    B}\omega>\frac{1}{4}(\alpha_{-1}-\alpha_0),\end{equation}
    since $B\alpha_{-1}<\omega$.
    At this point, we set
    \begin{equation}\label{l1}
	l_1=\begin{cases}
\tilde l,&\quad \text{if}\quad \tilde l<l_0-\frac{1}{4}(\alpha_{-1}-\alpha_0),\\
	l_0-\frac{1}{4}(\alpha_{-1}-\alpha_0),&\quad \text{if}\quad \tilde l\geq l_0-\frac{1}{4}(\alpha_{-1}-\alpha_0).
	\end{cases}
\end{equation}
Moreover, we define $Q_0=Q_0(l_1)$ and $d_0=l_0-l_1$. In view of \eqref{l0barl}, we see that the definition of $l_1$ is
justified. Since $B\alpha_0<\omega$, we have $l_1\geq  \bar l>\mu_-+\frac{1}{8}B\alpha_0+\frac{1}{16}\omega$.

   Step 3: \emph{Determine the sequence $\{l_j\}_{j=0}^{+\infty}$.} Assume that we have chosen two
   sequences $l_1,\cdots,l_j$ and $d_0,\cdots,d_{j-1}$ such that for $i=1,\cdots,j$, there holds
    \begin{equation}\label{li}\frac{1}{2}\mu_-+\frac{1}{32}\omega+\frac{1}{2}l_{i-1}+\frac{1}{16}B\alpha_{i-1}<l_i\leq
    l_{i-1}-\frac{1}{4}(\alpha_{i-2}-\alpha_{i-1}),
    \end{equation}
    \begin{equation}\label{Aj-1}
    A_{i-1}(l_i)\leq \chi,
     \end{equation}
      \begin{equation}\label{lj}
      l_i>\mu_-+\frac{1}{8}B\alpha_{i-1}+\frac{1}{16}\omega.
       \end{equation}
       Then, we set $Q_i=Q_i(l_{i+1})$, for $i=1,2,\cdots,j-1$ and claim that
       \begin{equation}\label{Aj}
       A_j(\bar l)\leq \frac{1}{2}\chi,\qquad\text{where}\qquad \bar l=\frac{1}{2}l_j+\frac{1}{16}B\alpha_j+\frac{1}{32}\omega+
       \frac{1}{2}\mu_-.
        \end{equation}
        To prove \eqref{Aj}, we first assert that the inclusion $Q_i\subset \hat Q$ holds for $i=0,1,\cdots,j-1$, where
        $\hat Q$  is defined in \eqref{hat Q}. In view of \eqref{alpha1}, we
        see that $\alpha_{i-1}-\alpha_i\geq\frac{1}{3B}4^{-i-100}\omega$. From \eqref{li}, we can verify the inequality
  \begin{equation*}\begin{split}
  (\mu_+)^{1-m}(l_{i}-l_{i+1})^{2-p}r_i^p&\leq  (\mu_+)^{1-m}4^{p-2}(\alpha_{i-1}-\alpha_i)^{2-p}r_i^p
 \\& \leq 4^{-2i}4^{101(p-2)}(\mu_+)^{1-m}(3B)^{p-2}\omega^{2-p}R^p\leq (\mu_+)^{1-m}\left(\frac{\omega}{A}\right)^{2-p}R^p,
  \end{split}\end{equation*}
  provided that we choose
  \begin{equation}\begin{split}\label{first condition for A}
  A>4^{102}B.
   \end{split}\end{equation}
  This implies the inclusion $Q_i\subset \hat Q$ for $i=0,1,\cdots,j-1$ and hence,
$\mu_-\leq u\leq \mu_+$ on $Q_i$ for $i=0,1,\cdots,j-1$.
 We now turn our attention to the proof of \eqref{Aj}. To start with,
   we introduce a cutoff function $\tilde u=\max\left\{u,\frac{1}{128} \omega\right\}$
   and decompose
   $L_j(\bar l)=L^\prime_j(\bar l)\cup L^{\prime\prime}_j(\bar l)$, where
   \begin{equation}\begin{split}\label{Ldecomposition}
   L^\prime_j(\bar l)=L_j(\bar l)\cap \left\{\frac{l_j-\tilde  u}{l_j-\bar l}\leq\epsilon_1\right\}\qquad\text{and}\qquad
   L^{\prime\prime}_j(\bar l)=L_j(\bar l)\setminus L^\prime_j(\bar l).
   \end{split}\end{equation}
   Moreover, we observe from \eqref{li} that $\{u\leq l_j\}=\{\tilde u\leq l_j\}$ and $l_j-\frac{1}{128}\omega> \frac{1}{2}l_j$.
   This implies  that
    \begin{equation}\begin{split}\label{cut}&\iint_{L_j(\bar l)}\left(\frac{l_j-u}{l_j-\bar l}
    \right)^{(1+\lambda)(p-1)}\varphi_j(\bar l)^{k-p}
 \,\mathrm {d}x\mathrm {d}t
 \\&\leq\iint_{L_j(\bar l)\cap\left\{u>\frac{1}{128}\omega\right\}}\left(\frac{l_j-\tilde u}{l_j-\bar l}
 \right)^{(1+\lambda)(p-1)}\varphi_j(\bar l)^{k-p}
 \\&\quad+\iint_{L_j(\bar l)\cap\left\{u\leq\frac{1}{128}\omega\right\}}
 \left(\frac{l_j}{l_j-\bar l}\right)^{(1+\lambda)(p-1)}\varphi_j(\bar l)^{k-p}
 \,\mathrm {d}x\mathrm {d}t
 \\&\leq\iint_{L_j(\bar l)\cap\left\{u>\frac{1}{128}\omega\right\}}\left(\frac{l_j-\tilde u}{l_j-\bar l}
 \right)^{(1+\lambda)(p-1)}\varphi_j(\bar l)^{k-p}
 \\&\quad+2^{(1+\lambda)(p-1)}\iint_{L_j(\bar l)\cap\left\{u\leq\frac{1}{128}\omega\right\}}
 \left(\frac{l_j-\frac{1}{128}\omega}{l_j-\bar l}\right)^{(1+\lambda)(p-1)}\varphi_j(\bar l)^{k-p}
 \,\mathrm {d}x\mathrm {d}t
 \\&\leq 2^{(1+\lambda)(p-1)}\iint_{L_j(\bar l)}\left(\frac{l_j-\tilde u}{l_j-\bar l}\right)^{(1+\lambda)(p-1)}\varphi_j(\bar l)^{k-p}
 \,\mathrm {d}x\mathrm {d}t
  \end{split}\end{equation}
  and we can rewrite $A_j(\bar l)$ by
    \begin{equation*}\begin{split}
 A_j(\bar l)\leq&\gamma\frac{(\mu_+)^{m-1}(l_j-\bar l)^{p-2}}{r_j^{n+p}}\iint_{L_j(\bar l)}
 \left(\frac{l_j-\tilde u}{l_j-\bar l}\right)^{(1+\lambda)(p-1)}\varphi_j(\bar l)^{k-p}
 \,\mathrm {d}x\mathrm {d}t
 \\&+\esssup_t\frac{1}{r_j^n}\int_{B_j\times\{t\}}G\left(\frac{l_j-u}{l_j-\bar l}\right)\varphi_j(\bar l)^{k}
 \,\mathrm {d}x,
 \end{split}\end{equation*}
 where the constant $\gamma$ depends only upon the data.
 Furthermore, we infer from \eqref{li} and  \eqref{lj} that
 \begin{equation}\begin{split}\label{upper bound for l}
 l_j-\bar l&
       =\frac{1}{2}l_j-\frac{1}{16}B\alpha_j-\frac{1}{32}\omega-
       \frac{1}{2}\mu_-
   \\&
        \geq
        \frac{1}{4}(l_{j-1}-l_j)+\frac{1}{4}l_j
      +\frac{1}{64}\omega+\dfrac{1}{32}B\alpha_{j-1} -\frac{1}{16}B\alpha_j-\frac{1}{32}\omega-
       \frac{1}{4}\mu_-
       \\&\geq
       \frac{1}{4}(l_{j-1}-l_j)+\frac{1}{32}\left(B\alpha_{j-1}+\frac{1}{2}\omega\right)
+\dfrac{1}{32}B\alpha_{j-1} -\frac{1}{16}B\alpha_j-\frac{1}{64}\omega
\\&=\frac{1}{4}(l_{j-1}-l_j)+\frac{1}{16}B(\alpha_{j-1}-\alpha_j).
 \end{split}\end{equation}
 For simplicity of notation, we write $\varphi_i=\varphi_i(l_{i+1})$
 and observe from \eqref{upper bound for l} that
 \begin{equation*}\begin{split}
 (\mu_+)^{1-m}(l_j-\bar l)^{2-p}r_j^p\leq (\mu_+)^{1-m}\frac{r_{j-1}^p}{4^p}\left(\frac{l_{j-1}-l_j}{4}\right)^{2-p}=\frac{1}{16}
 (\mu_+)^{1-m}(l_{j-1}-l_j)
 ^{2-p}r_{j-1}^p,
  \end{split}\end{equation*}
  which yields $Q_j(\bar l)\subset Q_{j-1}\subset \hat Q$ and  $\varphi_{j-1}(x,t)=1$ for $(x,t)\in Q_j(\bar l)$.
  Since $u\leq l_j$ on $L_j(\bar l)$, we use \eqref{Aj-1} to deduce
  \begin{equation}\begin{split}\label{0st estimate}
 &\frac{(\mu_+)^{m-1}(l_j-\bar l)^{p-2}}{r_j^{n+p}}|L_j(\bar l)|
 \leq \frac{1}{r_j^n}\esssup_t\int_{L_j(t)}
  \varphi_{j-1}^k(\cdot,t)
 \,\mathrm {d}x
  \\&\leq \frac{4^n}{r_{j-1}^n}\esssup_t\int_{B_{j-1}}G\left(\frac{l_{j-1}-u}{l_{j-1}-l_j}\right)
  \varphi_{j-1}^k
 \,\mathrm {d}x\leq 4^n\chi,
 \end{split}\end{equation}
 where $L_j(t)=\{x\in B_j:u(\cdot,t)\leq l_j\}$.
 In view of \eqref{0st estimate}, we conclude that
 \begin{equation}\begin{split}\label{1st estimate}
 &\frac{(\mu_+)^{m-1}(l_j-\bar l)^{p-2}}{r_j^{n+p}}\iint_{L_j^\prime(\bar l)}
 \left(\frac{l_j-\tilde u}{l_j-\bar l}\right)^{(1+\lambda)(p-1)}\varphi_j(\bar l)^{k-p}
 \,\mathrm {d}x\mathrm {d}t
 \\&\leq \frac{(\mu_+)^{m-1}(l_j-\bar l)^{p-2}}{r_j^{n+p}}\epsilon_1^{(1+\lambda)(p-1)}|L_j(\bar l)|
\leq 4^n\epsilon_1^{(1+\lambda)(p-1)}\chi.
 \end{split}\end{equation}
 Moreover, for any fixed  $\epsilon_2<1$, we apply the Young's inequality to obtain
  \begin{equation*}\begin{split}
 &\frac{(\mu_+)^{m-1}(l_j-\bar l)^{p-2}}{r_j^{n+p}}\iint_{L_j^{\prime\prime}(\bar l)}
 \left(\frac{l_j-\tilde u}{l_j-\bar l}\right)^{(1+\lambda)(p-1)}\varphi_j(\bar l)^{k-p}
 \,\mathrm {d}x\mathrm {d}t
 \\&\leq\epsilon_2\frac{(\mu_+)^{m-1}(l_j-\bar l)^{p-2}}{r_j^{n+p}}|L_j(\bar l)|
 \\&\quad+\gamma(\epsilon_2)\frac{(\mu_+)^{m-1}(l_j-\bar l)^{p-2}}{r_j^{n+p}}
 \iint_{L_j^{\prime\prime}(\bar l)}\left(\frac{l_j-\tilde u}{l_j-\bar l}\right)^{p\frac{n+h}{nh}}\varphi_j(\bar l)^{(k-p)q}
 \,\mathrm {d}x\mathrm {d}t
 \\&=:T_1+T_2,
  \end{split}\end{equation*}
  with the obvious meanings of $T_1$ and $T_2$. Here, we set
\begin{equation}\label{hq}h=\frac{p}{p-1-\lambda}>1\qquad\text{and}\qquad q=p\frac{n+h}{nh(1+\lambda)(p-1)}=\frac{p-1-
\lambda+\frac{p}{n}}{p-1+\lambda p-\lambda}>1,\end{equation}
since $\lambda<\min\left\{p-1,\frac{1}{n}\right\}$. According to \eqref{0st estimate}, we see that $T_1\leq 4^n\epsilon_2\chi$.
To estimate $T_2$, we set
   \begin{equation}\begin{split}\label{tildepsi}\tilde\psi_j(x,t)=\frac{1}{l_j-\bar l}\left[\int_{\tilde u}^{l_j}
   \left(1+\frac{l_j-s}{l_j-\bar l}\right)^{-\frac{1}{p}-\frac{\lambda}{p}}\,\mathrm {d}s\right]_+\end{split}\end{equation}
  and apply Lemma \ref{lemmainequalitypsi-} with $(l,d)$ replaced by $(l_j,l_j-\bar l)$ to conclude with
   \begin{equation}\begin{split}\label{T21}
   T_2\leq \gamma\frac{(\mu_+)^{m-1}(l_j-\bar l)^{p-2}}{r_j^{n+p}}\iint_{L_j^{\prime\prime}(\bar l)}
   \tilde\psi_j^{p\frac{n+h}{n}}\varphi_j(\bar l)^{(k-p)q} \,\mathrm {d}x\mathrm {d}t.
   \end{split}\end{equation}
   Let $v=\tilde\psi_j\varphi_j^{k_1}$, where $k_1=\frac{(k-p)nq}{p(n+h)}$. Recalling that $p<n$,
   we use H\"older's inequality with $r=\frac{n}{n-p}$ and $r^\prime=\frac{n}{p}$, Sobolev's inequality and Lemma \ref{lemmainequalitypsi-}
   with $(l,d)$ replaced by $(l_j,l_j-\bar l)$ to deduce
   \begin{equation}\begin{split}\label{T22}
\iint_{L_j^{\prime\prime}(\bar l)}&
   \tilde\psi_j^{p\frac{n+h}{n}}\varphi_j(\bar l)^{(k-p)q} \,\mathrm {d}x\mathrm {d}t=
   \iint_{L_j^{\prime\prime}(\bar l)}
v^{p+\frac{ph}{n}} \,\mathrm {d}x\mathrm {d}t
\\&\leq \int_{t_1- (\mu_+)^{1-m}(l_j-\bar l)^{2-p}r_j^p}^{t_1}\left(\int_{B_j}
v^{\frac{np}{n-p}} \,\mathrm {d}x\right)^{\frac{n-p}{n}}
\left(\int_{L_j^{\prime\prime}(t)}
v^{h} \,\mathrm {d}x\right)^{\frac{p}{n}}
\mathrm {d}t
\\&\leq  \gamma\esssup_t\left(\int_{L_j^{\prime\prime}(t)}
\frac{l_j-\tilde u}{l_j-\bar l}\varphi_j(\bar l)^{k_1h} \,\mathrm {d}x\right)^{\frac{p}{n}}\iint_{Q_j(\bar l)}
|Dv|^p \,\mathrm {d}x\mathrm {d}t,
   \end{split}\end{equation}
   where
   $$L_j^{\prime\prime}(t)=\{x\in B_j:u(\cdot,t)\leq l_j\}\cap \left\{x\in B_j:\frac{l_j-\tilde u(\cdot,t)}{l_j-\bar l}>
   \epsilon_1\right\}.$$
   According to Lemma \ref{Gk}, we see that
   \begin{equation}\begin{split}\label{T23}
&   \int_{L_j^{\prime\prime}(t)}
\frac{l_j-\tilde u}{l_j-\bar l}\varphi_j(\bar l)^{k_1h} \,\mathrm {d}x
\leq \int_{L_j^{\prime\prime}(t)}\epsilon_1
G\left(\frac{1}{\epsilon_1}\frac{l_j-\tilde u}{l_j-\bar l} \right)\varphi_j(\bar l)^{k_1h}\,\mathrm {d}x
\\&\leq c(\epsilon_1)
\int_{L_j^{\prime\prime}(t)}
G\left(\frac{l_j-\tilde u}{l_j-\bar l} \right)\varphi_j(\bar l)^{k_1h}\,\mathrm {d}x
\leq c(\epsilon_1)
\int_{B_j}
G\left(\frac{l_j-u}{l_j-\bar l} \right)\varphi_j(\bar l)^{k_1h}\,\mathrm {d}x.
    \end{split}\end{equation}
    At this point, we apply Lemma \ref{Cac1}
    with $(l,d,\theta)$ replaced by  $(l_j,l_j-\bar l,(\mu_+)^{1-m}(l_j-\bar l)^{2-p}r_j^p)$
    to conclude that
     \begin{equation*}\begin{split}
  \esssup_t&
  \frac{1}{r_j^n}
  \int_{B_j}
G\left(\frac{l_j-u}{l_j-\bar l} \right)\varphi_j(\bar l)^{k_1h}\,\mathrm {d}x
  \\
 \leq &
 \gamma  \frac{(l_j-\bar l)^{p-2}}{r_j^{p+n}}\iint_{L_j(\bar l)}u^{m-1}\left(\frac{l_j-u}{l_j-\bar l}\right)^{(1+\lambda)(p-1)}
  \varphi_j(\bar l)^{k_1h-p}\,\mathrm {d}x\mathrm {d}t
  \\&+\gamma
    \frac{1}{r_j^n}\iint_{L_j(\bar l)}\frac{l_j-u}{l_j-\bar l}|\partial_t\varphi_j(\bar l)|\,\mathrm {d}x\mathrm {d}t
   \\&+\gamma \frac{r_j^{p-n}}{(l_j-\bar l)^p}(\mu_+)^{1-m}\int_{B_j}g^{\frac{p}{p-1}}\,\mathrm {d}x
  +\gamma \frac{r_j^{p-n}}{(l_j-\bar l)^{p-1}}(\mu_+)^{1-m}\int_{B_j}|f|\,\mathrm {d}x
  \\=&:T_3+T_4+T_5+T_6,
   \end{split}\end{equation*}
   with the obvious meanings of $T_3$-$T_7$.
To estimate $T_3$,
we apply \eqref{upper bound for l} and \eqref{Aj-1} to conclude that
    \begin{equation}\begin{split}\label{T3}
    T_3\leq &\gamma\frac{(l_j-\bar l)^{p-2-(1+\lambda)(p-1)}(\mu_+)^{m-1}}{r_j^{p+n}}\iint_{L_j(\bar l)}
    (l_j-u)^{(1+\lambda)(p-1)}
  \varphi_j(\bar l)^{k_1h-p}\,\mathrm {d}x\mathrm {d}t
  \\ \leq &\gamma\frac{(l_{j-1}-l_j)^{p-2}(\mu_+)^{m-1}}{r_{j-1}^{p+n}}\iint_{L_{j-1}
  ( l_j)}\left(\frac{l_{j-1}-u}{l_{j-1}-l_j}\right)^{(1+\lambda)(p-1)}
  \varphi_{j-1}^{k-p}\,\mathrm {d}x\mathrm {d}t
  \\ \leq & \gamma A_{j-1}(l_j)\leq \gamma_1\chi,
  \end{split}\end{equation}
  where the constant  $\gamma_1$ depends only upon the data.
Noting that $(1+\lambda)(p-1)>1$ and
  $|\partial_t\varphi_j(\bar l)|\leq 9(l_j-\bar l)^{p-2}(\mu_+)^{m-1}r_j^{-p}$, we infer from \eqref{0st estimate}
  and \eqref{Aj-1} that the inequality
     \begin{equation*}\begin{split}
    T_4\leq &\gamma
    \frac{(l_j-\bar l)^{p-2}(\mu_+)^{m-1}}{r_j^{n+p}}\iint_{L_j(\bar l)}\frac{l_j-u}
    {l_j-\bar l}\,\mathrm {d}x\mathrm {d}t
    \\ \leq &\gamma\frac{(l_j-\bar l)^{p-2}(\mu_+)^{m-1}}{r_j^{n+p}}|L_j(\bar l)|
  +\gamma
    \frac{(l_j-\bar l)^{p-2}(\mu_+)^{m-1}}{r_j^{n+p}}\iint_{L_j(\bar l)}\left(\frac{l_j-u}
    {l_j-\bar l}\right)^{(1+\lambda)(p-1)}\,\mathrm {d}x\mathrm {d}t
  \\ \leq & \gamma\chi+A_{j-1}(l_j)\leq \gamma_2\chi,
    \end{split}\end{equation*}
holds for the constant  $\gamma_2$ depending only upon the data. Finally, we deduce from
    \eqref{alpha1} and \eqref{upper bound for l} that $T_5+T_6\leq \gamma (B^{-p}+B^{-(p-1)})$. Combining the above estimates,
    we infer that the estimate
    \begin{equation}\begin{split}\label{important G}
  \esssup_t&
  \frac{1}{r_j^n}
  \int_{B_j}
G\left(\frac{l_j-u}{l_j-\bar l} \right)\varphi_j(\bar l)^{k_1h}\,\mathrm {d}x\leq \gamma\chi+\gamma (B^{-p}+B^{-(p-1)})
    \end{split}\end{equation}
    holds for a constant $\gamma$ depending only upon the data.
   We now turn our attention to the estimate of $T_2$.
   With the help of \eqref{T21}-\eqref{important G}, we can rewrite the upper bound for $T_2$ by
    \begin{equation*}\begin{split}
    T_2\leq & \gamma \frac{(\mu_+)^{m-1}(l_j-\bar l)^{p-2}}{r_j^n}(\chi+B^{-p}+B^{-(p-1)})^{\frac{p}{n}}
    \\&\times\left[\iint_{Q_j(\bar l)}
\varphi_j(\bar l)^{k_1p}|D\tilde \psi_j|^p \,\mathrm {d}x\mathrm {d}t
+\iint_{Q_j(\bar l)}
\varphi_j(\bar l)^{(k_1-1)p}\tilde \psi_j^p|D\varphi_j|^p \,\mathrm {d}x\mathrm {d}t\right]
\\=&:\gamma(\chi+B^{-p}+B^{-(p-1)})^{\frac{p}{n}}(T_7+T_8),
    \end{split}\end{equation*}
    with the obvious meanings of $T_7$ and $T_8$. We first consider the  estimate for $T_7$. In the case $\mu_-\leq\frac{1}{2}\mu_+$.
    We have $\mu_+\leq\mu_-+\omega\leq \frac{1}{2}\mu_++\omega$, which implies that $\mu_+\leq2\omega$.
    Therefore, we deduce
    \begin{equation*}\tilde u\geq\frac{1}{128}\omega\geq\frac{1}{256}\mu_+.\end{equation*}
    In the case $\mu_->\frac{1}{2}\mu_+$.
    We get $\tilde u\geq u\geq\mu_->\frac{1}{2}\mu_+$ on $Q_j(\bar l)$. In both cases, we see that
    \begin{equation}\label{um-1}\tilde u^{m-1}\geq \left(\frac{1}{256}\mu_+\right)^{m-1}
    \qquad\text{on}\qquad Q_j(\bar l).\end{equation}
    In view of \eqref{um-1}, we conclude that
    \begin{equation*}\begin{split}
    T_7&=\frac{(\mu_+)^{m-1}(l_j-\bar l)^{p-2}}{r_j^n}
    \iint_{Q_j(\bar l)}
\varphi_j(\bar l)^{k_1p}|D\tilde \psi_j|^p \,\mathrm {d}x\mathrm {d}t
\\&\leq \gamma \frac{(l_j-\bar l)^{p-2}}{r_j^n}
    \iint_{Q_j(\bar l)}\tilde u^{m-1}
|D\tilde \psi_j|^p \varphi_j(\bar l)^{k_1p}\,\mathrm {d}x\mathrm {d}t.
    \end{split}\end{equation*}
    Taking into account that $\{\tilde u<l_j\}=\{u<l_j\}$ and $D\tilde u=0$ on $\left\{u\leq\frac{1}{128}\omega\right\}$,
    we conclude from \eqref{tildepsi} that
    \begin{equation*}\begin{split}
    \tilde u^{\frac{m-1}{p}}
|D\tilde \psi_j|&=\tilde u^{\frac{m-1}{p}}|D\tilde u|\frac{1}{l_j-\bar l}
   \left(1+\frac{l_j-\tilde u}{l_j-\bar l}\right)^{-\frac{1}{p}-\frac{\lambda}{p}}
  \chi_{\left\{\tilde u<l_j\right\}}
  \\&=u^{\frac{m-1}{p}}|Du|\frac{1}{l_j-\bar l}
   \left(1+\frac{l_j-u}{l_j-\bar l}\right)^{-\frac{1}{p}-\frac{\lambda}{p}}\chi_{\left\{\frac{1}{128} \omega<u<l_j\right\}}
  \\&\leq u^{\frac{m-1}{p}}|Du|\frac{1}{l_j-\bar l}
   \left(1+\frac{l_j-u}{l_j-\bar l}\right)^{-\frac{1}{p}-\frac{\lambda}{p}}
\chi_{\left\{u<l_j\right\}}=u^{\frac{m-1}{p}}|D\psi_j|,
     \end{split}\end{equation*}
     where
        \begin{equation*}\begin{split}\psi_j(x,t)=\frac{1}{l_j-\bar l}\left[\int_{u}^{l_j}
   \left(1+\frac{l_j-s}{l_j-\bar l}\right)^{-\frac{1}{p}-\frac{\lambda}{p}}\,\mathrm {d}s\right]_+.\end{split}\end{equation*}
  At this point, we apply Lemma \ref{Cac1}
   with $(l,d,\theta)$ replaced by  $(l_j,l_j-\bar l,(\mu_+)^{1-m}(l_j-\bar l)^{2-p}r_j^p)$.
 Taking into account the estimates for $T_3$-$T_6$, we conclude that
  \begin{equation*}\begin{split}
  T_7\leq &\gamma \frac{(l_j-\bar l)^{p-2}}{r_j^n}
    \iint_{Q_j(\bar l)}u^{m-1}
|D\psi_j|^p \varphi_j(\bar l)^{k_1p}\,\mathrm {d}x\mathrm {d}t
  \\
 \leq &\gamma  \frac{(l_j-\bar l)^{p-2}}{r_j^p}\iint_{L_j(\bar l)}u^{m-1}\left(\frac{l_j-u}{l_j-\bar l}\right)^{(1+\lambda)(p-1)}
 \varphi_j(\bar l)^{(k_1-1)p}\,\mathrm {d}x\mathrm {d}t
  \\&+\gamma \iint_{L_j(\bar l)}\frac{l_j-u}{l_j-\bar l}|\partial_t\varphi_j(\bar l)|\,\mathrm {d}x\mathrm {d}t
+\gamma \frac{r_j^p}{(l_j-\bar l)^p}(\mu_+)^{1-m}\int_{B_j}g^{\frac{p}{p-1}}\,\mathrm {d}x
 \\& +\gamma \frac{r_j^p}{(l_j-\bar l)^{p-1}}(\mu_+)^{1-m}\int_{B_j}|f|\,\mathrm {d}x
 \\ \leq&\gamma (\chi+B^{-p}+B^{-(p-1)}).
  \end{split}\end{equation*}
We now turn our attention to the estimate of $T_8$. To this end, we use H\"older inequality to deduce
an upper bound for $\tilde\psi_j$
   \begin{equation}\begin{split}\label{psitilde}
   \tilde\psi_j(x,t)\leq\frac{1}{l_j-\bar l}\left[\int_{\tilde u}^{l_j}
   \left(1+\frac{l_j-s}{l_j-\bar l}\right)^{-1-\lambda}\,\mathrm {d}s\right]^{\frac{1}{p}}(l_j-\tilde u)_+^{\frac{1}{p^\prime}}
   \leq \frac{(l_j-\tilde u)_+^{\frac{1}{p^\prime}}}{(l_j-\bar l)^{\frac{1}{p^\prime}}}.
   \end{split}\end{equation}
   Since $|D\varphi_j|\leq  r_j^{-1}$, we use Young's inequality, \eqref{0st estimate} and \eqref{T3} to conclude that
    \begin{equation*}\begin{split}
  T_8&\leq \frac{(\mu_+)^{m-1}(l_j-\bar l)^{p-2}}{r_j^n}\iint_{Q_j(\bar l)}
\varphi_j(\bar l)^{(k_1-1)p}\tilde \psi_j^p|D\varphi_j|^p \,\mathrm {d}x\mathrm {d}t
\\&\leq  \frac{(\mu_+)^{m-1}(l_j-\bar l)^{p-2}}{r_j^{n+p}}\iint_{L_j(\bar l)}
\left(\frac{l_j-\tilde u}{l_j-\bar l}\right)^{p-1}
\varphi_j(\bar l)^{(k_1-1)p} \,\mathrm {d}x\mathrm {d}t
\\&\leq \frac{(\mu_+)^{m-1}(l_j-\bar l)^{p-2}}{r_j^{n+p}}|L_j(\bar l)|
\\&\quad+
\frac{(\mu_+)^{m-1}(l_j-\bar l)^{p-2}}{r_j^{n+p}}\iint_{L_j(\bar l)}
\left(\frac{l_j-u}{l_j-\bar l}\right)^{(1+\lambda)(p-1)}
\varphi_j(\bar l)^{(k_1-1)p} \,\mathrm {d}x\mathrm {d}t
\\&\leq \gamma \chi
  \end{split}\end{equation*}
  and hence we arrive at
 $T_2\leq \gamma (\chi+B^{-p}+B^{-(p-1)})^{1+\frac{p}{n}}$ for a constant $\gamma$ depending only upon the data.
Finally, we arrive at
   \begin{equation}\begin{split}\label{Lprimeprime}
  &\frac{(\mu_+)^{m-1}(l_j-\bar l)^{p-2}}{r_j^{n+p}}\iint_{L_j^{\prime\prime}(\bar l)}
  \left(\frac{l_j-\tilde u}{l_j-\bar l}\right)^{(1+\lambda)(p-1)}\varphi_j(\bar l)^{k-p}
 \,\mathrm {d}x\mathrm {d}t
 \\&\leq 4^n\epsilon_2\chi+\gamma(\epsilon_2)(\chi+B^{-p}+B^{-(p-1)})^{1+\frac{p}{n}}.
 \end{split}\end{equation}
This also implies that
  \begin{equation}\begin{split}\label{AA1}
  &\frac{(\mu_+)^{m-1}(l_j-\bar l)^{p-2}}{r_j^{n+p}}\iint_{L_j(\bar l)}\left(\frac{l_j-\tilde u}{l_j-\bar l}
  \right)^{(1+\lambda)(p-1)}\varphi_j(\bar l)^{k-p}
 \,\mathrm {d}x\mathrm {d}t
 \\&\leq 4^n\epsilon_1^{(1+\lambda)(p-1)}\chi+
 4^n\epsilon_2\chi+\gamma(\epsilon_2)(\chi+B^{-p}+B^{-(p-1)})^{1+\frac{p}{n}}.
 \end{split}\end{equation}
 Our next goal is to refine the estimate \eqref{important G}.
 To this aim, we apply Lemma \ref{Cac1} with $(l,d,\theta)$ replaced by  $(l_j,l_j-\bar l,(\mu_+)^{1-m}(l_j-\bar l)^{2-p}r_j^p)$
 to obtain
 \begin{equation}\begin{split}\label{estimateforG}
  \esssup_t&
  \frac{1}{r_j^n}
  \int_{B_j}
G\left(\frac{l_j-u}{l_j-\bar l} \right)\varphi_j(\bar l)^k\,\mathrm {d}x
  \\
 \leq &
 \gamma  \frac{(l_j-\bar l)^{p-2}}{r_j^{p+n}}\iint_{L_j(\bar l)}u^{m-1}\left(\frac{l_j-u}{l_j-\bar l}\right)^{(1+\lambda)(p-1)}
  \varphi_j(\bar l)^{k-p}\,\mathrm {d}x\mathrm {d}t
  \\&+\gamma
    \frac{1}{r_j^n}\iint_{L_j(\bar l)}\frac{l_j-u}{l_j-\bar l}\varphi_j(\bar l)^{k-1}
    |\partial_t\varphi_j(\bar l)|\,\mathrm {d}x\mathrm {d}t
   \\&+\gamma \frac{r_j^{p-n}}{(l_j-\bar l)^p}(\mu_+)^{1-m}\int_{B_j}g^{\frac{p}{p-1}}\,\mathrm {d}x
  +\gamma \frac{r_j^{p-n}}{(l_j-\bar l)^{p-1}}(\mu_+)^{1-m}\int_{B_j}|f|\,\mathrm {d}x
  \\=&:S_1+S_2+S_3+S_4,
   \end{split}\end{equation}
   with the obvious meanings of $S_1$-$S_4$. To estimate $S_1$, we apply \eqref{AA1} and \eqref{cut} to deduce
   \begin{equation*}\begin{split}
   S_1&\leq 2^{(1+\lambda)(p-1)}
   \gamma\frac{(l_j-\bar l)^{p-2}(\mu_+)^{m-1}}{r_j^{p+n}}\iint_{L_j(\bar l)}
   \left(\frac{l_j-\tilde u}{l_j-\bar l}\right)^{(1+\lambda)(p-1)}
  \varphi_j(\bar l)^{k-p}\,\mathrm {d}x\mathrm {d}t
  \\&\leq 2^{(1+\lambda)(p-1)}
   \gamma\left[4^n\epsilon_1^{(1+\lambda)(p-1)}\chi+
 4^n\epsilon_2\chi+\gamma(\epsilon_2)(\chi+B^{-p}+B^{-(p-1)})^{1+\frac{p}{n}}\right].
   \end{split}\end{equation*}
   Next, we consider the estimate for $S_2$. Similar to \eqref{cut}, we infer from $l_j-\frac{1}{128} \omega> \frac{1}{2}l_j$ that
   \begin{equation*}\begin{split}
   S_2=&\gamma
    \frac{1}{r_j^n}
    \iint_{L_j(\bar l)\cap \left\{u>\frac{1}{128}\omega\right\}}
    \frac{l_j-\tilde   u}{l_j-\bar l}\varphi_j(\bar l)^{k-1}|\partial_t\varphi_j(\bar l)|\,\mathrm {d}x\mathrm {d}t+
   \\&+\gamma
    \frac{1}{r_j^n} \iint_{L_j(\bar l)\cap \left\{u\leq\frac{1}{128}\omega\right\}}
    \frac{l_j}{l_j-\bar l}\varphi_j(\bar l)^{k-1}|\partial_t\varphi_j(\bar l)|\,\mathrm {d}x\mathrm {d}t
    \\ \leq& 2\gamma
    \frac{1}{r_j^n}
    \iint_{L_j(\bar l)}
    \frac{l_j-\tilde   u}{l_j-\bar l}\varphi_j(\bar l)^{k-1}|\partial_t\varphi_j(\bar l)|\,\mathrm {d}x\mathrm {d}t.
    \end{split}\end{equation*}
Furthermore, we decompose
   $L_j(\bar l)=L^\prime_j(\bar l)\cup L^{\prime\prime}_j(\bar l)$, where $L^\prime_j(\bar l)$ and $L^{\prime\prime}_j(\bar l)$
   satisfy \eqref{Ldecomposition}. Since $|\partial_t\varphi_j(\bar l)|\leq 9(l_j-\bar l)^{p-2}(\mu_+)^{m-1}r_j^{-p}$, we
   use \eqref{0st estimate}
   and \eqref{Lprimeprime} to conclude that
    \begin{equation*}\begin{split}
   S_2\leq & \gamma
    \frac{(l_j-\bar l)^{p-2}(\mu_+)^{m-1}}{r_j^{n+p}}\iint_{L_j(\bar l)}\frac{l_j-\tilde u}{l_j-\bar l}\varphi_j(\bar l)^{k-1}
    \,\mathrm {d}x\mathrm {d}t
   \\
   \leq &\gamma\epsilon_1\frac{(\mu_+)^{m-1}(l_j-\bar l)^{p-2}}{r_j^{n+p}}|L_j^\prime(\bar l)|
   \\&+\gamma
   \frac{(l_j-\bar l)^{p-2}(\mu_+)^{m-1}}{r_j^{p+n}}\iint_{L_j^{\prime\prime}(\bar l)}
   \left(\frac{l_j-\tilde u}{l_j-\bar l}\right)^{(1+\lambda)(p-1)}
  \varphi_j(\bar l)^{k-p}\,\mathrm {d}x\mathrm {d}t
  \\ \leq & 4^n\epsilon_1\chi+\gamma\left[4^n\epsilon_2\chi+\gamma(\epsilon_2)(\chi+B^{-p}+B^{-(p-1)})^{1+\frac{p}{n}}\right].
     \end{split}\end{equation*}
     Similar to the estimates of $T_5$ and $T_6$, we see that $S_3+S_4\leq \gamma (B^{-p}+B^{-(p-1)})$.
     Plugging the estimates for $S_1$-$S_4$
     into \eqref{estimateforG} and taking into account \eqref{AA1}, we arrive at
     \begin{equation}\begin{split}\label{Ajbar}
     A_j(\bar l)\leq &\gamma (B^{-p}+B^{-(p-1)})+2^{(1+\lambda)(p-1)}
   \gamma4^n(\epsilon_1+
 \epsilon_2)\chi
 \\&+\gamma(\epsilon_1,\epsilon_2)\left(\chi^{1+\frac{p}{n}}+(B^{-p}+B^{-(p-1)})^{1+\frac{p}{n}}\right).
      \end{split}\end{equation}
At this point, we first choose $\epsilon_1$ and $\epsilon_2$ be such that
 \begin{equation}\begin{split}\label{epsilon}
 \epsilon_1=\epsilon_2=\frac{1}{2^{3+2n+(1+\lambda)(p-1)}\gamma}.
 \end{split}\end{equation}
 Next, we determine the value of $\chi$ by
  \begin{equation}\begin{split}\label{chi}
  \chi=\frac{1}{100^{\frac{n}{p}}\gamma(\epsilon_1,\epsilon_2)^{\frac{n}{p}}}.
  \end{split}\end{equation}
 Finally, with the choices of $\epsilon_1$, $\epsilon_2$ and $\chi$, we set $B$ so large that
  \begin{equation*}\begin{split}
  B^{-p}+B^{1-p}<\min\left\{\frac{1}{100\gamma}\chi,\ \left(\frac{1}{100\gamma(\epsilon_1,
  \epsilon_2)}\chi\right)^\frac{1}{1+\frac{p}{n}}\right\}.
   \end{split}\end{equation*}
      With the choices of $\epsilon_1$, $\epsilon_2$, $\chi$ and $B$,
      we get $A_j(\bar l)\leq \frac{1}{2}\chi$,
      which completes the proof of
      \eqref{Aj}.
      We remark that the choice of $\chi$ in \eqref{chi} determines the value of $\nu_0$ via \eqref{nu0}, i.e.,
     \begin{equation}\begin{split}\label{nu0fixed}\nu_0=\frac{\chi}{4(\gamma_0+\gamma_1)C^{n+p}},\end{split}\end{equation}
     where $\gamma_0$ and $\gamma_1$ are the constants in \eqref{nu0}.
Our task now is to determine the value of $l_{j+1}$.
     To this end, we first consider the case
    \begin{equation*}\iint_{B_{r_j}(x_1)\times\left\{t_1-r_j^p<t<t_1\right\}}(l_j-u)_+^{(1+\lambda)(p-1)}
    \,\mathrm {d}x\,\mathrm {d}t=0.\end{equation*}
    Then, we apply Lemma \ref{Lebesguepoint+} and \eqref{lj} to conclude that
   \begin{equation*} u(x_1,t_1)\geq l_j\geq  \mu_-+\frac{1}{8}B\alpha_{i-1}+\frac{1}{16}\omega
   > \mu_-+\frac{1}{16}\omega,
   \end{equation*}
   which proves \eqref{DeGiorgi1}.
   Next, we consider the case
     \begin{equation*}\iint_{B_{r_j}(x_1)\times\left\{t_1-r_j^p<t<t_1\right\}}(l_j-u)_+^{(1+\lambda)(p-1)}
     \,\mathrm {d}x\,\mathrm {d}t>0.\end{equation*}
     We see that $A_j(l)\to+\infty$
    as $l\to l_j$.
    Noting that $A_j(l)$ is continuous and increasing, then there exists a number $\tilde l\in (\bar l, l_j)$ such that $A_j(\tilde l)=\chi$.
    At this point, we choose $l_{j+1}$ via
    \begin{equation}\label{def lj+1}
	l_{j+1}=\begin{cases}
\tilde l,&\quad \text{if}\quad \tilde l<l_j-\frac{1}{4}(\alpha_{j-1}-\alpha_j),\\
	l_j-\frac{1}{4}(\alpha_{j-1}-\alpha_j),&\quad \text{if}\quad \tilde l\geq l_j-\frac{1}{4}(\alpha_{j-1}-\alpha_j).
	\end{cases}
\end{equation}
According to \eqref{upper bound for l},
we see that $\bar l<l_j-\frac{1}{4}(\alpha_{j-1}-\alpha_j)$ and
the definition of $l_{j+1}$ is justified.

Step 4: \emph{Proof of the inequality \eqref{DeGiorgi1}.}
To start with, we set
$Q_j=Q_j(l_{j+1})$, $L_j=L_j(l_{j+1})$ and $d_j=l_j-l_{j+1}$. In view of \eqref{def lj+1}, we observe that \eqref{li} and \eqref{Aj-1}
hold with $i=j+1$. Moreover, from \eqref{lj}$_j$, we find that
\begin{equation*}\begin{split}
l_{j+1}>&\bar l=\frac{1}{2}l_j+\frac{1}{16}B\alpha_j+\frac{1}{32}\omega+
       \frac{1}{2}\mu_-
       \\&>\frac{1}{2}\left(\mu_-+\frac{1}{8}B\alpha_{j-1}+\frac{1}{16}\omega\right)+\frac{1}{16}B\alpha_j+\frac{1}{32}\omega+
       \frac{1}{2}\mu_-
       \\&=\mu_-+\frac{1}{16}B(\alpha_j+\alpha_{j-1})+\frac{1}{16}\omega\geq \mu_-+\frac{1}{8}B\alpha_j+\frac{1}{16}\omega,
\end{split}\end{equation*}
since $\alpha_{j-1}\geq\alpha_j$. This implies that \eqref{lj}$_{j+1}$ holds. Repeating the arguments above, we continue to define
$l_{j+2}$ and therefore we can construct a sequence of numbers $\{l_i\}_{i=0}^\infty$ satisfying \eqref{li}-\eqref{lj}.
Since the sequence $\{l_i\}_{i=0}^\infty$ is decreasing, we infer from \eqref{lj} that the limitation of $l_i$ exists.
This also implies that $d_i\to0$ as $i\to\infty$. Set $$\hat l=\lim_{i\to\infty}l_i$$ and we claim that $\hat l=u(x_1,t_1)$.
Noting that $\mu_+>0$ and \eqref{Aj-1} holds for any $i=1,2,\cdots$, we conclude that
  \begin{equation*}\begin{split}\frac{1}{r_i^{n+p}}&\iint_{B_{r_i}(x_1)\times\left\{t_1-r_i^p<t<t_1\right\}}
  (\hat l-u)_+^{(1+\lambda)(p-1)}\,\mathrm {d}x\,\mathrm {d}t\leq
  \frac{4^{n+p}}{r_{i-1}^{n+p}}\iint_{L_{i-1}}\left(l_{i-1}-u\right)^{(1+\lambda)(p-1)}\varphi_{i-1}^{k-p}
 \,\mathrm {d}x\mathrm {d}t
 \\&\leq 4^{n+p}(\mu_+)^{1-m}A_{i-1}(l_i)d_{i-1}^{(1+\lambda)(p-1)-(p-2)}\leq 4^{n+p}(\mu_+)^{1-m}\chi d_{i-1}^{(1+\lambda)(p-1)-(p-2)}
 \to 0
  \end{split}\end{equation*}
  as $i\to\infty$. According to Lemma \ref{Lebesguepoint+}, we have $\hat l=u(x_1,t_1)$. Next, we show that for any $j\geq1$ there holds
  \begin{equation}\begin{split}\label{djdj-1}
  d_j\leq &\frac{1}{4}d_{j-1}+\gamma \frac{4^{-j-100}}{B}\omega+
  \gamma\left(r_{j-1}^{p-n}(\mu_+)^{1-m}\int_{B_{j-1}} g(y)^{\frac{p}{p-1}}
  \,\mathrm {d}y\right)^{\frac{1}{p}}\\&+\gamma\left(r_{j-1}^{p-n}(\mu_+)^{1-m}\int_{B_{j-1}}|f(y)|
  \,\mathrm {d}y\right)^{\frac{1}{p-1}}.
  \end{split}\end{equation}
  To start with, for any fixed $j\geq 1$, we first assume that
  \begin{equation}\begin{split}\label{djdj-1proof}
  d_j>\frac{1}{4}d_{j-1}\qquad\text{and}\qquad d_j>\frac{1}{4}(\alpha_{j-1}-\alpha_j),
   \end{split}\end{equation}
   since otherwise \eqref{djdj-1} holds immediately. From $d_j>\frac{1}{4}(\alpha_{j-1}-\alpha_j)$, we infer from \eqref{def lj+1} that
   $A_j(l_{j+1})=A_j(\tilde l)=\chi$. In view of $d_j>\frac{1}{4}d_{j-1}$, we find that
    \begin{equation*}\begin{split}
 (\mu_+)^{1-m}d_j^{2-p}r_j^p\leq (\mu_+)^{1-m}\frac{r_{j-1}^p}{4^p}\left(\frac{d_{j-1}}{4}\right)^{2-p}=\frac{1}{16}
 (\mu_+)^{1-m}d_{j-1}
 ^{2-p}r_{j-1}^p,
  \end{split}\end{equation*}
  which yields $Q_j\subset Q_{j-1}\subset \hat Q$ and  $\varphi_{j-1}(x,t)=1$ for $(x,t)\in Q_j$.
  Taking into account that $d_j>\frac{1}{4}d_{j-1}$, we can
  repeat the arguments from Step 3. Then, we deduce the estimate similar to \eqref{Ajbar},
  \begin{equation*}\begin{split}
  \chi=A_j(l_{j+1})\leq&
  \gamma \frac{r_j^{p-n}}{d_j^p}(\mu_+)^{1-m}\int_{B_j}g^{\frac{p}{p-1}}\,\mathrm {d}x+
  \gamma \frac{r_j^{p-n}}{d_j^{p-1}}(\mu_+)^{1-m}\int_{B_j}|f|\,\mathrm {d}x
  \\
  &+2^{(1+\lambda)(p-1)}
   \gamma4^n(\epsilon_1+
 \epsilon_2)\chi
 +\gamma(\epsilon_1,\epsilon_2)\chi^{1+\frac{p}{n}}\\&+\gamma\left[
 \frac{r_j^{p-n}}{d_j^p}(\mu_+)^{1-m}\int_{B_j}g^{\frac{p}{p-1}}\,\mathrm {d}x+\gamma
 \frac{r_j^{p-n}}{d_j^{p-1}}(\mu_+)^{1-m}\int_{B_j}|f|\,\mathrm {d}x\right]^{1+\frac{p}{n}}.
  \end{split}\end{equation*}
  Recalling that the choices of $\epsilon_1$, $\epsilon_2$ and $\chi$ in \eqref{epsilon}-\eqref{chi} yields that
    \begin{equation*}\begin{split}
  2^{(1+\lambda)(p-1)}
   \gamma4^n(\epsilon_1+
 \epsilon_2)\chi
 +\gamma(\epsilon_1,\epsilon_2)\chi^{1+\frac{p}{n}}\leq\frac{1}{2}\chi.
 \end{split}\end{equation*}
 Consequently, we conclude that either
  \begin{equation*}\begin{split}
  d_j\leq \gamma\left(r_j^{p-n}(\mu_+)^{1-m}\int_{B_j}g^{\frac{p}{p-1}}\,\mathrm {d}x\right)^{\frac{1}{p}}
  \qquad\text{or}\qquad
  d_j\leq
  \gamma \left(r_j^{p-n}(\mu_+)^{1-m}\int_{B_j}|f|\,\mathrm {d}x\right)^{\frac{1}{p-1}},
   \end{split}\end{equation*}
   which proves the inequality \eqref{djdj-1}. Let $J>1$ be a fixed integer. We sum up the
   inequality \eqref{djdj-1} for $j=1,\cdots,J-1$
   and there holds
   \begin{equation*}\begin{split}
   l_1-l_J\leq& \frac{1}{3}d_0+\gamma \frac{1}{B}
   \sum_{j=1}^{J-1} 4^{-j-100}\omega+\gamma\sum_{j=1}^{J-1}\left(r_{j-1}^{p-n}(\mu_+)^{1-m}\int_{B_{j-1}}|f(y)|
  \,\mathrm {d}y\right)^{\frac{1}{p-1}}
\\&+ \gamma\sum_{j=1}^{J-1}\left(r_{j-1}^{p-n}(\mu_+)^{1-m}\int_{B_{j-1}} g(y)^{\frac{p}{p-1}}
  \,\mathrm {d}y\right)^{\frac{1}{p}},
   \end{split}\end{equation*}
   where the constant $\gamma$ depends only upon the data.
   Since $d_0=l_0-l_1$ and $l_0=\mu_-+\frac{1}{4}\omega$, we use \eqref{omega1violated} to obtain
  \begin{equation}\begin{split}\label{uupperbound}
  \mu_-+\frac{1}{4}\omega\leq
 & \frac{4}{3}d_0+l_J+\gamma\frac{1}{B}\omega
  +\gamma(\mu_+)^{\frac{1-m}{p-1}}\int_0^{4C^{-1}R}\left(\frac{1}{r^{n-p}}\int_{B_r(x_1)}f(y)
\,\mathrm {d}y\right)^{\frac{1}{p-1}}\frac{1}{r}\,\mathrm {d}r
  \\&+\gamma(\mu_+)^{\frac{1-m}{p}}\int_0^{4C^{-1}R}\left(\frac{1}{r^{n-p}}\int_{B_r(x_1)}g(y)^{\frac{p}{p-1}}
\,\mathrm {d}y\right)^{\frac{1}{p}}\frac{1}{r}\,\mathrm {d}r
\\ \leq & \frac{4}{3}d_0+l_J+\gamma\frac{1}{B}\omega
   \end{split}\end{equation}
   for a constant $\gamma$ depending only upon the data.
   Recalling that $d_0\leq l_0-\bar  l$ where $\bar l=\frac{1}{2}l_0+\frac{1}{2}\mu_-+\frac{1}
   {16}B\alpha_0+\frac{1}{32}\omega$. According to \eqref{l0}, we have $d_0\leq\frac{1}{8}\omega$.
  Letting $J\to\infty$ in \eqref{uupperbound}, we infer that
   \begin{equation*}\begin{split}
   u(x_1,t_1)>\mu_-+\frac{1}{24}\omega,
    \end{split}\end{equation*}
    provided that we choose $B>24\gamma$.
    Since $(x_1,t_1)$ is a fixed Lebesgue point of $u$, we conclude that
    the inequality \eqref{DeGiorgi1} holds for
    almost everywhere point in $Q_{\frac{1}{4}R}^-(\bar t)$, provided that
    $\nu_0$ satisfies \eqref{nu0fixed}.
    The proof of the lemma is now complete.
\end{proof}
We now define a time level
$\hat t=\bar t-(\mu_+)^{1-m}\omega^{2-p}\left(\frac{1}{4}R\right)^p$
 and it follows from Lemma \ref{lemmaDeGiorgi1} that
 \begin{equation}\begin{split}\label{hat t lower}
 u(x,\hat  t)>\mu_-+\frac{\omega}{2^5}\qquad\text{for}\qquad x\in B_{\frac{R}{4}}.
  \end{split}\end{equation}
 Furthermore, there exists a constant $A_1>0$ such that $4^{-\frac{p}{p-2}}<A_1<A$ and
  \begin{equation*}\begin{split}
\hat t=-(\mu_+)^{1-m}A_1^{p-2}\omega^{2-p}R^p.
 \end{split}\end{equation*}
Next, we establish the following result regarding the time propagation of positivity.
\begin{lemma}\label{timeexpandlemma}
Let $u$ be a bounded nonnegative weak solution to \eqref{parabolic}-\eqref{A} in $\Omega_T$.
Given $\nu_*\in(0,1)$, there exists a constant $s_*=s_*(\text{data},\nu_*)>5$, such that either
\begin{equation}
\label{omega2}\omega\leq 2^{\frac{2}{p}s_*}(\mu_+)^{\frac{1-m}{p}}F_1(2R)+2^{\frac{1}{p-1}s_*}(\mu_+)^{\frac{1-m}{p-1}}F_2(2R)
\end{equation}
or
\begin{equation}\begin{split}\label{time expand}
\left|\left\{x\in B_{\frac{R}{8}}:u(x,t)<\mu_-+\frac{\omega}{2^{s_*}}\right\}\right|\leq \nu_*|B_{\frac{R}{8}}|
\end{split}\end{equation}
holds for any $t\in (\hat t,0)$.
 \end{lemma}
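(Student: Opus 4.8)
The plan is to carry out DiBenedetto's expansion of positivity in time, built on the logarithmic estimate \eqref{lnCac}. If \eqref{omega2} holds there is nothing to prove, so suppose it fails. We will choose $s_*>6$; among the requirements on $s_*$, we take it large enough that $2^{s_*/(p-1)}\geq B$ and $2^{2s_*/p}\geq B$, so that the failure of \eqref{omega2} forces the failure of \eqref{omega1}, and hence Lemma \ref{lemmaDeGiorgi1} yields the pointwise lower bound \eqref{hat t lower}, namely $u(\cdot,\hat t)>\mu_-+\omega/2^5$ on $B_{R/4}$. Moreover the failure of \eqref{omega2} gives in particular
\[
2^{s_*}(\mu_+)^{1-m}\Big(\tfrac{F_2(2R)}{\omega}\Big)^{p-1}\leq1\qquad\text{and}\qquad 2^{2s_*}(\mu_+)^{1-m}\Big(\tfrac{F_1(2R)}{\omega}\Big)^{p}\leq1,
\]
which will be used at the very end. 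Since $\hat Q\subset\widehat Q_R\subset Q_R$, $-\hat t=A_1^{p-2}(\mu_+)^{1-m}\omega^{2-p}R^p$ and $A_1<A$, the cylinder $B_{R/4}\times(\hat t,0)$ lies inside $\hat Q$, and therefore $\mu_-\leq u\leq\mu_+$ on it.

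I would first apply \eqref{lnCac} on $B_{R/4}\times(\hat t,0)$ (which is of the form $Q_{\rho,\theta}(z_0)$ with $\rho=R/4$, $\theta=-\hat t$, $z_0=(0,0)$) with the logarithmic function $\psi^-$ from \eqref{ln} associated to $k=\mu_-+\omega/2^5$, $H_k^-=\omega/2^5$ (admissible since $(u-k)_-=(k-u)_+\leq k-\mu_-=\omega/2^5$) and $c=\omega/2^{s_*}$, using a time-independent cutoff $\zeta=\zeta(x)$ that satisfies \eqref{def zeta}, equals $1$ on $B_{R/8}$, vanishes outside $B_{R/4}$, and has $|D\zeta|\leq\gamma R^{-1}$. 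Because $u(\cdot,\hat t)>k$ on $B_{R/4}$ we have $\psi^-(u(\cdot,\hat t))\equiv0$ there, so the first term on the right of \eqref{lnCac}, the integral over $B_{R/4}\times\{\hat t\}$, vanishes.

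Next I would bound the two remaining terms on the right of \eqref{lnCac} by $\gamma(s_*-5)A_1^{p-2}|B_R|$, with $\gamma$ depending only on the data. For the gradient term one uses $u^{m-1}\leq(\mu_+)^{m-1}$, $\psi^-\leq\ln(H_k^-/c)=(s_*-5)\ln2$ and $[(\psi^-)'(u)]^{2-p}\leq(H_k^-)^{p-2}=\gamma\omega^{p-2}$ (valid since $|(\psi^-)'(u)|=(H_k^--(k-u)_++c)^{-1}\geq(H_k^-)^{-1}$ on $\{\psi^->0\}$, off which the integrand is zero), together with $|B_{R/4}\times(\hat t,0)|\leq\gamma R^n(\mu_+)^{1-m}A_1^{p-2}\omega^{2-p}R^p$; the powers of $\mu_+$, $\omega$ and $R$ then cancel. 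For the two data terms one first records, using that $r\mapsto\int_{B_r(x)}|f|\diff y$ and $r\mapsto\int_{B_r(x)}g^{p/(p-1)}\diff y$ are nondecreasing while $r\mapsto r^{p-n}$ is decreasing, the slice estimates $R^{p-n}\int_{B_R(x)}|f|\diff y\leq\gamma F_2(2R)^{p-1}$ and $R^{p-n}\int_{B_R(x)}g^{p/(p-1)}\diff y\leq\gamma F_1(2R)^{p}$; inserting these into the $c^{-1}$- and $c^{-2}$-weighted integrals in \eqref{lnCac} and then invoking the two displayed inequalities from the first paragraph makes the powers $2^{s_*}$ and $2^{2s_*}$ cancel, again leaving $\gamma(s_*-5)A_1^{p-2}|B_R|$.

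Putting these estimates together, for every $t\in(\hat t,0)$ we obtain
\[
\int_{B_{R/4}\times\{t\}}[\psi^-(u)]^2\zeta^p\diff x\leq\gamma(s_*-5)A_1^{p-2}|B_R|.
\]
On the set $\{x\in B_{R/8}:u(x,t)<\mu_-+\omega/2^{s_*}\}$ one has $\zeta=1$ and, by monotonicity of $\psi^-$, $\psi^-(u)\geq(s_*-6)\ln2$, whence
\[
(s_*-6)^2(\ln2)^2\,\big|\{x\in B_{R/8}:u(x,t)<\mu_-+\tfrac{\omega}{2^{s_*}}\}\big|\leq\gamma\,8^{n}A_1^{p-2}(s_*-5)\,|B_{R/8}|.
\]
Since $A_1<A$ is bounded by a data constant and $(s_*-5)/(s_*-6)^2\to0$ as $s_*\to\infty$, one finally fixes $s_*=s_*(\text{data},\nu_*)>6$ so large that $\gamma\,8^{n}A_1^{p-2}(s_*-5)/[(s_*-6)^2(\ln2)^2]\leq\nu_*$, which gives \eqref{time expand} for all $t\in(\hat t,0)$ and completes the proof. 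I expect the main obstacle to be the exponential bookkeeping in the data terms: the exponents $2^{\frac2p s_*}$ and $2^{\frac1{p-1}s_*}$ in \eqref{omega2} must match \emph{exactly} the weights $c^{-2}=2^{2s_*}\omega^{-2}$ and $c^{-1}=2^{s_*}\omega^{-1}$ produced by \eqref{lnCac}, so that the $s_*$-powers cancel rather than accumulate, while the intrinsic time factor $A_1^{p-2}$ must be kept under control (it is, because $A_1<A$) so that it can be absorbed into the data constant before $s_*$ is sent to infinity; by contrast the degeneracy $u^{m-1}$ is harmless here, as $\mu_-\leq u\leq\mu_+$ on the working cylinder.
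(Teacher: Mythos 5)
Your proposal is correct and follows essentially the same route as the paper's proof: apply the logarithmic Caccioppoli estimate \eqref{lnCac} on $B_{R/4}\times(\hat t,0)$ with $k=\mu_-+\omega/2^5$, $H_k^-=\omega/2^5$, $c=\omega/2^{s_*}$, kill the initial term via \eqref{hat t lower}, bound the gradient term and the two data terms by $\gamma(s_*-5)A^{p-2}|B_{R/8}|$ (the data terms using the negation of \eqref{omega2}), then invoke $\psi^-\geq(s_*-6)\ln2$ on the target level set and choose $s_*$ large so that $(s_*-5)/(s_*-6)^2$ beats $\nu_*$. The only genuine difference is a small improvement in hygiene: you explicitly require $s_*$ large enough that $2^{2s_*/p}\geq B$ and $2^{s_*/(p-1)}\geq B$, so that failure of \eqref{omega2} forces failure of \eqref{omega1} and hence guarantees the availability of \eqref{hat t lower}; the paper leaves this to the surrounding context of Section 3, where \eqref{omega1} is implicitly assumed to fail. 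This makes the lemma's dichotomy more self-contained at no real cost, since $s_*$ is ultimately sent large anyway.
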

\begin{proof}
We first recall that
\begin{equation}\begin{split}\label{SS0}\psi^-&=\ln^+\left(\frac{H_k^-}{H_k^--(u-k)_-+c}\right)
=\begin{cases}
	\ln\left(\dfrac{H_k^-}{H_k^-+u-k+c}\right),&\quad k-H_k^-\leq u<k-c,\\
	0,&\quad u\geq k-c,
	\end{cases}\end{split}\end{equation}
where
\begin{equation*} H_k^-\geq \esssup_{B_{\frac{R}{4}}\times[\hat t,0]}(u-k)_-\qquad\text{and}\qquad 0<c<H_k^-.\end{equation*}
Next, we let
$k=\mu_-+\frac{1}{2^5}\omega$, $c=\frac{1}{2^{5+l}}\omega$ where $l\geq1$ will be determined later.
Moreover, we choose $H_k^-=\frac{1}{32}\omega$, which is admissible since $(u-k)_-\leq \frac{1}{32}\omega$. In view of
\eqref{SS0}, we see that
\begin{equation}\begin{split}\label{psipsiprime}
\psi^-(u)\leq l\ln2\qquad\text{and}\qquad |(\psi^-)^\prime(u)|^{2-p}\leq \omega^{p-2}.
\end{split}\end{equation}
We now apply the logarithmic estimate \eqref{lnCac} over the cylinder $B_{\frac{R}{4}}\times[\hat t,0]$. This yields that
\begin{equation*}\begin{split}
\esssup_{\hat t<t<0}&\int_{B_{\frac{R}{4}}\times\{t\}}[\psi^-(u)]^2\zeta^p\,\mathrm {d}x
\\ \leq &\int_{B_{\frac{R}{4}}\times\{\hat t\}}[\psi^-(u)]^2\zeta^p\,\mathrm {d}x+\gamma
\iint_{B_{\frac{R}{4}}\times[\hat t,0]}u^{m-1}\psi^-(u)|D\zeta|^p\left[(\psi^-)^\prime(u)\right]^{2-p}
\,\mathrm {d}x\mathrm {d}t
\\&+\gamma\ln\left(\frac{H}{c}\right)\left(\frac{1}{c}\iint_{B_{\frac{R}{4}}\times[\hat t,0]}|f|\,\mathrm {d}x\mathrm {d}t+
\frac{1}{c^2}\iint_{B_{\frac{R}{4}}\times[\hat t,0]}g^{\frac{p}{p-1}}\,\mathrm {d}x\mathrm {d}t\right)
\\ =&:I_1+I_2+I_3.
\end{split}\end{equation*}
We take the cutoff function $\zeta=\zeta(x)$, which satisfies $0\leq\zeta\leq1$ in $B_{\frac{R}{4}}$, $\zeta\equiv 1$
in $B_{\frac{R}{8}}$ and $|D\zeta|\leq 4R^{-1}$.
Observe from \eqref{hat t lower} that $\psi^-(x,\hat t)= 0$ for all $x\in B_{\frac{R}{4}}$,
we get $I_1=0$. To estimate $I_2$, we note that
$-\hat t\leq A^{p-2}(\mu_+)^{1-m}\omega^{2-p}R^p$. In view of \eqref{psipsiprime}, we conclude that
\begin{equation*}\begin{split}
I_2\leq 4l(\ln2)(-\hat t)|B_{\frac{R}{4}}|(\mu_+)^{m-1}R^{-p}\omega^{p-2}\leq \gamma lA^{p-2}|B_{\frac{R}{8}}|.
\end{split}\end{equation*}
Finally, we consider the estimate for  $I_3$. Using the inequality $-\hat t\leq A^{p-2}(\mu_+)^{1-m}\omega^{2-p}R^p$ again, we infer
that
\begin{equation*}\begin{split}
I_3\leq &l(\ln2)\frac{2^{5+l}A^{p-2}}{\omega^{p-1}}(\mu_+)^{1-m}R^p\int_{B_{R}}|f|\,\mathrm {d}x
+ l(\ln2)\frac{2^{10+2l}A^{p-2}}{\omega^{p}}(\mu_+)^{1-m}R^p\int_{B_{R}}g^{\frac{p}{p-1}}\,\mathrm {d}x
\\ \leq &\gamma (lA^{p-2})\frac{2^l}{\omega^{p-1}}\left((\mu_+)^{1-m}R^{p-n}\int_{B_{R}}|f|\,\mathrm {d}x\right)|B_{\frac{R}{8}}|
\\&+\gamma (lA^{p-2})\frac{2^{2l}}{\omega^{p}}\left((\mu_+)^{1-m}R^{p-n}\int_{B_{R}}g^{\frac{p}{p-1}}\,\mathrm {d}x\right)|B_{\frac{R}{8}}|.
\end{split}\end{equation*}
At this point, we assume that
\begin{equation}\label{omega2assumption}
\omega> 2^{\frac{2}{p}l}(\mu_+)^{\frac{1-m}{p}}F_1(2R)+2^{\frac{1}{p-1}l}(\mu_+)^{\frac{1-m}{p-1}}F_2(2R)
\end{equation}
and this yields $I_3\leq \gamma lA^{p-2}|B_{\frac{R}{8}}|$. Therefore, we arrive at
\begin{equation}\begin{split}\label{psi-upper}
\int_{B_{\frac{R}{4}}\times\{t\}}[\psi^-(u)]^2\zeta^p\,\mathrm {d}x\leq \gamma lA^{p-2}|B_{\frac{R}{8}}|
\end{split}\end{equation}
for any $t\in(\hat t,0)$. On the other hand,
the left-hand side of \eqref{psi-upper} is estimated below by integrating
over the smaller set
$$\left\{x\in B_{\frac{R}{8}}:u<\mu_-+\frac{\omega}{2^{l+5}}\right\}\subset B_{\frac{R}{4}}.$$
On such a set, $\zeta\equiv 1$ and
\begin{equation*}\begin{split}\psi^-&=\ln^+\left(\frac{\frac{1}{2^5}\omega}{\frac{1}{2^5}\omega
-(u-k)_-+\frac{1}{2^{l+5}}\omega}\right)
\geq \ln2^{l-1}>(l-1)\ln2.\end{split}\end{equation*}
This yields
\begin{equation}\begin{split}\label{SS4}\int_{B_{\frac{R}{8}}\times\{t\}}\left(\psi^-\right)^2\zeta^p dx\geq
(l-1)^2(\ln2)^2
\left|\left\{x\in B_{\frac{R}{8}}:u<\mu_-+\frac{\omega}{2^{l+5}} \right\}\right|,\end{split}\end{equation}
for all $t\in(\hat t,0)$. Combining \eqref{psi-upper} and \eqref{SS4}, we deduce
\begin{equation*}
\left|\left\{x\in B_{\frac{R}{8}}:u<\mu_-+\frac{\omega}{2^{l+5}} \right\}\right|\leq \gamma\frac{l}{(l-1)^2}A^{p-2}
|B_{\frac{R}{8}}|.
\end{equation*}
For $\nu_*\in(0,1)$, we set $l$ be a fixed integer such that $l>1+\gamma\nu_*^{-1}A^{p-2}$ and choose $s_*=l+5$.
This proves the inequality \eqref{time expand} under the assumption \eqref{omega2assumption}. On the other hand, if \eqref{omega2assumption}
is violated, then we obtain \eqref{omega2} for such a choice of $s_*$. The proof of the lemma is now complete.
\end{proof}
According to the proof of Lemma \ref{timeexpandlemma}, we conclude that for any fixed $\nu_*\in(0,1)$, we can choose
\begin{equation}\label{s*}s_*=s_*(\nu_*)=2 \nu_*^{-1}A^{p-2}\end{equation}
and Lemma \ref{timeexpandlemma} follows for such a choice of $s_*$.
Moreover, for $R>0$, we set
\begin{equation*}\widetilde Q_1=B_\frac{R}{8}\times (\hat t,0)\qquad \widetilde Q_2=B_\frac{R}{16}\times \left(\frac{\hat t}{2},0\right).\end{equation*}
We now establish a variant De Giorgi type lemma
which concerns the initial data.
The proof is similar to that of Lemma \ref{lemmaDeGiorgi1}, but
the arguments need some nontrivial modifications.
 \begin{lemma}\label{lemmaDeGiorgi2}
Let $\xi=2^{-s_*}$ and $0<\xi<2^{-5}$.
Let $u$ be a bounded nonnegative weak solution to \eqref{parabolic}-\eqref{A} in $\Omega_T$.
Then, there exist $\nu_1\in(0,1)$ and $\tilde B>1$ which
can be quantitatively determined a priori only in terms of the data, such that if
\begin{equation*}\left|\left\{(x,t)\in \widetilde Q_1:u<\mu_-+\xi \omega\right\}
\right|\leq \nu_1|\widetilde Q_1|,\end{equation*}
then either
\begin{equation}
\label{DeGiorgi2}u(x,t)>\mu_-+\frac{1}{2}\xi\omega\qquad\text{for}\ \ \text{a.e.}\ \ (x,t)\in \widetilde Q_2\end{equation}
or
\begin{equation}
\label{omega3}\xi\omega\leq \tilde B\left(
(\mu_-+\xi\omega)^{\frac{1-m}{p}}F_1(2R)+(\mu_-+\xi\omega)^{\frac{1-m}{p-1}}F_2(2R)\right).\end{equation}
 \end{lemma}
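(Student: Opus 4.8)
The plan is to rerun the Kilpel\"ainen--Mal\'y scheme from the proof of Lemma \ref{lemmaDeGiorgi1}, with the reference quantity $\mu_+$ systematically replaced by $\mu_-+\xi\omega$; this replacement is forced by the shape of the alternative \eqref{omega3}, since $(\mu_-+\xi\omega)^{1-m}$ is exactly what the potential terms of the Caccioppoli inequality \eqref{Cacformula1} will produce. First I would assume that \eqref{omega3} fails (in the slightly strengthened form of \eqref{omega1violated}, with $\omega$, $B$, $\mu_+$ replaced by $\xi\omega$, $\tilde B$, $\mu_-+\xi\omega$), so that the $(\mu_-+\xi\omega)$-weighted elliptic Riesz potentials of $f$ and $g$, centred at any point of $B_{R/8}$ and truncated at scale $2R$, are dominated by a small multiple of $\xi\omega$; it then suffices to prove $u(x_1,t_1)>\mu_-+\tfrac12\xi\omega$ at every Lebesgue point $(x_1,t_1)\in\widetilde Q_2$ of $u$. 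Fix such a point, set $r_j=4^{-j}C^{-1}R$ and $B_j=B_{r_j}(x_1)$ with $C>4$ large (to be fixed at the end), and introduce the intrinsic cylinders $Q_j(l)=B_j\times(t_1-(\mu_-+\xi\omega)^{1-m}(l_j-l)^{2-p}r_j^p,t_1)$ together with the space--time cutoffs $\varphi_j(l)=\phi_j(x)\theta_{j,l}(t)$ exactly as in Lemma \ref{lemmaDeGiorgi1}, so that $\varphi_j(l)=0$ on $\partial_PQ_j(l)$ and the initial term of \eqref{Cacformula1} again vanishes. The decreasing level sequence $\{l_j\}$ will satisfy $l_0=\mu_-+\xi\omega$ and $\lim_j l_j\ge\mu_-+\tfrac12\xi\omega$, the auxiliary sequence $\{\alpha_j\}$ is defined as in \eqref{alpha} with $(\mu_+)^{1-m}$ replaced by $(\mu_-+\xi\omega)^{1-m}$ and leading term $\tfrac{4^{-j-100}}{3\tilde B}\xi\omega$, and $A_j(l)$ is defined as in \eqref{A_j} with $(\mu_+)^{m-1}$ replaced by $(\mu_-+\xi\omega)^{m-1}$; the continuity and monotonicity of $l\mapsto A_j(l)$ for $l<l_j$ follow verbatim from Step~1 of Lemma \ref{lemmaDeGiorgi1}.

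The first genuine modification is the geometric bookkeeping: one must verify $Q_0(\bar l)\subset\widetilde Q_1$ for the base step and, more generally, $Q_j\subset\widetilde Q_1$ throughout the iteration. Since $\widetilde Q_1$ has time-extent $|\hat t|=(\mu_+)^{1-m}A_1^{p-2}\omega^{2-p}R^p$ while $Q_j$ has time-extent of order $(\mu_-+\xi\omega)^{1-m}(\xi\omega)^{2-p}r_j^p$, the relevant ratio is controlled by $(\mu_+/(\mu_-+\xi\omega))^{m-1}\xi^{2-p}C^{-p}$; using $\mu_+\le\mu_-+\omega$ and $\omega\le2\mu_+$ one gets $\mu_+/(\mu_-+\xi\omega)\le\xi^{-1}=2^{s_*}$, so, recalling $m+p-3>0$, all the required inclusions hold once $C$ is large in terms of $s_*$ and the data (hence in terms of the data alone after $s_*$ is fixed in Section~5). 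Consequently $\mu_-\le u\le\mu_+$ on every $Q_j$. For the base step, the measure hypothesis gives $|L_0(\bar l)|\le\nu_1|\widetilde Q_1|$, and dividing by $|Q_0(\bar l)|$ produces the analogue of \eqref{initial estimate} with an extra factor of the form $\gamma C^{n+p}2^{s_*(m+p-3)}$; feeding this into Lemma \ref{Cac1}, applied with $(l,d,\theta)=(l_0,l_0-\bar l,(\mu_-+\xi\omega)^{1-m}(l_0-\bar l)^{2-p}r_0^p)$, yields $A_0(\bar l)\le\tfrac12\chi$ once $\nu_1$ is chosen small and $\tilde B$ large in terms of a parameter $\chi\in(0,1)$ to be fixed later.

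The inductive step then reproduces Step~3 of Lemma \ref{lemmaDeGiorgi1} essentially verbatim, with $\omega$ replaced by $\xi\omega$ and $\mu_+$ by $\mu_-+\xi\omega$. One introduces the truncation $\tilde u=\max\{u,\tfrac{1}{128}\xi\omega\}$; since $l_j\ge\mu_-+\tfrac12\xi\omega\ge\tfrac12\xi\omega$ one has $l_j-\tfrac{1}{128}\xi\omega>\tfrac12 l_j$, which permits the passage from $(l_j-u)/(l_j-\bar l)$ to $(l_j-\tilde u)/(l_j-\bar l)$ exactly as in \eqref{cut}. The point of the truncation is the two-sided bound $\gamma^{-1}(\mu_-+\xi\omega)^{m-1}\le\tilde u^{m-1}\le(\mu_-+\xi\omega)^{m-1}$ on $L_j(\bar l)$ --- the upper bound because $\tilde u\le l_j\le l_0=\mu_-+\xi\omega$ there, the lower bound from the same case distinction ($\mu_-\le\tfrac12\mu_+$ versus $\mu_->\tfrac12\mu_+$) used for \eqref{um-1} --- and this is precisely what balances the $(\mu_-+\xi\omega)^{m-1}$-normalisation in $A_j$. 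With this in hand the splitting $L_j(\bar l)=L'_j(\bar l)\cup L''_j(\bar l)$, the H\"older--Sobolev estimate \eqref{T22}, the estimates for $T_3$--$T_8$, the refined bound \eqref{important G}, the estimates for $S_1$--$S_4$, and Lemmas \ref{lemmainequalitypsi-} and \ref{Gk} all carry over unchanged, giving an inequality of the type \eqref{Ajbar} and hence $A_j(\bar l)\le\tfrac12\chi$ after the same choice of $\epsilon_1,\epsilon_2,\chi$ and then $\tilde B$. One defines $l_{j+1}$ by the dichotomy \eqref{def lj+1} (with $\alpha_{j-1}-\alpha_j$), verifies \eqref{li}--\eqref{lj} with $\omega\to\xi\omega$, and, arguing as for \eqref{djdj-1}, obtains
\[
d_j\le\tfrac14 d_{j-1}+\frac{\gamma\,4^{-j-100}}{\tilde B}\xi\omega+\gamma\Big(r_{j-1}^{p-n}(\mu_-+\xi\omega)^{1-m}\int_{B_{j-1}}g^{\frac{p}{p-1}}\Big)^{\frac1p}+\gamma\Big(r_{j-1}^{p-n}(\mu_-+\xi\omega)^{1-m}\int_{B_{j-1}}|f|\Big)^{\frac{1}{p-1}}.
\]

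Finally I would sum this recursion over $j\ge1$, use $\lim_j l_j=u(x_1,t_1)$ (from Lemma \ref{Lebesguepoint+} and \eqref{lj}, as in Step~4 of Lemma \ref{lemmaDeGiorgi1}), bound the potential sums by $(\mu_-+\xi\omega)^{\frac{1-m}{p}}F_1(4C^{-1}R)\le(\mu_-+\xi\omega)^{\frac{1-m}{p}}F_1(2R)$ and $(\mu_-+\xi\omega)^{\frac{1-m}{p-1}}F_2(2R)$, and absorb these --- using the assumed failure of \eqref{omega3} --- into a small multiple of $\xi\omega$, arriving at $u(x_1,t_1)\ge\mu_-+\xi\omega-\tfrac43 d_0-\gamma\tilde B^{-1}\xi\omega$. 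Arranging $\bar l$ (hence $d_0\le l_0-\bar l$) so that $\tfrac43 d_0\le\tfrac14\xi\omega$ and taking $\tilde B$ large gives $u(x_1,t_1)>\mu_-+\tfrac12\xi\omega$, which is \eqref{DeGiorgi2}; the degenerate sub-cases (where some $\iint(l_j-u)_+^{(1+\lambda)(p-1)}=0$) are handled by Lemma \ref{Lebesguepoint+} exactly as before. The step I expect to be the main obstacle is the one already flagged: keeping control, uniformly along the iteration, of the inclusion $Q_j\subset\widetilde Q_1$ under the mismatch between the $(\mu_-+\xi\omega)^{1-m}$-intrinsic scaling and the $(\mu_+)^{1-m}\omega^{2-p}$-scaling built into $\hat t$, and ensuring the small-measure hypothesis on $\widetilde Q_1$ still bounds $|L_0(\bar l)|/|Q_0(\bar l)|$ by an arbitrarily small quantity; once this is settled, the remainder is a careful but essentially mechanical transcription of the proof of Lemma \ref{lemmaDeGiorgi1}.
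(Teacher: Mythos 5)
The overall template is right, but the step you flag as the main obstacle — control of the inclusion $Q_j(\bar l)\subset\widetilde Q_1$ — is exactly where your proposal goes wrong, and the paper resolves it in the opposite direction. You suggest enlarging $C$ (with $r_j=4^{-j}C^{-1}R$) until $Q_0(\bar l)\subset\widetilde Q_1$, estimating the time-extent ratio by $(\mu_+/\tilde\mu)^{m-1}\xi^{2-p}C^{-p}\lesssim\xi^{3-m-p}C^{-p}=2^{s_*(m+p-3)}C^{-p}$. This forces $C\gtrsim 2^{s_*(m+p-3)/p}$, so $C$ and hence $\nu_1$ (via the analogue of \eqref{nu0}) would depend on $s_*$; but in Proposition \ref{1st proposition} one sets $\nu_*=\nu_1$ and then $s_*=2\nu_*^{-1}A^{p-2}$, so making $\nu_1$ depend on $s_*$ is circular. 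The paper avoids the circularity by choosing a fixed radius scale $r_j=4^{-j-4}R$ and proving the \emph{opposite} inclusion: using \eqref{xi2-p} (which follows from \eqref{second condition for A} and $s_*=2\nu_*^{-1}A^{p-2}$, giving $\xi^{p-2}\le 2^{-100p}A^{2-p}$) one sees that $Q_0(\bar l)\nsubseteq\widetilde Q_1$; the cylinders stick out in time below $\hat t$. The crucial ingredient that makes this harmless is the initial positivity \eqref{hat t lower}: since $u(\cdot,\hat t)>\mu_-+\omega/2^5\ge l_0\ge l_j$, the first Caccioppoli term $\int_{L^-(t_0-\theta)}G(\cdot)\varphi^k$ vanishes when the cylinder is truncated at $\hat t$, which permits the use of a \emph{time-independent} cutoff $\phi_j(x)$ and the alternate definition \eqref{Aj2nd} of $A_j(l)$. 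The paper therefore runs the iteration with a two-case definition of $A_j(l)$ (\eqref{Aj1st} when $Q_j(l)\subseteq\widetilde Q_1$, \eqref{Aj2nd} otherwise), establishes $A_j(\bar l)\le\chi/2$ in both regimes, and — again thanks to $\xi^{p-2}\le2^{-100p}A^{2-p}$ — the base-step bound $|L_0|/|\widetilde Q_1|\lesssim\nu_1$ closes with a factor that is $\le1$ independently of $\xi$ and $A$. The remaining pieces of your plan (the truncation $\tilde u=\max\{u,\tfrac{1}{128}\xi\omega\}$ with the two-sided comparison $\tilde u^{m-1}\sim(\mu_-+\xi\omega)^{m-1}$, the $L_j'/L_j''$ splitting, the $T_i$ and $S_i$ estimates, the summation of $d_j$, and the use of Lemma \ref{Lebesguepoint+}) do match the paper; the only discrepancy is that the paper carries the weight $\tilde u^{m-1}$ inside the integrand of $A_j(l)$ rather than as a prefactor, but as you note these are comparable. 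The real gap is the geometric bookkeeping and the constant dependence, and your proposed fix there would not compile into a data-only $\nu_1,\tilde B$.
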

 \begin{proof}
For simplicity of notation, we write $\tilde \mu=\mu_-+\xi\omega$ and observe that $\tilde\mu\leq\mu_+$.
 We first assume that \eqref{omega3} is violated, that is,
 \begin{equation}
\label{omega1violated1}\xi\omega>\tilde B
\left(\tilde\mu^{\frac{1-m}{p}}F_1(2R)+\tilde\mu^{\frac{1-m}{p-1}}F_2(2R)
\right),\end{equation}
since otherwise there is nothing to
prove.
Our problem reduces to prove \eqref{DeGiorgi2}.
 Fix $(x_1,t_1)\in \widetilde Q_{\frac{R}{16}}$ and assume that $(x_1,t_1)$ is a Lebesgue point of $u$ in the sense of
 Definition \ref{def lebesgue}.
 Set $r_j=4^{-j-4}R$
 and $B_j=B_{r_j}(x_1)$. Next, we set
 $Q_j=B_j\times (\hat t,t_1)$
and $\phi_j\in C_0^\infty(B_j)$ where $\phi_j=1$ on $B_{j+1}$ and $|D\phi_j|\leq r_j^{-1}$.
For a sequence $\{l_j\}_{j=0}^\infty$ and a fixed $l>0$, we set
$$Q_j(l)=B_j\times (t_1-\tilde\mu^{1-m}(l_j-l)^{2-p}r_j^p,t_1).$$
Furthermore, we define $\varphi_j(l)=\phi_j(x)\theta_{j,l}(t)$, where $\phi_j\in C_0^\infty(B_j)$,
$\phi_j=1$ on $B_{j+1}$, $|D\phi_j|\leq r_j^{-1}$
 and $\theta_{j,l}(t)$ is a Lipschitz function
satisfies
 \begin{equation*}
 \theta_{j,l}(t)=1\qquad\text{in}\qquad t\geq t_1-\frac{4}{9}\tilde\mu^{1-m}(l_j-l)^{2-p}r_j^p,
 \end{equation*}
  \begin{equation*}
 \theta_{j,l}(t)=0\qquad\text{in}\qquad t\leq t_1-\frac{5}{9}\tilde\mu^{1-m}(l_j-l)^{2-p}r_j^p
 \end{equation*}
and
  \begin{equation*}
 \theta_{j,l}(t)=\frac{t-t_1-\frac{5}{9}\tilde\mu^{1-m}(l_j-l)^{2-p}r_j^p}
 {\frac{1}{9}\tilde\mu^{1-m}(l_j-l)^{2-p}r_j^p}
 \end{equation*}
in
$t_1-\frac{5}{9}\tilde\mu^{1-m}(l_j-l)^{2-p}r_j^p\leq t\leq t_1-\frac{4}{9}\tilde\mu^{1-m}(l_j-l)^{2-p}r_j^p.$
 Next, for  $j=-1,0,1,2,\cdots$, we introduce the sequence $\{\alpha_j\}$ by
  \begin{equation}\begin{split}\label{alphaxi}\alpha_j=&\int_0^{r_j}\left(r^{p-n}\tilde\mu
  ^{1-m}\int_{B_r(x_1)}
  g(y)^{\frac{p}{p-1}} \,\mathrm {d}y
  \right)^{\frac{1}{p}}\frac{\mathrm {d}r}{r}\\&+\int_0^{r_j}\left(r^{p-n}\tilde\mu^{1-m}\int_{B_r(x_1)}|f(y)| \,\mathrm {d}y
  \right)^{\frac{1}{p-1}}\frac{\mathrm {d}r}{r}.
  \end{split}\end{equation}
In view of \eqref{alphaxi}, we see that $\alpha_j\to 0$ as $j\to\infty$ and there exists a constant $\gamma$ depending only
upon the data such that
\begin{equation}\begin{split}\label{alpha1xi}
  \alpha_{j-1}-\alpha_j\geq &\gamma\left(r_j^{p-n}\tilde\mu^{1-m}\int_{B_j} g(y)^{\frac{p}{p-1}}
  \,\mathrm {d}y\right)^{\frac{1}{p}}+\gamma\left(r_j^{p-n}\tilde\mu^{1-m}\int_{B_j}|f(y)|
  \,\mathrm {d}y\right)^{\frac{1}{p-1}}
  \end{split}\end{equation}
  and
   \begin{equation}\begin{split}\label{alpha2xi}
  \alpha_{j-1}-\alpha_j\leq &\gamma\left(r_{j-1}^{p-n}\tilde\mu^{1-m}\int_{B_{j-1}} g(y)^{\frac{p}{p-1}}
  \,\mathrm {d}y\right)^{\frac{1}{p}}+\gamma\left(r_{j-1}^{p-n}\tilde\mu^{1-m}\int_{B_{j-1}}|f(y)|
  \,\mathrm {d}y\right)^{\frac{1}{p-1}}.
  \end{split}\end{equation}
  We introduce a cutoff function $\tilde u=\max\left\{u,\frac{1}{128}\xi\omega\right\}$.
   Moreover,
 for a sequence $\{l_j\}$ and a fixed $l>0$
 we define a quantity $A_j(l)$ as follows.
   \begin{itemize}
 \item[$\bullet$]In the case $Q_j(l)\subseteq \widetilde Q_1$, we define $A_j(l)$ by
 \begin{equation}\begin{split}\label{Aj1st}
 A_j(l)=&\frac{(l_j-l)^{p-2}}{r_j^{n+p}}\iint_{L_j(l)}\tilde u^{m-1}\left(\frac{l_j-u}{l_j-l}\right)^{(1+\lambda)(p-1)}\varphi_j(l)^{k-p}
 \,\mathrm {d}x\mathrm {d}t
 \\&+\esssup_{t}\frac{1}{r_j^n}\int_{B_j\times\{t\}}G\left(\frac{l_j-u}{l_j-l}\right)\varphi_j(l)^{k}
 \,\mathrm {d}x.
 \end{split}\end{equation}
 \end{itemize}
  \begin{itemize}
 \item[$\bullet$]In the case $Q_j(l)\nsubseteq\widetilde Q_1$, i.e., $t_1-\tilde\mu^{1-m}(l_j-l)^{2-p}r_j^p<\hat t$,
 we define $A_j(l)$ by
 \begin{equation}\begin{split}\label{Aj2nd}
 A_j(l)=&\frac{(l_j-l)^{p-2}}{r_j^{n+p}}\iint_{L_j}\tilde u^{m-1}\left(\frac{l_j-u}{l_j-l}\right)^{(1+\lambda)(p-1)}\phi_j^{k-p}
 \,\mathrm {d}x\mathrm {d}t
 \\&+\esssup_{\hat t<t<t_1}\frac{1}{r_j^n}\int_{B_j\times\{t\}}G\left(\frac{l_j-u}{l_j-l}\right)\phi_j^{k}
 \,\mathrm {d}x.
 \end{split}\end{equation}
 \end{itemize}
 Here, the function $G$ is defined in \eqref{G}, $L_j(l)=Q_j(l)\cap \{u\leq l_j\}$ and
 $L_j=Q_j\cap \{u\leq l_j\}$. From the proof of Lemma \ref{lemmaDeGiorgi1},
 we conclude that $A_j(l)$ is continuous for $l<l_j$.
The proof of the lemma will be divided into several steps.

Step 1: \emph{Determine the values of $l_0$ and $l_1$.}
 To start with, we set $l_0=\mu_-+\xi\omega$ and $\bar l=\frac{1}{2}l_0+\frac{1}{2}\mu_-+\frac{1}{4}\tilde B\alpha_0+\frac{1}{8}\xi\omega$.
 Observe that $\tilde B\alpha_0< \xi\omega$ and this implies
 \begin{equation}\begin{split}\label{xil0}
 l_0-\bar l=\frac{1}{2}\mu_-+\frac{1}{2}\xi\omega-\frac{1}{2}\mu_--\frac{1}{4}\tilde B\alpha_0-\frac{1}{8}\xi\omega\geq \frac{1}{8}\xi\omega
 \qquad\text{and}\qquad l_0-\bar l\leq \frac{3}{8}\xi\omega.
 \end{split}\end{equation}
 Let $A_p>1$ be a constant such that for any $X>A_p$ there holds $X\leq 2^{(p-2)X}$. At this point, we choose
  \begin{equation}\begin{split}\label{second condition for A}
  A>\max\left\{\left(\frac{100p}{p-2}\right)^{\frac{1}{p-2}},\ A_p^\frac{1}{p-2}\right\}.
  \end{split}\end{equation}
We infer from \eqref{s*} and \eqref{second condition for A} that
 \begin{equation}\begin{split}\label{xi2-p}
 \xi^{p-2}=2^{-s_*(p-2)}=2^{-2(p-2) \nu_*^{-1}A^{p-2}}\leq 2^{-2(p-2)A^{p-2}}\leq 2^{-100p}A^{2-p},
  \end{split}\end{equation}
  since $\nu_*<1$.
We now claim that $Q_0(\bar l)\nsubseteq\widetilde Q_1$. Taking into account that $\tilde\mu\leq\mu_+$, we use \eqref{xil0}
and \eqref{xi2-p} to obtain
\begin{equation*}\begin{split}
\tilde\mu^{1-m}(l_0-\bar l)^{2-p}r_0^p&\geq 4^{-4p}(\mu_+)^{1-m}\left(\xi\omega\right)^{2-p}R^p
\geq t_1+(\mu_+)^{1-m}A^{p-2}\omega^{2-p}R^p,
\end{split}\end{equation*}
which proves $Q_0(\bar l)\nsubseteq\widetilde Q_1$.
Our task now is to deduce an upper bound for $A_0(\bar l)$.
Noting that $\tilde u\leq \mu_+$ and $l_0-u\leq \xi\omega$ in $L_0$, we infer from \eqref{xil0} and \eqref{xi2-p} that
there exists a constant $\gamma_1$ depending only upon the data such that
   \begin{equation*}\begin{split}
  & \frac{(l_0-\bar  l)^{p-2}}{r_0^{n+p}}\iint_{L_0}\tilde u^{m-1}
   \left(\frac{l_0-u}{l_0-\bar l}\right)^{(1+\lambda)(p-1)}\phi_0^{k-p}
 \,\mathrm {d}x\mathrm {d}t
 \\&\leq\frac{(\mu_+)^{m-1}(l_0-\bar l)^{p-2-(1+\lambda)(p-1)}}{r_0^{n+p}}\left(\xi\omega\right)^{(1+\lambda)(p-1)}|L_0|
 \\&\leq \gamma_1 \frac{(\mu_+)^{m-1}(\xi\omega)^{p-2}}{R^{n+p}}|L_0|
\leq \gamma_1 \frac{(\mu_+)^{m-1}A^{2-p}\xi\omega^{p-2}}{R^{n+p}}R^n(-\hat t)
\frac{\left|Q_0\cap \{u\leq l_0\}\right|}{|\widetilde Q_1|}
  \\&\leq \gamma_1\frac{\left|\left\{(x,t)\in \widetilde Q_1:u<\mu_-+\xi \omega\right\}\right|}{|\widetilde Q_1|}
   \leq \gamma_1 \nu_1,
    \end{split}\end{equation*}
   since $-\hat t\leq A^{p-2}(\mu_+)^{1-m}\omega^{2-p}R^p$.
    In view of \eqref{hat t lower}, we see that $u(x,\hat t)<\mu_-+\xi\omega=l_0$ for all $x\in B_{\frac{R}{4}}$.
    We now apply Lemma \ref{Cac1} with $(l,d,\theta)$ replaced by $(l_0,l_0-
    \bar l,t_1-\hat t)$, \eqref{omega1violated1} and \eqref{xil0} to deduce
     \begin{equation*}\begin{split}
    &\esssup_{\hat t<t<t_1}\frac{1}{r_0^n}\int_{B_0\times\{t\}}G\left(\frac{l_0-u}{l_0-\bar l}\right)\phi_0^{k}
 \,\mathrm {d}x \\ \leq &\gamma  \frac{(l_0-\bar l)^{p-2}}{r_0^{p+n}}\iint_{L_0}u^{m-1}
 \left(\frac{l_0-u}{l_0-\bar l}\right)^{(1+\lambda)(p-1)}
 \phi_0^{k-p}\,\mathrm {d}x\mathrm {d}t
   \\&+\gamma\frac{r_0^{p-n}}{(\xi\omega)^p}(\mu_+)^{1-m}\int_{B_0}g^{\frac{p}{p-1}}\,\mathrm {d}x
  +\gamma \frac{r_0^{p-n}}{(\xi\omega)^{p-1}}(\mu_+)^{1-m}\int_{B_0}|f|\,\mathrm {d}x
  \\ \leq &\gamma_1  \nu_1+\gamma\frac{(\mu_+)^{1-m}}{\tilde B^p\tilde\mu^{1-m}}+
  \gamma\frac{(\mu_+)^{1-m}}{\tilde B^{p-1}\tilde\mu^{1-m}},
 \end{split}\end{equation*}
 since $\phi_0=\phi_0(x)$ is time independent and the first two terms on the right-hand side of \eqref{Cacformula1} vanishes.
 Since $\tilde\mu\leq\mu_+$,
we conclude that there exists a constant $\gamma_1$ depending only upon the data such that
 \begin{equation*}\begin{split}
   & \esssup_{\hat t<t<t_1} \frac{1}{r_0^n}\int_{B_0\times\{t\}}G\left(\frac{l_0-u}{l_0-\bar l}\right)\phi_0^k
 \,\mathrm {d}x
 \leq \gamma_1\nu_1+\gamma_1(\tilde B^{-p}+\tilde B^{1-p}).
  \end{split}\end{equation*}
  Consequently, we conclude that
$A_0(\bar l)\leq 2\gamma_1\nu_1+\gamma_1(\tilde B^{-p}+\tilde B^{1-p}).$
    At this stage, we  fix a number $\chi\in(0,1)$ which will be determined later. Now, we choose $\nu_1<1$ and $\tilde B>1$ be such that
     \begin{equation}\begin{split}\label{nu1}
  2\gamma_1\nu_1=\frac{\chi}{4}\qquad\text{and}\qquad\gamma_1(\tilde B^{-p}+\tilde B^{1-p})<\frac{\chi}{4}.
    \end{split}\end{equation}
    This yields that $A_0(\bar l)\leq\frac{1}{2}\chi$.
    The next thing to do is to determine the value of $l_1$.
    We first consider the case
    \begin{equation*}\iint_{B_{r_0}(x_1)\times\{t_1-r_0^p<t<t_1\}}\tilde u^{m-1}
    (l_0-u)_+^{(1+\lambda)(p-1)}\,\mathrm {d}x\,\mathrm {d}t=0.\end{equation*}
    Then, we apply Lemma \ref{Lebesguepoint+} to conclude that $u(x_1,t_1)\geq l_0=\mu_-+\xi\omega$, which proves \eqref{DeGiorgi2}.
    Next, we consider the case
     \begin{equation*}\iint_{B_{r_0}(x_1)\times\{t_1-r_0^p<t<t_1\}}\tilde u^{m-1}
     (l_0-u)_+^{(1+\lambda)(p-1)}\,\mathrm {d}x\,\mathrm {d}t>0.\end{equation*}
     We find that $A_0(l)\to+\infty$
    as $l\to l_0$.
    Noting that $A_0(l)$ defined in \eqref{Aj2nd}
    is continuous and increasing, then there exists a number $\tilde l\in (\bar l, l_0)$ such that $A_0(\tilde l)=\chi$.
    In view of \eqref{xil0} and  \eqref{omega1violated1}, we infer that for $B>8$ there holds
    \begin{equation}\label{l0barxi}l_0-\bar l\geq \frac{1}{8}\xi\omega>\frac{1}{4}(\alpha_{-1}-\alpha_0).\end{equation}
    At this point, we define
    \begin{equation}\label{l1xi}
	l_1=\begin{cases}
\tilde l,&\quad \text{if}\quad \tilde l<l_0-\frac{1}{4}(\alpha_{-1}-\alpha_0),\\
	l_0-\frac{1}{4}(\alpha_{-1}-\alpha_0),&\quad \text{if}\quad \tilde l\geq l_0-\frac{1}{4}(\alpha_{-1}-\alpha_0),
	\end{cases}
\end{equation}
and $d_0=l_0-l_1$.
Since $\tilde B\alpha_0<\xi\omega$, we have $l_1\geq  \bar l>\mu_-+\frac{1}{2}\tilde B\alpha_0+\frac{1}{4}\xi\omega$.

Step 2: \emph{Determine the sequence $\{l_j\}_{j=0}^\infty$.} Suppose that we have chosen two
sequences $l_1,\cdots,l_j$ and $d_0,\cdots,d_{j-1}$ such that for $i=1,\cdots,j$, there holds
    \begin{equation}\label{lixi}\frac{1}{2}\mu_-+\frac{1}{8}\xi\omega+\frac{1}{2}l_{i-1}+\frac{1}{4}\tilde B\alpha_{i-1}<l_i\leq
    l_{i-1}-\frac{1}{4}(\alpha_{i-2}-\alpha_{i-1}),
    \end{equation}
    \begin{equation}\label{Aj-1xi}
    A_{i-1}(l_i)\leq \chi,
     \end{equation}
      \begin{equation}\label{ljxi}
      l_i>\mu_-+\frac{1}{2}\tilde B\alpha_{i-1}+\frac{1}{4}\xi\omega.
       \end{equation}
       Then, we claim that
       \begin{equation}\label{Ajxi}
       A_j(\bar l)\leq \frac{1}{2}\chi,\qquad\text{where}\qquad \bar l=\frac{1}{2}l_j+\frac{1}{4}\tilde B\alpha_j+\frac{1}{8}\xi\omega+
       \frac{1}{2}\mu_-.
        \end{equation}
        To prove \eqref{Ajxi}, we first observe from \eqref{lixi} and  \eqref{ljxi} that $l_j-\bar l$ can be bounded below by
 \begin{equation}\begin{split}\label{upper bound for lxi}
 l_j-\bar l&
       =\frac{1}{2}l_j-\frac{1}{4}\tilde B\alpha_j-\frac{1}{8}\xi\omega-
       \frac{1}{2}\mu_-
   \\&
        \geq
        \frac{1}{4}(l_{j-1}-l_j)+\frac{1}{4}l_j
      +\frac{1}{16}\xi\omega+\dfrac{1}{8}\tilde B\alpha_{j-1} -\frac{1}{4}\tilde B\alpha_j-\frac{1}{8}\xi\omega-
       \frac{1}{4}\mu_-
       \\& \geq
        \frac{1}{4}(l_{j-1}-l_j)+\frac{1}{4}\left(\mu_-+\frac{1}{2}\tilde B\alpha_{j-1}+\frac{1}{4}\xi\omega\right)
      \\&\quad+\frac{1}{16}\xi\omega+\dfrac{1}{8}\tilde B\alpha_{j-1} -\frac{1}{4}\tilde B\alpha_j-\frac{1}{8}\xi\omega-
       \frac{1}{4}\mu_-
\\&=\frac{1}{4}(l_{j-1}-l_j)+\frac{1}{4}\tilde B(\alpha_{j-1}-\alpha_j).
 \end{split}\end{equation}
 To proceed further, we distinguish two cases. In the case $Q_j(\bar l)\nsubseteq\widetilde Q_1$.
We infer from \eqref{upper bound for lxi} that
  \begin{equation}\begin{split}\label{inclusion}
  \tilde\mu^{1-m}(l_{j-1}-l_j)^{2-p}r_{j-1}^p\geq 16\tilde\mu^{1-m}(l_j-\bar l)^{2-p}r_j^p>t_1-\hat t,
  \end{split}\end{equation}
  since
 $t_1-\tilde\mu^{1-m}(l_j-\bar l)^{2-p}r_j^p<\hat t$. This yields that $Q_{j-1}(l_j)\nsubseteq\widetilde Q_1$.
 We now decompose
   $L_j=L^\prime_j\cup L^{\prime\prime}_j$, where
   \begin{equation}\begin{split}\label{Ldecompositionxi}
   L^\prime_j=L_j\cap \left\{\frac{l_j-\tilde  u}{l_j-\bar l}\leq\epsilon_1\right\}\qquad\text{and}\qquad
   L^{\prime\prime}_j=L_j\setminus L^\prime_j.
   \end{split}\end{equation}
   Furthermore, we deduce from \eqref{ljxi} that $\{u\leq l_j\}=\{\tilde u\leq l_j\}$ and
    $l_j-\frac{1}{128}\xi\omega> \frac{1}{2}l_j$,
   where $\tilde u=\max\left\{u,\frac{1}{128}\xi\omega\right\}$.
   It follows that
    \begin{equation}\begin{split}\label{cutxi}
    &\iint_{L_j}\tilde u^{m-1}\left(\frac{l_j-u}{l_j-\bar l}
    \right)^{(1+\lambda)(p-1)}\phi_j^{k-p}
 \,\mathrm {d}x\mathrm {d}t
 \\&\leq\iint_{L_j\cap\left\{u>\frac{1}{128}\xi
 \omega\right\}}\tilde u^{m-1}\left(\frac{l_j-\tilde u}{l_j-\bar l}\right)
 ^{(1+\lambda)(p-1)}\phi_j^{k-p}
 \\&\quad+2^{(1+\lambda)(p-1)}\iint_{L_j\cap\left\{u\leq\frac{1}{128}\xi\omega\right\}}\tilde u^{m-1}
 \left(\frac{l_j-\frac{1}{128}\xi\omega}{l_j-\bar l}\right)^{(1+\lambda)(p-1)}\phi_j^{k-p}
 \,\mathrm {d}x\mathrm {d}t
 \\&\leq 2^{(1+\lambda)(p-1)}\iint_{L_j}\tilde u^{m-1}\left(\frac{l_j-\tilde u}
 {l_j-\bar l}\right)^{(1+\lambda)(p-1)}\phi_j^{k-p}
 \,\mathrm {d}x\mathrm {d}t
  \end{split}\end{equation}
  and hence
    \begin{equation*}\begin{split}
 A_j(\bar l)\leq&\gamma\frac{(l_j-\bar l)^{p-2}}{r_j^{n+p}}\iint_{L_j}\tilde u^{m-1}\left(\frac{l_j-\tilde u}
 {l_j-\bar l}\right)^{(1+\lambda)(p-1)}\phi_j^{k-p}
 \,\mathrm {d}x\mathrm {d}t
 \\&+\esssup_{\hat t<t<t_1}\frac{1}{r_j^n}\int_{B_j\times\{t\}}G\left(\frac{l_j-u}{l_j-\bar l}\right)\phi_j^{k}
 \,\mathrm {d}x,
 \end{split}\end{equation*}
 where the constant $\gamma$ depends only upon $p$ and $\lambda$.
    Our task now is to establish an estimate for $A_j(\bar  l)$.
Noting that $u\leq l_j$ on $L_j$, we have
     \begin{equation}\begin{split}\label{tildeu}
   \tilde u\leq\mu_-+\xi\omega=\tilde\mu\qquad\text{and}\qquad
   \frac{l_{j-1}-u}
 {l_{j-1}-l_j}\geq 1\qquad\text{on}\qquad L_j.
    \end{split}\end{equation}
Taking into account that $\phi_{j-1}(x)=1$ for $(x,t)\in Q_j$, we use \eqref{inclusion}, \eqref{Aj-1xi} and \eqref{tildeu} to deduce
  \begin{equation}\begin{split}\label{0st estimatexi}
 & \frac{(l_j-\bar l)^{p-2}}{r_j^{n+p}}\iint_{L_j}\tilde u^{m-1}
 \,\mathrm {d}x\mathrm {d}t\leq
 \frac{(l_j-\bar l)^{p-2}\tilde\mu^{m-1}}{r_j^{n+p}}|L_j|
 \\&\leq \gamma \frac{(l_j-\bar l)^{p-2}\tilde\mu^{m-1}(t_1-\hat t)}{r_j^{n+p}}\esssup_{\hat t<t<t_1}\int_{B_{j-1}}
\phi_{j-1}^k
 \,\mathrm {d}x
 \\&\leq \gamma \frac{1}{r_{j-1}^{n}}\esssup_{\hat t<t<t_1}\int_{B_{j-1}}
 G\left(\frac{l_{j-1}-u}
 {l_{j-1}-l_j}\right)\phi_{j-1}^k
 \,\mathrm {d}x
 \leq\gamma A_{j-1}(l_j)\leq \gamma \chi,
  \end{split}\end{equation}
  since $Q_{j-1}(l_j)\nsubseteq\widetilde Q_1$ and $A_{j-1}(l_j)$ is defined via \eqref{Aj2nd} with $l=l_j$.
 According to \eqref{0st estimatexi}, we have
 \begin{equation}\begin{split}\label{1st estimatexi}
 &\frac{(l_j-\bar l)^{p-2}}{r_j^{n+p}}\iint_{L_j^\prime}\tilde u^{m-1}\left(\frac{l_j-\tilde u}{l_j-\bar l}
 \right)^{(1+\lambda)(p-1)}\phi_j^{k-p}
 \,\mathrm {d}x\mathrm {d}t
 \\&\leq \frac{\tilde\mu^{m-1}(l_j-\bar l)^{p-2}}{r_j^{n+p}}\epsilon_1^{(1+\lambda)(p-1)}|L_j|
\leq \gamma\epsilon_1^{(1+\lambda)(p-1)}\chi.
 \end{split}\end{equation}
 On the other hand, for any fixed  $\epsilon_2<1$, we apply Young's inequality to conclude that
  \begin{equation*}\begin{split}
 &\frac{(l_j-\bar l)^{p-2}}{r_j^{n+p}}\iint_{L_j^{\prime\prime}}\tilde u^{m-1}\left(\frac{l_j-\tilde u}
 {l_j-\bar l}\right)^{(1+\lambda)(p-1)}\phi_j^{k-p}
 \,\mathrm {d}x\mathrm {d}t
 \\&\leq\epsilon_2\frac{\tilde \mu^{m-1}(l_j-\bar l)^{p-2}}{r_j^{n+p}}|L_j|
 \\&\quad+\gamma(\epsilon_2)\frac{\tilde\mu^{m-1}(l_j-\bar l)^{p-2}}{r_j^{n+p}}\iint_{L_j^{\prime\prime}}
 \left(\frac{l_j-\tilde u}{l_j-\bar l}\right)^{p\frac{n+h}{nh}}\phi_j^{(k-p)q}
 \,\mathrm {d}x\mathrm {d}t
 \\&=:T_1+T_2,
  \end{split}\end{equation*}
  with the obvious meanings of $T_1$ and $T_2$. Here, parameters $h$ and $q$ are chosen via \eqref{hq}.
In view of \eqref{0st estimatexi}, we deduce that $T_1\leq \gamma\epsilon_2\chi$.
To estimate $T_2$, we set
   \begin{equation}\begin{split}\label{tildepsixi}\tilde\psi_j(x,t)=\frac{1}{l_j-\bar l}\left[\int_{\tilde u}^{l_j}
   \left(1+\frac{l_j-s}{l_j-\bar l}\right)^{-\frac{1}{p}-\frac{\lambda}{p}}\,\mathrm {d}s\right]_+\end{split}\end{equation}
  and use Lemma \ref{lemmainequalitypsi-} with $(l,d)$ replaced by $(l_j,l_j-\bar l)$ to deduce that
   \begin{equation}\begin{split}\label{T21xi}
   T_2\leq \gamma\frac{\tilde\mu^{m-1}(l_j-\bar l)^{p-2}}{r_j^{n+p}}\iint_{L_j^{\prime\prime}}
   \tilde\psi_j^{p\frac{n+h}{n}}\phi_j^{(k-p)q} \,\mathrm {d}x\mathrm {d}t.
   \end{split}\end{equation}
   To proceed further, we
   set $v=\tilde\psi_j\phi_j^{k_1}$, where $k_1=\frac{(k-p)nq}{p(n+h)}$. Using the similar arguments as in the proof of Step 3
   of Lemma \ref{lemmaDeGiorgi1}, we can deduce the following estimate
   \begin{equation}\begin{split}\label{T22xi}
\iint_{L_j^{\prime\prime}}&
   \tilde\psi_j^{p\frac{n+h}{n}}\phi_j^{(k-p)q} \,\mathrm {d}x\mathrm {d}t=
   \iint_{L_j^{\prime\prime}}
v^{p+\frac{ph}{n}} \,\mathrm {d}x\mathrm {d}t
\\&\leq \int_{\hat t}^{t_1}\left(\int_{B_j}
v^{\frac{np}{n-p}} \,\mathrm {d}x\right)^{\frac{n-p}{n}}
\left(\int_{L_j^{\prime\prime}(t)}
v^{h} \,\mathrm {d}x\right)^{\frac{p}{n}}
\mathrm {d}t
\\&\leq  \gamma\esssup_{\hat t<t<t_1}\left(\int_{L_j^{\prime\prime}(t)}
\frac{l_j-\tilde u}{l_j-\bar l}\phi_j^{k_1h} \,\mathrm {d}x\right)^{\frac{p}{n}}\iint_{Q_j}
|Dv|^p \,\mathrm {d}x\mathrm {d}t,
   \end{split}\end{equation}
 where
  \begin{equation}
  \label{Ljjjj}
  L_j^{\prime\prime}(t)=\{x\in B_j:u(\cdot,t)\leq l_j\}\cap \left\{x\in B_j:\frac{l_j-\tilde u(\cdot,t)}{l_j-\bar l}>
   \epsilon_1\right\}.\end{equation}
   In view of \eqref{Gkv}, we conclude with
   \begin{equation}\begin{split}\label{T23xi}
&   \int_{L_j^{\prime\prime}(t)}
\frac{l_j-\tilde u}{l_j-\bar l}\phi_j^{k_1h} \,\mathrm {d}x
\\&\leq c(\epsilon_1)
\int_{L_j^{\prime\prime}(t)}
G\left(\frac{l_j-\tilde u}{l_j-\bar l} \right)\phi_j^{k_1h}\,\mathrm {d}x
\leq c(\epsilon_1)
\int_{B_j}
G\left(\frac{l_j-u}{l_j-\bar l} \right)\phi_j^{k_1h}\,\mathrm {d}x.
    \end{split}\end{equation}
    At this stage, we use Lemma \ref{Cac1} with $(l,d,\theta)$ replaced by $(l_j,l_j-\bar l, t_1-\hat t)$ to deduce that
     \begin{equation*}\begin{split}
  &\esssup_{\hat t<t<t_1}
  \frac{1}{r_j^n}
  \int_{B_j}
G\left(\frac{l_j-u}{l_j-\bar l} \right)\phi_j^{k_1h}\,\mathrm {d}x
  \\
 \leq &
 \gamma  \frac{(l_j-\bar l)^{p-2}}{r_j^{p+n}}\iint_{L_j}u^{m-1}\left(\frac{l_j-u}{l_j-\bar l}\right)^{(1+\lambda)(p-1)}
  \phi_j^{k_1h-p}\,\mathrm {d}x\mathrm {d}t
   \\&+\gamma\frac{t_1-\hat t}{(l_j-\bar l)^2r_j^n}\int_{B_j}g^{\frac{p}{p-1}}\,\mathrm {d}x
  +\gamma \frac{t_1-\hat t}{(l_j-\bar l)r_j^n}\int_{B_j}|f|\,\mathrm {d}x
  \\=&:T_3+T_4+T_5,
   \end{split}\end{equation*}
   with the obvious meanings of $T_3$-$T_5$.
   We first consider the estimate for $T_3$.
We apply \eqref{0st estimatexi}, \eqref{upper bound for lxi} and \eqref{Aj-1xi} to find that
    \begin{equation}\begin{split}\label{T3xi}
    T_3\leq &\gamma\frac{(l_j-\bar l)^{p-2-(1+\lambda)(p-1)}}{r_j^{p+n}}\iint_{L_j}\tilde u^{m-1}
    (l_j-u)^{(1+\lambda)(p-1)}
  \phi_j^{k_1h-p}\,\mathrm {d}x\mathrm {d}t
  \\ \leq &\gamma\frac{(l_{j-1}-l_j)^{p-2}}{r_{j-1}^{p+n}}\iint_{L_{j-1}}\tilde u^{m-1}
  \left(\frac{l_{j-1}-u}{l_{j-1}-l_j}\right)^{(1+\lambda)(p-1)}
  \phi_{j-1}^{k-p}\,\mathrm {d}x\mathrm {d}t
  \\ \leq & \gamma A_{j-1}(l_j)\leq \gamma_1\chi,
  \end{split}\end{equation}
  where the constant  $\gamma_1$ depends only upon the data.
Finally, we infer from
    \eqref{alpha1xi}, \eqref{upper bound for lxi} and \eqref{inclusion} that
    \begin{equation*}\begin{split}T_4+T_5\leq
    \gamma\frac{\tilde\mu^{1-m}}{(l_j-\bar l)^p}r_j^{p-n}\int_{B_j}g^{\frac{p}{p-1}}\,\mathrm {d}x
  +\gamma \frac{\tilde\mu^{1-m}}{(l_j-\bar l)^{p-1}}r_j^{p-n}\int_{B_j}|f|\,\mathrm {d}x\leq
    \gamma (\tilde B^{-p}+\tilde B^{-(p-1)}).\end{split}\end{equation*}
    Consequently,
    we infer that
    \begin{equation}\begin{split}\label{important Gxi}
  \esssup_{\hat t<t<t_1}&
  \frac{1}{r_j^n}
  \int_{B_j}
G\left(\frac{l_j-u}{l_j-\bar l} \right)\phi_j^{k_1h}\,\mathrm {d}x\leq \gamma\chi+\gamma (\tilde B^{-p}+\tilde B^{-(p-1)}).
    \end{split}\end{equation}
We now turn our attention to the estimate of $T_2$.
    Combining the estimates \eqref{T21xi}-\eqref{important Gxi}, we can rewrite the upper bound for $T_2$ by
    \begin{equation*}\begin{split}
    T_2\leq & \gamma \frac{\tilde\mu^{m-1}(l_j-\bar l)^{p-2}}{r_j^n}(\chi+\tilde B^{-p}+\tilde B^{-(p-1)})^{\frac{p}{n}}
    \\&\times\left[\iint_{Q_j}
\phi_j^{k_1p}|D\tilde \psi_j|^p \,\mathrm {d}x\mathrm {d}t
+\iint_{Q_j}
\phi_j^{(k_1-1)p}\tilde \psi_j^p|D\phi_j|^p \,\mathrm {d}x\mathrm {d}t\right]
\\=&:\gamma(\chi+\tilde B^{-p}+\tilde B^{-(p-1)})^{\frac{p}{n}}(T_6+T_7),
    \end{split}\end{equation*}
    with the obvious meanings of $T_6$ and $T_7$. We first consider the  estimate for $T_6$.
Noting that
 \begin{equation}\begin{split}\label{tildemutildeu}\tilde\mu^{m-1}\leq  \gamma_m(\mu_-^{m-1}+(\xi\omega)^{m-1})\leq \gamma\tilde u^{m-1}\quad\text{on}\quad
 \hat Q=B_R\times\left(-(\mu_+)^{1-m}A^{p-2}\omega^{2-p}R^p,0\right),\end{split}\end{equation}
we conclude that
    \begin{equation*}\begin{split}
    T_6&=\frac{\tilde\mu^{m-1}(l_j-\bar l)^{p-2}}{r_j^n}
    \iint_{Q_j}
\phi_j^{k_1p}|D\tilde \psi_j|^p \,\mathrm {d}x\mathrm {d}t
\leq \gamma \frac{(l_j-\bar l)^{p-2}}{r_j^n}
    \iint_{Q_j}\tilde u^{m-1}
|D\tilde \psi_j|^p \phi_j^{k_1p}\,\mathrm {d}x\mathrm {d}t.
    \end{split}\end{equation*}
    Taking into account that $\{\tilde u<l_j\}=\{u<l_j\}$, we infer from \eqref{tildepsixi} that
    \begin{equation}\begin{split}\label{reduction}
    \tilde u^{\frac{m-1}{p}}
|D\tilde \psi_j|&=u^{\frac{m-1}{p}}|Du|\frac{1}{l_j-\bar l}
   \left(1+\frac{l_j-u}{l_j-\bar l}\right)^{-\frac{1}{p}-\frac{\lambda}{p}}\chi_{\left\{\frac{1}{128}\xi\omega<u<l_j\right\}}
  \\&\leq u^{\frac{m-1}{p}}|Du|\frac{1}{l_j-\bar l}
   \left(1+\frac{l_j-u}{l_j-\bar l}\right)^{-\frac{1}{p}-\frac{\lambda}{p}}
\chi_{\left\{u<l_j\right\}}=u^{\frac{m-1}{p}}|D\psi_j|,
     \end{split}\end{equation}
     where
        \begin{equation*}\begin{split}\psi_j(x,t)=\frac{1}{l_j-\bar l}\left[\int_{u}^{l_j}
   \left(1+\frac{l_j-s}{l_j-\bar l}\right)^{-\frac{1}{p}-\frac{\lambda}{p}}\,\mathrm {d}s\right]_+.\end{split}\end{equation*}
  At this stage, we use Lemma \ref{Cac1}
  with $(l,d,\theta)$ replaced by $(l_j,l_j-\bar l, t_1-\hat t)$
  and taking into account the estimates for $T_3$-$T_5$. We conclude that
  \begin{equation*}\begin{split}
  T_6\leq &\gamma \frac{(l_j-\bar l)^{p-2}}{r_j^n}
    \iint_{Q_j}u^{m-1}
|D\psi_j|^p \phi_j^{k_1p}\,\mathrm {d}x\mathrm {d}t
  \\
 \leq &\gamma  \frac{(l_j-\bar l)^{p-2}}{r_j^p}\iint_{L_j}u^{m-1}\left(\frac{l_j-u}{l_j-\bar l}\right)^{(1+\lambda)(p-1)}
 \phi_j^{(k_1-1)p}\,\mathrm {d}x\mathrm {d}t
  \\&
+\gamma\frac{t_1-\hat t}{(l_j-\bar l)^2r_j^n}\int_{B_j}g^{\frac{p}{p-1}}\,\mathrm {d}x
  +\gamma \frac{t_1-\hat t}{(l_j-\bar l)r_j^n}\int_{B_j}|f|\,\mathrm {d}x
  \\ \leq &
  \gamma\frac{(l_{j-1}-l_j)^{p-2}}{r_{j-1}^{p+n}}\iint_{L_{j-1}}\tilde u^{m-1}
  \left(\frac{l_{j-1}-u}{l_{j-1}-l_j}\right)^{(1+\lambda)(p-1)}
  \phi_{j-1}^{k-p}\,\mathrm {d}x\mathrm {d}t
    \\&+\gamma(\tilde B^{-p}+\tilde B^{-(p-1)})
 \\ \leq&\gamma(\chi+\tilde B^{-p}+\tilde B^{-(p-1)}).
  \end{split}\end{equation*}
  Next, we consider the estimate for $T_7$.
  Similar to the estimate \eqref{psitilde}, we have
   \begin{equation}\begin{split}\label{tildepsiupper}\tilde\psi_j(x,t)
   \leq \frac{(l_j-\tilde u)_+^{\frac{1}{p^\prime}}}{(l_j-\bar l)^{\frac{1}{p^\prime}}}.
   \end{split}\end{equation}
Consequently, we apply \eqref{tildemutildeu}, \eqref{tildepsiupper} and \eqref{T3xi} to conclude that
    \begin{equation*}\begin{split}
  T_7&\leq  \frac{\tilde\mu^{m-1}(l_j-\bar l)^{p-2}}{r_j^{n+p}}\iint_{L_j}
\left(\frac{l_j-\tilde u}{l_j-\bar l}\right)^{p-1}
\phi_j^{(k_1-1)p} \,\mathrm {d}x\mathrm {d}t
\\&\leq \frac{\tilde\mu^{m-1}(l_j-\bar l)^{p-2}}{r_j^{n+p}}|L_j|
\\&\quad+
\frac{(l_j-\bar l)^{p-2}}{r_j^{n+p}}\iint_{L_j}\tilde u^{m-1}
\left(\frac{l_j-u}{l_j-\bar l}\right)^{(1+\lambda)(p-1)}
\phi_j^{(k_1-1)p} \,\mathrm {d}x\mathrm {d}t
\\&\leq \gamma \chi.
  \end{split}\end{equation*}
Then, we arrive at
$T_2\leq \gamma(\chi+\tilde B^{-p}+\tilde B^{-(p-1)})^{1+\frac{p}{n}}$
  and hence
   \begin{equation}\begin{split}\label{Lprimeprimexi}
  &\frac{(l_j-\bar l)^{p-2}}{r_j^{n+p}}\iint_{L_j^{\prime\prime}}\tilde u^{m-1}\left(\frac{l_j-\tilde u}
  {l_j-\bar l}\right)^{(1+\lambda)(p-1)}\phi_j^{k-p}
 \,\mathrm {d}x\mathrm {d}t
 \\&\leq \gamma\epsilon_2\chi+\gamma(\epsilon_2)(\chi+\tilde B^{-p}+\tilde B^{-(p-1)})^{1+\frac{p}{n}}.
 \end{split}\end{equation}
 This yields that
  \begin{equation}\begin{split}\label{AA1xi}
  &\frac{(l_j-\bar l)^{p-2}}{r_j^{n+p}}\iint_{L_j}\tilde u^{m-1}\left(\frac{l_j-\tilde u}{l_j-\bar l}
  \right)^{(1+\lambda)(p-1)}\phi_j^{k-p}
 \,\mathrm {d}x\mathrm {d}t
 \\&\leq \gamma\epsilon_1^{(1+\lambda)(p-1)}\chi+
\gamma\epsilon_2\chi+\gamma(\epsilon_2)(\chi+\tilde B^{-p}+\tilde B^{-(p-1)})^{1+\frac{p}{n}}.
 \end{split}\end{equation}
 Furthermore, we apply Lemma \ref{Cac1} to obtain
 \begin{equation}\begin{split}\label{estimateforGxi}
  &\esssup_t
  \frac{1}{r_j^n}
  \int_{B_j}
G\left(\frac{l_j-u}{l_j-\bar l} \right)\phi_j^k\,\mathrm {d}x
  \\
 \leq &
 \gamma  \frac{(l_j-\bar l)^{p-2}}{r_j^{p+n}}\iint_{L_j}u^{m-1}\left(\frac{l_j-u}{l_j-\bar l}\right)^{(1+\lambda)(p-1)}
  \phi_j^{k-p}\,\mathrm {d}x\mathrm {d}t
   \\&+\gamma\frac{t_1-\hat t}{(l_j-\bar l)^2r_j^n}\int_{B_j}g^{\frac{p}{p-1}}\,\mathrm {d}x
  +\gamma \frac{t_1-\hat t}{(l_j-\bar l)r_j^n}\int_{B_j}|f|\,\mathrm {d}x
  \\=&:S_1+S_2+S_3,
   \end{split}\end{equation}
   with the obvious meanings of $S_1$-$S_3$. To estimate $S_1$, we use \eqref{AA1xi} and \eqref{cutxi} to infer that
   \begin{equation*}\begin{split}
   S_1&\leq
   \gamma\frac{(l_j-\bar l)^{p-2}}{r_j^{p+n}}\iint_{L_j}\tilde u^{m-1}
   \left(\frac{l_j-\tilde u}{l_j-\bar l}\right)^{(1+\lambda)(p-1)}
  \phi_j^{k-p}\,\mathrm {d}x\mathrm {d}t
  \\&\leq
   \gamma\left[\epsilon_1^{(1+\lambda)(p-1)}\chi+
 \epsilon_2\chi+\gamma(\epsilon_2)(\chi+\tilde B^{-p}+\tilde B^{-(p-1)})^{1+\frac{p}{n}}\right].
   \end{split}\end{equation*}
     Similar to the estimates of $T_4$ and $T_5$, we obtain $S_2+S_3\leq \gamma (\tilde B^{-p}+\tilde
     B^{-(p-1)})$. Inserting the estimates for $S_1$-$S_3$
     into \eqref{estimateforGxi} and taking \eqref{AA1xi} into consideration, we conclude that there exist constants
     $\gamma_1=\gamma_1(\text{data})$ and $\gamma_2=\gamma_2(\text{data},\epsilon_1,\epsilon_2)$ such that
     \begin{equation}\begin{split}\label{Ajinclusion}
     A_j(\bar l)\leq &\gamma_1 (\tilde B^{-p}+\tilde B^{-(p-1)})+
   \gamma_1(\epsilon_1+
 \epsilon_2)\chi
 +\gamma_2(\chi+\tilde B^{-p}+\tilde B^{-(p-1)})^{1+\frac{p}{n}}.
      \end{split}\end{equation}
We now turn our attention to consider the case $Q_j(\bar l)\subseteq\widetilde Q_1$. For simplicity, we may take
$\varphi_{j-1}=\varphi_{j-1}(l_j)$.
We decompose
   $L_j(\bar l)=L^\prime_j(\bar l)\cup L^{\prime\prime}_j(\bar l)$, where
   \begin{equation}\begin{split}\label{Ldecompositionxixi}
   L^\prime_j(\bar l)=L_j(\bar l)\cap \left\{\frac{l_j-\tilde  u}{l_j-\bar l}\leq\epsilon_1\right\}\qquad\text{and}\qquad
   L^{\prime\prime}_j(\bar l)=L_j(\bar l)\setminus L^\prime_j(\bar l).
   \end{split}\end{equation}
Similar to \eqref{cutxi}, we have
    \begin{equation}\begin{split}\label{cutxixi}\iint_{L_j(\bar l)}&\tilde u^{m-1}\left(\frac{l_j-u}{l_j-\bar l}\right)^{(1+\lambda)(p-1)}\varphi_j(\bar l)^{k-p}
 \,\mathrm {d}x\mathrm {d}t
 \\&\leq 2^{(1+\lambda)(p-1)}\iint_{L_j(\bar l)}\tilde u^{m-1}\left(\frac{l_j-\tilde u}{l_j-\bar l}\right)^{(1+\lambda)(p-1)}\varphi_j(\bar l)^{k-p}
 \,\mathrm {d}x\mathrm {d}t
  \end{split}\end{equation}
  and hence
    \begin{equation*}\begin{split}
 A_j(\bar l)\leq&\gamma\frac{(l_j-\bar l)^{p-2}}{r_j^{n+p}}\iint_{L_j(\bar l)}\tilde u^{m-1}\left(\frac{l_j-\tilde u}
 {l_j-\bar l}\right)^{(1+\lambda)(p-1)}\varphi_j(\bar l)^{k-p}
 \,\mathrm {d}x\mathrm {d}t
 \\&+\esssup_{t}\frac{1}{r_j^n}\int_{B_j\times\{t\}}G\left(\frac{l_j-u}{l_j-\bar l}\right)\varphi_j(\bar l)^{k}
 \,\mathrm {d}x,
 \end{split}\end{equation*}
 where the constant $\gamma$ depends only upon $p$ and $\lambda$.
 The next thing to do in the proof is to obtain an upper bound for $A_j(\bar l)$. To this end,
we first observe that $u\leq l_j$ on $L_j(\bar l)$
 and hence
  \begin{equation*}\begin{split}\frac{l_{j-1}-u}
 {l_{j-1}-l_j}\geq 1\qquad\text{on}\quad L_j(\bar l).
  \end{split}\end{equation*}
 In the case $Q_{j-1}(l_j)\nsubseteq\widetilde Q_1$, we have $Q_j(\bar l)\subset Q_{j-1}$ and
$\phi_{j-1}(x)=1$ for $(x,t)\in Q_j(\bar l)$. Then, we use \eqref{Aj-1xi} to deduce
  \begin{equation*}\begin{split}
 \frac{(l_j-\bar l)^{p-2}\tilde\mu^{m-1}}{r_j^{n+p}}|L_j(\bar l)|
 &\leq \gamma \frac{1}{r_{j-1}^{n}}\esssup_{\hat t<t<t_1}\int_{B_{j-1}}
 G\left(\frac{l_{j-1}-u}
 {l_{j-1}-l_j}\right)\phi_{j-1}^k
 \,\mathrm {d}x
\\& \leq\gamma A_{j-1}(l_j)\leq \gamma \chi.
  \end{split}\end{equation*}
Moreover, we apply \eqref{Aj-1xi} and \eqref{upper bound for lxi} to conclude that
  \begin{equation*}\begin{split}
  &\frac{(l_j-\bar l)^{p-2}}{r_j^{n+p}}\iint_{L_j(\bar l)}\tilde u^{m-1}\left(\frac{l_j- u}{l_j-\bar l}
 \right)^{(1+\lambda)(p-1)}
 \,\mathrm {d}x\mathrm {d}t
 \\&\leq \gamma\frac{(l_{j-1}-l_j)^{p-2-(1+\lambda)(p-1)}}{r_j^{n+p}}\iint_{L_j(\bar l)}\tilde u^{m-1}(l_{j-1}- u)
 ^{(1+\lambda)(p-1)}
 \,\mathrm {d}x\mathrm {d}t
 \\&\leq \gamma\frac{(l_{j-1}-l_j)^{p-2}}{r_j^{n+p}}\iint_{L_{j-1}}\tilde u^{m-1}\left(\frac{l_{j-1}- u}{l_{j-1}-l_j}
 \right)^{(1+\lambda)(p-1)}\phi_{j-1}^{k-p}
 \,\mathrm {d}x\mathrm {d}t
 \\&\leq \gamma A_{j-1}(l_j)\leq\gamma\chi.
 \end{split}\end{equation*}
  Next, we consider the case $Q_{j-1}(l_j)\subseteq\widetilde Q_1$.
In view of \eqref{upper bound for lxi}, we see that
$\tilde\mu^{1-m}(l_j-\bar l)^{2-p}r_j^p\leq\frac{1}{16}\tilde\mu^{1-m}(l_{j-1}-l_j)^{2-p}r_{j-1}^p.$
This yields that $Q_j(\bar l)\subset Q_{j-1}(l_j)$ and $\varphi_{j-1}(x,t)=1$ for $(x,t)\in Q_j(\bar l)$. It follows that
    \begin{equation*}\begin{split}
 \frac{(l_j-\bar l)^{p-2}\tilde\mu^{m-1}}{r_j^{n+p}}|L_j(\bar l)|
 &\leq \gamma \frac{1}{r_{j-1}^{n}}\esssup_{t}\int_{B_{j-1}}
 G\left(\frac{l_{j-1}-u}
 {l_{j-1}-l_j}\right)\varphi_{j-1}^k
 \,\mathrm {d}x
\\& \leq\gamma A_{j-1}(l_j)\leq \gamma \chi
  \end{split}\end{equation*}
  and
   \begin{equation*}\begin{split}
  \frac{(l_j-\bar l)^{p-2}}{r_j^{n+p}}&\iint_{L_j(\bar l)}\tilde u^{m-1}\left(\frac{l_j- u}{l_j-\bar l}
 \right)^{(1+\lambda)(p-1)}
 \,\mathrm {d}x\mathrm {d}t
 \\&\leq \gamma\frac{(l_{j-1}-l_j)^{p-2}}{r_j^{n+p}}\iint_{L_{j-1}(l_j)}\tilde u^{m-1}\left(\frac{l_{j-1}- u}{l_{j-1}-l_j}
 \right)^{(1+\lambda)(p-1)}\varphi_{j-1}^{k-p}
 \,\mathrm {d}x\mathrm {d}t\\&\leq \gamma A_{j-1}(l_j)\leq\gamma\chi.
 \end{split}\end{equation*}
 Consequently, we conclude that the estimates
   \begin{equation}\begin{split}\label{0st estimatexixi1}
 \frac{(l_j-\bar l)^{p-2}\tilde\mu^{m-1}}{r_j^{n+p}}|L_j(\bar l)|
\leq \gamma \chi
  \end{split}\end{equation}
  and
    \begin{equation}\begin{split}\label{0st estimatexixi2}
  \frac{(l_j-\bar l)^{p-2}}{r_j^{n+p}}&\iint_{L_j(\bar l)}\tilde u^{m-1}\left(\frac{l_j- u}{l_j-\bar l}
 \right)^{(1+\lambda)(p-1)}
 \,\mathrm {d}x\mathrm {d}t
\leq\gamma\chi
 \end{split}\end{equation}
 hold for either $Q_{j-1}(l_j)\nsubseteq\widetilde Q_1$ or $Q_{j-1}(l_j)\subseteq\widetilde Q_1$.
 According to \eqref{0st estimatexixi1} and \eqref{tildeu}, we deduce that
 \begin{equation}\begin{split}\label{1st estimatexixi}
 &\frac{(l_j-\bar l)^{p-2}}{r_j^{n+p}}\iint_{L_j^\prime(\bar l)}\tilde u^{m-1}\left(\frac{l_j-\tilde u}{l_j-\bar l}
 \right)^{(1+\lambda)(p-1)}\varphi_j(\bar l)^{k-p}
 \,\mathrm {d}x\mathrm {d}t
 \\&\leq \frac{\tilde \mu^{m-1}(l_j-\bar l)^{p-2}}{r_j^{n+p}}\epsilon_1^{(1+\lambda)(p-1)}|L_j(\bar l)|
\leq \gamma\epsilon_1^{(1+\lambda)(p-1)}\chi.
 \end{split}\end{equation}
 Moreover, for any fixed  $\epsilon_2<1$, we apply Young's inequality and \eqref{tildeu} to conclude that
  \begin{equation*}\begin{split}
 &\frac{(l_j-\bar l)^{p-2}}{r_j^{n+p}}\iint_{L_j^{\prime\prime}(\bar l)}\tilde u^{m-1}\left(\frac{l_j-\tilde u}
 {l_j-\bar l}\right)^{(1+\lambda)(p-1)}\varphi_j(\bar l)^{k-p}
 \,\mathrm {d}x\mathrm {d}t
 \\&\leq\epsilon_2\frac{\tilde\mu^{m-1}(l_j-\bar l)^{p-2}}{r_j^{n+p}}|L_j(\bar l)|
 \\&\quad+\gamma(\epsilon_2)\frac{\tilde\mu^{m-1}(l_j-\bar l)^{p-2}}{r_j^{n+p}}\iint_{L_j^{\prime\prime}(\bar l)}
 \left(\frac{l_j-\tilde u}{l_j-\bar l}\right)^{p\frac{n+h}{nh}}\varphi_j(\bar l)^{(k-p)q}
 \,\mathrm {d}x\mathrm {d}t
 \\&=:\tilde T_1+\tilde T_2,
  \end{split}\end{equation*}
  with the obvious meanings of $\tilde T_1$ and $\tilde  T_2$. In view of \eqref{0st estimatexixi1}, we obtain
  $\tilde T_1\leq \gamma\epsilon_2\chi$.
Next, we use Lemma \ref{lemmainequalitypsi-} with $(l,d)$ replaced by $(l_j,l_j-\bar l)$ to deduce that
   \begin{equation}\begin{split}\label{T21xixi}
  \tilde T_2\leq \gamma\frac{\tilde\mu^{m-1}(l_j-\bar l)^{p-2}}{r_j^{n+p}}\iint_{L_j^{\prime\prime}(\bar l)}
   \tilde\psi_j^{p\frac{n+h}{n}}\varphi_j(\bar l)^{(k-p)q} \,\mathrm {d}x\mathrm {d}t,
   \end{split}\end{equation}
   where $\tilde\psi_j$ is the function defined in \eqref{tildepsixi}.
   Let $v=\tilde\psi_j\varphi_j^{k_1}$, where $k_1=\frac{(k-p)nq}{p(n+h)}$. We adopt the same procedure
   as in the proof of Step 3
   of Lemma \ref{lemmaDeGiorgi1}. This yields that
   \begin{equation}\begin{split}\label{T22xixi}
&\iint_{L_j^{\prime\prime}(\bar l)}
   \tilde\psi_j^{p\frac{n+h}{n}}\varphi_j(\bar l)^{(k-p)q} \,\mathrm {d}x\mathrm {d}t\\&\leq \int_{t_1- (\mu_+)^{1-m}(l_j-\bar l)^{2-p}r_j^p}^{t_1}\left(\int_{B_j}
v^{\frac{np}{n-p}} \,\mathrm {d}x\right)^{\frac{n-p}{n}}
\left(\int_{L_j^{\prime\prime}(t)}
v^{h} \,\mathrm {d}x\right)^{\frac{p}{n}}
\mathrm {d}t
\\&\leq  \gamma\esssup_t\left(\int_{L_j^{\prime\prime}(t)}
\frac{l_j-\tilde u}{l_j-\bar l}\varphi_j(\bar l)^{k_1h} \,\mathrm {d}x\right)^{\frac{p}{n}}\iint_{Q_j(\bar l)}
|Dv|^p \,\mathrm {d}x\mathrm {d}t,
   \end{split}\end{equation}
   where $L_j^{\prime\prime}(t)$ is the set defined in \eqref{Ljjjj}. According to Lemma \ref{Gk}, we deduce that
   \begin{equation}\begin{split}\label{T23xixi}
&   \int_{L_j^{\prime\prime}(t)}
\frac{l_j-\tilde u}{l_j-\bar l}\varphi_j(\bar l)^{k_1h} \,\mathrm {d}x
\leq c(\epsilon_1)
\int_{B_j}
G\left(\frac{l_j-u}{l_j-\bar l} \right)\varphi_j(\bar l)^{k_1h}\,\mathrm {d}x.
    \end{split}\end{equation}
    At this point, we apply Lemma \ref{Cac1} with $(l,d,\theta)$ replaced by  $(l_j,l_j-\bar l,\tilde \mu^{1-m}(l_j-\bar l)^{2-p}r_j^p)$
    to conclude that
     \begin{equation*}\begin{split}
  &\esssup_t
  \frac{1}{r_j^n}
  \int_{B_j}
G\left(\frac{l_j-u}{l_j-\bar l} \right)\varphi_j(\bar l)^{k_1h}\,\mathrm {d}x
  \\
 \leq &
 \gamma  \frac{(l_j-\bar l)^{p-2}}{r_j^{p+n}}\iint_{L_j(\bar l)}u^{m-1}\left(\frac{l_j-u}{l_j-\bar l}\right)^{(1+\lambda)(p-1)}
  \varphi_j(\bar l)^{k_1h-p}\,\mathrm {d}x\mathrm {d}t
  \\&+\gamma
    \frac{1}{r_j^n}\iint_{L_j(\bar l)}\frac{l_j-u}{l_j-\bar l}|\partial_t\varphi_j(\bar l)|\,\mathrm {d}x\mathrm {d}t
   \\&+\gamma \frac{r_j^{p-n}}{(l_j-\bar l)^p}\tilde\mu^{1-m}\int_{B_j}g^{\frac{p}{p-1}}\,\mathrm {d}x
  +\gamma \frac{r_j^{p-n}}{(l_j-\bar l)^{p-1}}\tilde\mu^{1-m}\int_{B_j}|f|\,\mathrm {d}x
  \\=&:\tilde T_3+\tilde T_4+\tilde T_5+\tilde T_6,
   \end{split}\end{equation*}
   with the obvious meanings of $\tilde T_3$-$\tilde T_6$. In view of \eqref{0st estimatexixi2}, we find that $\tilde T_3\leq\gamma\chi$,
   since $u\leq \tilde u$ on $L_j(\bar l)$.
Noting that $(1+\lambda)(p-1)>1$ and
  $|\partial_t\varphi_j(\bar l)|\leq 9(l_j-\bar l)^{p-2}\tilde\mu^{m-1}r_j^{-p}$, we infer from \eqref{tildemutildeu},
  \eqref{0st estimatexixi1} and \eqref{0st estimatexixi2} that
     \begin{equation*}\begin{split}
    \tilde T_4\leq &\gamma
    \frac{(l_j-\bar l)^{p-2}\tilde\mu^{m-1}}{r_j^{n+p}}\iint_{L_j(\bar l)}\frac{l_j-u}
    {l_j-\bar l}\,\mathrm {d}x\mathrm {d}t
    \\ \leq &\gamma\frac{(l_j-\bar l)^{p-2}\tilde\mu^{m-1}}{r_j^{n+p}}|L_j(\bar l)|
  +\gamma
    \frac{(l_j-\bar l)^{p-2}}{r_j^{n+p}}\iint_{L_j(\bar l)}\tilde u^{m-1}\left(\frac{l_j-u}
    {l_j-\bar l}\right)^{(1+\lambda)(p-1)}\,\mathrm {d}x\mathrm {d}t
  \\ \leq & \gamma\chi+A_{i-1}(l_i)\leq \gamma_2\chi,
    \end{split}\end{equation*}
  where the constant  $\gamma_2$ depends only upon the data. Finally, we infer from
    \eqref{alpha1xi} and \eqref{upper bound for lxi} that $\tilde T_5+\tilde T_6\leq \gamma (\tilde B^{-p}+\tilde B^{-(p-1)})$.
Combining the above estimates,
    we arrive at
    \begin{equation}\begin{split}\label{important Gxixi}
  \esssup_t&
  \frac{1}{r_j^n}
  \int_{B_j}
G\left(\frac{l_j-u}{l_j-\bar l} \right)\varphi_j(\bar l)^{k_1h}\,\mathrm {d}x\leq \gamma\chi+\gamma (\tilde B^{-p}+\tilde B^{-(p-1)}).
    \end{split}\end{equation}
We now turn our attention to the estimate of $\tilde T_2$.
    Combining \eqref{T21}-\eqref{important G}, we can rewrite the upper bound for $\tilde T_2$ by
    \begin{equation*}\begin{split}
    \tilde T_2\leq & \gamma \frac{\tilde\mu^{m-1}(l_j-\bar l)^{p-2}}{r_j^n}(\chi+B^{-p}+B^{-(p-1)})^{\frac{p}{n}}
    \\&\times\left[\iint_{Q_j(\bar l)}
\varphi_j(\bar l)^{k_1p}|D\tilde \psi_j|^p \,\mathrm {d}x\mathrm {d}t
+\iint_{Q_j(\bar l)}
\varphi_j(\bar l)^{(k_1-1)p}\tilde \psi_j^p|D\varphi_j|^p \,\mathrm {d}x\mathrm {d}t\right]
\\=&:\gamma(\chi+B^{-p}+B^{-(p-1)})^{\frac{p}{n}}(\tilde T_7+\tilde T_8),
    \end{split}\end{equation*}
    with the obvious meanings of $\tilde T_7$ and $\tilde T_8$.
    We first consider the  estimate for $\tilde T_7$.
    In view of \eqref{tildemutildeu} and \eqref{reduction}, we see that
    \begin{equation*}\begin{split}
    \tilde T_7&\leq \gamma \frac{(l_j-\bar l)^{p-2}}{r_j^n}
    \iint_{Q_j(\bar l)}\tilde u^{m-1}
|D\tilde \psi_j|^p \varphi_j(\bar l)^{k_1p}\,\mathrm {d}x\mathrm {d}t
\\&\leq \gamma \frac{(l_j-\bar l)^{p-2}}{r_j^n}
    \iint_{Q_j(\bar l)}u^{m-1}
|D\psi_j|^p \varphi_j(\bar l)^{k_1p}\,\mathrm {d}x\mathrm {d}t.
    \end{split}\end{equation*}
To proceed further, we use Lemma \ref{Cac1} and take into account the estimates for $\tilde T_3$-$\tilde T_6$. We proceed to estimate
  $\tilde T_7$ by
  \begin{equation*}\begin{split}
 \tilde T_7
 \leq &\gamma  \frac{(l_j-\bar l)^{p-2}}{r_j^p}\iint_{L_j(\bar l)}u^{m-1}\left(\frac{l_j-u}{l_j-\bar l}\right)^{(1+\lambda)(p-1)}
 \varphi_j(\bar l)^{(k_1-1)p}\,\mathrm {d}x\mathrm {d}t
  \\&+\gamma \iint_{L_j(\bar l)}\frac{l_j-u}{l_j-\bar l}|\partial_t\varphi_j(\bar l)|\,\mathrm {d}x\mathrm {d}t
+\gamma \frac{r_j^p}{(l_j-\bar l)^p}\tilde\mu^{1-m}\int_{B_j}g^{\frac{p}{p-1}}\,\mathrm {d}x
 \\& +\gamma \frac{r_j^p}{(l_j-\bar l)^{p-1}}\tilde\mu^{1-m}\int_{B_j}|f|\,\mathrm {d}x
 \\ \leq&\gamma (\chi+\tilde B^{-p}+\tilde B^{-(p-1)}),
  \end{split}\end{equation*}
  where the constant $\gamma$ depends only upon the data.
  To estimate $\tilde T_8$, we use \eqref{tildepsiupper}, \eqref{tildemutildeu}, \eqref{0st estimatexixi1}, \eqref{0st estimatexixi2}
  and Young's inequality to obtain
    \begin{equation*}\begin{split}
  \tilde T_8&\leq  \frac{\tilde\mu^{m-1}(l_j-\bar l)^{p-2}}{r_j^{n+p}}\iint_{L_j(\bar l)}
\left(\frac{l_j-\tilde u}{l_j-\bar l}\right)^{p-1}
\varphi_j(\bar l)^{(k_1-1)p} \,\mathrm {d}x\mathrm {d}t
\\&\leq \frac{\tilde\mu^{m-1}(l_j-\bar l)^{p-2}}{r_j^{n+p}}|L_j(\bar l)|
\\&\quad+
\frac{(l_j-\bar l)^{p-2}}{r_j^{n+p}}\iint_{L_j(\bar l)}\tilde u^{m-1}
\left(\frac{l_j-u}{l_j-\bar l}\right)^{(1+\lambda)(p-1)}
\varphi_j(\bar l)^{(k_1-1)p} \,\mathrm {d}x\mathrm {d}t
\\&\leq \gamma \chi
  \end{split}\end{equation*}
  and hence we arrive at
  $\tilde T_2\leq \gamma(\chi+\tilde B^{-p}+\tilde B^{-(p-1)})^{1+\frac{p}{n}}$.
  This also yields that
   \begin{equation}\begin{split}\label{Lprimeprimexixi}
  &\frac{\tilde\mu^{m-1}(l_j-\bar l)^{p-2}}{r_j^{n+p}}\iint_{L_j^{\prime\prime}(\bar l)}\left(\frac{l_j-\tilde u}
  {l_j-\bar l}\right)^{(1+\lambda)(p-1)}\varphi_j(\bar l)^{k-p}
 \,\mathrm {d}x\mathrm {d}t
 \\&\leq \gamma\epsilon_2\chi+\gamma(\epsilon_2)(\chi+\tilde B^{-p}+\tilde B^{-(p-1)})^{1+\frac{p}{n}}.
 \end{split}\end{equation}
Moreover, we conclude with
  \begin{equation}\begin{split}\label{AA1xixi}
  &\frac{\tilde\mu^{m-1}(l_j-\bar l)^{p-2}}{r_j^{n+p}}\iint_{L_j(\bar l)}\left(\frac{l_j-\tilde u}{l_j-\bar l}
  \right)^{(1+\lambda)(p-1)}\varphi_j(\bar l)^{k-p}
 \,\mathrm {d}x\mathrm {d}t
 \\&\leq \gamma\epsilon_1^{(1+\lambda)(p-1)}\chi+
 \gamma\epsilon_2\chi+\gamma(\epsilon_2)\eta^{1-m}(\chi+\tilde B^{-p}+\tilde B^{-(p-1)})^{1+\frac{p}{n}}.
 \end{split}\end{equation}
In order to derive an upper bound for $A_j(\bar l)$,
we need to estimate the second term on the right-hand side of \eqref{Aj1st}. To this end,
we apply Lemma \ref{Cac1} with $(l,d,\theta)$ replaced by  $(l_j,l_j-\bar l,\tilde \mu^{1-m}(l_j-\bar l)^{2-p}r_j^p)$ to obtain
 \begin{equation}\begin{split}\label{estimateforGxixi}
  &\esssup_t
  \frac{1}{r_j^n}
  \int_{B_j}
G\left(\frac{l_j-u}{l_j-\bar l} \right)\varphi_j(\bar l)^k\,\mathrm {d}x
  \\
 \leq &
 \gamma  \frac{(l_j-\bar l)^{p-2}}{r_j^{p+n}}\iint_{L_j(\bar l)}u^{m-1}\left(\frac{l_j-u}{l_j-\bar l}\right)^{(1+\lambda)(p-1)}
  \varphi_j(\bar l)^{k-p}\,\mathrm {d}x\mathrm {d}t
  \\&+\gamma
    \frac{1}{r_j^n}\iint_{L_j(\bar l)}\frac{l_j-u}{l_j-\bar l}|\partial_t\varphi_j(\bar l)|\varphi_j(\bar l)^{k-1}\,\mathrm {d}x\mathrm {d}t
   \\&+\gamma \frac{r_j^{p-n}}{(l_j-\bar l)^p}\tilde\mu^{1-m}\int_{B_j}g^{\frac{p}{p-1}}\,\mathrm {d}x
  +\gamma \frac{r_j^{p-n}}{(l_j-\bar l)^{p-1}}\tilde\mu^{1-m}\int_{B_j}|f|\,\mathrm {d}x
  \\=&:\tilde S_1+\tilde S_2+\tilde S_3+\tilde S_4,
   \end{split}\end{equation}
   with the obvious meanings of $\tilde S_1$-$\tilde S_4$. To estimate $\tilde S_1$, we apply \eqref{AA1xixi} and \eqref{cutxixi} to deduce
   \begin{equation*}\begin{split}
   \tilde S_1&\leq 2^{(1+\lambda)(p-1)}
   \gamma\frac{(l_j-\bar l)^{p-2}\tilde\mu^{m-1}}{r_j^{p+n}}\iint_{L_j(\bar l)}\left(\frac{l_j-u}{l_j-\bar l}\right)^{(1+\lambda)(p-1)}
  \varphi_j(\bar l)^{k-p}\,\mathrm {d}x\mathrm {d}t
  \\&\leq 2^{(1+\lambda)(p-1)}
   \gamma\left[\epsilon_1^{(1+\lambda)(p-1)}\chi+
 \epsilon_2\chi+\gamma(\epsilon_2)\eta^{1-m}(\chi+\tilde B^{-p}+\tilde B^{-(p-1)})^{1+\frac{p}{n}}\right].
   \end{split}\end{equation*}
   Next, we consider the estimate for $\tilde S_2$. Similar to \eqref{cutxi}, we note from $l_j-\frac{1}{128}\xi\omega> \frac{1}{2}l_j$ that
   \begin{equation*}\begin{split}
   \tilde S_2=&\gamma
    \frac{1}{r_j^n}
    \iint_{L_j(\bar l)\cap \left\{u>\frac{1}{128}\xi\omega\right\}}
    \frac{l_j-\tilde   u}{l_j-\bar l}|\partial_t\varphi_j(\bar l)|\varphi_j(\bar l)^{k-1}\,\mathrm {d}x\mathrm {d}t
   \\&+\gamma
    \frac{1}{r_j^n} \iint_{L_j(\bar l)\cap \left\{u\leq\frac{1}{128}\xi\omega\right\}}
    \frac{l_j}{l_j-\bar l}|\partial_t\varphi_j(\bar l)|\varphi_j(\bar l)^{k-1}\,\mathrm {d}x\mathrm {d}t
    \\ \leq& 2\gamma
    \frac{1}{r_j^n}
    \iint_{L_j(\bar l)}
    \frac{l_j-\tilde   u}{l_j-\bar l}|\partial_t\varphi_j(\bar l)|\varphi_j(\bar l)^{k-1}\,\mathrm {d}x\mathrm {d}t.
    \end{split}\end{equation*}
Furthermore, we decompose
   $L_j(\bar l)=L^\prime_j(\bar l)\cup L^{\prime\prime}_j(\bar l)$, where $L^\prime_j(\bar l)$ and $L^{\prime\prime}_j(\bar l)$
   satisfy \eqref{Ldecompositionxixi}. In view of $|\partial_t\varphi_j(\bar l)|\leq 9(l_j-\bar l)^{p-2}\tilde\mu^{m-1}r_j^{-p}$, we
   use \eqref{0st estimatexixi1}
   and \eqref{Lprimeprimexixi} to conclude that
    \begin{equation*}\begin{split}
   \tilde S_2\leq & \gamma
    \frac{(l_j-\bar l)^{p-2}\tilde\mu^{m-1}}{r_j^{n+p}}\iint_{L_j(\bar l)}\frac{l_j-\tilde u}{l_j-\bar l}\varphi_j(\bar l)^{k-1}
    \,\mathrm {d}x\mathrm {d}t
   \\
   \leq &\gamma\epsilon_1\frac{\tilde\mu^{m-1}(l_j-\bar l)^{p-2}}{r_j^{n+p}}|L_j^\prime(\bar l)|
   \\&+\gamma
   \frac{(l_j-\bar l)^{p-2}\tilde\mu^{m-1}}{r_j^{p+n}}\iint_{L_j^{\prime\prime}(\bar l)}
   \left(\frac{l_j-\tilde u}{l_j-\bar l}\right)^{(1+\lambda)(p-1)}
  \varphi_j(\bar l)^{k-p}\,\mathrm {d}x\mathrm {d}t
  \\ \leq & \epsilon_1\chi+\gamma\left[\epsilon_2\chi+\gamma(\epsilon_2)(\chi+\tilde B^{-p}+\tilde B^{-(p-1)})^{1+\frac{p}{n}}\right].
     \end{split}\end{equation*}
     Similar to the estimates of $\tilde T_5$ and $\tilde T_6$, we see that $\tilde S_3+\tilde S_4\leq \gamma (\tilde B^{-p}+\tilde B^{-(p-1)})$. Next, we substitute the estimates for $S_1$-$S_4$
     into \eqref{estimateforG}. Taking into account \eqref{AA1},
      we infer that there exist constants
     $\gamma_1^\prime=\gamma_1^\prime(\text{data})$ and $\gamma_2^\prime=\gamma_2^\prime(\text{data},\epsilon_1,\epsilon_2)$
     such that
     \begin{equation*}\begin{split}
     A_j(\bar l)\leq &\gamma_1^\prime (\tilde B^{-p}+\tilde B^{-(p-1)})+
   \gamma_1^\prime(\epsilon_1+
 \epsilon_2)\chi
 +\gamma_2^\prime(\chi+\tilde B^{-p}+\tilde B^{-(p-1)})^{1+\frac{p}{n}}.
      \end{split}\end{equation*}
      At this point, we first choose $\epsilon_1$ and $\epsilon_2$ be such that
 \begin{equation}\begin{split}\label{epsilon1epsilon2}
 \epsilon_1=\epsilon_2=\frac{1}{2}\min\left\{\frac{1}{8\gamma_1},\ \frac{1}{8\gamma_1^\prime}\right\}
 \end{split}\end{equation}
and then we determine the value of $\chi$ by
  \begin{equation}\begin{split}\label{chixi11}
  \chi=\frac{1}{2}\min\left\{\frac{1}{100^{\frac{n}{p}}\gamma_2^{\frac{n}{p}}},\ \frac{1}
  {100^{\frac{n}{p}}(\gamma_2^\prime)^{\frac{n}{p}}}\right\},
  \end{split}\end{equation}
  where $\gamma_1$ and $\gamma_2$ are the constants in the estimate \eqref{Ajinclusion}.
 Finally, with the choices of $\epsilon_1$, $\epsilon_2$ and $\chi$, we set $B$ so large that
  \begin{equation*}\begin{split}
 \tilde B^{-p}+\tilde B^{1-p}<\min\left\{\frac{1}{100\gamma_1}\chi,\ \left(\frac{1}{100\gamma_2
 }\chi\right)^\frac{1}{1+\frac{p}{n}},\ \frac{1}{100\gamma_1^\prime}\chi,\ \left(\frac{1}{100\gamma_2^\prime
 }\chi\right)^\frac{1}{1+\frac{p}{n}}\right\}.
   \end{split}\end{equation*}
      With the choices of $\epsilon_1$, $\epsilon_2$, $\chi$ and $\tilde B$, we get $A_j(\bar l)\leq \frac{1}{2}\chi$ holds
       for either $Q_j(\bar l)\nsubseteq\widetilde Q_1$ or $Q_j(\bar l)\subseteq\widetilde Q_1$, which completes the proof of
      \eqref{Ajxi}.
Our next goal is to determine the value of $l_{j+1}$.
      We first consider the case
    \begin{equation*}\iint_{B_{r_j}(x_1)\times\left\{t_1-r_j^p<t<t_1\right\}}\tilde u^{m-1}(l_j-u)_+^{(1+\lambda)(p-1)}
    \,\mathrm {d}x\,\mathrm {d}t=0.\end{equation*}
According to Lemma \ref{Lebesguepoint+} and \eqref{ljxi}, we conclude that
   \begin{equation*} u(x_1,t_1)\geq l_j\geq  \mu_-+\frac{1}{2}\tilde B\alpha_{i-1}+\frac{1}{4}\xi\omega
   > \mu_-+\frac{1}{4}\xi\omega,
   \end{equation*}
   which proves \eqref{DeGiorgi2}.
We now consider the case
     \begin{equation*}\iint_{B_{r_j}(x_1)\times\left\{t_1-r_j^p<t<t_1\right\}}
    \tilde u^{m-1} (l_j-u)_+^{(1+\lambda)(p-1)}\,\mathrm {d}x\,\mathrm {d}t>0.\end{equation*}
    It follows that $A_j(l)\to+\infty$
    as $l\to l_j$.
Since $A_j(l)$ is continuous and increasing, we infer that there exists a number
 $\tilde l\in (\bar l, l_j)$ such that $A_j(\tilde l)=\chi$.
    At this point, we set
    \begin{equation}\label{def lj+1xi}
	l_{j+1}=\begin{cases}
\tilde l,&\quad \text{if}\quad \tilde l<l_j-\frac{1}{4}(\alpha_{j-1}-\alpha_j),\\
	l_j-\frac{1}{4}(\alpha_{j-1}-\alpha_j),&\quad \text{if}\quad \tilde l\geq l_j-\frac{1}{4}(\alpha_{j-1}-\alpha_j).
	\end{cases}
\end{equation}
Recalling from \eqref{upper bound for lxi} that $l_j-\bar l>\frac{1}{4}\tilde B(\alpha_{j-1}-\alpha_j)>\frac{1}{4}(\alpha_{j-1}-\alpha_j)$,
we find that the definition of $l_{j+1}$ is justified.

Step 4: \emph{Proof of the estimate \eqref{DeGiorgi2}.}
For simplicity of notation, we write
$Q_j=Q_j(l_{j+1})$, $L_j=L_j(l_{j+1})$ and $d_j=l_j-l_{j+1}$. According to \eqref{def lj+1xi}, we
find that \eqref{lixi} and \eqref{Aj-1xi}
hold with $i=j+1$. In view of \eqref{ljxi}$_j$, we conclude from $\alpha_{j-1}\geq\alpha_j$ that
\begin{equation*}\begin{split}
l_{j+1}>&\bar l=\frac{1}{2}l_j+\frac{1}{4}\tilde B\alpha_j+\frac{1}{8}\xi\omega+
       \frac{1}{2}\mu_-
       \\&>\frac{1}{2}\left(\mu_-+\frac{1}{2}\tilde B\alpha_{j-1}+\frac{1}{4}\xi\omega\right)+\frac{1}{4}\tilde B\alpha_j+\frac{1}{8}\xi\omega+
       \frac{1}{2}\mu_-
       \\&=\mu_-+\frac{1}{4}\tilde B(\alpha_j+\alpha_{j-1})+\frac{1}{4}\xi\omega\geq \mu_-+\frac{1}{2}\tilde
       B\alpha_j+\frac{1}{4}\xi\omega.
\end{split}\end{equation*}
This yields that \eqref{ljxi}$_{j+1}$ holds. Repeating the arguments as in Step 3, we construct
$l_{j+2}$ and hence we can determine a sequence of numbers $\{l_i\}_{i=0}^\infty$ satisfying \eqref{lixi}-\eqref{ljxi}.
Noting that the sequence $\{l_i\}_{i=0}^\infty$ is decreasing, we infer from \eqref{ljxi} that the limitation of $l_i$ exists.
This also yields that $d_i\to0$ as $i\to\infty$. Define
$$\hat l=\lim_{i\to\infty}l_i$$
and we assert that $\hat l=u(x_1,t_1)$.
Recalling that \eqref{Aj-1xi} holds for any $i=1,2,\cdots$, we deduce that
  \begin{equation*}\begin{split}\frac{1}{r_i^{n+p}}&\iint_{B_{r_i}(x_1)\times\left\{t_1-r_i^p<t<t_1\right\}}
  (\hat l-u)_+^{(1+\lambda)(p-1)}\,\mathrm {d}x\,\mathrm {d}t\leq
  \frac{4^{n+p}}{r_{i-1}^{n+p}}\iint_{L_{i-1}}\left(l_{i-1}-u\right)^{(1+\lambda)(p-1)}\varphi_{i-1}^{k-p}
 \,\mathrm {d}x\mathrm {d}t
 \\&\leq 4^{n+p}(\xi\omega)^{1-m}A_{i-1}(l_i)d_{i-1}^{(1+\lambda)(p-1)-(p-2)}\leq 4^{n+p}(\xi\omega)^{1-m}\chi d_{i-1}^{(1+\lambda)(p-1)-(p-2)}
 \to 0
  \end{split}\end{equation*}
  as $i\to\infty$. By Lemma \ref{Lebesguepoint+}, we have $\hat l=u(x_1,t_1)$. Next, we claim that for any $j\geq1$ there holds
  \begin{equation}\begin{split}\label{djdj-1xi}
  d_j\leq &\frac{1}{4}d_{j-1}+
  \gamma\left(r_{j-1}^{p-n}\tilde\mu^{1-m}\int_{B_{j-1}} g(y)^{\frac{p}{p-1}}
  \,\mathrm {d}y\right)^{\frac{1}{p}}\\&+\gamma\left(r_{j-1}^{p-n}\tilde\mu^{1-m}\int_{B_{j-1}}|f(y)|
  \,\mathrm {d}y\right)^{\frac{1}{p-1}}.
  \end{split}\end{equation}
  To start with, for any fixed $j\geq 1$, we first assume that
  \begin{equation}\begin{split}\label{djdj-1proofxi}
  d_j>\frac{1}{4}d_{j-1}\qquad\text{and}\qquad d_j>\frac{1}{4}(\alpha_{j-1}-\alpha_j),
   \end{split}\end{equation}
   since otherwise \eqref{djdj-1} holds immediately. In view of $d_j>\frac{1}{4}(\alpha_{j-1}-\alpha_j)$,
   we infer from \eqref{def lj+1} that
   $A_j(l_{j+1})=A_j(\tilde l)=\chi$. According to \eqref{djdj-1proofxi}, we
  repeat the arguments from Step 3 and this gives
  \begin{equation*}\begin{split}
  \chi=A_j(l_{j+1})\leq&
  \gamma \frac{r_j^{p-n}}{d_j^p}\tilde\mu^{1-m}\int_{B_j}g^{\frac{p}{p-1}}\,\mathrm {d}x+\gamma
  \frac{r_j^{p-n}}{d_j^{p-1}}\tilde\mu^{1-m}\int_{B_j}|f|\,\mathrm {d}x
  \\
  &+
   \gamma(\epsilon_1+
 \epsilon_2)\chi
 +\gamma(\epsilon_2)\chi^{1+\frac{p}{n}}\\&+\gamma(\epsilon_2)\left[
 \frac{r_j^{p-n}}{d_j^p}\tilde\mu^{1-m}\int_{B_j}g^{\frac{p}{p-1}}\,\mathrm {d}x+\gamma
 \frac{r_j^{p-n}}{d_j^{p-1}}\tilde\mu^{1-m}\int_{B_j}|f|\,\mathrm {d}x\right]^{1+\frac{p}{n}}.
  \end{split}\end{equation*}
According to the choices of $\epsilon_1$, $\epsilon_2$ and $\chi$ in \eqref{epsilon1epsilon2}-\eqref{chixi11}, we conclude with
    \begin{equation*}\begin{split}
 (\epsilon_1+
 \epsilon_2)\chi
 +\gamma(\epsilon_2)\chi^{1+\frac{p}{n}}\leq\frac{1}{4}\chi.
 \end{split}\end{equation*}
 Consequently, we infer that either
  \begin{equation*}\begin{split}
  d_j\leq \gamma\left(r_j^{p-n}\tilde\mu^{1-m}\int_{B_j}g^{\frac{p}{p-1}}\,\mathrm {d}x\right)^{\frac{1}{p}}
  \qquad\text{or}\qquad
  d_j\leq
  \gamma \left(r_j^{p-n}\tilde\mu^{1-m}\int_{B_j}|f|\,\mathrm {d}x\right)^{\frac{1}{p-1}},
   \end{split}\end{equation*}
   which proves the inequality \eqref{djdj-1xi}. Let $J>1$ be a fixed integer. We sum up the inequality \eqref{djdj-1xi} for $j=1,\cdots,J-1$
   and obtain
   \begin{equation*}\begin{split}
   l_1-l_J\leq& \frac{1}{3}d_0+\gamma\sum_{j=1}^{J-1}\left(r_{j-1}^{p-n}\tilde\mu^{1-m}\int_{B_{j-1}}|f(y)|
  \,\mathrm {d}y\right)^{\frac{1}{p-1}}
\\&+ \gamma\sum_{j=1}^{J-1}\left(r_{j-1}^{p-n}\tilde\mu^{1-m}\int_{B_{j-1}} g(y)^{\frac{p}{p-1}}
  \,\mathrm {d}y\right)^{\frac{1}{p}}.
   \end{split}\end{equation*}
   Recalling that $d_0=l_0-l_1$ and $l_0=\mu_-+\xi\omega$, we apply \eqref{omega1violated1} to obtain
  \begin{equation}\begin{split}\label{uupperboundxi}
  \mu_-+\xi\omega\leq
 & \frac{4}{3}d_0+l_J
  +\gamma\tilde\mu^{\frac{1-m}{p-1}}\int_0^{2R}\left(\frac{1}{r^{n-p}}\int_{B_r(x_1)}f(y)
\,\mathrm {d}y\right)^{\frac{1}{p-1}}\frac{1}{r}\,\mathrm {d}r
  \\&+\gamma\tilde\mu^{\frac{1-m}{p}}\int_0^{2R}\left(\frac{1}{r^{n-p}}\int_{B_r(x_1)}g(y)^{\frac{p}{p-1}}
\,\mathrm {d}y\right)^{\frac{1}{p}}\frac{1}{r}\,\mathrm {d}r
\\ \leq & \frac{4}{3}d_0+l_J+\gamma\frac{1}{\tilde B}\xi\omega.
   \end{split}\end{equation}
   Taking into account that $l_1\geq \bar l$ where $\bar l=\frac{1}{2}l_0+\frac{1}{2}\mu_-+\frac{1}{4}\tilde B\alpha_0+\frac{1}{8}\xi\omega$.
 We infer from \eqref{xil0} that $d_0\leq l_0-\bar  l\leq \frac{3}{8}\xi\omega$.
  Passing to the limit $J\to\infty$, we conclude from \eqref{uupperboundxi} that
   \begin{equation*}\begin{split}
   u(x_1,t_1)>\mu_-+\frac{1}{2}\xi\omega,
    \end{split}\end{equation*}
    provided that we choose $\tilde B>4\gamma$. This shows that the inequality \eqref{DeGiorgi2} holds for
    almost everywhere point in $\widetilde Q_2$,
    since $(x_1,t_1)$ is the Lebesgue point of $u$.
    The constant $\nu_1$ is determined via \eqref{nu1}, where the quantity $\chi$ in \eqref{nu1}
    is fixed in terms of \eqref{chixi11}.
    We have thus proved the lemma.
 \end{proof}
 With the help of the proceeding lemmas we can now establish a decay estimate for the oscillation of the weak solution
 in a smaller cylinder,
 and the following proposition is our main result in this section.
\begin{proposition}\label{1st proposition}
Let $\tilde Q_0=Q\left(\frac{1}{16}R,(\mu_+)^{1-m}\omega^{2-p}\left(\frac{1}{16}R\right)^p\right)$ and
let $u$ be a bounded nonnegative weak solution to \eqref{parabolic}-\eqref{A} in $\Omega_T$.
There exist $0<\xi_1<2^{-5}$ and $B_1>1$ depending only upon the data and $A$ such that
\begin{equation*}\begin{split}
\essosc_{\tilde Q_0} u\leq (1-2^{-1}\xi_1)\omega+B_1\xi_1^{-1}(
F_1(2R)^{\frac{p}{p+m-1}}+F_2(2R)^{\frac{p-1}{p+m-2}}).
 \end{split}\end{equation*}
\end{proposition}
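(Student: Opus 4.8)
The plan is to run a case analysis through the three De~Giorgi--type lemmas established above, organised as a nested tree, and then to homogenise each of the ``exceptional'' bounds on $\omega$ that the tree may produce into a bound governed by $F_1(2R)$ and $F_2(2R)$.

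Working under the hypothesis of the first alternative, I would first invoke Lemma~\ref{lemmaDeGiorgi1}. If its conclusion \eqref{omega1} holds, this branch is set aside for later. Otherwise \eqref{DeGiorgi1} holds and, combined with the time-propagation statement \eqref{hat t lower}, gives $u(x,\hat t)>\mu_-+\omega/2^5$ for all $x\in B_{R/4}$, with $\hat t=-(\mu_+)^{1-m}A_1^{p-2}\omega^{2-p}R^p$ and $4^{-p/(p-2)}<A_1<A$. Next I would let $\nu_1\in(0,1)$ be the constant furnished by Lemma~\ref{lemmaDeGiorgi2} --- it depends only on the data, and in particular not on the parameter $\xi$ occurring there --- apply Lemma~\ref{timeexpandlemma} with $\nu_*=\nu_1$ to obtain $s_*=2\nu_1^{-1}A^{p-2}>5$ via \eqref{s*}, and \emph{define} $\xi_1:=2^{-s_*}\in(0,2^{-5})$, a quantity depending only on the data and on $A$. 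If \eqref{omega2} holds, this branch is exceptional; otherwise \eqref{time expand} holds for all $t\in(\hat t,0)$, and integrating over $(\hat t,0)$ in time yields
\begin{equation*}
\big|\big\{(x,t)\in\widetilde Q_1:u<\mu_-+\xi_1\omega\big\}\big|\le\nu_1\,|\widetilde Q_1|.
\end{equation*}
Lemma~\ref{lemmaDeGiorgi2} with $\xi=\xi_1$ then applies: either its alternative \eqref{omega3} holds (exceptional), or \eqref{DeGiorgi2} holds, i.e.\ $u>\mu_-+\tfrac12\xi_1\omega$ a.e.\ in $\widetilde Q_2=B_{R/16}\times(\hat t/2,0)$. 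A short computation using $A_1^{p-2}\ge 4^{-p}$ and $p>2$ shows $\tilde Q_0\subset\widetilde Q_2\subset Q_R$; hence $\essinf_{\tilde Q_0}u\ge\mu_-+\tfrac12\xi_1\omega$ and $\esssup_{\tilde Q_0}u\le\mu_+$, so that $\essosc_{\tilde Q_0}u\le(\mu_+-\mu_-)-\tfrac12\xi_1\omega\le(1-\tfrac12\xi_1)\omega$ in this ``good'' branch.

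It then remains to treat the exceptional branches \eqref{omega1}, \eqref{omega2}, \eqref{omega3}. I would use that $u\ge0$ forces $\mu_+\ge\omega/2$ and that $m>1$ makes the exponents $\tfrac{1-m}{p}$ and $\tfrac{1-m}{p-1}$ negative, so that $(\mu_+)^{\frac{1-m}{p}}\le(\omega/2)^{\frac{1-m}{p}}$ and $(\mu_+)^{\frac{1-m}{p-1}}\le(\omega/2)^{\frac{1-m}{p-1}}$. Both \eqref{omega1} and \eqref{omega2} then reduce to
\begin{equation*}
\omega\le C\,\omega^{\frac{1-m}{p}}F_1(2R)+C\,\omega^{\frac{1-m}{p-1}}F_2(2R),
\end{equation*}
with $C$ depending only on the data and on $A$ (through $B$, respectively through $s_*$). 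Since at least one summand is $\ge\omega/2$, isolating it and solving for $\omega$ --- using $1-\tfrac{1-m}{p}=\tfrac{p+m-1}{p}$ and $1-\tfrac{1-m}{p-1}=\tfrac{p+m-2}{p-1}$ --- gives $\omega\le C'\big(F_1(2R)^{\frac{p}{p+m-1}}+F_2(2R)^{\frac{p-1}{p+m-2}}\big)$. For \eqref{omega3} the identical computation, with $\mu_-+\xi_1\omega\ge\xi_1\omega$ in place of $\mu_+\ge\omega/2$, yields $\xi_1\omega\le C''\big(F_1(2R)^{\frac{p}{p+m-1}}+F_2(2R)^{\frac{p-1}{p+m-2}}\big)$, hence $\omega\le C''\xi_1^{-1}\big(F_1(2R)^{\frac{p}{p+m-1}}+F_2(2R)^{\frac{p-1}{p+m-2}}\big)$, which is where the factor $\xi_1^{-1}$ in the statement originates. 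In every exceptional branch $\essosc_{\tilde Q_0}u\le\omega$, so choosing $B_1$ large enough in terms of the data and $A$ yields, in all cases,
\begin{equation*}
\essosc_{\tilde Q_0}u\le\big(1-\tfrac12\xi_1\big)\omega+B_1\xi_1^{-1}\big(F_1(2R)^{\frac{p}{p+m-1}}+F_2(2R)^{\frac{p-1}{p+m-2}}\big),
\end{equation*}
as claimed.

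I expect the genuine work to be already contained in the De~Giorgi lemmas invoked, so that the proof of the proposition is essentially bookkeeping; the delicate points are organisational --- keeping the case tree's quantifiers consistent (in particular, choosing $\nu_1$ before $s_*$, and hence before $\xi_1$, which is legitimate precisely because $\nu_1$ does not depend on $\xi$), verifying the cylinder inclusions $\tilde Q_0\subset\widetilde Q_2\subset Q_R$, and merging the several $\omega$--bounds --- which carry different powers of $\mu_+$, respectively of $\mu_-+\xi_1\omega$ --- into a single estimate by means of $\mu_+\ge\omega/2$.
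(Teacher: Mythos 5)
Your proposal is correct and follows essentially the same route as the paper: both chain Lemma~\ref{lemmaDeGiorgi1}, Lemma~\ref{timeexpandlemma} (with $\nu_*=\nu_1$ giving $s_*$ and $\xi_1=2^{-s_*}$), and Lemma~\ref{lemmaDeGiorgi2} for the reduction branch, and both collapse the three exceptional alternatives \eqref{omega1}, \eqref{omega2}, \eqref{omega3} into the stated bound via $\mu_+\ge\omega/2$ and $(\mu_-+\xi_1\omega)^{\frac{1-m}{p}}\le(\xi_1\omega)^{\frac{1-m}{p}}$. The only cosmetic difference is that you present the case analysis as a nested tree while the paper assumes all three alternatives are violated at the outset and handles the exceptional cases afterward; the algebra for homogenising the $\omega$-bounds and the observation that $\nu_1$ can be fixed independently of $\xi$ are the same in both.
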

\begin{proof}
To start with, we first assume that
\eqref{omega1}, \eqref{omega2} and \eqref{omega3} are violated.
We take $\nu_*=\nu_1$ according to \eqref{nu1} in Lemma \ref{timeexpandlemma}. Moreover,
we choose $s_*=2\nu_*^{-1}A^{p-2}$
and $\xi_1=2^{-s_*}$. It follows from Lemma \ref{timeexpandlemma} that
\begin{equation*}\left|\left\{(x,t)\in \widetilde Q_1:u<\mu_-+\xi_1\omega\right\}
\right|\leq \nu_1|\widetilde Q_1|,\end{equation*}
where $\hat Q_1=B_{\frac{R}{8}}\times(\hat t,0)$. This implies that the condition for Lemma \ref{lemmaDeGiorgi2} is satisfied and it
follows that
\begin{equation*}\begin{split}
\essosc_{\tilde Q_0} u\leq (1-2^{-1}\xi_1)\omega.
 \end{split}\end{equation*}
 On the other hand, if either \eqref{omega1}, \eqref{omega2} or \eqref{omega3} holds, then we conclude that either $\omega\leq (\mu_+)^{\frac{1-m}{p}}F_1(2R)$, $\omega\leq (\mu_+)^{\frac{1-m}{p-1}}F_2(2R)$,
 $\omega\leq\xi_1^{-\frac{2}{p}}(\mu_+)^{\frac{1-m}{p}}F_1(2R)$, $\omega\leq \xi_1^{-\frac{1}{p-1}}(\mu_+)^{\frac{1-m}{p-1}}F_2(2R)$,
\begin{equation*}\xi_1\omega\leq \tilde B(\mu_-+\xi_1\omega)^{\frac{1-m}{p}}F_1(2R)\qquad\text{or}\qquad\xi_1\omega\leq \tilde B(\mu_-+\xi_1\omega)^{\frac{1-m}{p-1}}F_2(2R).\end{equation*}
Noting that $\mu_+\geq \frac{1}{2}\omega$, $(\mu_-+\xi_1\omega)^{\frac{1-m}{p}}\leq (\xi_1\omega)^{\frac{1-m}{p}}$ and $(\mu_-+\xi_1\omega)^{\frac{1-m}{p-1}}\leq (\xi_1\omega)^{\frac{1-m}{p-1}}$, we deduce that the inequality
\begin{equation*}\begin{split}
\essosc_{\tilde Q_0} u\leq\omega\leq B_1\xi_1^{-1}(
F_1(2R)^{\frac{p}{p+m-1}}+F_2(2R)^{\frac{p-1}{p+m-2}}).
 \end{split}\end{equation*}
 holds for a constant $B_1=B_1(\text{data},A)>1$.
 It is now obvious that the proposition holds.
\end{proof}
  \section{The second alternative}
In this section, we will establish the decay estimate of the essential oscillation for the second alternative.
We begin with the following lemma which is a standard result that can be found in \cite[Lemma 7.1]{PV}.
\begin{lemma}Let $-(A^{p-2}-1)(\mu_+)^{1-m}\omega^{2-p}R^p\leq \bar t\leq0$
and let $u$ be a bounded nonnegative weak solution to \eqref{parabolic}-\eqref{A} in $\Omega_T$.
Then there exists a time level
 \begin{equation*}t^*\in \left[\bar t-(\mu_+)^{1-m}\omega^{2-p}\left(\frac{3}{4}R\right)^p,\bar t-\frac{1}{2}
 \nu_0(\mu_+)^{1-m}\omega^{2-p}\left(\frac{3}{4}R\right)^p\right]\end{equation*}
 such that
 \begin{equation}\label{2nd alternative begin}
 \left|\left\{x\in B_{\frac{3}{4}R}:u(x,t^*)>\mu_+-\frac{\omega}{4}\right\}\right|\leq
 \left(\frac{1-\nu_0}{1-\frac{1}{2}\nu_0}\right)|B_{\frac{3}{4}R}|,\end{equation}
 where $\nu_0$ is the constant claimed by Lemma \ref{lemmaDeGiorgi1}.
\end{lemma}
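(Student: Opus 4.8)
The plan is to produce the time level $t^*$ by a pigeonhole argument over time slices, using only the measure estimate \eqref{2nd} of the second alternative (which is in force throughout this section) together with Fubini's theorem. Set $\theta:=(\mu_+)^{1-m}\omega^{2-p}\big(\tfrac{3}{4}R\big)^p$, so that $Q_{\frac{3}{4}R}^-(\bar t)=B_{\frac{3}{4}R}\times(\bar t-\theta,\bar t)$ and $|Q_{\frac{3}{4}R}^-(\bar t)|=\theta\,|B_{\frac{3}{4}R}|$. By Fubini's theorem the map $t\mapsto\big|\{x\in B_{\frac{3}{4}R}:u(x,t)>\mu_+-\tfrac{\omega}{4}\}\big|$ is measurable on $(\bar t-\theta,\bar t)$, and integrating it over this interval recovers exactly the left-hand side of \eqref{2nd}.

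First I would argue by contradiction on the sub-interval $I:=\big[\bar t-\theta,\ \bar t-\tfrac{1}{2}\nu_0\theta\big]$, which has length $|I|=\big(1-\tfrac{1}{2}\nu_0\big)\theta>0$ and whose associated slab $B_{\frac{3}{4}R}\times I$ is contained, up to a null set, in $Q_{\frac{3}{4}R}^-(\bar t)$. Suppose that for almost every $t\in I$ one has
\begin{equation*}
\big|\{x\in B_{\frac{3}{4}R}:u(x,t)>\mu_+-\tfrac{\omega}{4}\}\big|>\frac{1-\nu_0}{1-\tfrac{1}{2}\nu_0}\,|B_{\frac{3}{4}R}|.
\end{equation*}
Integrating this strict inequality over $I$ and discarding the nonnegative contribution coming from the complementary times, I would obtain
\begin{equation*}
\begin{split}
\big|\{(x,t)\in Q_{\frac{3}{4}R}^-(\bar t):u>\mu_+-\tfrac{\omega}{4}\}\big|
&\geq\int_I\big|\{x\in B_{\frac{3}{4}R}:u(x,t)>\mu_+-\tfrac{\omega}{4}\}\big|\,\mathrm{d}t\\
&>\frac{1-\nu_0}{1-\tfrac{1}{2}\nu_0}\cdot\big(1-\tfrac{1}{2}\nu_0\big)\,\theta\,|B_{\frac{3}{4}R}|.
\end{split}
\end{equation*}
Since $\frac{1-\nu_0}{1-\frac{1}{2}\nu_0}\cdot\big(1-\tfrac{1}{2}\nu_0\big)=1-\nu_0$, the right-hand side equals $(1-\nu_0)\theta\,|B_{\frac{3}{4}R}|=(1-\nu_0)\,|Q_{\frac{3}{4}R}^-(\bar t)|$, contradicting \eqref{2nd}.

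Therefore the set of those $t\in I$ for which
\begin{equation*}
\big|\{x\in B_{\frac{3}{4}R}:u(x,t)>\mu_+-\tfrac{\omega}{4}\}\big|\leq\frac{1-\nu_0}{1-\tfrac{1}{2}\nu_0}\,|B_{\frac{3}{4}R}|
\end{equation*}
has positive measure; picking any such $t^*\in I$ gives \eqref{2nd alternative begin}, because the endpoints of $I$ are exactly $\bar t-(\mu_+)^{1-m}\omega^{2-p}\big(\tfrac{3}{4}R\big)^p$ and $\bar t-\tfrac{1}{2}\nu_0(\mu_+)^{1-m}\omega^{2-p}\big(\tfrac{3}{4}R\big)^p$. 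I do not expect any genuine obstacle in this lemma: it is the standard DiBenedetto time-slice selection, and the only thing requiring a moment's care is choosing the length of $I$ to be exactly $\big(1-\tfrac{1}{2}\nu_0\big)\theta$, so that the product $\frac{1-\nu_0}{1-\frac{1}{2}\nu_0}\cdot\big(1-\tfrac{1}{2}\nu_0\big)$ collapses to $1-\nu_0$ and the pigeonhole bound lines up exactly with \eqref{2nd}.
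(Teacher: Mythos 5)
Your proof is correct and is precisely the standard DiBenedetto time-slice selection argument that the paper itself invokes by citing Lemma 7.1 of \cite{PV}. The only point deserving a second of care, which you handle correctly, is that the set where the slice-wise measure exceeds the stated threshold cannot cover almost all of $I=[\bar t-\theta,\bar t-\tfrac12\nu_0\theta]$: integrating the strict inequality over $I$ (a set of positive measure) and using that the integrand is bounded gives the strict bound $>(1-\nu_0)|Q_{\frac34R}^-(\bar t)|$, contradicting \eqref{2nd}, and the arithmetic identity $\tfrac{1-\nu_0}{1-\nu_0/2}\cdot(1-\tfrac{\nu_0}{2})=1-\nu_0$ makes the constants line up exactly as claimed.
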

Next. we provide the following lemma regarding the time propagation of positivity.
\begin{lemma}\label{time 2nd}
Let $u$ be a bounded nonnegative weak solution to \eqref{parabolic}-\eqref{A} in $\Omega_T$.
There exists a positive constant $s_1$
that can be determined a priori only in terms of the data such that
 either
\begin{equation}\begin{split}\label{2ndomega assumption1}
\omega\leq 2^{\frac{s_1+n}{p-1}}(\mu_+)^{\frac{1-m}{p-1}}F_2(2R)
+2^{\frac{2s_1+n}{p}}(\mu_+)^{\frac{1-m}{p}}F_1(2R)
\end{split}\end{equation}
or
\begin{equation}\label{2nd measure estimate}
\left|\left\{x\in B_{\frac{3}{4}R}:u(x,t)>\mu_+-\frac{\omega}{2^{s_1}}\right\}\right|\leq
\left(1-\left(\frac{\nu_0}{2}\right)^2\right)|B_{\frac{3}{4}R}|
\end{equation}
for all $t\in \left[\bar t-\frac{1}{2}\nu_0(\mu_+)^{1-m}\omega^{2-p}(\frac{3}{4}R)^p,\bar t\right]$.
\end{lemma}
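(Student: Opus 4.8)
The plan is to transplant the logarithmic–estimate argument of Lemma~\ref{timeexpandlemma} to the supremum side. I assume that \eqref{2ndomega assumption1} is violated, so that in particular $\omega>2^{\frac{s_1+n}{p-1}}(\mu_+)^{\frac{1-m}{p-1}}F_2(2R)$ and $\omega>2^{\frac{2s_1+n}{p}}(\mu_+)^{\frac{1-m}{p}}F_1(2R)$, where $s_1>5$ is still to be fixed; otherwise \eqref{2ndomega assumption1} holds and there is nothing to prove. I take the level $k=\mu_+-\tfrac\omega4$ in the logarithmic function \eqref{ln}, with $H_k^+=\tfrac\omega4$ and $c=\tfrac{\omega}{2^{s_1+2}}$. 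The choice $H_k^+=\tfrac\omega4$ is admissible, since in the case under consideration ($\hat Q\subset\widehat Q_R$, see \eqref{hat Q}) the cylinder $B_{\frac34R}\times(t^*,\bar t)$ is contained in $Q_R$, where $u\le\mu_+$, so that $(u-k)_+\le\tfrac\omega4$ there. With these choices $\ln(H_k^+/c)=s_1\ln2$ and, as in \eqref{psipsiprime}, $\psi^+(u)\le s_1\ln2$ and $[(\psi^+)'(u)]^{2-p}\le(H_k^++c)^{p-2}\le\omega^{p-2}$.

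The next step is to apply the logarithmic inequality \eqref{lnCac} on $B_{\frac34R}\times(t^*,\bar t)$ with a time–independent cutoff $\zeta=\zeta(x)$ as in \eqref{def zeta} satisfying $\zeta\equiv1$ on $B_{(1-\sigma)\frac34R}$ and $|D\zeta|\le\gamma(\sigma R)^{-1}$, where $\sigma\in(0,1)$ is chosen later. The initial term is the decisive one: since $(u-k)_+=0$, hence $\psi^+(u)=0$, at every point where $u(\cdot,t^*)\le\mu_+-\tfrac\omega4$, the integrand at $t=t^*$ is supported in the set appearing in \eqref{2nd alternative begin}, so that
\begin{equation*}
\int_{B_{\frac34R}\times\{t^*\}}[\psi^+(u)]^2\zeta^p\diff x\le (s_1\ln2)^2\,\frac{1-\nu_0}{1-\tfrac12\nu_0}\,|B_{\tfrac34R}|.
\end{equation*}
Using $u\le\mu_+$, the bounds on $\psi^+$ and $(\psi^+)'$, and $\bar t-t^*\le(\mu_+)^{1-m}\omega^{2-p}\bigl(\tfrac34R\bigr)^p$, the gradient term of \eqref{lnCac} is controlled by $\gamma s_1\sigma^{-p}|B_{\frac34R}|$; and, exactly as for the term $I_3$ in the proof of Lemma~\ref{timeexpandlemma} and under the running hypothesis (the exponents $\tfrac{s_1+n}{p-1}$ and $\tfrac{2s_1+n}{p}$ in \eqref{2ndomega assumption1} are chosen precisely for this), the terms containing $f$ and $g$ are bounded by $\gamma s_1 2^{-n}|B_{\frac34R}|$. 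Altogether
\begin{equation*}
\esssup_{t^*<t<\bar t}\int_{B_{\frac34R}\times\{t\}}[\psi^+(u)]^2\zeta^p\diff x\le\Bigl[(s_1\ln2)^2\tfrac{1-\nu_0}{1-\frac12\nu_0}+\gamma s_1\bigl(\sigma^{-p}+2^{-n}\bigr)\Bigr]|B_{\tfrac34R}|.
\end{equation*}

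To extract the measure estimate, I would observe that for any $t$ in the asserted range and any $x$ with $u(x,t)>\mu_+-\tfrac{\omega}{2^{s_1}}$ one has $H_k^+-(u-k)_++c<\tfrac54\tfrac{\omega}{2^{s_1}}$, hence $\psi^+(u)>(s_1-3)\ln2$. Bounding the left–hand side of the last display from below by restricting the integration to $\{x\in B_{(1-\sigma)\frac34R}:u(x,t)>\mu_+-\tfrac{\omega}{2^{s_1}}\}$ (where $\zeta\equiv1$) and then adding the thin layer $B_{\frac34R}\setminus B_{(1-\sigma)\frac34R}$, of measure $\le n\sigma|B_{\frac34R}|$, we obtain, after division by $(s_1-3)^2(\ln2)^2$,
\begin{equation*}
\Bigl|\bigl\{x\in B_{\tfrac34R}:u(x,t)>\mu_+-\tfrac{\omega}{2^{s_1}}\bigr\}\Bigr|\le\Bigl[\tfrac{s_1^2}{(s_1-3)^2}\tfrac{1-\nu_0}{1-\frac12\nu_0}+\tfrac{\gamma(\sigma^{-p}+2^{-n})}{s_1}+n\sigma\Bigr]|B_{\tfrac34R}|.
\end{equation*}
Since $\tfrac{1-\nu_0}{1-\frac12\nu_0}=1-\tfrac{\nu_0}{2-\nu_0}<1-\bigl(\tfrac{\nu_0}{2}\bigr)^2$ strictly (because $\nu_0(2-\nu_0)<1$), and $\nu_0$ depends only on the data, I would first fix $\sigma=\sigma(\text{data})$ so small that $n\sigma$ is a small fraction of the gap $\tfrac{\nu_0}{2-\nu_0}-\bigl(\tfrac{\nu_0}{2}\bigr)^2$, and then fix $s_1=s_1(\text{data})$ so large that $\tfrac{s_1^2}{(s_1-3)^2}$ is close to $1$ and $\tfrac{\gamma(\sigma^{-p}+2^{-n})}{s_1}$ is negligible; this produces the bound $\bigl(1-(\tfrac{\nu_0}{2})^2\bigr)|B_{\frac34R}|$ and hence \eqref{2nd measure estimate}.

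The main obstacle is the interplay between geometry and constants. The cutoff must be supported in $B_{\frac34R}$ so that the only available information — estimate \eqref{2nd alternative begin}, which lives on $B_{\frac34R}$ — governs the initial term, yet it must equal $1$ on almost all of $B_{\frac34R}$; the price is the factor $\sigma^{-p}$ in the gradient term, which can be afforded only because it is divided by the quadratically growing $(s_1-3)^2$, so $\sigma$ has to be frozen before $s_1$ is sent to infinity. Checking that this, together with the loss recorded by $\tfrac{s_1^2}{(s_1-3)^2}$ that stems from the nonempty bad initial set, still leaves the total strictly below $1-(\tfrac{\nu_0}{2})^2$ is the delicate point of the argument, and it is exactly there that the strict inequality $\tfrac{1-\nu_0}{1-\frac12\nu_0}<1-(\tfrac{\nu_0}{2})^2$ enters.
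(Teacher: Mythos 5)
Your proposal follows essentially the same route as the paper's proof: apply the logarithmic Caccioppoli estimate \eqref{lnCac} on $B_{\frac34R}\times(t^*,\bar t)$ with $k=\mu_+-\tfrac{\omega}{4}$, $H_k^+=\tfrac{\omega}{4}$ and $c$ a dyadic fraction of $\omega$, control the initial term by \eqref{2nd alternative begin}, absorb the $f$ and $g$ contributions under the negation of \eqref{2ndomega assumption1}, then bound below on the small level set and fix $\sigma$ before $s_1$. The only differences are cosmetic (your $c=\omega/2^{s_1+2}$ versus the paper's $c=\omega/2^{s_1}$, and a slightly different but equivalent bookkeeping of the final numerical margin via the gap $\frac{1-\nu_0}{1-\frac12\nu_0}<1-(\tfrac{\nu_0}{2})^2$).
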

\begin{proof}
For simplicity, we write $\rho=\frac{3}{4}R$.
Let $k=\mu_+-\frac{1}{4}\omega$ and $c=2^{-2-l}\omega$ where $l\geq2$ is to be determined later.
Moreover, we set $H_k^+=\frac{1}{4}\omega$ and it follows that
\begin{equation*}
H_k^+\geq \esssup_{B_\rho\times[t^*,\bar t]}\left|\left(u-k\right)_+\right|.
\end{equation*}
We now consider the logarithmic function defined by
\begin{equation*}\begin{split}\psi^+=\ln^+\left(\frac{\frac{1}{4}\omega}
{\frac{1}{4}\omega-(u-k)_++c}\right).\end{split}\end{equation*}
Next, we take a smooth cutoff function $0<\zeta(x)\leq1$, defined in $B_\rho$, and satisfying
$\zeta\equiv 1$ in $B_{(1-\sigma)\rho}$ and $|D\zeta|\leq (\sigma \rho)^{-1}$, where $\sigma\in (0,1)$ is to be determined.
With these choices, we use the logarithmic estimate \eqref{lnCac} to obtain
\begin{equation}\begin{split}\label{psi+log}
&\int_{B_{(1-\sigma)\rho}\times\{t\}}[\psi^+(u)]^2\,\mathrm{d}x
\\&\leq
\int_{B_\rho\times\{t^*\}}[\psi^+(u)]^2\,\mathrm{d}x+\frac{\gamma}{\sigma^p \rho^p}\iint_{B_\rho\times[t^*,\bar t]}u^{m-1}\psi^+(u)
[(\psi^+)^\prime(u)]^{2-p}\,\mathrm{d}x\mathrm{d}t
\\&\quad+\gamma(\ln 2)l\frac{2^{2+l}}{\omega^{p-1}}(\mu_+)^{1-m}\rho^p\int_{B_\rho}|f|\,\mathrm{d}x+
\gamma(\ln 2)l\frac{2^{4+2l}}{\omega^p}(\mu_+)^{1-m}\rho^p\int_{B_\rho}g^{\frac{p}{p-1}}\,\mathrm{d}x
\\&=:I_1+I_2+I_3+I_4
\end{split}\end{equation}
for all $t\in[t^*,\bar t]$. Taking into account that $\psi^+(u)\leq l\ln2$, we conclude from \eqref{2nd alternative begin} that
\begin{equation*}\begin{split}
I_1\leq (l\ln2)^2\left|\left\{x\in B_\rho:u(x,t^*)>\mu_+-\frac{\omega}{4}\right\}\right|
\leq l^2(\ln^22)\left(\frac{1-\nu_0}{1-\frac{1}{2}\nu_0}\right)|B_\rho|.
\end{split}\end{equation*}
In view of $[(\psi^+)^\prime(u)]^{2-p}\leq(4^{-1}\omega)^{p-2}$, we have
\begin{equation*}\begin{split}
I_2\leq \gamma\frac{1}{\sigma^p\rho^p}(\mu_+)^{m-1}l(\ln2)\left(\frac{\omega}{4}\right)^{p-2}(\bar t-t^*)|B_\rho|\leq
\gamma \frac{l}{\sigma^p}|B_\rho|,
\end{split}\end{equation*}
where the constant $\gamma$ depends only upon the data.
At this stage, we assume that
\begin{equation}\begin{split}\label{omega assumption}
\omega>2^{\frac{l}{p-1}}(\mu_+)^{\frac{1-m}{p-1}}\left(\rho^{p-n}\int_{B_\rho}|f|\,\mathrm{d}x\right)^\frac{1}{p-1}
+2^{\frac{2l}{p}}(\mu_+)^{\frac{1-m}{p}}\left(\rho^{p-n}\int_{B_\rho}g^{\frac{p}{p-1}}\,\mathrm{d}x\right)^\frac{1}{p}
\end{split}\end{equation}
and hence
\begin{equation*}\begin{split}
I_3+I_4\leq \gamma l|B_\rho|\leq \gamma \frac{l}{\sigma^p}|B_\rho|.
\end{split}\end{equation*}
Combining the estimates for $I_1$-$I_4$, we arrive at
\begin{equation*}\begin{split}
\int_{B_{(1-\sigma)\rho}\times\{t\}}[\psi^+(u)]^2\,\mathrm{d}x\leq
l^2(\ln^22)\left(\frac{1-\nu_0}{1-\frac{1}{2}\nu_0}\right)|B_\rho|+\gamma \frac{l}{\sigma^p}|B_\rho|.
\end{split}\end{equation*}
On the other hand, the left-hand side of \eqref{psi+log} can be estimated below by integrating over the smaller set
\begin{equation*}\begin{split}
S=\left\{x\in B_{(1-\sigma)\rho}:u(x,t)>\mu_+-\frac{\omega}{2^{l+2}}\right\}
\end{split}\end{equation*}
and this implies that
\begin{equation*}\begin{split}
\int_{B_{(1-\sigma)\rho}\times\{t\}}[\psi^+(u)]^2\,\mathrm{d}x\geq (l-1)^2(\ln^22)|S|.
\end{split}\end{equation*}
Consequently, we infer that the estimate
\begin{equation*}\begin{split}
&\left|\left\{x\in B_\rho:u(x,t)>\mu_+-\frac{\omega}{2^{l+2}}\right\}\right|
\leq
|S|+|B_\rho\setminus B_{(1-\sigma)\rho}|
\\&\quad\leq \left[\left(\frac{l}{l-1}\right)^2\left(\frac{1-\nu_0}{1-\frac{1}{2}\nu_0}\right)+\gamma \frac{l}{\sigma^p(l-1)^2}
+\gamma\sigma\right]|B_\rho|
\end{split}\end{equation*}
holds  for all $t\in[t^*,\bar t]$. At this point, we choose $\gamma \sigma\leq\frac{3}{8}\nu_0^2$ and then $l$ so large that
\begin{equation*}\begin{split}
\left(\frac{l}{l-1}\right)^2<\left(1-\frac{1}{2}\nu_0\right)(1+\nu_0)\qquad\text{and}\qquad
\gamma\frac{1}{\sigma^pl}\leq\frac{3}{8}\nu_0^2.
\end{split}\end{equation*}
This proves the inequality \eqref{2nd measure estimate} with $s_1=l+2$. Moreover, if \eqref{omega assumption} is violated, then we get
\eqref{2ndomega assumption1} for such a choice of $s_1$. We have thus proved the lemma.
\end{proof}
Our task now is to establish a De Giorgi type lemma for the second alternative. To this end,
we introduce the concentric parabolic cylinders
\begin{equation*}\begin{split}
Q_A^{(1)}=B_{\frac{3}{4}R}\times\left(-\frac{1}{2}A^{p-2}(\mu_+)^{1-m}\omega^{2-p}R^p,0\right]
\end{split}\end{equation*}
and
\begin{equation*}\begin{split}
Q_A^{(2)}=B_{\frac{1}{2}R}\times\left(-\frac{1}{4}A^{p-2}(\mu_+)^{1-m}\omega^{2-p}R^p,0\right],
\end{split}\end{equation*}
where $A$ satisfies \eqref{first condition for A} and \eqref{second condition for A}.
The proof of the De Giorgi type lemma is based on the Kilpel\"ainen-Mal\'y technique.
Contrary to Lemma \ref{lemmaDeGiorgi1}, we deal with the estimates for $u$ near the supremum $\mu_+$ and
our proof makes no use of the cutoff function.
 \begin{lemma}\label{lemmaDeGiorgi1+}
 Let $0<\xi<4^{-1}p$ and
let $u$ be a bounded nonnegative weak solution to \eqref{parabolic}-\eqref{A} in $\Omega_T$.
There exist constants $\nu_2\in(0,1)$ and $\hat B>1$, depending only upon the data, such that if
\begin{equation*}\left|\left\{(x,t)\in Q_A^{(1)}:u\geq\mu_+-\xi\omega\right\}\right|\leq \nu_2|Q_A^{(1)}|,\end{equation*}
then either
\begin{equation}
\label{DeGiorgi1+}u(x,t)<\mu_+-\frac{1}{6}\xi\omega\qquad\text{for}\ \ \text{a.e.}\ \ (x,t)\in Q_A^{(2)}\end{equation}
or
\begin{equation}
\label{omega1+}\xi\omega\leq \hat B\left((\mu_+)^{\frac{1-m}{p}}F_1(2R)
+(\mu_+)^{\frac{1-m}{p-1}}F_2(2R)\right).\end{equation}
Here $A=\hat B\xi^{-1}$ and satisfies \eqref{first condition for A} and \eqref{second condition for A}.
 \end{lemma}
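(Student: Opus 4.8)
The plan is to run the Kilpel\"ainen--Mal\'y scheme of Lemma~\ref{lemmaDeGiorgi1}, but now on the \emph{upper} level sets $\{u\ge l\}$ and with the Caccioppoli inequality \eqref{Cacformula2} of Lemma~\ref{Cac2} (taken with the free parameter $a=\mu_+$) in place of \eqref{Cacformula1}. The feature that makes this alternative a bit easier is that every cylinder entering the iteration will be contained in the set $\{u\ge\mu_+-\xi\omega\}$; there, using $\omega\le 2\mu_+$ and the smallness of $\xi$, the solution $u$ stays comparable to $\mu_+$, so $u^{m-1}$ is comparable to the fixed constant $(\mu_+)^{m-1}$ and the equation is non-degenerate on the working region. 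Hence $u^{m-1}$ may be frozen at $(\mu_+)^{m-1}$ and the auxiliary truncation $\tilde u=\max\{u,\cdot\}$ of Lemma~\ref{lemmaDeGiorgi1} is not needed. As always I would first assume that \eqref{omega1+} fails, which, after absorbing a fixed small multiple of $\xi\omega$, gives a quantitative reverse inequality bounding $(\mu_+)^{\frac{1-m}{p}}F_1(2R)+(\mu_+)^{\frac{1-m}{p-1}}F_2(2R)$ by $\xi\omega$; otherwise \eqref{omega1+} holds and there is nothing to prove.

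Next I would fix a Lebesgue point $(x_1,t_1)\in Q_A^{(2)}$ of $u$ (if $u(x_1,t_1)\le\mu_+-\xi\omega$ there is nothing to prove), set $r_j=4^{-j}C^{-1}R$, $B_j=B_{r_j}(x_1)$ and, for $l>l_j$,
\[
Q_j(l)=B_j\times\bigl(t_1-(\mu_+)^{1-m}(l-l_j)^{2-p}r_j^p,\,t_1\bigr),
\]
and introduce an \emph{increasing} sequence of levels $\{l_j\}$ with $l_0=\mu_+-\xi\omega$, together with a decreasing sequence $\{\alpha_j\}$ consisting of a geometric term comparable to $\hat B^{-1}4^{-j}\xi\omega$ plus the two local elliptic Riesz potentials on $B_{r_j}(x_1)$, in the manner of \eqref{alpha}. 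The monitoring quantity is
\[
A_j(l)=\frac{(\mu_+)^{m-1}(l-l_j)^{p-2}}{r_j^{n+p}}\iint_{L_j^+(l)}\Bigl(\frac{u-l_j}{l-l_j}\Bigr)^{(1+\lambda)(p-1)}\varphi_j(l)^{k-p}\,\mathrm{d}x\,\mathrm{d}t+\esssup_t\frac1{r_j^{n}}\int_{B_j\times\{t\}}G\Bigl(\frac{u-l_j}{l-l_j}\Bigr)\varphi_j(l)^{k}\,\mathrm{d}x,
\]
with $L_j^+(l)=Q_j(l)\cap\{u\ge l_j\}$; these are exactly the two terms on the left-hand side of \eqref{Cacformula2} for $(u-l_j)_+$. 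The measure hypothesis, via \eqref{Cacformula2} applied on $Q_0(\bar l)$, yields an initial bound $A_0(\bar l)\le\tfrac12\chi$ once $\nu_2$ and $\hat B$ are chosen in terms of a free parameter $\chi\in(0,1)$; one then defines $l_{j+1}=\min\{\tilde l,\,l_j+\tfrac14(\alpha_{j-1}-\alpha_j)\}$, where $A_j(\tilde l)=\chi$ (if $A_j$ never attains $\chi$, Lemma~\ref{Lebesguepoint+} terminates the construction and already delivers \eqref{DeGiorgi1+}).

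The core is the inductive step $A_j(\bar l)\le\tfrac12\chi$, which I would carry out exactly as Steps~2--3 of Lemma~\ref{lemmaDeGiorgi1}: split $L_j^+(\bar l)$ according to whether $(u-l_j)/(\bar l-l_j)$ is below $\epsilon_1$ or not, apply \eqref{Cacformula2}, H\"older's inequality, Sobolev's inequality on $B_j$ (using $p<n$), Lemma~\ref{lemmainequalitypsi+}, and Young's inequality, and then absorb the error terms by the successive choices of $\epsilon_1$, $\epsilon_2$, then $\chi$, then $\hat B$. With the degeneracy removed this is a shorter and formally identical version of the argument in Lemma~\ref{lemmaDeGiorgi1}. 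Finally, as in Step~4 of that lemma, $d_j:=l_{j+1}-l_j\to 0$, $\hat l:=\lim_j l_j$ exists, Lemma~\ref{Lebesguepoint+} identifies $\hat l=u(x_1,t_1)$, and the recursion
\[
d_j\le\tfrac14 d_{j-1}+\gamma\,\hat B^{-1}4^{-j}\xi\omega+\gamma\Bigl(r_{j-1}^{p-n}(\mu_+)^{1-m}\!\!\int_{B_{j-1}}g^{\frac{p}{p-1}}\Bigr)^{\frac1p}+\gamma\Bigl(r_{j-1}^{p-n}(\mu_+)^{1-m}\!\!\int_{B_{j-1}}|f|\Bigr)^{\frac1{p-1}}
\]
telescopes to control $\hat l-l_0$; combined with the failure of \eqref{omega1+} and $l_0=\mu_+-\xi\omega$ this gives $u(x_1,t_1)<\mu_+-\tfrac16\xi\omega$, proving \eqref{DeGiorgi1+} for a.e. $(x_1,t_1)\in Q_A^{(2)}$.

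The step I expect to be the main obstacle is the geometric bookkeeping that keeps every $Q_j(l)$ inside $Q_A^{(1)}$ throughout the iteration. This is what forces the relation $A=\hat B\xi^{-1}$, so that the time length $\tfrac12 A^{p-2}(\mu_+)^{1-m}\omega^{2-p}R^p=\tfrac12\hat B^{p-2}(\mu_+)^{1-m}(\xi\omega)^{2-p}R^p$ of $Q_A^{(1)}$ dominates the lengths $(\mu_+)^{1-m}(l-l_j)^{2-p}r_j^p$ of the $Q_j(l)$ (recall $l-l_j\le\tfrac14(\alpha_{j-1}-\alpha_j)$ is controlled by $\xi\omega$), and it is also where the normalisation $\xi<4^{-1}p$ and the constraints \eqref{first condition for A}--\eqref{second condition for A} enter. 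A secondary nuisance is keeping the reversed ordering of the levels $l_j$ consistent with the ordering of the integrands $(u-l_j)_+$: one must check that $A_j(l)\to+\infty$ as $l\downarrow l_j$, that the nested inclusions $Q_j(\bar l)\subset Q_{j-1}$ point the right way, and that the intermediate level $\bar l$ indeed satisfies $\bar l<l_j+\tfrac14(\alpha_{j-1}-\alpha_j)$ so that the definition of $l_{j+1}$ is legitimate. Once these are in place, the remaining work is a routine transcription of the proof of Lemma~\ref{lemmaDeGiorgi1}.
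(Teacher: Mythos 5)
Your overall strategy is the one the paper follows: run the Kilpel\"ainen--Mal\'y scheme on the upper level sets $\{u\geq l_j\}$ with the monitoring quantity built from the two left-hand-side terms of \eqref{Cacformula2}, with $u^{m-1}$ frozen at $(\mu_+)^{m-1}$ (since $u\geq\mu_+-\xi\omega>\tfrac12\mu_+$ on the working region, so the truncation $\tilde u$ of Lemma~\ref{lemmaDeGiorgi1} is not needed); the relation $A=\hat B\xi^{-1}$, the use of the measure hypothesis for the initial estimate, the telescoping recursion for $d_j$, and the final passage to $\hat l=\lim_j l_j$ all match the paper's proof.

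There is, however, a systematic direction error in your level construction that would break the argument as written. Because the levels $l_j$ now \emph{increase} and the monitoring quantity is \emph{decreasing} in $l$, the correct choice is $l_{j+1}=\max\{\tilde l,\,l_j+\tfrac14(\alpha_{j-1}-\alpha_j)\}$, not $\min$. With the $\min$: if $\tilde l\ge l_j+\tfrac14(\alpha_{j-1}-\alpha_j)$ you would set $l_{j+1}=l_j+\tfrac14(\alpha_{j-1}-\alpha_j)\le\tilde l$, and monotonicity then gives $A_j(l_{j+1})\ge A_j(\tilde l)=\chi$, losing the crucial estimate $A_j(l_{j+1})\le\chi$; if $\tilde l<l_j+\tfrac14(\alpha_{j-1}-\alpha_j)$ you would get $d_j<\tfrac14(\alpha_{j-1}-\alpha_j)$ and lose the lower bound on $d_j$ that bounds the length of $Q_j$ (recall $2-p<0$, so it is a \emph{lower} bound on $d_j$, i.e.\ $d_j\ge\tfrac14(\alpha_{j-1}-\alpha_j)$, not an upper bound, that keeps the cylinders inside $\hat Q$). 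The same reversal appears in the consistency requirement on the intermediate level: what one needs is $\bar l-l_0\ge\tfrac14(\alpha_{-1}-\alpha_0)$ (this is \eqref{l0+}), not $\bar l<l_j+\tfrac14(\alpha_{j-1}-\alpha_j)$. Finally, a harmless but worth-noting deviation: you set up the intermediate value argument by checking $A_j(l)\to+\infty$ as $l\downarrow l_j$ and inserting the Lebesgue-point case split from Lemma~\ref{lemmaDeGiorgi1}; the paper instead observes that $K_j(l)\downarrow 0$ as $l\to\infty$ and applies the intermediate value theorem on $[\,l_j+\tfrac14(\alpha_{j-1}-\alpha_j),\infty)$, which eliminates the case split (and the associated inductive bookkeeping of $A_j(\bar l)\le\tfrac12\chi$ for all $j$) and is what makes the proof of Lemma~\ref{lemmaDeGiorgi1+} noticeably shorter than that of Lemma~\ref{lemmaDeGiorgi1}.
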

 \begin{proof}
 To start with, we first assume that
 \begin{equation}\label{hatB}
 \hat B>\max\left\{4^{102}B,\ \left(\frac{100p}{p-2}\right)^{\frac{1}{p-2}},\ A_p^\frac{1}{p-2}\right\},\end{equation}
 where $A_p>1$ is the constant such that for any $X>A_p$ there holds $X\leq 2^{(p-2)X}$. For such a choice of $\hat B$,
 the constant $A=\hat B\xi^{-1}$ satisfies \eqref{first condition for A} and \eqref{second condition for A}.
Next, we assume that \eqref{omega1} is violated, that is,
 \begin{equation}
\label{omega1violated+}\xi\omega>\frac{3}{4}\hat B\left(\frac{1}{3\hat B}\xi\omega
+(\mu_+)^{\frac{1-m}{p}}F_1(2R))+(\mu_+)^{\frac{1-m}{p-1}}F_2(2R)
\right).\end{equation}
We are reduced to proving \eqref{DeGiorgi1}.
Let $(x_1,t_1)\in Q_A^{(2)}$ be a fixed point and assume that $(x_1,t_1)$ is a Lebesgue point of the function $u$.
 We set $r_j=4^{-j}C^{-1}R$
 and $B_j=B_{r_j}(x_1)$ where $C>40$ is to be determined.
 For a sequence $\{l_j\}_{j=0}^\infty$ and
 a fixed $l>0$, we define
 $$Q_j(l)=B_j\times (t_1-(\mu_+)^{1-m}(l-l_j)^{2-p}r_j^p,t_1).$$
 Next, we set $\varphi_j(l)=\phi_j(x)\theta_{j,l}(t)$, where $\phi_j\in C_0^\infty(B_j)$,
 $\phi_j=1$ on $B_{j+1}$, $|D\phi_j|\leq r_j^{-1}$
 and $\theta_{j,l}(t)$ is a Lipschitz function
satisfies
 \begin{equation*}
 \theta_{j,l}(t)=1\qquad\text{in}\qquad t\geq t_1-\frac{4}{9}(\mu_+)^{1-m}(l-l_j)^{2-p}r_j^p,
 \end{equation*}
  \begin{equation*}
 \theta_{j,l}(t)=0\qquad\text{in}\qquad t\leq t_1-\frac{5}{9}(\mu_+)^{1-m}(l-l_j)^{2-p}r_j^p
 \end{equation*}
and
  \begin{equation*}
 \theta_{j,l}(t)=\frac{t-t_1-\frac{5}{9}(\mu_+)^{1-m}(l-l_j)^{2-p}r_j^p}{\frac{1}{9}(\mu_+)^{1-m}(l-l_j)^{2-p}r_j^p}
 \end{equation*}
in
$t_1-\frac{5}{9}(\mu_+)^{1-m}(l-l_j)^{2-p}r_j^p\leq t\leq t_1-\frac{4}{9}(\mu_+)^{1-m}(l-l_j)^{2-p}r_j^p$.
It is easy to show that $\varphi_j(l)=0$ on $\partial_PQ_j(l)$.
 Moreover, for $j=-1,0,1,2,\cdots$, we construct the sequence $\{\alpha_j\}$ similar to \eqref{alpha}, that is,
  \begin{equation}\begin{split}\label{alpha+}\alpha_j=&\frac{4^{-j-100}}
  {3\hat B}\xi\omega+\frac{3}{4}\int_0^{r_j}\left(r^{p-n}(\mu_+)^{1-m}\int_{B_r(x_1)}
  g(y)^{\frac{p}{p-1}} \,\mathrm {d}y
  \right)^{\frac{1}{p}}\frac{\mathrm {d}r}{r}\\&+\frac{3}{4}\int_0^{r_j}\left(r^{p-n}(\mu_+)^{1-m}\int_{B_r(x_1)}|f(y)| \,\mathrm {d}y
  \right)^{\frac{1}{p-1}}\frac{\mathrm {d}r}{r}.
  \end{split}\end{equation}
  According to the definition of $\alpha_j$, we see that $\alpha_j\to 0$ as $j\to\infty$ and the sequence of $\alpha_j$ satisfy
  \begin{equation}\begin{split}\label{alpha1+}
  \alpha_{j-1}-\alpha_j\geq &\frac{4^{-j-100}}{\hat B}\xi\omega+\gamma\left(r_j^{p-n}(\mu_+)^{1-m}\int_{B_j} g(y)^{\frac{p}{p-1}}
  \,\mathrm {d}y\right)^{\frac{1}{p}}\\&+\gamma\left(r_j^{p-n}(\mu_+)^{1-m}\int_{B_j}|f(y)|
  \,\mathrm {d}y\right)^{\frac{1}{p-1}},
  \end{split}\end{equation}
   \begin{equation}\begin{split}\label{alpha2+}
  \alpha_{j-1}-\alpha_j\leq & \frac{4^{-j-100}}{\hat B}\xi\omega+\gamma\left(r_{j-1}^{p-n}(\mu_+)^{1-m}\int_{B_{j-1}} g(y)^{\frac{p}{p-1}}
  \,\mathrm {d}y\right)^{\frac{1}{p}}\\&+\gamma\left(r_{j-1}^{p-n}(\mu_+)^{1-m}\int_{B_{j-1}}|f(y)|
  \,\mathrm {d}y\right)^{\frac{1}{p-1}}
  \end{split}\end{equation}
  and $\hat B\alpha_{j-1}\leq\xi\omega$ for all $j=0,1,2,\cdots$.
 Furthermore, we introduce a quantity $K_j(l)$ by
 \begin{equation*}\begin{split}
 K_j(l)=&\frac{(\mu_+)^{m-1}(l-l_j)^{p-2}}{r_j^{n+p}}\iint_{L_j(l)}\left(\frac{u-l_j}{l-l_j
 }\right)^{(1+\lambda)(p-1)}\varphi_j(l)^{k-p}
 \,\mathrm {d}x\mathrm {d}t
 \\&+\esssup_t\frac{1}{r_j^n}\int_{B_j\times\{t\}}G\left(\frac{u-l_j}{l-l_j}\right)\varphi_j(l)^{k}
 \,\mathrm {d}x,
 \end{split}\end{equation*}
 where $G$ is the function defined in \eqref{G} and $L_j(l)=Q_j(l)\cap \{u\geq l_j\}\cap\Omega_T$.
 We observe that $K_j(l)$ is continuous in $l>l_j$
 and $K_j(l)\downarrow0$ as $l\to\infty$. Let $l_0=\mu_+-\xi\omega$ and $\kappa>0$ is to be determined later.
 We construct the sequence $l_j$ in the following way. If $K_j(l_j+\frac{1}{4}(\alpha_{j-1}-\alpha_j))<\kappa$, then we set $l_{j+1}=l_j+\frac{1}{4}(\alpha_{j-1}-\alpha_j)$.
 If $K_j(l_j+\frac{1}{4}(\alpha_{j-1}-\alpha_j))\geq\kappa$, then we choose $l_{j+1}>l_j+\frac{1}{4}(\alpha_{j-1}-\alpha_j)$
 such that $K_j(l_{j+1})=\kappa$. From the definition of $l_j$, we find that
  \begin{equation}\begin{split}\label{Kj}K_j(l_{j+1})\leq\kappa \end{split}\end{equation}
  for any $j\geq0$.
We proceed similarly as in Lemma \ref{lemmaDeGiorgi1} and divide
the proof of \eqref{DeGiorgi1+} into three steps.

Step 1: \emph{We establish an iteration scheme.}
To start with, we set $\bar l=\frac{1}{2}(l_0+\mu_+)-\frac{1}{4}B\alpha_0$ and assert that $l_1\leq\bar l$.
 Observe that $\hat B\alpha_0< \xi\omega$ and hence
 \begin{equation}\begin{split}\label{l0+}
 \bar l-l_0=\frac{1}{2}\xi\omega-\frac{1}{4}\hat B\alpha_0\geq \frac{1}{4}\xi\omega
 \geq\frac{1}{4}(\alpha_{-1}-\alpha_0)\qquad\text{and}\qquad  \bar l-l_0\leq\frac{1}{2}\xi\omega.
 \end{split}\end{equation}
 It follows that
\begin{equation*}\begin{split}(\mu_+)^{1-m}(\bar l-l_0)^{2-p}r_0^p
\leq (\mu_+)^{1-m}4^{p-2}\hat B^{2-p}A^{p-2}\omega^{2-p}C^{-p}R^p
<\frac{1}{16}(\mu_+)^{1-m}A^{p-2}\omega^{2-p}R^p,
\end{split}\end{equation*}
 since $C>40$. This also yields that $Q_0(\bar l)\subset Q_A^{(1)}$.
 Since $u-l_0\leq \xi\omega$ in $L_0(\bar l)$, we conclude that
   \begin{equation*}\begin{split}
  & \frac{(\mu_+)^{m-1}(\bar l-l_0)^{p-2}}{r_0^{n+p}}\iint_{L_0(\bar l)}\left(\frac{u-l_0}{\bar l-l_0}
   \right)^{(1+\lambda)(p-1)}\varphi_0(\bar l)^{k-p}
 \,\mathrm {d}x\mathrm {d}t
 \\&\leq\frac{(\mu_+)^{m-1}(\bar l-l_0)^{p-2-(1+\lambda)(p-1)}}{r_0^{n+p}}
 \left(\xi\omega\right)^{(1+\lambda)(p-1)}|L_0(\bar l)|
 \\&\leq \hat\gamma_0 C^{n+p}\frac{(\mu_+)^{m-1}(\xi\omega)^{p-2}}{R^{n+p}}|L_0(\bar l)|
  \\&\leq \hat\gamma_0 C^{n+p}\hat B^{p-2}\frac{\left|Q_A^{(1)}\cap \left\{u\geq\mu_+-\xi\omega\right\}\right|}{|Q_A^{(1)}|}
   \leq \hat\gamma_0 C^{n+p}\hat B^{p-2}\nu_2,
    \end{split}\end{equation*}
    where the constant $\hat\gamma_0$ depends only upon the data.
    Moreover, we apply Lemma \ref{Cac2}
    with $(a,d,l)$ replaced by $(\mu_+,\bar l-l_0,l_0)$
    to obtain
     \begin{equation*}\begin{split}
   & \esssup_t\frac{1}{r_0^n}\int_{B_0\times\{t\}}G\left(\frac{u-l_0}{\bar l-l_0}\right)\varphi_0(\bar l)^{k}
 \,\mathrm {d}x \\ \leq &\gamma  \frac{(\bar l-l_0)^{p-2}}{r_0^{p+n}}\iint_{L_0(\bar l)}u^{m-1}\left(\frac{u-l_0}{\bar l-l_0}
 \right)^{(1+\lambda)(p-1)}
 \varphi_0(\bar l)^{k-p}\,\mathrm {d}x\mathrm {d}t
 \\&+\gamma\frac{1}{r_0^n} \iint_{L_0(\bar l)}\frac{u-l_0}{\bar l-l_0}|\partial_t\varphi_0(\bar l)|\,\mathrm {d}x\mathrm {d}t
   \\&+\gamma \frac{r_0^{p-n}}{(\bar l-l_0)^p}(\mu_+)^{1-m}\int_{B_0}g^{\frac{p}{p-1}}\,\mathrm {d}x
  +\gamma \frac{r_0^{p-n}}{(\bar l-l_0)^{p-1}}(\mu_+)^{1-m}\int_{B_0}|f|\,\mathrm {d}x,
 \end{split}\end{equation*}
 since $\varphi_0(\bar l)\in C_0^\infty(Q_0(\bar l))$ and the first term on the right-hand side of \eqref{Cacformula2} vanishes.
 Taking into account that $u\leq\mu_+$ in $L_0(\bar l)$ and $|\partial_t\varphi_0(\bar l)|\leq 9(\bar l-l_0)^{p-2}(\mu_+)^{m-1}r_0^{-p}$,
an argument similar
to the one used in the proof of Lemma \ref{lemmaDeGiorgi1} shows
that there exists a constant $\hat\gamma_1$ depending only upon the data such that
 \begin{equation*}\begin{split}
   & \esssup_t \frac{1}{r_0^n}\int_{B_0\times\{t\}}G\left(\frac{u-l_0}{\bar l-l_0}\right)\varphi_0(\bar l)^{k}
 \,\mathrm {d}x
 \leq \hat\gamma_1C^{n+p}\hat B^{p-2}\nu_2+\hat\gamma_1C^{n-p}(\hat B^{-p}+\hat B^{1-p}).
  \end{split}\end{equation*}
  Consequently, we infer that
   \begin{equation*}\begin{split}
  K_0(\bar l)\leq (\hat\gamma_0+\hat\gamma_1)C^{n+p}\hat B^{p-2}\nu_2+\hat\gamma_1C^{n-p}(\hat B^{-p}+\hat B^{1-p}).
    \end{split}\end{equation*}
    At this point, we choose $\nu_2<1$ and $\hat B>1$ be such that
     \begin{equation}\begin{split}\label{determine the value of nu2}
  (\hat\gamma_0+\hat\gamma_1)C^{n+p}\hat B^{p-2}
  \nu_2=\frac{\kappa}{4}\qquad\text{and}\qquad\hat\gamma_1C^{n-p}(\hat B^{-p}+\hat B^{1-p})<\frac{\kappa}{4}.
    \end{split}\end{equation}
Consequently, we deduce that $K_0(\bar l)\leq\frac{1}{2}\kappa$. It follows from \eqref{Kj} that $l_1\leq\bar l$.
    Next, we set $d_j=l_{j+1}-l_j$ and  $Q_j=B_j\times (t_1-(\mu_+)^{1-m}d_j^{2-p}r_j^p,t_1).$
We claim that for any $j\geq1$ there holds
  \begin{equation}\begin{split}\label{djdj-1+}
  d_j\leq &\frac{1}{4}d_{j-1}+\gamma \frac{4^{-j}}{\hat B}\xi\omega+
  \gamma\left(r_{j-1}^{p-n}(\mu_+)^{1-m}\int_{B_{j-1}} g(y)^{\frac{p}{p-1}}
  \,\mathrm {d}y\right)^{\frac{1}{p}}\\&+\gamma\left(r_{j-1}^{p-n}(\mu_+)^{1-m}\int_{B_{j-1}}|f(y)|
  \,\mathrm {d}y\right)^{\frac{1}{p-1}}.
  \end{split}\end{equation}

Step 2: \emph{Proof of the inequality \eqref{djdj-1+}.}
  To start with, we first assume that for any fixed $j\geq 1$ there holds
  \begin{equation}\begin{split}\label{djdj-1proof+}
  d_j>\frac{1}{4}d_{j-1}\qquad\text{and}\qquad d_j>\frac{1}{4}(\alpha_{j-1}-\alpha_j),
   \end{split}\end{equation}
   since otherwise \eqref{djdj-1+} holds immediately. Since $d_j>\frac{1}{4}(\alpha_{j-1}-\alpha_j)$, we infer
   from the construction of $K_j(l_{j+1})$ that
   $K_j(l_{j+1})=\kappa$. To simplify the notation,  we set $\varphi_i=\varphi_i(l_{i+1})$.
   In view of $d_j>\frac{1}{4}d_{j-1}$, we deduce that
    \begin{equation*}\begin{split}
 (\mu_+)^{1-m}d_j^{2-p}r_j^p\leq (\mu_+)^{1-m}\frac{r_{j-1}^p}{4^p}\left(\frac{d_{j-1}}{4}\right)^{2-p}=\frac{1}{16}
 (\mu_+)^{1-m}d_{j-1}
 ^{2-p}r_{j-1}^p,
  \end{split}\end{equation*}
  which yields $Q_j\subset Q_{j-1}$ and  $\varphi_{j-1}(x,t)=1$ for $(x,t)\in Q_j$.
We also remark that for any $i\geq0$, the inclusion $Q_i\subset \hat Q$ holds, where
$\hat Q$ is defined in \eqref{hat Q}. Taking into account that $A=\hat B\xi^{-1}$, we infer from \eqref{alpha1+} that
  \begin{equation*}\begin{split}
  (\mu_+)^{1-m}d_i^{2-p}r_i^p&\leq  (\mu_+)^{1-m}4^{p-2}(\alpha_{i-1}-\alpha_i)^{2-p}r_i^p
  \\&\leq (\mu_+)^{1-m}4^{-2i}4^{101(p-2)}\left(\frac{\xi\omega}{\hat B}\right)^{2-p}C^{-p}R^p
  \leq (\mu_+)^{1-m}\left(\frac{\omega}{A}\right)^{2-p}R^p,
  \end{split}\end{equation*}
  provided that we choose
  $C=4^{101}.$
  This  proves the inclusion $Q_i\subset \hat Q$ for any  $i\geq0$. We now turn our attention to the
  proof of \eqref{djdj-1+}. To this end,
   we set $L_j=Q_j\cap \{u\geq l_j\}\cap\Omega_T$ and decompose
   $L_j=L^\prime_j\cup L^{\prime\prime}_j$, where
   \begin{equation}\begin{split}\label{Ldecomposition+}
   L^\prime_j=L_j\cap \left\{\frac{u-l_j}{d_j}\leq\epsilon_1\right\}\qquad\text{and}\qquad
   L^{\prime\prime}_j=L_j\setminus L^\prime_j.
   \end{split}\end{equation}
  In view of $u\leq l_j$ on $L_j$, we use \eqref{Kj} to deduce that
  \begin{equation}\begin{split}\label{0st estimate+}
 \frac{(\mu_+)^{m-1}d_j^{p-2}}{r_j^{n+p}}|L_j|
\leq \frac{4^n}{r_{j-1}^n}\esssup_t\int_{B_{j-1}}G\left(\frac{u-l_{j-1}}{d_{j-1}}\right)
  \varphi_{j-1}^k
 \,\mathrm {d}x\leq 4^n\kappa.
 \end{split}\end{equation}
 It follows from \eqref{0st estimate+} that
 \begin{equation}\begin{split}\label{1st estimate+}
 &\frac{(\mu_+)^{m-1}d_j^{p-2}}{r_j^{n+p}}\iint_{L_j^\prime}
 \left(\frac{u-l_j}{d_j}\right)^{(1+\lambda)(p-1)}\varphi_j^{k-p}
 \,\mathrm {d}x\mathrm {d}t
 \\&\leq \frac{(\mu_+)^{m-1}d_j^{p-2}}{r_j^{n+p}}\epsilon_1^{(1+\lambda)(p-1)}|L_j|
\leq 4^n\epsilon_1^{(1+\lambda)(p-1)}\kappa.
 \end{split}\end{equation}
 Let parameters $h$ and $q$ be as in \eqref{hq}. For a fixed  $\epsilon_2<1$, we apply Young's inequality to conclude that
  \begin{equation*}\begin{split}
 &\frac{(\mu_+)^{m-1}d_j^{p-2}}{r_j^{n+p}}\iint_{L_j^{\prime\prime}}
 \left(\frac{u-l_j}{d_j}\right)^{(1+\lambda)(p-1)}\varphi_j^{k-p}
 \,\mathrm {d}x\mathrm {d}t
 \\&\leq\epsilon_2\frac{(\mu_+)^{m-1}d_j^{p-2}}{r_j^{n+p}}|L_j|
+\gamma(\epsilon_2)\frac{(\mu_+)^{m-1}d_j^{p-2}}{r_j^{n+p}}
 \iint_{L_j^{\prime\prime}}\left(\frac{u-l_j}{d_j}\right)^{p\frac{n+h}{nh}}\varphi_j^{(k-p)q}
 \,\mathrm {d}x\mathrm {d}t
 \\&=:T_1+T_2,
  \end{split}\end{equation*}
  with the obvious meanings of $T_1$ and $T_2$.
According to \eqref{0st estimate+}, we see that $T_1\leq 4^n\epsilon_2\kappa$.
To estimate $T_2$, we use Lemma \ref{lemmainequalitypsi+} with $(l,d)$ replaced by $(l_j,d_j)$ to deduce that
   \begin{equation}\begin{split}\label{T21+}
   T_2\leq \gamma\frac{(\mu_+)^{m-1}(l_j-\bar l)^{p-2}}{r_j^{n+p}}\iint_{L_j^{\prime\prime}}
   \psi_j^{p\frac{n+h}{n}}\varphi_j^{(k-p)q} \,\mathrm {d}x\mathrm {d}t,
   \end{split}\end{equation}
   where
\begin{equation*}\begin{split}\psi_j(x,t)=\frac{1}{d_j}\left[\int_{l_j}^u
   \left(1+\frac{s-l_j}{d_j}\right)^{-\frac{1}{p}-\frac{\lambda}{p}}\,\mathrm {d}s\right]_+.\end{split}\end{equation*}
   Let $v=\psi_j\varphi_j^{k_1}$, where $k_1=\frac{(k-p)nq}{p(n+h)}$. Recalling that $p<n$,
   we use H\"older's inequality, Sobolev's inequality and Lemma \ref{lemmainequalitypsi+}
   to deduce
   \begin{equation}\begin{split}\label{T22+}
&\iint_{L_j^{\prime\prime}}
 \psi_j^{p\frac{n+h}{n}}\varphi_j^{(k-p)q} \,\mathrm {d}x\mathrm {d}t
\\& \leq \int_{t_1- (\mu_+)^{1-m}d_j^{2-p}r_j^p}^{t_1}\left(\int_{B_j}
v^{\frac{np}{n-p}} \,\mathrm {d}x\right)^{\frac{n-p}{n}}
\left(\int_{L_j^{\prime\prime}(t)}
v^{h} \,\mathrm {d}x\right)^{\frac{p}{n}}
\mathrm {d}t
\\&\leq  \gamma\esssup_t\left(\int_{L_j^{\prime\prime}(t)}
\frac{u-l_j}{d_j}\varphi_j^{k_1h} \,\mathrm {d}x\right)^{\frac{p}{n}}\iint_{Q_j}
|Dv|^p \,\mathrm {d}x\mathrm {d}t,
   \end{split}\end{equation}
   where
     $$L_j^{\prime\prime}(t)=\{x\in B_j:u(\cdot,t)\geq l_j\}\cap \left\{x\in B_j:\frac{u(\cdot,t)-l_j}{d_j}>
   \epsilon_1\right\}.$$
   According to Lemma \ref{Gk}, we find that
   \begin{equation}\begin{split}\label{T23+}
&   \int_{L_j^{\prime\prime}(t)}
\frac{u-l_j}{d_j}\varphi_j^{k_1h} \,\mathrm {d}x
\leq c(\epsilon_1)
\int_{L_j(t)}
G\left(\frac{u-l_j}{d_j} \right)\varphi_j^{k_1h}\,\mathrm {d}x.
    \end{split}\end{equation}
    At this point,
    we introduce the quantities $M_1$ and $M_2$ by
\begin{equation*}\begin{split}
M_1= \frac{r_j^{p-n}}{d_j^p}(\mu_+)^{1-m}\int_{B_j}g^{\frac{p}{p-1}}\,\mathrm {d}x\qquad\text{and}\qquad M_2=
 \frac{r_j^{p-n}}{d_j^{p-1}}(\mu_+)^{1-m}\int_{B_j}|f|\,\mathrm {d}x.
 \end{split}\end{equation*}
We now apply Lemma \ref{Cac2} with $(a,d,l)$ replaced by  $(\mu_+,d_j,l_j)$ to conclude that
     \begin{equation*}\begin{split}
  \esssup_t&
  \frac{1}{r_j^n}
  \int_{B_jp}
G\left(\frac{u-l_j}{d_j} \right)\varphi_j^{k_1h}\,\mathrm {d}x
  \\
 \leq &
 \gamma  \frac{d_j^{p-2}}{r_j^{p+n}}\iint_{L_j}u^{m-1}\left(\frac{u-l_j}{d_j}\right)^{(1+\lambda)(p-1)}
  \varphi_j^{k_1h-p}\,\mathrm {d}x\mathrm {d}t
  \\&+\gamma
    \frac{1}{r_j^n}\iint_{L_j}\frac{u-l_j}{d_j}|\partial_t\varphi_j|\,\mathrm {d}x\mathrm {d}t
   \\&+\gamma \frac{r_j^{p-n}}{d_j^p}(\mu_+)^{1-m}\int_{B_j}g^{\frac{p}{p-1}}\,\mathrm {d}x
  +\gamma \frac{r_j^{p-n}}{d_j^{p-1}}(\mu_+)^{1-m}\int_{B_j}|f|\,\mathrm {d}x
  \\=&:T_3+T_4+\gamma M_1+\gamma M_2,
   \end{split}\end{equation*}
   since $\varphi_j=0$ on $\partial_PQ_j$.
   We first consider the estimate for $T_3$.
According to \eqref{Kj} and \eqref{djdj-1proof+}, we conclude that
    \begin{equation*}\begin{split}
    T_3\leq &\gamma\frac{d_j^{p-2-(1+\lambda)(p-1)}(\mu_+)^{m-1}}{r_j^{p+n}}\iint_{L_j}
    (u-l_j)^{(1+\lambda)(p-1)}
  \varphi_j^{k_1h-p}\,\mathrm {d}x\mathrm {d}t
  \\ \leq &\gamma\frac{d_{j-1}^{p-2}(\mu_+)^{m-1}}{r_{j-1}^{p+n}}\iint_{L_{j-1}}
  \left(\frac{u-l_{j-1}}{d_{j-1}}\right)^{(1+\lambda)(p-1)}
  \varphi_{j-1}^{k-p}\,\mathrm {d}x\mathrm {d}t
  \\ \leq & \gamma K_{i-1}(l_i)\leq \gamma\kappa.
  \end{split}\end{equation*}
Taking into account that $(1+\lambda)(p-1)>1$, $l_{j-1}\leq l_j$, $d_j>4^{-1}d_{j-1}$ and
  $|\partial_t\varphi_j|\leq 9d_j^{p-2}(\mu_+)^{m-1}r_j^{-p}$, we infer from \eqref{0st estimate+} that
     \begin{equation*}\begin{split}
    T_4\leq &\gamma
    \frac{d_j^{p-2}(\mu_+)^{m-1}}{r_j^{n+p}}\iint_{L_j}\frac{u-l_j}
    {d_j}\,\mathrm {d}x\mathrm {d}t
    \\ \leq &\gamma\frac{d_j^{p-2}(\mu_+)^{m-1}}{r_j^{n+p}}|L_j|
  +\gamma
    \frac{d_j^{p-2}(\mu_+)^{m-1}}{r_j^{n+p}}\iint_{L_j}\left(\frac{u-l_j}
    {d_j}\right)^{(1+\lambda)(p-1)}\,\mathrm {d}x\mathrm {d}t
  \\ \leq & \gamma\kappa+K_{i-1}(l_i)\leq \gamma\kappa.
    \end{split}\end{equation*}
Combining the above estimates,
    we infer that there exists a constant $\gamma$ depending only upon the data such that
    \begin{equation}\begin{split}\label{important G+}
  \esssup_t&
  \frac{1}{r_j^n}
  \int_{B_j}
G\left(\frac{u-l_j}{d_j} \right)\varphi_j^{k_1h}\,\mathrm {d}x\leq\gamma\kappa+\gamma(M_1+M_2).
    \end{split}\end{equation}
We now turn our attention to the estimate of $T_2$.
    Combining \eqref{T21+}-\eqref{important G+}, we can rewrite the upper bound for $T_2$ by
    \begin{equation*}\begin{split}
    T_2\leq & \gamma \frac{(\mu_+)^{m-1}d_j^{p-2}}{r_j^n}(\kappa+M_1+M_2)^{\frac{p}{n}}
    \\&\times\left[\iint_{Q_j}
\varphi_j^{k_1p}|D\psi_j|^p \,\mathrm {d}x\mathrm {d}t
+\iint_{Q_j}
\varphi_j^{(k_1-1)p}\psi_j^p|D\varphi_j|^p \,\mathrm {d}x\mathrm {d}t\right]
\\=&:\gamma(\kappa+M_1+M_2)^{\frac{p}{n}}(T_7+T_8),
    \end{split}\end{equation*}
    with the obvious meanings of $T_7$ and $T_8$. We first consider the  estimate for $T_7$. Noting that $\xi<4^{-1}$
    and $\omega\leq 2\mu_+$, we have
$u\geq l_j=\mu_+-\xi\omega\geq \mu_+-2\xi\mu_+>\frac{1}{2}\mu_+$ on $L_j$
    and hencep
    \begin{equation*}\begin{split}
    T_7&=\frac{(\mu_+)^{m-1}d_j^{p-2}}{r_j^n}
    \iint_{L_j}
\varphi_j^{k_1p}|D\psi_j|^p \,\mathrm {d}x\mathrm {d}t
\leq \gamma \frac{d_j^{p-2}}{r_j^n}
    \iint_{Q_j}u^{m-1}
|D\psi_j|^p \varphi_j^{k_1p}\,\mathrm {d}x\mathrm {d}t.
    \end{split}\end{equation*}
To proceed further, we use Lemma \ref{Cac1}
with $(a,d,l)$ replaced by  $(\mu_+,d_j,l_j)$
and taking into account the estimates for $T_3$-$T_6$. Since $\varphi_j=0$ on $\partial_PQ_j$, we conclude that
  \begin{equation*}\begin{split}
  T_7\leq &\gamma \frac{d_j^{p-2}}{r_j^n}
    \iint_{Q_j}u^{m-1}
|D\psi_j|^p \varphi_j^{k_1p}\,\mathrm {d}x\mathrm {d}t
  \\
 \leq &\gamma  \frac{d_j^{p-2}}{r_j^p}\iint_{L_j}u^{m-1}\left(\frac{u-l_j}{d_j}\right)^{(1+\lambda)(p-1)}
 \varphi_j^{(k_1-1)p}\,\mathrm {d}x\mathrm {d}t
  +\gamma \iint_{L_j}\frac{u-l_j}{d_j}|\partial_t\varphi_j|\,\mathrm {d}x\mathrm {d}t
\\&+\gamma \frac{r_j^p}{d_j^p}(\mu_+)^{1-m}\int_{B_j}g^{\frac{p}{p-1}}\,\mathrm {d}x
 +\gamma \frac{r_j^p}{d_j^{p-1}}(\mu_+)^{1-m}\int_{B_j}|f|\,\mathrm {d}x
  \\ \leq &
  \gamma\frac{d_{j-1}^{p-2}(\mu_+)^{m-1}}{r_{j-1}^{p+n}}\iint_{L_{j-1}}\left(\frac{u-l_{j-1}}{d_{j-1}}\right)^{(1+\lambda)(p-1)}
  \varphi_{j-1}^{k-p}\,\mathrm {d}x\mathrm {d}t
    \\&+\gamma\frac{d_j^{p-2}(\mu_+)^{m-1}}{r_j^{n+p}}|L_j|+M_1+M_2
 \\ \leq&\gamma (\kappa+M_1+M_2).
  \end{split}\end{equation*}
  To estimate $T_8$, we first note that
   \begin{equation*}\begin{split}\psi_j(x,t)=\frac{1}{d_j}\left[\int_{l_j}^u
   \left(1+\frac{s-l_j}{d_j}\right)^{-1-\lambda}\,\mathrm {d}s\right]^{\frac{1}{p}}(u-l_j)_+^{\frac{1}{p^\prime}}
   \leq \frac{(u-l_j)_+^{\frac{1}{p^\prime}}}{d_j^{\frac{1}{p^\prime}}}.
   \end{split}\end{equation*}
 Consequently, we infer that
    \begin{equation*}\begin{split}
  T_8&\leq \frac{(\mu_+)^{m-1}d_j^{p-2}}{r_j^n}\iint_{Q_j}
\varphi_j^{(k_1-1)p}\psi_j^p|D\varphi_j|^p \,\mathrm {d}x\mathrm {d}t
\\&\leq  \frac{(\mu_+)^{m-1}d_j^{p-2}}{r_j^{n+p}}\iint_{L_j}
\left(\frac{u-l_j}{d_j}\right)^{p-1}
\varphi_j^{(k_1-1)p} \,\mathrm {d}x\mathrm {d}t
\\&\leq \frac{(\mu_+)^{m-1}d_j^{p-2}}{r_j^{n+p}}|L_j|
+\frac{(\mu_+)^{m-1}d_j^{p-2}}{r_j^{n+p}}\iint_{L_j}
\left(\frac{u-l_j}{d_j}\right)^{(1+\lambda)(p-1)}
\varphi_j^{(k_1-1)p} \,\mathrm {d}x\mathrm {d}t
\\&\leq \gamma \kappa
  \end{split}\end{equation*}
  and hence we arrive at
 $T_2\leq \gamma (\kappa+M_1+M_2)^{1+\frac{p}{n}}$.
Combining the above estimates, we conclude with
   \begin{equation}\begin{split}\label{Lprimeprime+}
  &\frac{(\mu_+)^{m-1}d_j^{p-2}}{r_j^{n+p}}\iint_{L_j^{\prime\prime}}
  \left(\frac{u-l_j}{d_j}\right)^{(1+\lambda)(p-1)}\varphi_j^{k-p}
 \,\mathrm {d}x\mathrm {d}t
 \\&\leq 4^n\epsilon_2\kappa+\gamma(\epsilon_2)(\kappa+M_1+M_2)^{1+\frac{p}{n}}.
 \end{split}\end{equation}
 This also yields that
  \begin{equation}\begin{split}\label{AA1+}
  &\frac{(\mu_+)^{m-1}d_j^{p-2}}{r_j^{n+p}}\iint_{L_j}\left(\frac{u-l_j}{d_j}
  \right)^{(1+\lambda)(p-1)}\varphi_j^{k-p}
 \,\mathrm {d}x\mathrm {d}t
 \\&\leq 4^n\epsilon_1^{(1+\lambda)(p-1)}\kappa+
 4^n\epsilon_2\kappa+\gamma(\epsilon_2)(\kappa+M_1+M_2)^{1+\frac{p}{n}}.
 \end{split}\end{equation}
 Furthermore,
we aim to improve the estimate \eqref{important G+}. To this end,
 we apply Lemma \ref{Cac2} to obtain
 \begin{equation}\begin{split}\label{estimateforG+}
 & \esssup_t
  \frac{1}{r_j^n}
  \int_{L_j(t)}
G\left(\frac{u-l_j}{d_j} \right)\varphi_j^k\,\mathrm {d}x
  \\
& \leq
 \gamma  \frac{d_j^{p-2}}{r_j^{p+n}}\iint_{L_j}u^{m-1}\left(\frac{u-l_j}{d_j}\right)^{(1+\lambda)(p-1)}
  \varphi_j^{k-p}\,\mathrm {d}x\mathrm {d}t
  \\&+\gamma
    \frac{1}{r_j^n}\iint_{L_j}\frac{u-l_j}{d_j}\varphi_j^{k-1}
    |\partial_t\varphi_j|\,\mathrm {d}x\mathrm {d}t
   \\&+\gamma \frac{r_j^{p-n}}{d_j^p}(\mu_+)^{1-m}\int_{B_j}g^{\frac{p}{p-1}}\,\mathrm {d}x
  +\gamma \frac{r_j^{p-n}}{d_j^{p-1}}(\mu_+)^{1-m}\int_{B_j}|f|\,\mathrm {d}x
  \\&=:S_1+S_2+\gamma M_1+\gamma M_2,
   \end{split}\end{equation}
   with the obvious meanings of $S_1$ and $S_2$. To estimate $S_1$, we apply \eqref{AA1+} to deduce that
   \begin{equation*}\begin{split}
   S_1&\leq 2^{(1+\lambda)(p-1)}
   \gamma\frac{d_j^{p-2}(\mu_+)^{m-1}}{r_j^{p+n}}\iint_{L_j}\left(\frac{u-l_j}{d_j}\right)^{(1+\lambda)(p-1)}
  \varphi_j^{k-p}\,\mathrm {d}x\mathrm {d}t
  \\&\leq 2^{(1+\lambda)(p-1)}
   \gamma\left[4^n\epsilon_1^{(1+\lambda)(p-1)}\kappa+
 4^n\epsilon_2\kappa+\gamma(\epsilon_2)(\kappa+M_1+M_2)^{1+\frac{p}{n}}\right].
   \end{split}\end{equation*}
   Finally, we consider the estimate for $S_2$.
To this end, we decompose
   $L_j=L^\prime_j\cup L^{\prime\prime}_j$, where $L^\prime_j$ and $L^{\prime\prime}_j$
   satisfy \eqref{Ldecomposition+}. In view of $|\partial_t\varphi_j|\leq 9d_j^{p-2}(\mu_+)^{m-1}r_j^{-p}$, we
   use \eqref{0st estimate+}
   and \eqref{Lprimeprime+} to conclude that
    \begin{equation*}\begin{split}
   S_2\leq & \gamma
    \frac{d_j^{p-2}(\mu_+)^{m-1}}{r_j^{n+p}}\iint_{L_j}\frac{u-l_j}{d_j}\varphi_j^{k-1}
    \,\mathrm {d}x\mathrm {d}t
   \\
   \leq &\gamma\epsilon_1\frac{(\mu_+)^{m-1}d_j^{p-2}}{r_j^{n+p}}|L_j^\prime|
   +\gamma
   \frac{d_j^{p-2}(\mu_+)^{m-1}}{r_j^{p+n}}\iint_{L_j^{\prime\prime}}
   \left(\frac{u-l_j}{d_j}\right)^{(1+\lambda)(p-1)}
  \varphi_j^{k-p}\,\mathrm {d}x\mathrm {d}t
  \\ \leq & 4^n\epsilon_1\kappa+\gamma\left[4^n\epsilon_2\kappa+
  \gamma(\epsilon_2)(\kappa+M_1+M_2)^{1+\frac{p}{n}}\right].
     \end{split}\end{equation*}
Inserting the estimates for $S_1$ and $S_2$
     into \eqref{estimateforG+} and taking into account \eqref{AA1+}, we arrive at
     \begin{equation*}\begin{split}
    K_j(l_{j+1})\leq &\gamma (M_1+M_2)+2^{(1+\lambda)(p-1)}
   \gamma4^n(\epsilon_1+
 \epsilon_2)\kappa
 +\gamma(\epsilon_2)(\kappa+M_1+M_2)^{1+\frac{p}{n}}.
      \end{split}\end{equation*}
Recalling that $  \kappa=K_j(l_{j+1})$, we conclude that there exist constants $\hat \gamma_2=\hat\gamma_2(\text{data})$
and $\hat \gamma_3=\hat \gamma_3(\text{data},\epsilon_1,\epsilon_2)$ such that
  \begin{equation*}\begin{split}
  \kappa=K_j(l_{j+1})\leq&
  \hat\gamma_3 \frac{r_j^{p-n}}{d_j^p}(\mu_+)^{1-m}\int_{B_j}g^{\frac{p}{p-1}}\,\mathrm {d}x+
  \hat\gamma_3 \frac{r_j^{p-n}}{d_j^{p-1}}(\mu_+)^{1-m}\int_{B_j}|f|\,\mathrm {d}x
  \\
  &+\hat\gamma_2(\epsilon_1+
 \epsilon_2)\kappa
 +\hat\gamma_3\kappa^{1+\frac{p}{n}}\\&+\hat\gamma_3\left[
 \frac{r_j^{p-n}}{d_j^p}(\mu_+)^{1-m}\int_{B_j}g^{\frac{p}{p-1}}\,\mathrm {d}x+
 \frac{r_j^{p-n}}{d_j^{p-1}}(\mu_+)^{1-m}\int_{B_j}|f|\,\mathrm {d}x\right]^{1+\frac{p}{n}}.
  \end{split}\end{equation*}
  At this point, we first determine the values of $\epsilon_1$ and $\epsilon_2$ by
   $\epsilon_1=\epsilon_2=(8\hat\gamma_2)^{-1}.$
    The choices of $\epsilon_1$ and $\epsilon_2$ determine the value of $\hat\gamma_3$. Moreover, we choose $\kappa$ be such that
  \begin{equation}\begin{split}\label{kappa}
  \kappa=4^{-\frac{n}{p}}\hat\gamma_3^{-\frac{n}{p}}.
   \end{split}\end{equation}
 The choices of $\epsilon_1$, $\epsilon_2$ and $\chi$ ensuring
$ \hat\gamma_2(\epsilon_1+
 \epsilon_2)\kappa
 +\hat\gamma_3\kappa^{1+\frac{p}{n}}\leq\frac{1}{2}\kappa$
and we conclude that either
  \begin{equation*}\begin{split}
  d_j\leq \gamma\left(r_j^{p-n}(\mu_+)^{1-m}\int_{B_j}g^{\frac{p}{p-1}}\,\mathrm {d}x\right)^{\frac{1}{p}}
  \qquad\text{or}\qquad
  d_j\leq
  \gamma \left(r_j^{p-n}(\mu_+)^{1-m}\int_{B_j}|f|\,\mathrm {d}x\right)^{\frac{1}{p-1}}
   \end{split}\end{equation*}
   holds, which proves the inequality \eqref{djdj-1proof+}.

Step 3: \emph{Proof of the inequality \eqref{DeGiorgi1+}.}
   Let $J>1$ be a fixed integer. We sum up the inequality \eqref{djdj-1proof+} for $j=1,\cdots,J-1$
   and obtain
   \begin{equation*}\begin{split}
   l_J-l_1\leq& \frac{1}{3}d_0+\gamma\frac{\xi\omega}{\hat B}
   \sum_{j=1}^{J-1} 4^{-j}+\gamma\sum_{j=1}^{J-1}\left(r_{j-1}^{p-n}(\mu_+)^{1-m}\int_{B_{j-1}}|f(y)|
  \,\mathrm {d}y\right)^{\frac{1}{p-1}}
\\&+ \gamma\sum_{j=1}^{J-1}\left(r_{j-1}^{p-n}(\mu_+)^{1-m}\int_{B_{j-1}} g(y)^{\frac{p}{p-1}}
  \,\mathrm {d}y\right)^{\frac{1}{p}}.
   \end{split}\end{equation*}
   In view of $d_0=l_1-l_0$ and $l_0=\mu_+-\xi\omega$, we apply \eqref{omega1violated+} to conclude that
   there exists a constant $\hat\gamma_4=\hat\gamma_4(\text{data})>1$ such that
  \begin{equation}\begin{split}\label{uupperbound+}
l_J\leq
 & \frac{4}{3}d_0+\mu_+-\xi\omega+\gamma \frac{\xi\omega}{\hat B}
  \\&+\gamma(\mu_+)^{\frac{1-m}{p-1}}\int_0^{4C^{-1}R}\left(\frac{1}{r^{n-p}}\int_{B_r(x_1)}f(y)
\,\mathrm {d}y\right)^{\frac{1}{p-1}}\frac{1}{r}\,\mathrm {d}r
  \\&+\gamma(\mu_+)^{\frac{1-m}{p}}\int_0^{4C^{-1}R}\left(\frac{1}{r^{n-p}}\int_{B_r(x_1)}g(y)^{\frac{p}{p-1}}
\,\mathrm {d}y\right)^{\frac{1}{p}}\frac{1}{r}\,\mathrm {d}r
\\ \leq & \frac{4}{3}d_0+\mu_+-\xi\omega+\hat\gamma_4\frac{\xi\omega}{\hat B}.
   \end{split}\end{equation}
According to \eqref{l0+}, we find that $d_0\leq \bar l-l_0\leq\frac{1}{2}\xi\omega$.
  Passing to the limit $J\to\infty$, we infer from \eqref{uupperbound+} that
   \begin{equation*}\begin{split}
   u(x_1,t_1)<\mu_+-\frac{1}{6}\xi\omega,
    \end{split}\end{equation*}
    provided that we choose $\hat B>6\hat\gamma_4$.
    This shows that the estimate \eqref{DeGiorgi1+} holds for
    almost everywhere point in $Q_A^{(2)}$,
    since $(x_1,t_1)$ is the Lebesgue point of $u$.
    Taking into account that $C=4^{101}$, \eqref{hatB} and \eqref{determine the value of nu2}, we determine the constant $\hat B$ by
   \begin{equation}\label{hatBdetermine}
   \hat B=\max\left\{\left(\frac{\kappa}{4^{101(n-p)+2}\hat\gamma_1}\right)^{\frac{1}{1-p}},\  6\hat\gamma_4,\
   4^{102}B,\ \left(\frac{100p}{p-2}\right)^{\frac{1}{p-2}},\ A_p^\frac{1}{p-2}\right\},
   \end{equation}
   where the constant $\kappa$ is determined via \eqref{kappa}.
    For such a choice of $\hat B$, we determine the value of $\nu_2$ via \eqref{determine the value of nu2} by
    \begin{equation}\label{nu2determine}
    \nu_2=\frac{\kappa}{4^{101(n-p)+1}(\hat\gamma_0+\hat\gamma_1)\hat B^{p-2}}.
     \end{equation}
     We find that the constant $\nu_2$ depends only upon the data and independent of $\xi$.
    The proof of the Lemma is now complete.
\end{proof}
The crucial step
to obtain the main result proved in this section is the following lemma concerning the estimate of the measure of level sets.
\begin{lemma}\label{DeGiorgi3}
Let $u$ be a bounded nonnegative weak solution to \eqref{parabolic}-\eqref{A} in $\Omega_T$.
For every $\bar\nu\in(0,1)$, there exists a positive integer $q_*=q_*(\text{data},\bar\nu)$ such that either
\begin{equation}\begin{split}\label{2st omega assumption}
\omega\leq 2^{s_1+q_*}(\mu_+)^{\frac{1-m}{p}}F_1(2R)+2^{s_1+q_*}(\mu_+)^{\frac{1-m}{p-1}}F_2(2R)
\end{split}\end{equation}
or
\begin{equation}\label{measure estimate 2nd}\left|\left\{(x,t)\in Q_A^{(1)}:u\geq\mu_+-\frac{\omega}{2^{s_1+q_*}}\right\}
\right|\leq \bar\nu|Q_A^{(1)}|,\end{equation}
where
$s_1>2$ is the constant claimed by Lemma \ref{time 2nd} and
$A=2^{s_1+q_*}\hat B$.
\end{lemma}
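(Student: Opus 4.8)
The plan is to upgrade the slice-wise information of Lemma~\ref{time 2nd} to a full-cylinder measure bound by a De~Giorgi type iteration built on the isoperimetric inequality and the Caccioppoli estimate of Lemma~\ref{caclemma}. I would first fix the integer $q_*$, whose value --- depending only on the data and $\bar\nu$ --- is pinned down at the very end; it then suffices to show that if \eqref{2st omega assumption} fails, then \eqref{measure estimate 2nd} holds, so assume \eqref{2st omega assumption} fails. Requiring $q_*\ge n$, this also rules out \eqref{2ndomega assumption1}, so Lemma~\ref{time 2nd} makes \eqref{2nd measure estimate} available for every admissible level $\bar t$. Since $A\ge\hat B\ge(\tfrac{100p}{p-2})^{1/(p-2)}$ forces $A^{p-2}\ge 2$, the intervals $[\bar t-\tfrac12\nu_0(\mu_+)^{1-m}\omega^{2-p}(\tfrac34R)^p,\bar t]$ cover the time projection of $Q_A^{(1)}$ as $\bar t$ runs over $[-(A^{p-2}-1)(\mu_+)^{1-m}\omega^{2-p}R^p,0]$; hence for a.e.\ $t$ in that projection
\[
\big|\{x\in B_{3R/4}:u(x,t)>\mu_+-\omega 2^{-s_1}\}\big|\le\big(1-(\nu_0/2)^2\big)\,|B_{3R/4}| .
\]

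Next I would set the truncation levels $k_s=\mu_+-\omega 2^{-s_1-s}$, write $A_s(t)=\{x\in B_{3R/4}:u(x,t)>k_s\}$ and $Y_s=|\{(x,t)\in Q_A^{(1)}:u>k_s\}|$, and apply the isoperimetric inequality of De~Giorgi type (cf.~\cite[Chapter~I]{Di93}) on each time slice with the pair of levels $k_s<k_{s+1}$. The slice bound above controls the denominator by $|\{u(\cdot,t)\le k_s\}\cap B_{3R/4}|\ge(\nu_0/2)^2|B_{3R/4}|$ (recall $k_s\ge k_0$), which gives
\[
(k_{s+1}-k_s)\,|A_{s+1}(t)|\le\frac{\gamma R}{\nu_0^2}\int_{\{k_s<u(\cdot,t)<k_{s+1}\}}|Du(\cdot,t)|\,\mathrm{d}x .
\]
Using $|A_{q_*}(t)|\le|A_{s+1}(t)|$ for $s<q_*$, integrating in $t$, and then applying H\"older in space-time together with the two-sided bound $\tfrac12\mu_+\le u\le\mu_+$ on $\{u>k_0\}$ (which makes $u^{m-1}$ comparable to $(\mu_+)^{m-1}$), I obtain
\[
(k_{s+1}-k_s)\,Y_{q_*}\le\frac{\gamma R}{\nu_0^2}(\mu_+)^{-\frac{m-1}{p}}\Big(\iint_{Q_A^{(1)}}u^{m-1}|D(u-k_s)_+|^p\,\mathrm{d}x\,\mathrm{d}t\Big)^{1/p}(Y_s-Y_{s+1})^{\frac{p-1}{p}} .
\]

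The energy factor is controlled by Lemma~\ref{caclemma} with a cutoff equal to $1$ on $Q_A^{(1)}$ and supported in $B_{7R/8}\times(-A^{p-2}(\mu_+)^{1-m}\omega^{2-p}R^p,0]$, which lies in $\Omega_T$ because $\hat Q\subset\widehat Q_R$; using $(u-k_s)_+\le\omega 2^{-s_1-s}$, absorbing the $f$- and $g$-terms through the failure of \eqref{2st omega assumption}, and recalling $A=2^{s_1+q_*}\hat B$, one finds $\iint_{Q_A^{(1)}}u^{m-1}|D(u-k_s)_+|^p\le\gamma\hat B^{p-2}\omega^{2}\,2^{-2(s_1+s)}2^{(p-2)(q_*-s)}|B_{3R/4}|$. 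Inserting this, cancelling $k_{s+1}-k_s=\omega 2^{-s_1-s-1}$, and checking that the $s$-dependent powers of $2$ cancel (the worsening factor $2^{(p-2)(q_*-s)}$ from the long cylinder exactly compensating the growth $2^{(p-2)(s_1+s)/p}$) while the surviving powers of $R,\mu_+,\omega,A$ collapse to $|Q_A^{(1)}|^{1/p}$, I arrive at
\[
|A_{q_*}|\le\gamma\,|Q_A^{(1)}|^{1/p}\,(Y_s-Y_{s+1})^{\frac{p-1}{p}}\qquad\text{for all }1\le s\le q_*-1,
\]
with $\gamma$ depending only on the data. Raising to the power $p/(p-1)$, summing over $s=1,\dots,q_*-1$ and telescoping yields $(q_*-1)\,|A_{q_*}|^{p/(p-1)}\le\gamma\,|Q_A^{(1)}|^{1/(p-1)}(Y_1-Y_{q_*})\le\gamma\,|Q_A^{(1)}|^{p/(p-1)}$, hence $|A_{q_*}|\le\gamma(q_*-1)^{-(p-1)/p}|Q_A^{(1)}|$. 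It then suffices to choose $q_*=q_*(\text{data},\bar\nu)$ so large that $q_*\ge n$ and $\gamma(q_*-1)^{-(p-1)/p}\le\bar\nu$, which gives \eqref{measure estimate 2nd}.

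The step I expect to be the main obstacle is precisely the scaling bookkeeping just sketched: one must verify that the Caccioppoli estimate over the intrinsically long cylinder $Q_A^{(1)}$ produces exactly the factor $2^{(p-2)(q_*-s)}2^{-2(s_1+s)}$, so that after cancelling $k_{s+1}-k_s$ and the factor $R$ coming from the isoperimetric inequality the constant multiplying $(Y_s-Y_{s+1})^{(p-1)/p}$ is independent of $s$ and reduces to a universal constant times $|Q_A^{(1)}|^{1/p}$ --- which is what allows the telescoping sum to gain the factor $(q_*-1)^{-(p-1)/p}$; throughout, the degenerate weight $u^{m-1}$ must be tracked carefully and kept comparable to $(\mu_+)^{m-1}$ on the relevant superlevel sets. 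The remaining ingredients --- the covering of the time projection, the isoperimetric inequality, and the telescoping --- are routine.
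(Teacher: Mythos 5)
Your argument is correct and follows essentially the same route as the paper: slice-wise measure control from Lemma~\ref{time 2nd}, a Caccioppoli-type energy bound on the intrinsic cylinder via Lemma~\ref{caclemma} with the lower-order terms absorbed by the negation of \eqref{2st omega assumption}, the De~Giorgi isoperimetric inequality applied slice by slice with the $(\nu_0/2)^2$ lower bound on the sublevel set, and a telescoping sum over the levels $k_s$. Your scaling bookkeeping is accurate --- the $s$-dependent powers of $2$ do cancel exactly as you describe, and the surviving factors collapse to $\gamma|Q_A^{(1)}|^{1/p}$ --- and the observation that choosing $q_*\ge n$ makes the failure of \eqref{2st omega assumption} also rule out \eqref{2ndomega assumption1}, \eqref{1st omega} and \eqref{2st omega}, is the same device the paper uses (the paper's choice \eqref{q*} builds the $n$ in for this reason); the only cosmetic difference is that you take the cutoff supported in $B_{7R/8}\times(-A^{p-2}(\mu_+)^{1-m}\omega^{2-p}R^p,0]$ whereas the paper uses the full $B_R$, which is immaterial.
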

\begin{proof}
For simplicity, we abbreviate $Q_A=Q_A^{(1)}$.
To start with, we first assume that \eqref{2ndomega assumption1} is violated, that is,
\begin{equation}\begin{split}\label{omegaviolated++}
\omega> 2^{\frac{s_1+n}{p-1}}(\mu_+)^{\frac{1-m}{p-1}}F_2(2R)
+2^{\frac{2s_1+n}{p}}(\mu_+)^{\frac{1-m}{p}}F_1(2R).
\end{split}\end{equation}
According to Lemma \ref{time 2nd}, we infer that the slice-wise estimate
\begin{equation}\label{2nd measure estimate++}
\left|\left\{x\in B_{\frac{3}{4}R}:u(x,t)>\mu_+-\frac{\omega}{2^{s_1}}\right\}\right|\leq
\left(1-\left(\frac{\nu_0}{2}\right)^2\right)|B_{\frac{3}{4}R}|
\end{equation}
holds
for all $t\in \left(-\frac{1}{2}A^{p-2}(\mu_+)^{1-m}\omega^{2-p}R^p,0\right]$.
Let $q_*>1$ to be determined in the course of the proof.
For $j>2$ and $A=2^{s_1+q_*}\hat B$, we define
\begin{equation*}\begin{split}
Q_A^\prime&=B_{R}\times\left(-A^{p-2}(\mu_+)^{1-m}\omega^{2-p}R^p,0\right],
\\
A_j(t)&=\left\{x\in B_{\frac{3}{4}R}:u(x,t)>\mu_+-\frac{\omega}{2^j}\right\}
\end{split}
\end{equation*}
and
\begin{equation*}Q_j^+=\left\{(x,t)\in Q_A:u(x,t)>\mu_+-\frac{\omega}{2^j}\right\}.\end{equation*}
Now for $q_*\in\mathbb{N}$, we set $k_j=\mu_+-2^{-j}\omega$ for $j=s_1,s_1+1,\cdots, s_1+q_*$.
Take a cutoff function $0\leq\varphi\leq1$,
such that $\varphi=1$ in $Q_A$, $\varphi=0$ on $\partial_PQ_A^\prime$,
\begin{equation*}\begin{split}
|D\varphi|\leq \frac{4}{R}\qquad\text{and}\qquad 0\leq
\frac{\partial \varphi}{\partial t}\leq2A^{2-p}(\mu_+)^{m-1}\omega^{p-2}R^{-p}.
\end{split}\end{equation*}
We consider the Caccioppoli estimate \eqref{Caccioppoli} for the truncated
functions $(u-k_j)_+$ over the cylinder $Q_A^\prime$
and obtain
\begin{equation*}\begin{split}
&\iint_{Q_A^\prime}u^{m-1}|D(u-k_j)_+\varphi|^p \,\mathrm {d}x\mathrm {d}t\\
&\leq \gamma\iint_{Q_A^\prime} u^{m-1}(u-k_j)_+^p|D\varphi|^p
\,\mathrm {d}x\mathrm {d}t+\gamma
\iint_{Q_A^\prime} (u-k_j)_+^2|\partial_t\varphi|\,\mathrm {d}x\mathrm {d}t
\\&+\gamma
\iint_{Q_A^\prime} |f|(u-k_j)_+\,\mathrm {d}x\mathrm {d}t+\gamma
\iint_{Q_A^\prime} g^{\frac{p}{p-1}}\,\mathrm {d}x\mathrm {d}t
.\end{split}\end{equation*}
Taking into accouont that $j>2$ and $\omega<2\mu_+$, we have $u\geq k_j>\mu_+-\frac{1}{4}\omega>\frac{1}{2}
\mu_+$ on the set $\{u\geq k_j\}$.
Then, we deduce
\begin{equation*}\begin{split}
\iint_{Q_A^\prime}u^{m-1}|D(u-k_j)_+\varphi|^p \,\mathrm {d}x\mathrm {d}t\geq
\left(\frac{\mu_+}{2}\right)^{m-1}\iint_{Q_j^+}|Du|^p \,\mathrm {d}x\mathrm {d}t.
\end{split}\end{equation*}
In view of $(u-k_j)_+\leq 2^{-j}\omega$ and $A=2^{s_1+q_*}\hat B$, we conclude that
\begin{equation*}\begin{split}
\iint_{Q_A^\prime} u^{m-1}(u-k_j)_+^p|D\varphi|^p
\,\mathrm {d}x\mathrm {d}t\leq \gamma\frac{(\mu_+)^{m-1}}{R^p}\left(\frac{\omega}{2^j}\right)^p|Q_A|
\end{split}\end{equation*}
and
\begin{equation*}\begin{split}
\iint_{Q_A^\prime}& (u-k_j)_+^2|\partial_t\varphi|\,\mathrm {d}x\mathrm {d}t\leq
\gamma\frac{(\mu_+)^{m-1}\omega^{p-2}}{A^{p-2}R^p}\left(\frac{\omega}{2^j}\right)^2|Q_A|
\\&=\gamma\frac{(\mu_+)^{m-1}}{R^p}\left(\frac{\omega}{2^{s_1+q_*}\hat B
}\right)^{p-2}\left(\frac{\omega}{2^j}\right)^2|Q_A|
\leq \gamma\frac{(\mu_+)^{m-1}}{R^p}\left(\frac{\omega}{2^j}\right)^p|Q_A|,
\end{split}\end{equation*}
since $j\leq s_1+q_*$ and $\hat B>1$. Furthermore, we deal with the estimates for the lower
order terms. In view of $u-k_j\leq 2^{-j}\omega$, we find that
\begin{equation*}\begin{split}
\iint_{Q_A^\prime}& |f|(u-k_j)_+\,\mathrm {d}x\mathrm {d}t\leq \gamma\left(\frac{\omega}{2^jR^p}\right)
\left(R^{p-n}\int_{B_R}|f|\,\mathrm{d}x
\right)|Q_A|
\\&=\gamma\left(\frac{(\mu_+)^{m-1}}{R^p}\left(\frac{\omega}{2^j}\right)^p|Q_A|\right)\left[
\left(\frac{\omega}{2^j}\right)^{1-p}(\mu_+)^{1-m}
R^{p-n}\int_{B_R}|f|\,\mathrm{d}x\right]
\\&\leq \gamma\frac{(\mu_+)^{m-1}}{R^p}\left(\frac{\omega}{2^j}\right)^p|Q_A|,
\end{split}\end{equation*}
provided that we assume
\begin{equation}\begin{split}\label{1st omega}
\omega\geq 2^j(\mu_+)^{\frac{1-m}{p-1}}\left(R^{p-n}\int_{B_R}|f|\,\mathrm{d}x
\right)^{\frac{1}{p-1}}.
\end{split}\end{equation}
Finally, we deduce that
\begin{equation*}\begin{split}
\iint_{Q_A^\prime} g^{\frac{p}{p-1}}\,\mathrm {d}x\mathrm {d}t&\leq
\gamma\left(\frac{(\mu_+)^{m-1}}{R^p}\left(\frac{\omega}{2^j}\right)^p|Q_A|\right)\left[
\left(\frac{\omega}{2^j}\right)^{-p}(\mu_+)^{1-m}
R^{p-n}\int_{B_R}g^{\frac{p}{p-1}}\,\mathrm{d}x\right]
\\&\leq \gamma\frac{(\mu_+)^{m-1}}{R^p}\left(\frac{\omega}{2^j}\right)^p|Q_A|,
\end{split}\end{equation*}
provided that we assume
\begin{equation}\begin{split}\label{2st omega}
\omega\geq 2^j(\mu_+)^{\frac{1-m}{p}}\left(R^{p-n}\int_{B_R}g^{\frac{p}{p-1}}\,\mathrm{d}x
\right)^{\frac{1}{p}}.
\end{split}\end{equation}
Combining the above estimates, we conclude that there exists a constant $\gamma$ depending only upon the data such that
\begin{equation}\begin{split}\label{Du}
\iint_{Q_j^+}|Du|^p \,\mathrm {d}x\mathrm {d}t\leq \gamma\frac{1}{R^p}\left(\frac{\omega}{2^j}\right)^p|Q_A|.
\end{split}\end{equation}
To proceed further, we apply a De Giorgi type Lemma (see \cite[Lemma 2.2]{Di93}) to the function $u(\cdot,t)$,
for all $-\frac{1}{2}A^{p-2}(\mu_+)^{1-m}\omega^{2-p}R^p\leq t\leq0$, and with $l=\mu_+-2^{-(j+1)}\omega$
and $k=\mu_+-2^{-j}\omega$. According to \eqref{2nd measure estimate++}, we deduce that
\begin{equation*}\begin{split}
\left|\left\{x\in B_{\frac{3}{4}R}:u(x,t)\leq \mu_+-\frac{\omega}
{2^j}\right\}\right|=|B_{\frac{3}{4}R}|-|A_j(t)|\geq \left(\frac{\nu_0}{2}\right)^2|B_{\frac{3}{4}R}|
\end{split}\end{equation*}
holds
for all $t\in \left(-\frac{1}{2}A^{p-2}(\mu_+)^{1-m}\omega^{2-p}R^p,0\right]$.
This yields that
the estimate
\begin{equation*}\begin{split}
\frac{\omega}{2^{j+1}}|A_{j+1}(t)|\leq \frac{\gamma}{\nu_0^2}\frac{R^{n+1}}{|B_R|}
\int_{A_s(t)\setminus A_{s+1}(t)}|Du|\,\mathrm{d}x
\leq \gamma \frac{R}{\nu_0^2}
\int_{A_s(t)\setminus A_{s+1}(t)}|Du|\,\mathrm{d}x
\end{split}\end{equation*}
holds for any $-\frac{1}{2}A^{p-2}(\mu_+)^{1-m}\omega^{2-p}R^p\leq t\leq0$.
Integrating the preceding inequality
over the interval $\left(-\frac{1}{2}A^{p-2}(\mu_+)^{1-m}\omega^{2-p}R^p,0\right]$, we conclude from \eqref{Du} that
\begin{equation*}\begin{split}
\frac{\omega}{2^{j+1}}|Q_{j+1}^+|&\leq \frac{\gamma R}{\nu_0^2}\iint_{Q_j^+\setminus Q_{j+1}^+}|Du|\,\mathrm{d}x\mathrm{d}t
\\&\leq \frac{\gamma R}{\nu_0^2}\left(\iint_{Q_j^+\setminus Q_{j+1}^+}|Du|^p\,\mathrm{d}x\mathrm{d}t\right)^{\frac{1}{p}}
|Q_j^+\setminus Q_{j+1}^+|^{\frac{p-1}{p}}
\\&\leq \frac{\gamma}{\nu_0^2}\left(\frac{\omega}{2^j}\right)|Q_A|^{\frac{1}{p}}
|Q_j^+\setminus Q_{j+1}^+|^{\frac{p-1}{p}}.
\end{split}\end{equation*}
It follows that
\begin{equation*}\begin{split}
|Q_{j+1}^+|^{\frac{p}{p-1}}\leq \gamma\nu_0^{-\frac{2p}{p-1}}|Q_A|^{\frac{1}{p-1}}|Q_j^+\setminus Q_{j+1}^+|.
\end{split}\end{equation*}
Furthermore, we add up these inequalities for $j=s_1,s_1+1,\cdots,s_1+q_*-1$ and deduce that
\begin{equation*}\begin{split}
(q_*-1)|Q_{s_1+q_*}^+|^{\frac{p}{p-1}}\leq \gamma \nu_0^{-\frac{2p}{p-1}}|Q_A|^{\frac{p}{p-1}},
\end{split}\end{equation*}
where the constant $\gamma$ depends only upon the data.
From the definition of $Q_j^+$, the above inequality reads
\begin{equation*}\begin{split}
\left|\left\{(x,t)\in Q_A:u(x,t)>\mu_+-\frac{\omega}{2^{s_1+q_*}}\right\}\right|\leq \frac{\gamma}{\nu_0^2}
\frac{1}{(q_*-1)^{\frac{p-1}{p}}}|Q_A|.
\end{split}\end{equation*}
At this point, we take $q_*=q_*(\text{data},\bar\nu)\geq2$ according to
\begin{equation}\begin{split}\label{q*}
q_*=q_*(\bar\nu)=n+2+\left(\frac{\gamma}{\nu_0^2\bar\nu}\right)^\frac{p}{p-1}.
\end{split}\end{equation}
For such a choice of $q_*$, the inequality \eqref{measure estimate 2nd} holds true.
We also determine the value of $j$ in \eqref{1st omega} and \eqref{2st omega}
by $j=s_1+q_*$. On the other hand, if the  inequality \eqref{omegaviolated++},
\eqref{1st omega} or \eqref{2st omega} is violated,
then we deduce the inequality \eqref{2st omega assumption}. We have thus proved the lemma.
\end{proof}
We are now in a position to prove the decay estimate for the oscillation
of the weak solution for the second alternative.
The next proposition is our main result in this section.
\begin{proposition}\label{2nd proposition}
Let $\tilde Q_0=Q\left(\frac{1}{16}R,(\mu_+)^{1-m}\omega^{2-p}\left(\frac{1}{16}R\right)^p\right)$
and let $u$ be a bounded nonnegative weak solution to \eqref{parabolic}-\eqref{A} in $\Omega_T$.
There exist $0<\xi_2<2^{-5}$ and $B_2>1$ depending only upon the data such that
\begin{equation*}\begin{split}
\essosc_{\tilde Q_0} u\leq (1-6^{-1}\xi_2)\omega+B_2\xi_2^{-1}(
F_1(2R)^{\frac{p}{p+m-1}}+F_2(2R)^{\frac{p-1}{p+m-2}}).
 \end{split}\end{equation*}
\end{proposition}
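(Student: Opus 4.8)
The plan is to combine the two De Giorgi type lemmas for the second alternative---Lemma \ref{lemmaDeGiorgi1+} and Lemma \ref{DeGiorgi3}---exactly as the first alternative was handled in Proposition \ref{1st proposition}. First I would apply Lemma \ref{DeGiorgi3} with a specific choice $\bar\nu=\nu_2$, where $\nu_2\in(0,1)$ is the constant furnished by Lemma \ref{lemmaDeGiorgi1+}; note that $\nu_2$ depends only on the data and is independent of $\xi$, which is the crucial structural fact that makes this composition legitimate. This produces an integer $q_*=q_*(\text{data})$, and I set $\xi_2=2^{-(s_1+q_*)}$ and $A=2^{s_1+q_*}\hat B=\hat B\xi_2^{-1}$. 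Then Lemma \ref{DeGiorgi3} gives, provided \eqref{2st omega assumption} fails, the measure estimate
\[
\left|\left\{(x,t)\in Q_A^{(1)}:u\geq\mu_+-\xi_2\omega\right\}\right|\leq\nu_2|Q_A^{(1)}|,
\]
which is precisely the hypothesis of Lemma \ref{lemmaDeGiorgi1+}. Invoking that lemma (with $A=\hat B\xi_2^{-1}$, so that \eqref{first condition for A} and \eqref{second condition for A} hold), I obtain either the pointwise bound $u<\mu_+-\tfrac16\xi_2\omega$ a.e.\ on $Q_A^{(2)}$, or the potential bound \eqref{omega1+}.

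In the favorable branch, $Q_A^{(2)}=B_{R/2}\times(-\tfrac14 A^{p-2}(\mu_+)^{1-m}\omega^{2-p}R^p,0]$ contains $\tilde Q_0=Q(\tfrac{1}{16}R,(\mu_+)^{1-m}\omega^{2-p}(\tfrac{1}{16}R)^p)$ once $A$ is large, since $A>1$; this inclusion needs a quick check comparing radii ($\tfrac{1}{16}R<\tfrac12 R$) and time-lengths ($(\tfrac{1}{16}R)^p\le \tfrac14 A^{p-2}R^p$ as $A^{p-2}\ge4\cdot16^{-p}$). On $\tilde Q_0$ we then have $\mu_-\le u<\mu_+-\tfrac16\xi_2\omega$, hence
\[
\essosc_{\tilde Q_0}u\leq\mu_+-\tfrac16\xi_2\omega-\mu_-\leq(1-\tfrac16\xi_2)\omega,
\]
using $\mu_+-\mu_-\le\omega$. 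This is the desired estimate with the second (potential) term absent.

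In the unfavorable branch one of \eqref{2st omega assumption}, \eqref{omega2assumption}-type inequalities, or \eqref{omega1+} holds. Each of these has the shape $\xi_2^{a}\omega\le c\,(\mu_+)^{(1-m)/p}F_1(2R)$ or $\xi_2^{a}\omega\le c\,(\mu_+)^{(1-m)/(p-1)}F_2(2R)$ for suitable exponents and constants depending only on data. Using $\mu_+\ge\tfrac12\omega$ (from nonnegativity and $\tfrac12\omega\le\mu_+-\mu_-\le\mu_+$) to absorb the powers of $\mu_+$: from $\omega\lesssim \xi_2^{-a}\omega^{(1-m)/p}F_1$ one gets $\omega^{(p+m-1)/p}\lesssim\xi_2^{-a}F_1$, i.e.\ $\omega\lesssim\xi_2^{-ap/(p+m-1)}F_1^{p/(p+m-1)}$, and similarly $\omega\lesssim\xi_2^{-a(p-1)/(p+m-2)}F_2^{(p-1)/(p+m-2)}$ from the $F_2$ alternatives. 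Since always $\essosc_{\tilde Q_0}u\le\omega$, and the exponents on $\xi_2^{-1}$ are bounded by data-dependent constants, both branches are subsumed in
\[
\essosc_{\tilde Q_0}u\leq(1-\tfrac16\xi_2)\omega+B_2\xi_2^{-1}\bigl(F_1(2R)^{\frac{p}{p+m-1}}+F_2(2R)^{\frac{p-1}{p+m-2}}\bigr)
\]
for a suitable $B_2=B_2(\text{data})>1$ (enlarging the exponent on $\xi_2^{-1}$ to $1$ is harmless since $\xi_2<1$).

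**The main obstacle** I anticipate is purely bookkeeping: tracking the dependence of $s_1$, $q_*$, $\hat B$, $\nu_2$, $A$ on one another and verifying there is no circularity. The delicate point is that $\nu_2$ in Lemma \ref{lemmaDeGiorgi1+} must not depend on $\xi$ (the exponent/scaling parameter that here equals $\xi_2$)---this is stated in \eqref{nu2determine}, and it is what allows one to first fix $q_*$ (hence $\xi_2$) via $\bar\nu=\nu_2$ in Lemma \ref{DeGiorgi3}, and only then set $A=\hat B\xi_2^{-1}$ and feed everything back into Lemma \ref{lemmaDeGiorgi1+}. Once that ordering is pinned down, the rest is the routine case analysis above, identical in spirit to Proposition \ref{1st proposition}.
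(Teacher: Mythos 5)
Your proposal is correct and follows the same route as the paper: fix $\bar\nu=\nu_2$ from Lemma \ref{lemmaDeGiorgi1+} (whose $\nu_2$ is $\xi$-independent, as you correctly note via \eqref{nu2determine}), obtain $q_*$ from Lemma \ref{DeGiorgi3}, set $\xi_2=2^{-(s_1+q_*)}$ and $A=\hat B\xi_2^{-1}$, then split on whether \eqref{2ndomega assumption1}, \eqref{omega1+}, \eqref{2st omega assumption} all fail (yielding the oscillation decay on $\tilde Q_0\subset Q_A^{(2)}$) or one holds (yielding the potential bound after absorbing $(\mu_+)^{(1-m)/p}$ and $(\mu_+)^{(1-m)/(p-1)}$ via $\mu_+\ge\omega/2$). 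The supporting calculations you spell out---the inclusion $\tilde Q_0\subset Q_A^{(2)}$ and the observation that $\xi_2^{-p/(p+m-1)},\xi_2^{-(p-1)/(p+m-2)}\le\xi_2^{-1}$ since the exponents are below $1$ for $m>1$---are exactly the details the paper leaves implicit, and they check out.
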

\begin{proof}
Let $\nu_2$ be the constant claimed in Lemma \ref{lemmaDeGiorgi1+}.
We take
$\bar\nu=\nu_2$ in Lemma \ref{DeGiorgi3} and this fixes $q_*$ via \eqref{q*}.
Moreover, we fix
$\xi_2=2^{-(s_1+q_*)}$ and $A=\xi_2^{-1}\hat B$.
We first assume that \eqref{2ndomega assumption1}, \eqref{omega1+} and \eqref{2st omega assumption}
are violated, then we infer from Lemma \ref{lemmaDeGiorgi1+} that
\begin{equation*}\begin{split}
\essosc_{\tilde Q_0} u\leq (1-6^{-1}\xi_2)\omega.
 \end{split}\end{equation*}
 On the other hand, if either \eqref{2ndomega assumption1}, \eqref{omega1+} or \eqref{2st omega assumption} holds,
 then we deduce that either
 \begin{equation*}\begin{split}
 \omega\leq 2^{\frac{2s_1+n}{p}}(\mu_+)^{\frac{1-m}{p}}F_1(2R),\quad
  \omega\leq 2^{\frac{s_1+n}{p-1}}(\mu_+)^{\frac{1-m}{p-1}}F_2(2R),\quad
  \xi_2\omega\leq\hat B(\mu_+)^{\frac{1-m}{p}}F_1(2R),
  \end{split}\end{equation*}
   \begin{equation*}\begin{split}
    \xi_2\omega\leq\hat B(\mu_+)^{\frac{1-m}{p-1}}F_2(2R),\quad
    \omega\leq 2^{s_1+q_*}(\mu_+)^{\frac{1-m}{p}}F_1(2R)\quad\text{or}\quad
    \omega\leq 2^{s_1+q_*}(\mu_+)^{\frac{1-m}{p-1}}F_2(2R).
   \end{split}\end{equation*}
In view of $\mu_+\geq \frac{1}{2}\omega$, we conclude that there exists a constant $B_2>1$ depending only upon the data such that
\begin{equation*}\begin{split}
\essosc_{\tilde Q_0} u\leq\omega\leq B_2\xi_2^{-1}(
F_1(2R)^{\frac{p}{p+m-1}}+F_2(2R)^{\frac{p-1}{p+m-2}}).
 \end{split}\end{equation*}
 The proof of the proposition is now complete.
\end{proof}
\section{Proof of the main result}
This section is devoted to give the final part of the proof of Theorem \ref{main}.
Our proof follows the idea from
\cite{BDG1} and \cite{PV} but the argument is considerably more delicate in our setting.
First, we set up an iteration scheme.
Let $\{\xi_1,B_1\}$ and $\{\xi_2,B_2\}$ are the constants claimed by Proposition \ref{1st proposition} and Proposition \ref{2nd proposition},
respectively.
Let $\omega_0=\omega$, $\mu_0=\mu_+$, $R_0=R$ and $Q_0=Q_R=B_R\times (-R^{p-\epsilon_0},0)$. Moreover, we set
\begin{equation}\begin{split}\label{define omega1}
\omega_1= \eta\omega_0+\gamma[
F_1(2R_0)^{\frac{p}{p+m-1}}+F_2(2R_0)^{\frac{p-1}{p+m-2}}]\quad\text{and}\quad\mu_1=\esssup_{Q_1}u,
 \end{split}\end{equation}
 where $\eta=\max\left\{1-2^{-1}\xi_1,\ 1-6^{-1}\xi_2\right\}<1$,
 \begin{equation*}\begin{split}
 \gamma=\max\left\{B_1\xi_1^{-1},\ B_2\xi_2^{-1}\right\},\quad\text{and}\quad
 Q_1=B_{\frac{1}{16}R_0}\times\left(-\mu_0^{1-m}\omega_0^{2-p}\left(\frac{1}{16}R_0\right)^p,0\right).
  \end{split}\end{equation*}
  According to Proposition \ref{1st proposition} and Proposition \ref{2nd proposition}, we conclude that
   \begin{equation}\begin{split}\label{osc1}
   \essosc_{Q_1}u=\esssup_{Q_1}u-\essinf_{Q_1}u\leq\omega_1.
    \end{split}\end{equation}
    Our task now is to construct a smaller cylinder $Q_2$ under the assumption
that $\essosc_{Q_1}u\geq \frac{1}{2}\omega_1$.
    We first claim that $\mu_0\leq\frac{4}{\eta}\mu_1$.
    In the case $\mu_-\leq\frac{1}{2}\mu_+$, we have $\mu_+\leq\omega+\mu_-\leq\omega+\frac{1}{2}\mu_+$ and hence
    $\mu_+\leq 2\omega=2\omega_0$. In view of \eqref{define omega1}, we find that
   $\mu_0=\mu_+\leq \frac{2}{\eta}\omega_1\leq\frac{4}{\eta}\mu_1$. In the case $\mu_->\frac{1}{2}\mu_+$, we have
   $\mu_1\geq\mu_->\frac{1}{2}\mu_+$. At this point, we set
    \begin{equation*}\begin{split}
    \delta=\frac{1}{16}2^{-\frac{3m}{p}}\eta^{\frac{m+p-3}{p}}A^{\frac{2-p}{p}},\qquad R_1=\delta R_0\qquad\text{and}\qquad
    Q_2=B_{\frac{1}{16}R_1}\times\left(-\mu_1^{1-m}\omega_1^{2-p}\left(\frac{1}{16}R_1\right)^p,0\right).
    \end{split}\end{equation*}
    In view of $\mu_0\leq\frac{4}{\eta}\mu_1$ and $\omega_1\geq\eta\omega_0$, we conclude that
    \begin{equation}\begin{split}\label{hatcontained}
    \widehat Q_2:=B_{R_1}\times\left(-\mu_1^{1-m}\left(\frac{\omega_1}{A}\right)^{2-p}R_1^p,0\right)
    \subseteq B_{\frac{1}{16}R_0}\times\left(-\frac{1}{2}\mu_0^{1-m}\omega_0^{2-p}\left(\frac{1}{16}R_0\right)^p,0\right).
     \end{split}\end{equation}
Next, we claim that
       \begin{equation}\begin{split}\label{oscillation}
   \essosc_{Q_2}u\leq\omega_2,\qquad\text{where}\qquad
 \omega_2=\eta\omega_1+\gamma[
F_1(2R_1)^{\frac{p}{p+m-1}}+F_2(2R_1)^{\frac{p-1}{p+m-2}}].
    \end{split}\end{equation}
    The claim can be proved in the same way as shown before.
    In fact, we observe that the nonuniformly parabolic cylinders
     \begin{equation*}\begin{split}
     &Q_j(\bar l)=B_j\times (t_1-(\mu_+)^{1-m}(l_j-\bar l)^{2-p}r_j^p,t_1),
     \qquad \bar l=\frac{1}{2}l_j+\frac{1}{16}B\alpha_j+\frac{1}{32}\omega
     +\frac{1}{2}\mu_-
     \\&\quad\text{and}\qquad
     Q_j=Q_j(l_{j+1})=B_j\times (t_1-(\mu_+)^{1-m}(l_j-l_{j+1})^{2-p}r_j^p,t_1)
     \end{split}\end{equation*}
  constructed in the proof of Lemma \ref{lemmaDeGiorgi1} are contained in the cylinder $\hat Q$, where
$\hat Q=B_R\times\left(-(\mu_+)^{1-m}A^{p-2}\omega^{2-p}R^p,0\right).$
Similarly, we find that the cylinders
     \begin{equation*}\begin{split}
     &Q_0(\bar l)=B_0\times (t_1-(\mu_+)^{1-m}(\bar l-l_0)^{2-p}r_0^p,t_1),
     \qquad \bar l=\mu_+-\frac{1}{2}\xi\omega-\frac{1}{4}\hat B\alpha_0
     \\&\quad\text{and}\qquad
     Q_j=Q_j(l_{j+1})=B_j\times (t_1-(\mu_+)^{1-m}(l_{j+1}-l_j)^{2-p}r_j^p,t_1)
     \end{split}\end{equation*}
  considered in the proof of Lemma \ref{lemmaDeGiorgi1+} are also contained in the cylinder $\hat Q$.
    Consequently, the statements of Proposition \ref{1st proposition} and Proposition \ref{2nd proposition} are still valid if
    we modify the settings
    of $\mu_+$ and $\mu_-$ by
    \begin{equation*}\begin{split}
\mu_+=\esssup_{\hat Q}u\qquad\text{and}\qquad \mu_-=\essinf_{\hat Q}u.
\end{split}\end{equation*}
Thus we arrive at the conclusion that \eqref{oscillation} holds.
    At this point, we define $R_k=\delta R_{k-1}=\delta^kR_0$ and
    \begin{equation*}\begin{split}\omega_k=
    \eta\omega_{k-1}+\gamma[
F_1(2R_{k-1})^{\frac{p}{p+m-1}}+F_2(2R_{k-1})^{\frac{p-1}{p+m-2}}].
\end{split}\end{equation*}
We first observe that $\omega_{k+1}\geq\frac{1}{2}\omega_k$ for any $k\geq0$.
Another step in the proof is to show that $\omega_k\leq\omega_0$ and $\omega_k\to0$ as $k\to\infty$.
For simplicity of notation, we write $F_k=
F_1(2R_k)^{\frac{p}{p+m-1}}+F_2(2R_k)^{\frac{p-1}{p+m-2}}$. We see that $F_k\downarrow0$ as $k\to\infty$. For any fixed integer
$k\geq0$, we infer that for
$n>k$ there holds
\begin{equation}\begin{split}\label{omegakomgea0}
\omega_n=\eta^{n-k}\omega_k+\gamma\eta^n\sum_{j=k+1}^n\eta^{-j}F_{j-1}\leq \eta^{n-k}\omega_k+\gamma\frac{1}{1-\eta}F_k.
\end{split}\end{equation}
Then, we have $\omega_n\leq\omega_0$, provided that we choose $R>0$ so small that $F_0\leq (1-\eta)\omega_0$.
Passing to the limit $n\to\infty$, we conclude from \eqref{omegakomgea0} that
\begin{equation}\begin{split}\label{passing to the limit}
\lim_{n\to\infty}\omega_n\leq\gamma\frac{1}{1-\eta}F_k
\end{split}\end{equation}
holds for any fixed $k>1$. Passing to the limit $k\to\infty$ in \eqref{passing to the limit}, we conclude that $\omega_k\to0$ as $k\to\infty$.
Next, we consider the case $\essosc_{Q_1}u<\frac{1}{2}\omega_1$. Let $k_1\geq1$ be a smallest integer such that \begin{equation}\begin{split}\label{k1half}\frac{1}{2}\omega_{k_1}<\essosc_{Q_1}u<\omega_{k_1}.\end{split}\end{equation}
At this point, we first define $Q_{k_1+1}$ by
    \begin{equation*}\begin{split}
    Q_{k_1+1}=B_{\frac{1}{16}R_{k_1}}\times\left(-\mu_1^{1-m}\omega_{k_1}^{2-p}\left(\frac{1}{16}R_{k_1}\right)^p,0\right).
    \end{split}\end{equation*}
    Taking into account that $\omega_{k_1}\geq\eta^{k_1}\omega_0$, $R_{k_1}=\delta^{k_1}R_0$ and
     \begin{equation*}
     \begin{cases}
     \mu_0\leq 2\omega_0\leq 2\eta^{-k_1}\omega_{k_1}<4\eta^{-k_1}\mu_1\qquad&\text{if}\qquad \mu_-\leq\frac{1}{2}\mu_+,
     \\ \mu_0<2\mu_-\leq2\mu_1\qquad&\text{if}\qquad \mu_->\frac{1}{2}\mu_+.
     \end{cases}
     \end{equation*}
     It follows that
     \begin{equation}\begin{split}\label{k_1time}
     \mu_1^{1-m}\left(\frac{\omega_{k_1}}{A}\right)^{2-p}R_{k_1}^p
     &\leq 4^{m-1}\eta^{k_1(1-m)}\eta^{k_1(2-p)}A^{p-2}\mu_0^{1-m}\omega_0^{2-p}\left(\delta^{k_1}R_0\right)^p
     \\&\leq \frac{1}{2}\mu_0^{1-m}\omega_0^{2-p}\left(\frac{1}{16}R_0\right)^p,
     \end{split}\end{equation}
     since $\delta=\frac{1}{16}2^{-\frac{3m}{p}}\eta^{\frac{m+p-3}{p}}A^{\frac{2-p}{p}}$.
     Consequently, we infer that
      \begin{equation*}\begin{split}
    \widehat Q_{k_1+1}:=B_{R_{k_1}}\times\left(- \mu_1^{1-m}\left(\frac{\omega_{k_1}}{A}\right)^{2-p}R_{k_1}^p
    ,0\right)
    \subseteq B_{\frac{1}{16}R_0}\times\left(-\frac{1}{2}\mu_0^{1-m}\omega_0^{2-p}\left(\frac{1}{16}R_0\right)^p,0\right).
     \end{split}\end{equation*}
Taking into account \eqref{k1half} and \eqref{k_1time}, and using the same argument as in the proof of \eqref{oscillation},
we deduce that
$\essosc_{Q_{k_1+1}}u\leq \omega_{k_1+1}.$
     Furthermore, we introduce the cylinders $Q_2$, $\cdots$, $Q_{k_1}$ by
     \begin{equation*}\begin{split}
    Q_2&=B_{\frac{1}{16}R_1}\times\left(-(\mu^{(2)})^{1-m}(\omega^{(2)})^{2-p}\left(\frac{1}{16}R_1\right)^p,0\right),
    \\Q_3&=B_{\frac{1}{16}R_2}\times\left(-(\mu^{(3)})^{1-m}(\omega^{(3)})^{2-p}\left(\frac{1}{16}R_2\right)^p,0\right),
    \\ &\qquad\qquad \qquad\cdots\cdots
    \\ Q_{k_1}&=B_{\frac{1}{16}R_{k_1-1}}\times\left(-(\mu^{(k_1)})^{1-m}(\omega^{(k_1)})
    ^{2-p}\left(\frac{1}{16}R_{k_1-1}\right)^p,0\right),
    \end{split}\end{equation*}
    where $\mu^{(2)}=\eta\mu_0$, $\mu^{(3)}=\eta^2\mu_0$, $\cdots$, $\mu^{(k_1)}=\eta^{k_1-1}\mu_0$ and
    $\omega^{(2)}=\eta\omega_0$, $\omega^{(3)}=\eta^2\omega_0$, $\cdots$, $\omega^{(k_1)}=\eta^{k_1-1}\omega_0$.
    It is easy to check that $Q_{k_1+1}\subset Q_{k_1}\subset\cdots\subset Q_1$.
    Taking into account that $k_1\geq1$ is the smallest integer such that \eqref{k1half} holds and
     $\omega_{j+1}\geq\frac{1}{2}\omega_j$ for any $j\geq0$,
    we conclude that $\essosc_{Q_1}u<\frac{1}{2}\omega_1\leq \omega_2$ and hence
     \begin{equation*}\begin{split}\essosc_{Q_1}u\leq
     \min\{\omega_1,\omega_2,\cdots,\omega_{k_1}\}.\end{split}\end{equation*}
Consequently, we infer that
      \begin{equation*}\begin{split}\essosc_{Q_2}u&\leq\essosc_{Q_1}u\leq\omega_2,\quad
      \essosc_{Q_3}u\leq\essosc_{Q_1}u\leq\omega_3,\quad
     \\& \cdots,\quad  \essosc_{Q_{k_1}}u\leq\essosc_{Q_1}u\leq\omega_{k_1}.
    \end{split}\end{equation*}
We may now repeat the same arguments as in the previous proof and conclude that there exist two
sequences $\omega^{(n)}$, $\mu^{(n)}$ and a sequence of cylinders
     \begin{equation*}\begin{split}
    Q_n=B_{\frac{1}{16}R_{n-1}}\times\left(-(\mu^{(n)})^{1-m}(
    \omega^{(n)})^{2-p}\left(\frac{1}{16}R_{n-1}\right)^p,0\right)
    \end{split}\end{equation*}
    such that $\omega^{(n)}\leq\omega_0$, $\mu^{(n)}\leq\mu_0$,
    $Q_0\supset Q_1\supset \cdots \supset Q_n\supset\cdots$ and there holds
    \begin{equation*}\begin{split}
    \essosc_{Q_n}u\leq \omega_n
    \end{split}\end{equation*}
    for all $n=0,1,\cdots$.
    We are now in a position to establish a continuity estimate for the weak solutions.
    To this end, we fix $\bar\rho\in(0,R_0)$, $\bar r\in (0,\bar\rho)$
    and $0<b<\min\{\delta,\eta\}$. Then, there exist two integers $k\in\mathbb{N}$ and $l\in\mathbb{N}$ such that
    \begin{equation}\label{kl}k-1<\frac{\ln\frac{\bar \rho}{R_0}}{\ln\delta}\leq k\qquad\text{and}\qquad
      l-1<\frac{\ln\frac{\bar r}{\bar \rho}}{\ln b}\leq l.
    \end{equation}
    Moreover, we set $n=k+l$ and observe that
    \begin{equation*}\begin{split}
    \eta^{n-k}=\exp(\alpha l\ln b)=b^{\alpha l}\leq \left(\frac{\bar r}{\bar \rho}\right)^\alpha,\qquad\text{where}\qquad
    \alpha=\frac{\ln \eta}{\ln b}.
     \end{split}\end{equation*}
It follows from \eqref{omegakomgea0} that the estimate
     \begin{equation}\begin{split}\label{omegan}
\omega_n\leq \left(\frac{\bar r}{\bar \rho}\right)^\alpha\omega_0+\gamma\frac{1}{1-\eta}
\left(F_1(2\bar \rho)^{\frac{p}{p+m-1}}+F_2(2\bar \rho)^{\frac{p-1}{p+m-2}}\right).
\end{split}\end{equation}
holds for any $0<\bar r<\bar \rho<R_0$ and $n=k+l$, where $k$ and $l$ satisfy \eqref{kl}. Furthermore, we take $0<\bar r<R_0$ and
specify $\bar \rho=R_0^{1-\beta}\bar r^\beta$. This also determines $k_*$ and $l_*$ via \eqref{kl} by
 \begin{equation*}k_*-1<\frac{\beta}{\ln\delta}\ln\frac{\bar r}{R_0}\leq k_*\qquad\text{and}\qquad
      l_*-1<\frac{(1-\beta)}{\ln b}\ln\frac{\bar r}{R_0}\leq l_*.
    \end{equation*}
    For $n_*=k_*+l_*$, we find that the estimate \eqref{omegan} reads
    \begin{equation*}\begin{split}
\omega_{n_*}\leq \left(\frac{\bar r}{R_0}\right)^{\alpha(1-\beta)}\omega_0+\gamma
\left(F_1(2R_0^{1-\beta}\bar r^\beta)^{\frac{p}{p+m-1}}+F_2(2R_0^{1-\beta}\bar r^\beta)^{\frac{p-1}{p+m-2}}\right)
\end{split}\end{equation*}
and $n_*$ satisfies
\begin{equation}\label{n*}
n_*-2<\left(\frac{\beta}{\ln\delta}+\frac{1-\beta}{\ln b}\right)\ln\frac{\bar r}{R_0}\leq n_*.
\end{equation}
At this stage, we set $a_0=\mu_0^{1-m}\omega_0^{2-p}$ and $\bar Q_n=B_{R_n}\times(-a_0R_n^p,0)$. Recalling that
$\omega^{(n)}\leq\omega_0$ and $\mu^{(n)}\leq\mu_0$, we see that $\bar Q_n\subset Q_n$. Moreover, we define
 \begin{equation*}\begin{split}\sigma=\beta+(1-\beta)\frac{\ln\delta}{\ln b}\qquad\text{and}\qquad
 r=\delta^2R_0\left(\frac{\bar r}{R_0}\right)^\sigma.\end{split}\end{equation*}
 This yields that
 \begin{equation*}\begin{split}
 \bar r=R_0\left(\delta^{-2}R_0^{-1}r\right)^{\frac{1}{\sigma}}\qquad\text{and}\qquad
  R_0^{1-\beta}\bar r^\beta=\delta^{-2\frac{\beta}{\beta+\frac{1-\beta}{\ln b}\ln\delta}}r^{\frac{\beta}{\sigma}}
  R_0^{\frac{(1-\beta)\ln \delta}{\sigma\ln b}}\leq c(\delta)r^{\frac{\beta}{\sigma}}.
 \end{split}\end{equation*}
 In view of \eqref{n*}, we conclude with
 \begin{equation*}\begin{split}
 r=R_0\exp\left(\sigma\ln\frac{\bar r}{R_0}+2\ln\delta\right)<R_0e^{n_*\ln\delta} =R_{n_*}
 \end{split}\end{equation*}
 and hence $Q(r,a_0r^p)\subset \bar Q_{n_*}\subset Q_{n_*}$.
 Consequently, we arrive at the continuity estimate as follows
 \begin{equation*}\begin{split}
 &\essosc_{Q(r,a_0r^p)}u\leq \essosc_{\bar Q_{n_*}}u\leq\essosc_{Q_{n_*}}u\leq \omega_{n_*}
 \\&\leq \gamma\left(\frac{r}{R_0}\right)^{\frac{\alpha(1-\beta)}{\sigma}}\omega_0+\gamma
\left(F_1(cr^{\frac{\beta}{\sigma}}
)^{\frac{p}{p+m-1}}+F_2(cr^{\frac{\beta}{\sigma}})^{\frac{p-1}{p+m-2}}\right).
 \end{split}\end{equation*}
 Thus we are led to the conclusion that the weak solution to \eqref{parabolic}-\eqref{A} is locally continuous in $\Omega_T$.
 \section*{Acknowledgements}
 The author wishes to thank the Institute for Advanced Study in Mathematics of Harbin Institute of Technology
 for the kind hospitality during his visit in July 2023.
\bibliographystyle{abbrv}

\begin{thebibliography}{9}
\bibitem{BER}
\newblock G. I. Barenblatt, V. M. Entov and V. M. Rizhnik:
\newblock Motion of fluids and gases in natural strata,
\newblock Nedra. Moscow, (1984).

\bibitem{BDG2}
\newblock V. B\"ogelein, F. Duzaar and U. Gianazza:
\newblock Porous medium type equations with measure data and potential estimates,
\newblock SIAM J. Math. Anal., 45(6), 3283-3330, (2013).

\bibitem{BDG1}
\newblock V. B\"ogelein, F. Duzaar and U. Gianazza:
\newblock Continuity estimates for porous medium type equations with measure data,
\newblock J. Funct. Anal., 267, 3351-3396, (2014).

\bibitem{BDLS}
\newblock V. B\"ogelein, F. Duzaar, N. Liao and L. Sch\"atzler:
\newblock On the H\"older regularity of signed solutions to a doubly nonlinear
equation. Part II,
\newblock Rev. Mat. Iberoam., 39(3), 1005-1037, (2022).

\bibitem{BHSS}
\newblock V. B\"ogelein, A. Heran, L. Sch\"atzler and T. Singer:
\newblock Harnack's inequality for doubly nonlinear equations of slow diffusion type,
\newblock Calc. Var., 60, 215 (2021).

\bibitem{Di93}
\newblock E. DiBenedetto:
\newblock Degenerate Parabolic Equations.
\newblock Universitext, Springer, New York (1993).

\bibitem{EG}
\newblock L. C. Evans and R. F. Gariepy:
\newblock Measure theory and fine properties of functions,
\newblock Textbooks in Mathematics,
Revised edition. CRC Press (2015).

\bibitem{FS}
\newblock S. Fornaro and M. Sosio:
\newblock Intrinsic Harnack estimates for some doubly nonlinear degenerate parabolic equations,
\newblock Adv. Differ. Equ., 13(1-2), 139-168, (2008).

\bibitem{I}
\newblock A. V. Ivanov:
\newblock H\"older estimates for quasilinear doubly degenerate parabolic equations,
\newblock Zap. Nauchn. Sem. LOMI., 171, 70-105, (1989).

\bibitem{KM}
\newblock T. Kilpel\"ainen and J. Mal\'y:
\newblock The Wiener test and potential estimates for quasilinear elliptic equations,
\newblock Acta Math., 172, 137-161, (1993).

\bibitem{LS}
\newblock V. Liskevich and I. I. Skrypnik:
\newblock Harnack inequality and continuity of solutions to quasi-linear degenerate parabolic
equations with coefficients from Kato-type classes,
\newblock J. Diff. Equa., 247(10), 2740-2777, (2009).

\bibitem{LSS}
\newblock V. Liskevich, I. I. Skrypnik and Z. Sobol:
\newblock Estimates of solutions for the parabolic $p$-Laplacian equation with measure via parabolic nonlinear potentials,
\newblock Commun.
Pure Appl. Anal., 12(4), 1731-1744, (2013).


\bibitem{PV}
\newblock M. M. Porzio and V. Vespri:
\newblock H\"older estimates for local solutions of some doubly nonlinear degenerate parabolic equations,
\newblock J. Differ. Equ., 103 146-178, (1993).

\bibitem{Sk}
\newblock I. I. Skrypnik:
\newblock Continuity of solutions to singular parabolic equations with coefficients from Kato-type classes.
\newblock Annali di Matematica, 195, 1153-1176, (2016).

\bibitem{S}
\newblock E. Stein:
\newblock Harmonic analysis: real-variable methods, orthogonality, and oscillatory integrals,
\newblock Princeton Math. Series, vol. 43. Princeton University Press, Princeton (1993).

\bibitem{SS}
\newblock S. Sturm:
\newblock Pointwise estimates via parabolic potentials for a class of doubly nonlinear
parabolic equations with measure data,
\newblock Manuscr. Math., 157, 295-322, (2018).

\bibitem{V}
\newblock V. Vespri:
\newblock Harnack type inequalities for solutions of certain doubly nonlinear parabolic equations,
\newblock J. Math.
Anal. Appl., 181(1), 104-131, (1994).

\bibitem{z1}
\newblock Y. Zozulia:
\newblock Pointwise estimates of solutions to weighted porous medium and fast diffusion equations via weighted Riesz potentials,
 \newblock Ukr. Mat. Bull., 17(1), 116-144, (2020).

\bibitem{z2}
\newblock Y. Zozulia:
\newblock On the continuity of solutions of the equations of a porous medium and the
fast diffusion with weighted and singular lower-order
terms,
 \newblock J. Math. Sci., 256, 803-830, (2021).
\end{thebibliography}

\end{document}